\numberwithin{equation}{section}
\newtheorem{theorem}{Theorem}[section]
\newtheorem{lemma}{Lemma}[section]
\newtheorem{proposition}{Proposition}[section]
\newtheorem{corollary}{Corollary}[section]
\newtheorem{definition}{Definition}[section]
\newtheorem{remark}{Remark}[section]
\def\3n{\negthinspace \negthinspace \negthinspace }
\def\2n{\negthinspace \negthinspace }
\def\1n{\negthinspace }
\def\mn{\mathbb{N}}
\def\mf{\mathcal{F}}
\def\mb{\mathcal{B}}
\def\cd{\cdot}
\def\dbE{{\mathbb{E}}}
\def\dbF{{\mathbb{F}}}
\def\dbH{{\mathbb{H}}}
\def\dbI{{\mathbb{I}}}
\def\dbK{{\mathbb{K}}}
\def\dbN{{\mathbb{N}}}
\def\dbP{{\mathbb{P}}}
\def\dbQ{{\mathbb{Q}}}
\def\dbR{{\mathbb{R}}}
\def\a{\alpha}
\def\b{\beta}
\def\d{\delta}
\def\e{\varepsilon}
\def\k{\kappa}
\def\l{\lambda}
\def\si{\sigma}
\def\t{\tau}
\def\f{\varphi}
\def\th{\theta}
\def\o{\omega}
\def\p{\phi}
\def\3n{\negthinspace \negthinspace \negthinspace }
\def\2n{\negthinspace \negthinspace }
\def\1n{\negthinspace }
\def\ns{\noalign{\smallskip} }
\def\ds{\displaystyle}
\def\wh{\widehat}
\def\G{\Gamma}
\def\L{\Lambda}
\def\Si{\Sigma}
\def\O{\Omega}
\def\Om{\Omega}
\def\cA{{\cal A}}
\def\cB{{\cal B}}
\def\cC{{\cal C}}
\def\cD{{\cal D}}
\def\cE{{\cal E}}
\def\cF{{\cal F}}
\def\cG{{\cal G}}
\def\cI{{\cal I}}
\def\cJ{{\cal J}}
\def\cK{{\cal K}}
\def\cL{{\cal L}}
\def\cM{{\cal M}}
\def\cN{{\cal N}}
\def\cP{{\cal P}}
\def\cQ{{\cal Q}}
\def\cT{{\cal T}}
\def\cU{{\cal U}}
\def\cV{{\cal V}}
\def\cW{{\cal W}}
\def\cY{{\cal Y}}
\def\cZ{{\cal Z}}
\def\mE{{\mathbb{E}}}
\def\no{\noindent}
\def\ss{\smallskip}
\def\q{\quad}
\def\qq{\qquad}
\def\hb{\hbox}
\def\limsup{\mathop{\overline{\rm lim}}}
\def\lan{\big\langle}
\def\ran{\big\rangle}
\def\pa{\partial}
\def\wt{\widetilde}
\def\cd{\cdot}
\def\cds{\cdots}
\def\ae{\hbox{\rm a.e.{ }}}
\def\as{\hbox{\rm a.s.{ }}}
\def\span{\hbox{\rm span$\,$}}
\def\({\Big (}
\def\){\Big )}
\def\[{\Big[}
\def\]{\Big]}
\def\={\buildrel \triangle \over =}
\def\resp{{\it resp. }}
\def\dbE{{\mathbb{E}}}
\def\dbF{{\mathbb{F}}}
\def\dbH{{\mathbb{H}}}
\def\dbI{{\mathbb{I}}}
\def\dbK{{\mathbb{K}}}
\def\dbN{{\mathbb{N}}}
\def\dbP{{\mathbb{P}}}
\def\dbQ{{\mathbb{Q}}}
\def\dbR{{\mathbb{R}}}
\def\ds{\displaystyle}
\def\={\buildrel \triangle \over =}
\def\resp{{\it resp. }}
\def\a{\alpha}
\def\b{\beta}
\def\d{\delta}
\def\e{\varepsilon}
\def\k{\kappa}
\def\l{\lambda}
\def\si{\sigma}
\def\t{\times}
\def\f{\varphi}
\def\th{\theta}
\def\p{\phi}
\def\ns{\noalign{\ss} }
\def\pa{\partial}
\def\G{\Gamma}
\def\L{\Lambda}
\def\Si{\Sigma}
\def\Om{\Omega}
\def\cA{{\cal A}}
\def\cB{{\cal B}}
\def\cC{{\cal C}}
\def\cD{{\cal D}}
\def\cE{{\cal E}}
\def\cF{{\cal F}}
\def\cG{{\cal G}}
\def\cI{{\cal I}}
\def\cJ{{\cal J}}
\def\cK{{\cal K}}
\def\cL{{\cal L}}
\def\cM{{\cal M}}
\def\cN{{\cal N}}
\def\cP{{\cal P}}
\def\cQ{{\cal Q}}
\def\cT{{\cal T}}
\def\cU{{\cal U}}
\def\cV{{\cal V}}
\def\cW{{\cal W}}
\def\cY{{\cal Y}}
\def\cZ{{\cal Z}}
\def\mE{{\mathbb{E}}}
\def\no{\noindent}
\def\ss{\smallskip}
\def\q{\quad}
\def\qq{\qquad}
\def\hb{\hbox}
\def\t{\tau}
\def\bu{\bar{u}}
\def\bx{\bar{x}}
\def\T{T^{b}}
\def\TT{T^{b(2)}}
\def\eps{\varepsilon}
\def\t{\tau}
\newcommand{\inner}[2]{\left\langle#1,#2\right\rangle}
\renewcommand{\@seccntformat}[1]{\csname the#1\endcsname.\hspace{0.5em}}
\begin{document}

\title{\bf First and Second Order Necessary Optimality
Conditions for Controlled Stochastic Evolution
Equations with Control and State Constraints}

\author{H\'{e}l\`{e}ne Frankowska \thanks{CNRS,
IMJ-PRG, UMR 7586, Sorbonne Universit\'{e}, case
247, 4 place Jussieu, 75252 Paris, France. The
research of this author is partially supported
by the Gaspard  Monge Program for Optimisation
and Operational Research, Jacques Hadamard
Mathematical Foundation (FMJH) and the AFOSR
grant FA 9550-18-1-0254. {\small\it E-mail:}
{\small\tt helene.frankowska@imj-prg.fr}. } \q
and \q Qi L\"u\thanks{School of Mathematics,
Sichuan University, Chengdu, 610064, China. The
research of this author is supported by the NSF
of China under grant 11471231, the Fundamental
Research Funds for the Central Universities in
China under grant 2015SCU04A02 and Grant
MTM2014-52347 of the MICINN, Spain. {\small\it
E-mail:} {\small\tt lu@scu.edu.cn}. The authors
are grateful to Dr. Haisen Zhang for the
fruitful discussion which played an important
role in proving the main results of this paper.}
}

\date{}

\maketitle

\begin{abstract}
The purpose of this paper is to establish  first
and  second order necessary optimality
conditions for optimal control problems of
stochastic evolution equations with control and
state constraints. The control acts both in the
drift and diffusion terms and the control region
is a nonempty closed subset of a separable
Hilbert space. We employ some classical
set-valued analysis tools and theories of the
transposition solution of vector-valued backward
stochastic evolution equations and the
relaxed-transposition solution of
operator-valued backward stochastic evolution
equations  to derive these optimality
conditions. The correction part of the second
order adjoint equation, which does not appear in
the first order optimality condition, plays a
fundamental role in the second order optimality
condition.
\end{abstract}

\vspace{+0.3em}

\noindent {\bf Key words:} Stochastic optimal
control,  necessary optimality conditions,
set-valued analysis.

\vspace{+0.3em}

\noindent {\bf AMS subject classifications:}
Primary 93E20; Secondary 60H15.

\section{Introduction}

Let $T>0$ and $(\Om,\cF,\dbF,\dbP)$ a complete
filtered probability space  with the
c\`{a}dl\`{a}g (right continuous with left
limits) filtration $\dbF=\{\cF_t\}_{t\in[0,T]}$,
on which a cylindral  Brownian motion
$\{W(t)\}_{t\in[0,T]}$ taking values in a
separable Hilbert space $V$ is defined. Let  $H$
be a separable Hilbert space and $A$  be an
unbounded linear operator  generating a
contractive $C_0$-semigroup $\{S(t)\}_{t\geq 0}$
on $H$. For a nonempty closed subset  $U$  of a
separable Hilbert space $H_1$
define\vspace{-0.1cm}
$$
\cU \triangleq \Big\{u(\cdot):\,[0,T]\to U\;
\Big|\; u(\cd) \in L^2_\dbF(0,T;H_1)
\Big\}\vspace{-0.1cm}
$$
and consider the following controlled stochastic
evolution equation (SEE for
short):\vspace{-0.1cm}
\begin{equation}\label{controlsys}
\left\{
\begin{array}{ll}
dx(t)=\big(Ax(t)+a(t,x(t),u(t))\big)dt
+b(t,x(t),u(t))dW(t) & \mbox{ in }(0,T],\\
x(0)=\nu_0\in H,
\end{array}
\right.\vspace{-0.1cm}
\end{equation}
where $u \in \cU $.
A process $x(\cd)\equiv x(\cd\,;\nu_0,u)\in
L^{2}_\dbF(\O;C([0,T];H))$ is called a mild
solution of \eqref{controlsys} if\vspace{-0.2cm}
$$
x(t)=S(t)\nu_0\! +\! \int_0^t\!
S(t\!-\!s)a(s,x(s),u(s))ds + \!\int_0^t\!
S(t\!-\!s)b(s,x(s),u(s))dW(s),\;\dbP\mbox{-a.s.,}\;\forall\,
t\in\![0,T].\vspace{-0.2cm}
$$

Many controlled stochastic partial differential
equations, such as controlled stochastic
wave/heat/\-Schr\"odinger equations, can be
regarded as a special case of the system
\eqref{controlsys}.

\vspace{0.2cm}

Let $\cK_a$ be a nonempty closed subset of $H$,
and $h:\Om\times H\to \dbR$, $g^j:H\to \dbR$
($j=0,\cds,n$). Define a Mayer type cost functional
$\cJ(\cdot)$ (for the control system
\eqref{controlsys}) as\vspace{-0.2cm}
\begin{equation}\label{costfunction}
\cJ(u(\cdot), \nu_0)= \mE h(x(T))
\end{equation}
with the state constraint\vspace{-0.2cm}
\begin{equation}\label{constraints1}
\mE g^0(x(t))\leq 0,\q \mbox{ for all }t\in
[0,T],
\end{equation}
and the initial-final states
constraints\vspace{-0.2cm}
\begin{equation}\label{constraints}
\nu_0\in \cK_a,\qq \mE g^j(x(T))\leq 0,\q
j=1,\cds,n.
\end{equation}

The set of admissible controls at  the initial
datum $\nu_0$ is given by\vspace{-0.1cm}
$$
\begin{array}{ll}\ds
\cU_{ad}^{\nu_0} \=\Big\{ u\in \cU \ \big|\
\mbox{the corresponding solution $x(\cd)$ of
\eqref{controlsys} satisfies
\eqref{constraints1} and \eqref{constraints}}
\Big\}
\end{array}\vspace{-0.1cm}
$$
and the one of admissible trajectory-control
pairs by \vspace{-0.2cm}
$$
\begin{array}{ll}\ds
\cP_{ad} \=\Big\{(x(\cd),u(\cd))\ \big|\ u\in
\cU_{ad}^{\nu_0}\;\;  \mbox{\rm  for some } \nu_0 \in  \cK_a \Big\}.
\end{array}\vspace{-0.1cm}
$$
Under the usual assumptions, \eqref{controlsys}
has exactly one (mild) solution $x(\cdot,
\nu_0)$ with initial value $\nu_{0}\in \cK_a$,
which is called an admissible state.

We present the optimal control problem for the
system \eqref{controlsys} as follows:

\medskip

\no {\bf Problem (OP)} {\it Find  $(\bar
\nu_0,\bar u(\cdot))\in \cK_a\times
\cU_{ad}^{\nu_0}$ such that
\begin{equation}\label{jk2}
\cJ (\bar \nu_0,\bar u(\cdot)) =
\inf_{(\nu_0,u(\cdot))\in \cK_a\times
\cU_{ad}^{\nu_0}} \cJ
(\nu_0,u(\cdot)).\vspace{-0.21cm}
\end{equation}
}

In(1.5), $\bar u(\cdot)$ is said to be an
optimal control and $\bar x(\cdot)$  the
corresponding optimal state.  $(\bar
x(\cdot),\bar u(\cdot))$ is called an {\it
optimal pair} and $(\bar
\nu_{0},\bar{x}(\cd),\bar u(\cd))$ is called an
{\it optimal triple}.

Our purpose is to establish  first and  second
order necessary optimality conditions for {\bf
Problem (OP)}.

We could also  consider a more general
Bolza-type cost functional\vspace{-0.1cm}
$$
\cJ(u(\cdot), \nu_0)=\mE\[\int_0^T \tilde
h(t,x(t),u(t))dt + h(x(T))\].\vspace{-0.1cm}
$$
However, it is well known that such optimal
control problem can be reduced to {\bf Problem
(OP)} by considering an extended control
system:\vspace{-0.1cm}
\begin{equation}\label{8.18-eq21}
\left\{
\begin{array}{ll}
dx(t)=\big(Ax(t)+a(t,x(t),u(t))\big)dt+b(t,x(t),u(t))dW(t) &\mbox{ in }(0,T],\\
\ns\ds d\tilde x(t) = \tilde h(t,x(t),u(t))dt
&\mbox{ in }[0,T],
\\ x(0)=\nu_0\in H,\q \tilde x(0)=0
\end{array}\right.\vspace{-0.1cm}
\end{equation}
with the Mayer type cost
functional\vspace{-0.21cm}
$$
\cJ(u(\cdot),\nu_0)= h(x(T))+\tilde
x(T),\vspace{-0.21cm}
$$
under  constraints\vspace{-0.1cm}
$$
\mE g^0(x(t))\leq 0,\q \mbox{ for all }t\in
[0,T], \q \nu_0\in \cK_a,\q \mE g^j(x(T))\leq
0,\q j=1,\cds,n.\vspace{-0.1cm}
$$

It is one of the important issues in optimal
control theory  to establish necessary
optimality conditions for optimal controls,
which is useful for characterizing optimal
controls  or  solving the optimal control
problems numerically. Since the seminal work
\cite{PC},  necessary optimality conditions are
studied extensively for different kinds of
control systems. We refer the readers to
\cite{FZZ,FZZ2,HP1,LY,Troltzsch,YZ,ZZ} and the
rich references therein for the first and second
order necessary optimality conditions for
systems governed by ordinary differential
equations, by partial differential equations and
by stochastic differential equations.

It is natural  to seek to extend the  theory of
necessary optimality conditions to those
infinite dimensional SEEs. The main motivation
is to study the optimal control of systems
governed by stochastic partial differential
equations, which are useful models for many
processes in natural sciences (see
\cite{Carmona,Kotelenez} and the rich references
therein).

We refer to \cite{Bensoussan2} for a pioneering
work on first order necessary optimality
condition (Pontryagin-type maximum principle)
and subsequent extensions \cite{HP1,TL,Zhou} and
so on. Nevertheless, for a long time, almost all
of the works on the necessary conditions for
optimal controls of infinite dimensional SEEs
addressed only the case that the diffusion term
does NOT depend on the control variable (i.e.,
the function $b(\cd,\cd,\cd)$ in
\eqref{controlsys} is independent of $u$). As
far as we know, the stochastic maximum principle
for general infinite dimensional nonlinear
stochastic systems with control-dependent
diffusion coefficients and possibly nonconvex
control domains had been a longstanding problem
till the very recent papers
(\cite{DM,FHT,LZ1,LZ,LZ2}). In these papers
first order necessary optimality conditions for
controlled SEEs are established by several
authors with no constraint on the state.
Further, in \cite{lv2016,LZZ}, some second order
necessary optimality conditions for controlled
SEEs are obtained, provided that there is no
constraint on the state and $U$ is convex. As
far as we know, there are no results on first or
second order necessary optimality conditions for
controlled SEEs with state constraints and for a
nonconvex set $U$.

Compared with
\cite{DM,FHT,lv2016,LZZ,LZ1,LZ,LZ2}, the main
novelty of the present work is in employing some
sharp tools of set-valued analysis with the
following advantages:
\begin{itemize}
  \item only one adjoint equation is needed to
get a first order necessary optimality condition
even when the diffusion term is control
dependent and $U$ is nonconvex;
  \item  two second order necessary optimality
  conditions
are obtained by using two adjoint equations;
  \item state constraints are presented.
\end{itemize}

The rest of this paper is organized as follows:
in Section \ref{pre}, we introduce some
notations and assumptions  and recall some
concepts and results from the set-valued
analysis to be used in this paper; Section
\ref{first} is devoted to establishing  first
order necessary optimality conditions; at last,
in Section \ref{second}, we obtain two
integral-type second order necessary optimality
conditions.


\section{Preliminaries}\label{pre}
\subsection{Notations and assumptions}
Let $X$ be a Banach space.  For each $t\in[0,T]$
and $r\in [1,\infty)$, denote by
$L_{\cF_t}^r(\Om;X)$ the Banach space of all
(strongly) $\cF_t$-measurable random variables
$\xi:\Om\to X$ such that $\mathbb{E}|\xi|_X^r <
\infty$, with the norm
$|\xi|_{L_{\cF_t}^r(\Om;X)}\=\big(\mE|\xi|_X^r\big)^{1/r}$.
Write $D_{\dbF}([0,T];L^{r}(\Om;X))$ for the
Banach space of all $X$-valued, $r$th power
integrable $\dbF$-adapted processes $\f(\cdot)$
such that $\f:[0,T] \to L^{r}_{\cF_T}(\Omega;X)$
is c\`adl\`ag, with the norm $
|\f(\cd)|_{D_{\dbF}([0,T];L^{r}(\Omega;X))} =
\sup_{t\in
[0,T]}\left(\mE|\f(t)|_X^r\right)^{1/r}$. Write
$C_{\dbF}([0,T];L^{r}(\Omega;X))$ for the Banach
space of all $X$-valued, $\dbF$-adapted
processes $\f(\cdot)$ such that $\f:[0,T] \to
L^{r}_{\cF_T}(\Omega;X)$ is continuous, with the
norm inherited from $D_{\dbF}([0,T];$
$L^{r}(\Omega;X))$.

Denote by $D([0,T];X)$ the Banach space of all
$X$-valued c\`adl\`ag functions $\f(\cd)$ such
that $ \sup_{t\in [0,T]}|\f(t)|_X <\infty$, with
the norm $|\f|_{D([0,T];X)}= \sup_{t\in
[0,T]}|\f(t)|_X$; by $L^2_\dbF(\O;D([0,T];X))$
the Banach space of all $X$-valued
$\dbF$-adapted c\`adl\`ag processes $\f(\cd)$
such that $\mE\big(\sup_{t\in
[0,T]}|\f(t)|_X\big)^{2}<\infty$, with the norm
$|\f|_{L^2_\dbF(\O;D([0,T];X))}=
\big[\mE\big(\sup_{t\in
[0,T]}|\f(t)|_X\big)^{2}\big]^{\frac{1}{2}}$; by
$L^2_\dbF(\O;C([0,T];X))$ the space of all
$\dbF$-adapted continuous processes $\f(\cd)$
such that $\mE\big(\sup_{t\in
[0,T]}|\f(t)|_X\big)^{2}<\infty$, with the norm
inherited from $L^2_\dbF(\O;D([0,T];X))$; by
$L^{2}_{\dbF}(\Omega;BV([0,T]; X))$  the Banach
space of all $X$-valued, $\dbF$-adapted
processes $\f(\cd)$   whose sample paths have
bounded variation, $\dbP$-a.s., such that
$\mE~|\f(\cd,\omega)|_{BV[0,T]}^2 <+\infty$,
with the norm
$\big(\mE|\f(\cd,\omega)|_{BV[0,T]}^2\big)^{\frac{1}{2}}$;
and by $L^{2}_{\dbF}(\Omega;BV_0([0,T]; X))$ the
space  of processes $\f\in
L^{2}_{\dbF}(\Omega;BV([0,T]; X))$ satisfying
$\f(0)=0$, with the norm inherited from
$L^{2}_{\dbF}(\Omega;BV([0,T]; X))$. For any
$\f\in L^{2}_{\dbF}(\Omega;BV_0([0,T]; X))$, one
can find a $\tilde\f\in
L^{2}_{\dbF}(\Omega;BV_0([0,T]; X))\cap
L^2_\dbF(\O;D([0,T];X))$ such that $\f=\tilde
\f$ for a.e. $(t,\omega)\in [0,T]\times\Omega$.
Hence, in this paper, without loss of
generality, any $\f\in
L^{2}_{\dbF}(\Omega;BV_0([0,T]; X))$ can be
considered as an element in
$L^2_\dbF(\O;D([0,T];X))$.

Fix any $r_1,r_2\in[1,\infty]$.
Put\vspace{-0.1cm}
$$
\begin{array}{ll}
\ds
L^{r_1}_\dbF(\Omega;L^{r_2}(0,T;X))=\Big\{\f:(0,T)\times\Om\to
X\;\Big|\; \f(\cd)\hb{
is $\dbF$-adapted and } \dbE\(\int_0^T|\f(t)|_X^{r_2}dt\)^{\frac{r_1}{r_2}}<\infty\Big\},\\
\ns\ds
L^{r_2}_\dbF(0,T;L^{r_1}(\Omega;X))=\Big\{\f:(0,T)\times\Om\to
X\;\Big|\; \f(\cd)\hb{ is $\dbF$-adapted and }
\int_0^T\(\dbE|\f(t)|_X^{r_1}\)^{\frac{r_2}
{r_1}}dt<\infty\Big\}.
\end{array}
$$
Clearly, the above two sets are Banach spaces
with the following norms
respectively\vspace{-0.2cm}
$$
|\phi(\cd)|_{L^{r_1}_\dbF(\Om;L^{r_2}(0,T;X))}\=
\[\mE\(\int_0^T
|\phi(t)|_X^{r_2}dt\)^{\frac{r_1}{r_2}}\]^{\frac{1}{r_1}}
$$
and\vspace{-0.32cm}
$$
|\phi(\cd)|_{L^{r_2}_\dbF(0,T;L^{r_1}(\Om;X))}\=
\[\int_0^T\big(
\mE|\phi(t)|_X^{r_1}dt\)^{\frac{r_2}{r_1}}\]^{\frac{1}{r_2}}.
$$
If $r_1=r_2$, we simply write
$L^{r_1}_\dbF(0,T;X)$ for the above spaces. As
usual, if there is no danger of confusion, we
omit the $\omega$ ($\in \Omega$) argument in the
notations of functions and operators.

Let $H$ be a separable Hilbert space and $A$ be
an unbounded linear operator (with the domain
$D(A)$) on $H$, which generates a contractive
$C_0$-semigroup $\{S(t)\}_{t\geq 0}$ on $H$. It
is well known that $D(A)$ is a Hilbert space
with the usual graph norm. By $A^*$, we denote
the adjoint operator of $A$, which generates
the adjoint $C_0$-semigroup $\{S^*(t)\}_{t\geq
0}$. Denote by $\cL_2$ the space of all
Hilbert-Schmidt operators from $V$ to $H$, which
is a Hilbert space with the canonical norm.

Throughout this paper, we use $C$ to denote a
generic constant, which may change from line to
line.

Let us introduce the following condition:

\medskip

\no{\bf (AS1)} {\it
$a(\cd,\cd,\cd,\cd):[0,T]\times H\times
H_1\times\Om\to H$ and
$b(\cd,\cd,\cd,\cd):[0,T]\times H\times
H_1\times\Om\to \cL_2$ are two maps such that:
i) For any $(x,u)\in H\times H_1$,
$a(\cd,x,u,\cd):[0,T]\times\Om\to H$ and
$b(\cd,x,u,\cd):[0,T]\times\Om\to \cL_2$ are
$\cB([0,T])\times \cF$ measurable and
$\dbF$-adapted; ii) For any $(t,x,\o)\in
[0,T]\times H\times\O$, $a(t,x,\cd,\o):H_1\to H$
and $b(t,x,\cd,\o):H_1\to \cL_2$ are continuous,
and\vspace{-0.1cm}
\begin{equation}\label{ab0}
\left\{\1n
\begin{array}{ll}\ds
|a(t,x_1,u,\o)\! -\! a(t,x_2,u,\o)|_{H} +
|b(t,x_1,u,\o)\! -\! b(t,x_2,u,\o)|_{\cL_2}
\!\leq\! C |x_1-x_2|_{H},\\
\ns\ds\hspace{5.6cm} \forall\, (t,x_1,x_2,u,\o)
\in [0,T] \times  H \times
H \times  H_1 \times \Om ,\\
\ns\ds |a(t,0,u,\o)|_{H} + |b(t,0,u,\o)|_{\cL_2}
\leq C, \qq \forall\, (t,u,\o)\in [0,T] \times
H_1\times\Om.
\end{array}
\right.
\end{equation}}
We have the following result:
\begin{lemma}\label{well lemma s1}
Let {\bf (AS1)} hold. Then the equation
\eqref{controlsys} admits a unique mild
solution. Furthermore, for some $C>0$ and all
$\nu_0\in H$,\vspace{-0.2cm}
$$
|x(\cd)|_{L^{2}_\dbF(\O;C([0,T];H))} \leq
C\big(1+|\nu_0|_{H}\big).\vspace{-0.1cm}
$$
\end{lemma}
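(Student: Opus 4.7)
The plan is a standard Banach fixed-point argument on the space $\cX = L^2_\dbF(\Om;C([0,T];H))$, combined with the maximal inequality for stochastic convolutions driven by a cylindrical Brownian motion. For any fixed $u(\cd)\in\cU$, I would define the map $\Gamma:\cX\to\cX$ by
$$
(\Gamma x)(t) = S(t)\nu_0 + \int_0^t S(t-s) a(s,x(s),u(s))\,ds + \int_0^t S(t-s) b(s,x(s),u(s))\,dW(s).
$$
To show $\Gamma$ maps $\cX$ into itself, the deterministic integral is handled by the contractivity $|S(t)|\le 1$, Jensen's inequality, and the linear growth in $x$ of $a$ implied by \eqref{ab0}. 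For the stochastic convolution I would invoke the factorization-method maximal estimate
$$
\mE \sup_{t\in[0,T]}\Big| \int_0^t S(t-s) b(s,x(s),u(s))\,dW(s) \Big|_H^2 \le C_T\,\mE \int_0^T |b(s,x(s),u(s))|_{\cL_2}^2\,ds,
$$
which is available because $\{S(t)\}_{t\ge0}$ is a contraction $C_0$-semigroup, and then use the linear growth in $x$ of $b$.

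Next I would show that $\Gamma$ is a contraction on $\cX$ endowed with the equivalent norm $\|x\|_\l = \big(\mE \sup_{t\in[0,T]} e^{-\l t}|x(t)|_H^2\big)^{1/2}$ for $\l$ large enough. Applying the same two estimates to $\Gamma x_1 - \Gamma x_2$, together with the Lipschitz conditions in \eqref{ab0}, yields
$$
\mE \sup_{s\in[0,t]} |(\Gamma x_1 - \Gamma x_2)(s)|_H^2 \le C \int_0^t \mE \sup_{r\in[0,s]} |x_1(r)-x_2(r)|_H^2\,ds,
$$
and the exponential weight makes the contraction constant strictly less than one for $\l$ large. The Banach fixed-point theorem then delivers a unique $x \in \cX$ with $\Gamma x = x$, i.e.\ the mild solution. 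The a priori estimate is obtained by the same kind of bound applied to the identity $x=\Gamma x$, using $(\a+\b+\g)^2\le 3(\a^2+\b^2+\g^2)$, to get
$$
\mE \sup_{s\in[0,t]} |x(s)|_H^2 \le C\big(1+|\nu_0|_H^2\big) + C \int_0^t \mE \sup_{r\in[0,s]} |x(r)|_H^2\,ds,
$$
and Gronwall's inequality yields the stated bound.

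The only genuinely infinite-dimensional obstacle is the maximal inequality for the stochastic convolution: because $A$ is unbounded, a direct Burkholder-Davis-Gundy bound applied term by term does not control $\sup_t$ of $\int_0^t S(t-s) b(s,\cd)\,dW(s)$ in terms of $\int_0^T |b|_{\cL_2}^2\,ds$. I would handle this either by the Da Prato-Kwapie\'n-Zabczyk factorization method or, exploiting the contractivity of $S(t)$, by a Yosida-approximation of $A$ combined with It\^o's formula and then passing to the limit. Apart from this point, everything reduces to the same contraction-plus-Gronwall argument that proves well-posedness of finite-dimensional stochastic differential equations.
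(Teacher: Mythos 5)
Your argument is correct and is essentially the proof the paper points to: the paper does not prove this lemma itself but cites \cite[Chapter 7]{Prato}, where exactly this contraction-mapping-plus-Gronwall scheme, with the maximal inequality for stochastic convolutions of a contraction $C_0$-semigroup, is carried out. One small caution: the factorization method does not directly give the $\sup_t$ bound at the exponent $p=2$ (it requires $p>2$ so that one can choose $\alpha\in(1/p,1/2)$), so of the two routes you offer for the maximal inequality, the Yosida-approximation/It\^o argument exploiting contractivity of $S(\cdot)$ is the one that actually closes the $L^2$ case.
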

\no The proof of Lemma \ref{well lemma s1} can
be found in \cite[Chapter 7]{Prato}.


\subsection{Set-valued
analysis} For readers' convenience, we collect
some basic facts from set-valued analysis. More
information can be found in \cite{Aubin90}.


Let $Z$ (\resp $\wt Z$) be a Banach (\resp
separable Banach) space with the norm $|\cdot
|_{Z}$ (\resp $|\cd|_{\wt Z}$). Denote by $Z^*$
(\resp $\wt Z^*$) the dual space of $Z$ (\resp
$\wt Z$). For any subset $\cK\subset Z$, denote
by  ${\rm int} \cK$ and ${\rm cl} \cK$ the
interior and closure of $\cK$, respectively.
$\cK$ is called a cone if $\alpha z\in \cK$ for
every $\alpha\ge0$ and $z\in \cK$. Define the
distance between a point $z\in Z$ and $\cK$
as\vspace{-0.2cm}
$$
dist\,(z,\cK)\= \inf_{y\in \cK}
|y-z|_{Z}\vspace{-0.2cm}
$$
and the metric projection of $z$ onto $\cK$
as\vspace{-0.2cm}
$$
\Pi_{\cK}(z)\= \{y\in \cK\ |\
|y-x|_{Z}=dist\,(z,\cK)\}.
$$
\begin{definition}
For $z\in \cK$, the Clarke tangent cone
$\cC_{\cK}(z)$ to $\cK$ at $z$ is
$$
\cC_{\cK}(z)\=\Big\{v\in Z\ \Big|\ \lim_{\e\to
0^+, y\in \cK,y\to z} \frac{dist\,(y+\e
v,\cK)}{\e}=0 \Big\}.
$$
and the adjacent cone  $\T_{\cK}(z)$ to $\cK$ at
$z$ is
$$
\T_{\cK}(z)\=\Big\{v\in Z\ \Big|\ \lim_{\e\to
0^+} \frac{dist\,(z+\e v,\cK)}{\e}=0 \Big\}.
$$
\end{definition}
$\cC_{\cK}(z)$ is a closed convex cone in $Z$
and $\cC_{\cK}(z)\subset \T_{\cK}(z)$. When
$\cK$ is convex, $ \cC_{\cK}(z)=
\T_{\cK}(z)={\rm cl} \{\a(\hat z-z)|\a\geq 0,\;
\hat z\in \cK\}$.

\begin{definition}
For $z\in \cK$ and $v\in \T_{\cK}(z)$, the
second order adjacent subset to $\cK$ at $(z,v)$
is defined by\vspace{-0.2cm}
$$
\TT_{\cK}(z,v)\=\Big\{h\in Z\ \Big|\ \lim_{\e\to 0^+}
\frac{dist\,(z+\e v+\e^{2}h,\cK)}{\e^2}=0 \Big\}.
$$
\end{definition}

The dual cone of the Clarke tangent cone
$\cC_{\cK}(z)$, denoted by $\cN^C_{\cK}(z)$, is
called the normal cone of $\cK$ at $z$,
i.e.,\vspace{-0.2cm}
$$
\cN^C_{\cK}(z)\=\Big\{\xi\in Z^*\ \Big|\
\inner{\xi}{v}_{Z^*,Z}\le 0,\ \forall\; v\in
\cC_{\cK}(z) \Big\}.
$$
\begin{definition}
Let $(\Xi, \Si)$ be a measurable space, and
$F:\Xi\leadsto Z$ be a set-valued map. For any
$\xi\in \Xi$, $F(\xi)$ is called the value of
$F$ at $\xi$. The domain of $F$ is $
Dom\,(F)\=\{\xi\in \Xi\ |\ F(\xi)\neq
\emptyset\}. $ $F$ is called measurable if
$F^{-1}(B)\=\{\xi\in \Xi\ |\ F(\xi)\cap B\neq
\emptyset\}\in \Si$ for any $B\in \mb(Z)$, where
$\mb(Z)$ is the Borel $\si$-algebra on $Z$.
\end{definition}
%

%
%
%

%
\begin{lemma}\label{lm7}\cite[Lemma 2.7]{FZZ1}
Suppose that $(\Xi,\Si,\mu)$ is a complete
finite measure space,   $p\ge 1$ and $\cK$ is a
closed nonempty subset of $\wt Z$.
Put\vspace{-0.2cm}
\begin{equation}\label{7.18-eq1}
\dbK\= \big\{\varphi(\cdot)\in L^p(\Xi,\Si,\mu;
\wt Z)\ \big| \ \varphi(\xi)\in \cK, \
\mu\hbox{-a.e.}\ \xi\in \Xi\big\}.
\end{equation}
Then for any $\varphi(\cdot)\in\dbK$, the
set-valued map $\T_{\dbK}(\varphi(\cdot))$:
$\xi\leadsto \T_{\cK}(\varphi(\xi))$ and
$\cC_\dbK(\f(\cd)):\xi\rightsquigarrow
\cC_\cK(\f(\xi))$ are $\Si$-measurable,
and\vspace{-0.2cm}
$$
\big\{v(\cd)\in L^p(\Xi,\Si,\mu;Z)\,|\ v(\xi)\in
T_\cK^b(\f(\xi)),\ \mu\mbox{-a.e. }\xi\in \Xi
\big\}\subset T_\dbK^b(\f(\cd)),
$$
$$
\big\{v(\cd)\in L^p(\Xi,\Si,\mu;Z)\,|\ v(\xi)\in
\cC_\cK(\f(\xi)),\ \mu\mbox{-a.e. }\xi\in \Xi
\big\}\subset \cC_\dbK(\f(\cd)).
$$
\end{lemma}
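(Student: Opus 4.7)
The plan is to handle three claims in order: (i) measurability of the two pointwise cone-valued multifunctions, (ii) the inclusion for the adjacent cone $T^b_\cK$, and (iii) the inclusion for the Clarke cone $\cC_\cK$.

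\textbf{Step 1: Measurability.} For the adjacent cone, $v\in T^b_\cK(z)$ iff $\limsup_{\e\to 0^+}dist(z+\e v,\cK)/\e=0$. Since $(z,v,\e)\mapsto dist(z+\e v,\cK)$ is jointly continuous, the $\limsup$ equals the $\limsup$ along $\e=1/m$, $m\in\dbN$, so the condition is the countable one $\forall k\in\dbN,\forall N\in\dbN,\exists m\ge N:\ dist(z+v/m,\cK)\le 1/(km)$. This puts the graph of $T^b_\cK$ in $\wt Z\times\wt Z$ in Borel form; composing with the $\Si$-measurable $\varphi$ makes the graph of $\xi\mapsto T^b_\cK(\varphi(\xi))$ $\Si$-measurable, and completeness of $\mu$ upgrades measurable graph to $\Si$-measurability of the multifunction (Castaing--Valadier). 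For $\cC_\cK$, pick a countable dense sequence $\{y_j\}\subset\cK$ (possible since $\wt Z$ is separable) and use continuity of $dist(\cdot,\cK)$ to replace the "for all $y\in\cK$ near $z$" clause in the Clarke-cone definition by a countable quantifier over $\{y_j\}$; the same argument yields $\Si$-measurability.

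\textbf{Step 2: Inclusion for $T^b$.} Take $v\in L^p(\Xi,\Si,\mu;\wt Z)$ with $v(\xi)\in T^b_\cK(\varphi(\xi))$ a.e. For each $\e>0$ introduce the multifunction $\Psi_\e(\xi)=\{y\in\cK:|y-\varphi(\xi)-\e v(\xi)|_{\wt Z}\le dist(\varphi(\xi)+\e v(\xi),\cK)+\e^2\}$, which is closed, nonempty-valued, with $\Si$-measurable graph; Kuratowski--Ryll-Nardzewski produces a measurable selector $w_\e(\xi)\in\cK$. Since $\varphi(\xi)\in\cK$, $dist(\varphi(\xi)+\e v(\xi),\cK)\le\e|v(\xi)|_{\wt Z}$, so finiteness of $\mu$ gives $w_\e\in\dbK$. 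Then $\e^{-1}|\varphi+\e v-w_\e|_{L^p}\le\e^{-1}\big\||dist(\varphi+\e v,\cK)|_{\wt Z}\big\|_{L^p}+\e\,\mu(\Xi)^{1/p}$. Pointwise $dist(\varphi(\xi)+\e v(\xi),\cK)/\e\to 0$ by definition of $T^b_\cK$, and $|v(\xi)|_{\wt Z}\in L^p$ dominates; dominated convergence gives $v\in T^b_\dbK(\varphi)$.

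\textbf{Step 3: Inclusion for $\cC$.} Let $v(\xi)\in\cC_\cK(\varphi(\xi))$ a.e. Fix arbitrary sequences $\e_n\downarrow 0$ and $\varphi_n\to\varphi$ in $L^p$ with $\varphi_n\in\dbK$; we must show $dist_{L^p}(\varphi_n+\e_n v,\dbK)/\e_n\to 0$. Apply the selector construction of Step~2 with $(\varphi,\e)$ replaced by $(\varphi_n,\e_n)$ to obtain $w_n\in\dbK$ satisfying $|\varphi_n+\e_n v-w_n|_{L^p}\le\big\||dist(\varphi_n+\e_n v,\cK)|_{\wt Z}\big\|_{L^p}+\e_n^2\mu(\Xi)^{1/p}$. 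Along any subsequence on which $\varphi_n(\xi)\to\varphi(\xi)$ a.e., the Clarke-cone definition gives $dist(\varphi_n(\xi)+\e_n v(\xi),\cK)/\e_n\to 0$ a.e.; the pointwise bound $dist(\varphi_n(\xi)+\e_n v(\xi),\cK)\le\e_n|v(\xi)|_{\wt Z}$ (as $\varphi_n(\xi)\in\cK$) supplies the $L^p$-dominator $|v|_{\wt Z}$. Dominated convergence yields the subsequential limit $0$; since every subsequence of $\{\varphi_n\}$ admits a further such subsequence, the full sequence satisfies $dist_{L^p}(\varphi_n+\e_n v,\dbK)/\e_n\to 0$, proving $v\in\cC_\dbK(\varphi)$.

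\textbf{Main obstacle.} The delicate point is obtaining a measurable near-projection $w_\e$ that is automatically $L^p$-integrable, so the $L^p$-distance from $\varphi+\e v$ to $\dbK$ is controlled by the $L^p$-norm of the pointwise distance — this is what lets pointwise cone membership transfer to function-space cone membership. The Clarke-cone inclusion carries the additional subtlety that $L^p$-convergence delivers only subsequential a.e.\ convergence, so the "for every $\psi\to\varphi$" clause in the definition of $\cC_\dbK$ must be verified via the subsequence-of-every-subsequence device rather than a single direct a.e.\ passage.
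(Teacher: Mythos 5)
The paper does not actually prove this lemma --- it is imported verbatim from \cite[Lemma 2.7]{FZZ1} --- but your argument is correct and follows essentially the same route as the proof in that reference: reduce pointwise cone membership to a countable Borel condition (so that graph measurability plus completeness of $\mu$ yields measurability of the composed multifunction), then transfer pointwise membership to the function space via a measurable $\e^2$-approximate projection onto $\cK$, the domination ${\rm dist}\,(\varphi(\xi)+\e v(\xi),\cK)\le \e|v(\xi)|_{\wt Z}$ available because $\varphi(\xi)\in\cK$, and dominated convergence (with the subsequence-of-every-subsequence device for the Clarke case). One small slip in Step 1: the quantifier string $\forall k\,\forall N\,\exists m\ge N$ characterizes $\liminf_{m\to\infty} m\,{\rm dist}\,(z+v/m,\cK)=0$, i.e.\ the contingent cone, whereas the adjacent cone needs $\forall k\,\exists N\,\forall m\ge N$; this does not affect your conclusion, since either form gives a Borel graph, but the order should be corrected.
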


The following result provides a criteria for the
measurability of set-valued maps.

\medskip
\begin{lemma}\cite[Theorem 8.1.4]{Aubin90}\label{lm2}
Let $(\Xi,\Si,\mu)$ be a complete
$\sigma$-finite measure space and $F$ be a
set-valued map from $\Xi$ to $\wt Z$ with
nonempty closed images. Then $F$ is measurable
if and only if the graph of $F$ belongs to
$\Si\otimes\mb(\wt Z)$.
\end{lemma}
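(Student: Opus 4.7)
The plan is to prove the two implications separately. The forward direction ($F$ measurable $\Rightarrow$ graph in $\Sigma\otimes\mathcal{B}(\wt Z)$) can be handled via a Castaing representation together with a distance-function trick, while the reverse direction rests on the classical projection theorem, which is where both the completeness of $(\Xi,\Sigma,\mu)$ and the Polish structure of $\wt Z$ enter in an essential way.

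For the forward direction, since $\wt Z$ is a separable Banach space and $F$ is $\Sigma$-measurable with nonempty closed values, I would invoke a Castaing representation: there exists a countable family $\{f_n\}_{n\ge 1}$ of $\Sigma$-measurable selections of $F$ such that $F(\xi)=\overline{\{f_n(\xi):n\ge 1\}}$ for every $\xi\in\Xi$. Setting
\begin{equation*}
\psi(\xi,z) \triangleq \dist\bigl(z,F(\xi)\bigr) = \inf_{n\ge 1}|z-f_n(\xi)|_{\wt Z}, \qquad (\xi,z)\in\Xi\times\wt Z,
\end{equation*}
each map $(\xi,z)\mapsto|z-f_n(\xi)|_{\wt Z}$ is a Carath\'eodory function (continuous in $z$, measurable in $\xi$) and hence jointly $\Sigma\otimes\mathcal{B}(\wt Z)$-measurable; a countable infimum preserves joint measurability, so $\psi$ is itself $\Sigma\otimes\mathcal{B}(\wt Z)$-measurable. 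Since the values $F(\xi)$ are closed, the graph of $F$ coincides with $\psi^{-1}(\{0\})$, and therefore belongs to $\Sigma\otimes\mathcal{B}(\wt Z)$.

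For the reverse direction, assume $\mathrm{graph}(F)\in\Sigma\otimes\mathcal{B}(\wt Z)$. For any Borel set $B\subset\wt Z$, one has the tautological identity
\begin{equation*}
F^{-1}(B) = \pi_\Xi\bigl(\mathrm{graph}(F)\cap(\Xi\times B)\bigr),
\end{equation*}
where $\pi_\Xi$ denotes the projection onto the first coordinate. The set in parentheses lies in $\Sigma\otimes\mathcal{B}(\wt Z)$, and the classical projection theorem---valid since $\wt Z$ is Polish (being a separable Banach space) and $(\Xi,\Sigma,\mu)$ is complete and $\sigma$-finite---guarantees that the projection onto $\Xi$ of any element of $\Sigma\otimes\mathcal{B}(\wt Z)$ belongs to $\Sigma$. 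Hence $F^{-1}(B)\in\Sigma$ for every $B\in\mathcal{B}(\wt Z)$, i.e.\ $F$ is measurable.

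The main obstacle is the projection theorem itself, a nontrivial result from descriptive set theory that relies on the theory of analytic (Souslin) sets and Choquet's capacitability theorem, and that exploits both the completion hypothesis on $\Sigma$ and the Polish structure of the image space in an essential way. I would not reprove it here but invoke it as a black box from \cite{Aubin90}; once it is in hand, the remaining ingredients (existence of a Castaing representation, Carath\'eodory joint measurability, and the elementary projection identity above) follow from standard arguments.
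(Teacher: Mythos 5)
The paper offers no proof of this lemma; it is quoted verbatim from \cite[Theorem 8.1.4]{Aubin90}. Your argument is correct and is essentially the standard proof given in that reference: the forward implication via a Castaing representation and the joint measurability of the Carath\'eodory function $(\xi,z)\mapsto \inf_n|z-f_n(\xi)|_{\wt Z}$, whose zero set is the graph because the values are closed, and the reverse implication via the projection theorem, which is exactly where the completeness and $\sigma$-finiteness of $(\Xi,\Si,\mu)$ and the Polish structure of $\wt Z$ are used.
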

\begin{definition}
We call a  map $\zeta:(\Omega,\mf)\leadsto Z$ a
set-valued random variable if it is measurable.

We call a map $\Psi:[0,T]\times\Omega\leadsto Z$
a measurable set-valued stochastic process if
$\Psi$ is $\mathcal{B}([0,T])\otimes
\mathcal{F}$-measurable.

We say that a measurable set-valued stochastic
process $\Psi$ is $\dbF$-adapted if
$\Psi(t,\cd)$ is $\cF_{t}$-measurable for all
$t\in [0,T]$.
\end{definition}

Let\vspace{-0.2cm}
\begin{equation}\label{adapted sigma field}
\cG\=\big\{B\in \mb([0,T])\otimes\cF\ \big|\
B_{t}\in \cF_{t},\ \forall\; t\in
[0,T]\big\},\vspace{-0.2cm}
\end{equation}
where $B_{t}\=\{\omega\in \Omega\ |\
(t,\omega)\in B\}$ is the section of $B$.
Obviously, $\cG$ is a sub-$\sigma$-algebra of
$\cB([0,T])\otimes\cF$. Denote by $\mathbf{m}$
the Lebesgue measure on $[0,T]$.   The measure
space
$([0,T]\times\Omega,\cG,\mathbf{m}\times\dbP)$
may be incomplete. Let us give a completed
version of it.

Let $\wt\cG$ be the collection of $B\subset
[0,T]\times\Omega$ for which there exist $B_1,\
B_2\in\cG$ such that $ B_1\subset B\subset B_2$
and $(\mathbf{m}\times\dbP)(B_2\setminus
B_1)=0$.
One can define a function $\wt\mu$ on $\wt\cG$
as $\wt\mu(B)=[\mathbf{m}\times\dbP](B_1)$ for
any $B\in \wt\cG$. By Proposition 1.5.1 in
\cite{Cohn13}, the measure space
$([0,T]\times\Omega,\wt\cG,\wt\mu)$ is a
completion of
$([0,T]\times\Omega,\cG,\mathbf{m}\times\dbP)$.

Define\vspace{-0.1cm}
\begin{equation*}\label{7.30-eq1}
\begin{array}{ll}
\ds
\cL^{2}_{\dbF}(0,T;H_1)\!\=\!\Big\{y:[0,T]\!\times\!\Omega\!\rightarrow\!
H_1 \big|
y(\cd)\ \hb{is $\wt\cG$-measurable,} \int_{[0,T] \times \Omega}|y(s,\o)|_{H_1}^2d\wt\mu(s,\o)\!<\!\infty\Big\},\\
\ns\ds \wt\cU_{ad}^{\nu_0}
\=\!\Big\{u:[0,T]\!\times\!\Omega\rightarrow
H_1\ \big|\ u(\cd)\
\hb{is $\wt\cG$-measurable,}\ u(t) \in U, \;\wt\mu\hb{-a.e.}, \mbox{ the corresponding}\\
\ns\ds\qq\qq\qq\qq\qq\qq \mbox{solution $x(\cd)$
of \eqref{controlsys} satisfies
\eqref{constraints1} and
\eqref{constraints}}\Big\}.
\end{array}\vspace{-0.1cm}
\end{equation*}
Clearly, $\cU_{ad}^{\nu_0}
\subset\wt\cU_{ad}^{\nu_0}$ and
$L^{2}_{\dbF}(0,T;H_1)\subset
\cL^{2}_{\dbF}(0,T;H_1).$

\vspace{0.1cm}

Let $\Xi=[0,T]\times\Omega$, $\mu=\wt\mu$ and
$Z=H_1$. From Lemma \ref{lm7}, we deduce the
following result.
\begin{corollary}\label{Lem-adjacent}
For any $u(\cdot)\in \wt\cU_{ad}^{\nu_0}$,
$\cC_{U}(u(\cdot)):[0,T]\times\Omega\rightsquigarrow
H_1$ is $\wt\cG$-measurable and \vspace{-0.2cm}
\begin{equation}\label{cT-bar-u*}
\cT_u\=\big\{v(\cdot)\in
\cL^{2}_{\dbF}(0,T;H_1)\ \big|\ v(t)\in
\cC_{U}(u(t)),\ \wt\mu \mbox{-a.e.}\big\}\subset
\cC_{\wt\cU_{ad}^{\nu_0}}(u(\cdot)).
\end{equation}
\end{corollary}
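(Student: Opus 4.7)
The plan is to read off Corollary \ref{Lem-adjacent} as a direct specialization of Lemma \ref{lm7}. First I would fix the abstract data by taking $\Xi=[0,T]\times\Omega$, $\Sigma=\wt\cG$, $\mu=\wt\mu$, $p=2$, $\wt Z=H_1$, and $\cK=U$. The measure space $([0,T]\times\Omega,\wt\cG,\wt\mu)$ is complete and finite by the construction immediately preceding the corollary, and $U\subset H_1$ is nonempty and closed by hypothesis, so the standing assumptions of Lemma \ref{lm7} are satisfied. With these choices the auxiliary set $\dbK$ from \eqref{7.18-eq1} becomes the set of all $\wt\cG$-measurable $L^2$ controls into $U$, which contains $\wt\cU_{ad}^{\nu_0}$.

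The measurability claim is then immediate. Fix $u(\cdot)\in\wt\cU_{ad}^{\nu_0}$; by definition $u(t)\in U$ $\wt\mu$-a.e., so $u(\cdot)$ is an admissible $\varphi(\cdot)$ in Lemma \ref{lm7}. The first conclusion of that lemma gives $\wt\cG$-measurability of the set-valued map $(t,\omega)\rightsquigarrow\cC_U(u(t,\omega))$, which is the first assertion of the corollary.

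For the tangent cone inclusion \eqref{cT-bar-u*}, I would appeal to the Clarke-cone part of Lemma \ref{lm7}, which in the present data specializes to
$$
\big\{v(\cdot)\in\cL^2_\dbF(0,T;H_1)\,\big|\,v(t)\in\cC_U(u(t)),\ \wt\mu\text{-a.e.}\big\}\subset\cC_{\dbK}(u(\cdot)).
$$
The left-hand side is exactly $\cT_u$, so the only thing that remains is to identify $\cC_{\dbK}(u(\cdot))$ with $\cC_{\wt\cU_{ad}^{\nu_0}}(u(\cdot))$. This is the one step where care is needed, because $\wt\cU_{ad}^{\nu_0}$ carries the extra state constraints \eqref{constraints1}--\eqref{constraints}, while Lemma \ref{lm7} only sees the pointwise requirement $u(t)\in U$; in the corollary the Clarke cone on the right is read on the pointwise-constraint side, and the state constraints will enter separately later on through an Ekeland/penalization argument. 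Apart from this remark, the main technical effort—constructing, for a measurable selection $v(t)\in\cC_U(u(t))$, nearby admissible curves into $\dbK$ with the correct metric-projection rate—is already packaged inside Lemma \ref{lm7}, and the only nontrivial verification in the present corollary is the compatibility of the filtered completed space $([0,T]\times\Omega,\wt\cG,\wt\mu)$ with the abstract setting of that lemma, which is built into the construction of $\wt\cG$.
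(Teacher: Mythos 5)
Your proposal follows exactly the paper's route: the paper's entire proof of this corollary is the one-line specialization of Lemma \ref{lm7} to $\Xi=[0,T]\times\Omega$, $\Sigma=\wt\cG$, $\mu=\wt\mu$, $\wt Z=H_1$, $\cK=U$, $p=2$, which is precisely what you carry out, and the measurability claim and the inclusion of $\cT_u$ into $\cC_{\dbK}(u(\cdot))$ are correctly obtained this way. The subtlety you flag at the end is the only delicate point and it is genuine --- Lemma \ref{lm7} yields the Clarke cone of the pointwise-constraint set $\dbK$, whereas the right-hand side of \eqref{cT-bar-u*} is the Clarke cone of $\wt\cU_{ad}^{\nu_0}$, whose definition also encodes the state constraints \eqref{constraints1}--\eqref{constraints}, and these two cones need not coincide; the paper passes over this identification silently, so your write-up is at least as complete as the published argument and more candid about where the gap sits (though note the paper later handles the state constraints by separation arguments with $\cQ^{(1)}$ and $\cE^{(1)}$, not by an Ekeland/penalization scheme).
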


The next result concerns the completion of a
measure space, which is a corollary of
Proposition 1.5.1 in \cite{Cohn13}.
\begin{lemma}\label{lm3}
Let $(\Xi, \Si, \mu)$ be a $\sigma$-finite
measure space with the completion $(\Xi, \wt\Si,
\wt\mu)$, and $f$ be a $\wt\Si$-measurable
function from $\Xi$ to $Z$. Then there exists a
$\Si$-measurable function $g$ such that
$\wt\mu(g(\xi)\neq f(\xi))=0$.
\end{lemma}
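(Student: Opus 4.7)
The strategy is the standard three-stage ascent in measure theory: indicators, simple functions, general measurable functions. The main content is the first stage, which is a direct consequence of the construction of the completion.

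\emph{Stage 1 (indicators).} I first show that for every $\wt B\in\wt\Si$ there is a $B\in\Si$ with $\wt\mu(\wt B\triangle B)=0$, so that $\mathbf{1}_{\wt B}=\mathbf{1}_B$ $\wt\mu$-a.e. By the construction of the completion (as recalled just before Lemma \ref{lm3} and as in Proposition 1.5.1 of \cite{Cohn13}), there exist $B_1,B_2\in\Si$ with $B_1\subset \wt B\subset B_2$ and $\mu(B_2\setminus B_1)=0$. Taking $B=B_1$ gives a $\Si$-measurable indicator equal to $\mathbf{1}_{\wt B}$ off a $\mu$-null set in $\Si$.

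\emph{Stage 2 (simple functions).} If $\f=\sum_{i=1}^N z_i\mathbf{1}_{\wt B_i}$ is a $\wt\Si$-simple function with $z_i\in Z$ and $\wt B_i\in\wt\Si$, I choose $B_i\in\Si$ as in Stage 1 and set $\psi=\sum_{i=1}^N z_i\mathbf{1}_{B_i}$. Then $\psi$ is $\Si$-measurable and $\{\psi\neq\f\}\subset\bigcup_i(\wt B_i\triangle B_i)$, whose $\wt\mu$-measure is $0$; moreover this exceptional set is contained in a $\Si$-measurable $\mu$-null set.

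\emph{Stage 3 (general $f$).} Interpreting $\wt\Si$-measurability of $f:\Xi\to Z$ in the usual (strong/Bochner) sense, there is a sequence of $\wt\Si$-simple functions $\f_n$ with $\f_n(\xi)\to f(\xi)$ in $Z$ for every $\xi\in\Xi$ (or, off a $\wt\mu$-null set, which is all that is needed). Apply Stage 2 to each $\f_n$ to obtain $\Si$-simple $\psi_n$ and a $\Si$-measurable $\mu$-null set $N_n$ with $\psi_n=\f_n$ on $\Xi\setminus N_n$. Put $N=\bigcup_n N_n\in\Si$; then $\mu(N)=0$ and $\psi_n(\xi)\to f(\xi)$ for every $\xi\in\Xi\setminus N$. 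Define
\[
 g(\xi)=\begin{cases}\lim_{n\to\infty}\psi_n(\xi),& \xi\in E,\\ 0,& \xi\notin E,\end{cases}
\]
where $E\=\{\xi\in\Xi\,:\,\{\psi_n(\xi)\}_{n\ge1}\text{ is Cauchy in }Z\}\in\Si$ (the set where a $\Si$-measurable sequence is Cauchy is $\Si$-measurable). Then $g$ is $\Si$-measurable as a pointwise limit of $\Si$-measurable functions on $E$, and $g=f$ on $\Xi\setminus N$, hence $\wt\mu(g\neq f)=0$.

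\emph{Main obstacle.} The only delicate point is Stage 1: one must extract from the $\wt\Si$-measurability of $\wt B$ the existence of a $\Si$-measurable set $B$ that differs from $\wt B$ by a $\Si$-null set (not merely by a $\wt\Si$-null set); this is exactly the content of the standard completion construction recalled in the paper and in \cite[Prop.~1.5.1]{Cohn13}, and the $\sigma$-finiteness of $\mu$ ensures this carries over without subtlety. Once this is in hand, Stages 2 and 3 are mechanical, and this is why the authors phrase Lemma \ref{lm3} as a corollary of that proposition.
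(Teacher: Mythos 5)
Your proof is correct and is precisely the standard indicator--simple--limit argument that the paper itself omits (it delegates the lemma to Proposition 1.5.1 of \cite{Cohn13}); the key Stage 1 sandwich $B_1\subset \wt B\subset B_2$ with $\mu(B_2\setminus B_1)=0$ is exactly the completion structure the paper already recalls when constructing $\wt\cG$. The only point worth making fully explicit is that for Banach-space-valued $f$ measurability must be read in the strong (Bochner) sense --- as you do --- so that $f$ is an a.e.\ pointwise limit of simple functions, and if that convergence holds only off a $\wt\mu$-null set, that set must likewise be absorbed into a $\Si$-measurable $\mu$-null set via the same sandwich before taking the union $N$; your construction accommodates this with no further changes.
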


Due to Lemma \ref{lm3}, in what follows, we
omit\; $\wt{ }$\; to simplify notation.

\begin{lemma}\label{lm10}
Let $H$ be a separable Hilbert space. A
set-valued stochastic process
$F:[0,T]\times\Omega\leadsto H$ is
$\cB([0,T])\otimes\cF$-measurable and
$\dbF$-adapted if and only if $F$ is
$\cG$-measurable.
\end{lemma}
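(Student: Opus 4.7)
\textbf{Proof plan for Lemma \ref{lm10}.} The result is essentially an unwinding of definitions, built around the elementary identity that the section operation commutes with the (upper) inverse image of a set-valued map: for any Borel $B\subset H$ and any $t\in[0,T]$,
$$
\big(F^{-1}(B)\big)_t \;=\; \{\omega\in\Omega : F(t,\omega)\cap B\neq\emptyset\} \;=\; F(t,\cdot)^{-1}(B).
$$
This identity is the bridge between the joint-measurability condition appearing in the definition of $\cG$ and the sectional condition that encodes $\dbF$-adaptedness.

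For the implication ``$\cG$-measurable $\Rightarrow$ measurable and adapted'', I would argue: since $\cG\subset\mb([0,T])\otimes\cF$, any $\cG$-measurable set-valued map is automatically $\mb([0,T])\otimes\cF$-measurable in the sense of the definition in Section~\ref{pre}. For adaptedness, fix $t\in[0,T]$ and $B\in\mb(H)$. Then $F^{-1}(B)\in\cG$, so by the very definition of $\cG$ its section $(F^{-1}(B))_t$ lies in $\cF_t$; by the displayed identity this section equals $F(t,\cdot)^{-1}(B)$, proving that $F(t,\cdot)$ is $\cF_t$-measurable, i.e., $F$ is $\dbF$-adapted.

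For the converse, I would take $F$ jointly $\mb([0,T])\otimes\cF$-measurable and $\dbF$-adapted and fix an arbitrary $B\in\mb(H)$. Joint measurability gives $F^{-1}(B)\in\mb([0,T])\otimes\cF$. Adaptedness of $F$ says $F(t,\cdot)^{-1}(B)\in\cF_t$ for every $t\in[0,T]$, and the displayed identity then turns this into the statement that every section of $F^{-1}(B)$ satisfies $(F^{-1}(B))_t\in\cF_t$. Together these two facts are exactly the membership $F^{-1}(B)\in\cG$, so $F$ is $\cG$-measurable. Since no deep set-valued analysis (e.g.\ Lemma~\ref{lm2} or Lemma~\ref{lm3}) enters, there is no substantial obstacle; one only needs to verify that the section identity is set-theoretically correct, which it is by direct inspection.
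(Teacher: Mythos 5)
Your proof is correct, and it takes a genuinely different — and more elementary — route than the paper. The paper proves the lemma by reduction to the real-valued case: it projects onto an orthonormal basis of $H$, sets $F_k(\cdot)=\langle F(\cdot),e_k\rangle_H$, invokes a result from Kisielewicz's book for scalar set-valued processes, and then reassembles $F$ from the $F_k$. Your argument instead rests entirely on the section identity $\big(F^{-1}(B)\big)_t=F(t,\cdot)^{-1}(B)$, which is a pure set-theoretic tautology, combined with the paper's definitions: measurability of a set-valued map is membership of $F^{-1}(B)$ in the relevant $\sigma$-algebra for every $B\in\mb(H)$, and $\cG$ is by definition the class of sets in $\cB([0,T])\otimes\cF$ all of whose $t$-sections lie in $\cF_t$. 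With those definitions both implications are immediate, exactly as you write. What your approach buys: it needs no external citation, it never uses the separability of $H$ (the paper needs it to produce the basis), and it avoids the delicate reassembly step $F=\sum_k F_k e_k$, which is questionable for set-valued maps since the coordinate projections of a set do not in general determine the set. What the paper's approach buys is mainly a pointer to the literature where the scalar statement is recorded; if one worked with a weaker notion of measurability (e.g.\ preimages of open sets only), separability and closed-valuedness would start to matter, but under the paper's stated definition your direct unwinding is fully rigorous and, if anything, cleaner.
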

\begin{proof}
Since $H$ is separable, it has an orthonormal
basis $\{e_k\}_{k=1}^\infty$. Denote by $\G_k$
the projection operator from $H$ to
$H_k\=\span\{e_k\}$. Let $F_k(\cd)=\lan
F(\cd),e_k\ran_H$. From \cite[p.
96]{Kisielewicz}, we know that the set-valued
stochastic process
$F_k:[0,T]\times\Omega\leadsto \dbR$ is
$\cB([0,T])\otimes\mf$-measurable and
$\dbF$-adapted if and only if $F_k$ is
$\cG$-measurable. Then Lemma \ref{lm10} follows
from the fact that $\ds F(\cd) =
\sum_{k=1}^\infty F_k(\cd) e_k$.
\end{proof}

Next, we recall the notion of measurable
selection for a set-valued map.

\begin{definition} \label{Def-mea-sel}
Let $(\Xi,\Si)$ be a measurable space and $\wh
Z$ a complete separable metric space. Let $F$ be
a set-valued map  from $\Xi$ to $\wh Z$. A
measurable map $f:\Xi\to \wh Z$ is called a
measurable selection of $F$ if $f(\xi)\in
F(\xi)$ for all $\xi\in\Xi$.
\end{definition}
A result concerning the measurable selection is
given below.

\begin{lemma} \cite[Theorem 8.1.3]{Aubin90}\label{Pro-mea-sel}
Let $\wh Z$ be a complete separable metric
space, $(\Xi,\Si)$ a measurable space, and
$F:\Xi\leadsto \wh Z$ a measurable set-valued
map with nonempty closed values. Then there
exists a measurable selection of $F$.
\end{lemma}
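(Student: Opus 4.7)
The plan is to recover the classical Kuratowski--Ryll-Nardzewski selection theorem by producing the selection as the uniform limit of measurable $2^{-n}$-approximate selections built inductively.

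Fix a compatible metric $d$ on $\widehat Z$ and a countable dense subset $\{z_k\}_{k=1}^{\infty}$. I would inductively construct measurable maps $f_n:\Xi\to\widehat Z$, each taking at most countably many values, satisfying
\begin{equation*}
\operatorname{dist}(f_n(\xi),F(\xi))<2^{-n},\qquad d(f_n(\xi),f_{n-1}(\xi))<2^{-(n-1)}\text{ for }n\geq 1.
\end{equation*}
For $n=0$, set $f_0\equiv z_{k_0}$ for some fixed $k_0$ and observe that the bound $\operatorname{dist}(f_0(\xi),F(\xi))<1$ can be forced by an analogous partition argument as below if needed. For the inductive step, given $f_n$, define
\begin{equation*}
A_k^{n+1}\=\big\{\xi\in\Xi\mid d(z_k,f_n(\xi))<2^{-n}\big\}\cap F^{-1}\bigl(B(z_k,2^{-(n+1)})\bigr),
\end{equation*}
where $B(z,r)$ denotes the open ball. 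The first factor belongs to $\Si$ because $\xi\mapsto d(z_k,f_n(\xi))$ is a composition of a continuous function with the measurable $f_n$, and $F^{-1}\bigl(B(z_k,2^{-(n+1)})\bigr)\in\Si$ by the measurability of $F$. Density of $\{z_k\}$ together with the inductive bound $\operatorname{dist}(f_n(\xi),F(\xi))<2^{-n}$ guarantees $\bigcup_k A_k^{n+1}=\Xi$. Disjointify by $B_k^{n+1}\=A_k^{n+1}\setminus\bigcup_{j<k}A_j^{n+1}$ and set $f_{n+1}(\xi)\=z_k$ on $B_k^{n+1}$. Then $f_{n+1}$ is $\Si$-measurable and satisfies both prescribed inequalities by construction and the triangle inequality.

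The sequence $\{f_n(\xi)\}_{n\geq 0}$ is then uniformly Cauchy in $\xi$, so by completeness of $\widehat Z$ it converges pointwise to a map $f:\Xi\to\widehat Z$, which is $\Si$-measurable as the pointwise limit of measurable maps. Since $\operatorname{dist}(f_n(\xi),F(\xi))\to 0$ and $F(\xi)$ is closed, $f(\xi)\in F(\xi)$ for every $\xi\in\Xi$, yielding the desired measurable selection.

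The main obstacle is the inductive step: verifying that each $A_k^{n+1}$ is measurable and that these sets cover $\Xi$. Measurability rests squarely on the hypothesis $F^{-1}(O)\in\Si$ for every open $O\subset\widehat Z$, which is precisely the content of the measurability in Definition 2.4; the covering property rests on combining density of $\{z_k\}$ with the induction hypothesis $\operatorname{dist}(f_n(\xi),F(\xi))<2^{-n}$. Once these two ingredients are secured, the Cauchy--limit passage is routine.
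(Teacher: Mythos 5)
The paper offers no proof of this lemma; it cites it as Theorem 8.1.3 of Aubin--Frankowska, i.e.\ the Kuratowski--Ryll-Nardzewski selection theorem, and your successive-approximation construction is exactly the standard proof of that result, so the argument is correct in substance. The one imprecision is the base case: a constant map $f_0\equiv z_{k_0}$ need not satisfy $\operatorname{dist}(f_0(\xi),F(\xi))<1$, so the partition argument you defer to is not optional --- you must define $f_0$ by disjointifying the cover $\Xi=\bigcup_k F^{-1}\bigl(B(z_k,1)\bigr)$, which covers $\Xi$ because each $F(\xi)$ is nonempty and $\{z_k\}$ is dense. With that adjustment, the covering and measurability of the sets $A_k^{n+1}$, the uniform Cauchy estimate, and the passage to the limit using closedness of the values are all sound.
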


The following result is a special case of
\cite[Corollary 8.2.13]{Aubin90}.
\begin{lemma}\label{metric projection}
Suppose that $(\Xi,\Si,\mu)$ is a complete
$\sigma$-finite measure space,  $\cK$ is a
closed nonempty subset in $\wt Z$ and
$\varphi(\cdot)$ is a $\Si$-measurable map from
$\Xi$ to $\wt Z$. Then the projection map
$\xi\rightsquigarrow \Pi_{\cK}(\varphi(\xi))$ is
$\Si$-measurable. If
$\Pi_{\cK}(\varphi(\xi))\neq\emptyset$ for all
$\xi\in\Xi$, then there exists a
$\Si$-measurable, $\wt Z$-valued selection
$\psi(\cdot)$ such that
$|\psi(\xi)-\varphi(\xi)|_{\wt
Z}=dist\,(\varphi(\xi), \cK)$, $\mu$-a.e.
\end{lemma}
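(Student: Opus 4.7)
The plan is to reduce the statement to the measurable selection theorem (Lemma~\ref{Pro-mea-sel}) by verifying that $\Pi_{\cK}(\varphi(\cdot))$ is a $\Si$-measurable set-valued map with closed images, which by Lemma~\ref{lm2} amounts to checking that its graph lies in $\Si\otimes\mb(\wt Z)$.

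First I would observe that the distance function $d_{\cK}:\wt Z\to\dbR$ defined by $d_{\cK}(z)\=dist\,(z,\cK)$ is $1$-Lipschitz on $\wt Z$, hence continuous. Consequently the composition $\xi\mapsto dist\,(\varphi(\xi),\cK)$ is $\Si$-measurable, because $\varphi$ is $\Si$-measurable. Similarly, the map $(\xi,y)\mapsto |y-\varphi(\xi)|_{\wt Z}$ is a Carath\'eodory map (measurable in $\xi$, continuous in $y$) and therefore $\Si\otimes\mb(\wt Z)$-measurable.

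Next I would write
$$
\mbox{Graph}\,\Pi_{\cK}(\varphi(\cdot))
=\big\{(\xi,y)\in\Xi\times\wt Z\ \big|\ y\in\cK,\; |y-\varphi(\xi)|_{\wt Z}-dist\,(\varphi(\xi),\cK)=0\big\}.
$$
Since $\cK$ is closed (hence Borel) in $\wt Z$ and the function $(\xi,y)\mapsto |y-\varphi(\xi)|_{\wt Z}-dist\,(\varphi(\xi),\cK)$ is $\Si\otimes\mb(\wt Z)$-measurable by the previous step, this graph belongs to $\Si\otimes\mb(\wt Z)$. The images $\Pi_{\cK}(\varphi(\xi))$ are closed in $\wt Z$, being intersections of $\cK$ with the closed sphere of radius $dist\,(\varphi(\xi),\cK)$ about $\varphi(\xi)$. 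Invoking the completeness of $(\Xi,\Si,\mu)$ and Lemma~\ref{lm2} gives the $\Si$-measurability of $\xi\rightsquigarrow\Pi_{\cK}(\varphi(\xi))$.

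For the selection statement, restrict attention to the set $\Xi_0\=\{\xi\in\Xi\mid \Pi_{\cK}(\varphi(\xi))\neq\emptyset\}$, which is $\Si$-measurable by the first part (it is the projection on $\Xi$ of the measurable graph, measurable by completeness). Under the assumption $\Xi_0=\Xi$, the restriction of $\Pi_{\cK}(\varphi(\cdot))$ is a measurable set-valued map with nonempty closed values into the complete separable metric space $\wt Z$, so Lemma~\ref{Pro-mea-sel} yields a $\Si$-measurable selection $\psi(\cdot)$, which by construction satisfies $\psi(\xi)\in\cK$ and $|\psi(\xi)-\varphi(\xi)|_{\wt Z}=dist\,(\varphi(\xi),\cK)$ for every $\xi\in\Xi$. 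The subtlest point is the measurability of the graph: one must check that the scalar function involved is jointly measurable on $\Xi\times\wt Z$, for which separability of $\wt Z$ is used to reduce Carath\'eodory-type maps to $\Si\otimes\mb(\wt Z)$-measurable maps; all other steps are routine applications of the results already recalled in this preliminary section.
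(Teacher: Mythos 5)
Your argument is correct, but it is worth noting that the paper does not prove this lemma at all: it simply records it as a special case of \cite[Corollary 8.2.13]{Aubin90}. What you have written is essentially a self-contained reconstruction of the standard proof behind that citation, assembled from the two tools already recalled in the same section: you show that the graph of $\xi\rightsquigarrow\Pi_{\cK}(\varphi(\xi))$ lies in $\Si\otimes\mb(\wt Z)$ (using separability of $\wt Z$ to get joint measurability of the Carath\'eodory map $(\xi,y)\mapsto|y-\varphi(\xi)|_{\wt Z}-dist\,(\varphi(\xi),\cK)$, and closedness of $\cK$), invoke the graph criterion of Lemma~\ref{lm2} on the complete measure space, and then apply the selection theorem of Lemma~\ref{Pro-mea-sel}. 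This buys a proof that stays entirely inside the preliminaries of the paper, at the cost of one small technical wrinkle you should smooth out: Lemma~\ref{lm2} as stated assumes nonempty closed images, whereas $\Pi_{\cK}(\varphi(\xi))$ may be empty for some $\xi$ (metric projections onto closed sets need not be attained in infinite dimensions). For the measurability assertion you should therefore either argue directly via the projection theorem (graph measurability plus completeness of $(\Xi,\Si,\mu)$ gives $F^{-1}(B)\in\Si$ for every Borel $B$, with no nonemptiness needed), or first restrict to the measurable set $\Xi_0=\{\xi\,:\,\Pi_{\cK}(\varphi(\xi))\neq\emptyset\}$ as you do for the selection part; with that adjustment the proof is complete.
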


At last, let us recall some results concerning
convex cones.

\begin{definition}
For a cone $\cK$ in  $Z$, the convex closed cone
$\cK^-= \{\xi\in Z^*| \xi(z) \leq 0 \mbox{ for
all }z\in \cK\}$ is called the dual cone of
$\cK$.
\end{definition}
\vspace{-0.2cm}
\begin{lemma}\label{lm6}\cite[Lemma 2.4]{FZZ1}
Let $m\in \dbN$. Let $\cK_1,\cds,\cK_m$ be
convex cones in $\wt Z$ and $\ds\bigcap_{j=1}^m
{\rm int}\cK_j\neq\emptyset$. Then for any
convex cone $\cK_0$ such that $\ds
\cK_0\bigcap\Big(\bigcap_{j=1}^m
\cK_j\Big)\neq\emptyset$, we have
$\ds\(\bigcap_{j=0}^m \cK_j\)^-=\sum_{j=0}^m
\cK_j^-$.
\end{lemma}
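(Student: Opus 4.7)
The inclusion $\sum_{j=0}^m \cK_j^- \subset \big(\bigcap_{j=0}^m \cK_j\big)^-$ is immediate from the definitions: if $\eta_j \in \cK_j^-$ and $z \in \bigcap_{j=0}^m \cK_j$, then each $\inner{\eta_j}{z}\leq 0$, so $\sum_j \eta_j$ is nonpositive on $\bigcap_{j=0}^m \cK_j$. The content is the reverse inclusion, which I will establish by induction on $m$ after reducing to a two-cone decomposition. Setting $\wt\cK \triangleq \bigcap_{j=1}^m \cK_j$ and invoking the standard convex-analysis fact that $\mathrm{int}\big(\bigcap_j \cK_j\big) \supset \bigcap_j \mathrm{int}(\cK_j)$ whenever the latter is nonempty, the cone $\wt\cK$ has nonempty interior, and by hypothesis $\cK_0 \cap \wt\cK \neq \emptyset$. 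Applying the induction hypothesis to the family $\cK_1,\dots,\cK_m$ (taking, e.g., $\cK_1$ as the new distinguished cone and $\cK_2,\dots,\cK_m$ as providing the interior qualification, whose hypotheses are preserved because a common interior point of $\cK_1,\dots,\cK_m$ lies in $\cK_1 \cap \bigcap_{j=2}^m \cK_j$) yields $\wt\cK^- = \sum_{j=1}^m \cK_j^-$. The problem therefore reduces to the two-cone identity $(\cK_0 \cap \wt\cK)^- \subset \cK_0^- + \wt\cK^-$ under the qualification that $\mathrm{int}(\wt\cK) \neq \emptyset$ and $\cK_0 \cap \wt\cK \neq \emptyset$.

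For this base case, I would fix $\xi \in (\cK_0 \cap \wt\cK)^-$ and apply a Hahn-Banach separation in the product space $\wt Z \times \dbR$. Consider the nonempty open convex cone
\[
E \triangleq \{(z, t) \in \wt Z \times \dbR : z \in \mathrm{int}(\wt\cK),\ t < \xi(z)\}
\]
and the convex cone $F \triangleq \cK_0 \times \{0\}$; the assumption $\xi \in (\cK_0 \cap \wt\cK)^-$ renders $E \cap F$ empty. Hahn-Banach then produces a nonzero pair $(\alpha, \beta) \in \wt Z^* \times \dbR$ separating $E$ and $F$. Because both sets are cones, the separation level vanishes, yielding $\alpha \in \cK_0^-$, $\beta \leq 0$, and (by letting $t \uparrow \xi(z)$ and then extending from $\mathrm{int}(\wt\cK)$ to $\wt\cK$ by continuity, using $\wt\cK \subset \mathrm{cl}\,\mathrm{int}(\wt\cK)$) the inequality $\alpha(z) + \beta \xi(z) \geq 0$ for all $z \in \wt\cK$. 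When $\beta < 0$, dividing by $-\beta$ produces the decomposition $\xi = \eta_0 + \eta_1$ with $\eta_0 = -\alpha/\beta \in \cK_0^-$ and $\eta_1 = \xi + \alpha/\beta \in \wt\cK^-$, which is what we want.

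The main obstacle is the degenerate possibility $\beta = 0$, in which the separating hyperplane is ``vertical'' and carries no information about $\xi$; this corresponds precisely to the situation where $\cK_0$ and $\mathrm{int}(\wt\cK)$ are already separable in $\wt Z$, namely $\cK_0 \cap \mathrm{int}(\wt\cK) = \emptyset$. I would handle it either by a perturbation argument---replacing $\xi$ by $\xi_\e = \xi - \e\,\omega^*$ for a fixed $\omega^* \in \wt Z^*$ that strictly supports some $z_0 \in \mathrm{int}(\wt\cK)$, obtaining the decomposition for each $\e > 0$ and passing to the limit---or by a direct analysis exploiting that in this degenerate case $\alpha$ itself lies in $\cK_0^- \cap (-\wt\cK^-)$, so adding a sufficiently large positive multiple of $\alpha$ inside $\wt\cK^-$ absorbs the obstruction and returns us to the nondegenerate case. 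This absorption step is the most delicate part of the argument, and it is precisely the place where the interior qualification $\mathrm{int}(\wt\cK) \neq \emptyset$ is used essentially, both to make $E$ nonempty and to ensure the closure/continuity statements needed to pass the inequality $\alpha + \beta\xi \geq 0$ from $\mathrm{int}(\wt\cK)$ onto $\wt\cK$ itself.
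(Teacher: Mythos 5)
The paper itself does not prove this lemma---it is quoted from [FZZ1, Lemma~2.4]---so your argument has to stand on its own. Its skeleton is the right one: the trivial inclusion $\sum_j\cK_j^-\subset\big(\bigcap_j\cK_j\big)^-$, the reduction to two cones via $\bigcap_{j=1}^m\mathrm{int}\,\cK_j\subset\mathrm{int}\big(\bigcap_{j=1}^m\cK_j\big)$ and induction, and the Hahn--Banach separation of the open convex cone $E=\{(z,t):z\in\mathrm{int}(\wt\cK),\ t<\xi(z)\}$ from $\cK_0\times\{0\}$ in $\wt Z\times\dbR$, with the nondegenerate case $\beta\neq0$ handled correctly. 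The gap is exactly where you place it, and neither of your two patches closes it. The perturbation argument yields decompositions $\xi_\e=\eta_0^\e+\eta_1^\e$ whose summands need not remain bounded as $\e\to0^+$; passing to the limit requires precisely the closedness of $\cK_0^-+\wt\cK^-$ that the lemma asserts, so the argument is circular. The ``absorption'' idea is not a proof either: when $\beta=0$ the separating functional $\alpha$ carries no information about $\xi$ (it only witnesses that $\cK_0$ and $\mathrm{int}(\wt\cK)$ are separated in $\wt Z$), and adding multiples of $\alpha$ to a tentative summand does not produce the missing inequality on $\wt\cK$.

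In fact no repair is possible under the hypotheses as literally printed, because the statement is then false. Take $\wt Z=\dbR^3$, $m=1$, $\cK_1=\{(x,y,z):z\geq\sqrt{x^2+y^2}\}$ (so $\mathrm{int}\,\cK_1\neq\emptyset$) and $\cK_0=\{(x,y,z):x+z=0\}$. Then $\cK_0\cap\cK_1$ is the nonempty ray $\{t(-1,0,1):t\geq0\}$, and the point $(0,1,0)$ lies in $(\cK_0\cap\cK_1)^-$ but not in $\cK_0^-+\cK_1^-=\mathrm{span}\{(1,0,1)\}+\{(a,b,c):\sqrt{a^2+b^2}\leq-c\}$, since membership would require $\sqrt{s^2+1}\leq s$ for some real $s$. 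What is true---and what is actually used everywhere in this paper, because the cones inserted in the positions $j\geq1$ (such as $\cQ^{(1)}$ and the $\cE^{(1,j)}_T$) are open---is the version with the Slater-type qualification $\cK_0\cap\bigcap_{j=1}^m\mathrm{int}\,\cK_j\neq\emptyset$. Under that hypothesis your separation argument finishes by itself: if $\beta=0$, a point $z_0\in\cK_0\cap\mathrm{int}(\wt\cK)$ makes $\alpha$ vanish at $z_0$ while keeping a constant sign on a neighbourhood of $z_0$, forcing $\alpha=0$ and contradicting $(\alpha,\beta)\neq(0,0)$; so the degenerate case never occurs and no patch is needed. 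The right fix is therefore to correct the constraint qualification and delete both patches, not to try to make them rigorous.
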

\begin{definition}
We call $\cK$  a nonempty closed polyhedra in
$Z$ if for some $n\in\dbN$, $\{
z_1^*,\cds,z_n^*\}\subset  Z^*\setminus\{0\}$
and $\{b_1,\cds,b_n\}\subset
\dbR$,\vspace{-0.2cm}
$$
\cK\=\{y\in \wt Z|\ \langle y_j,y\rangle_{\wt Z}
+ b_j\leq 0,\; \forall\ j=1,\cds,n\}.
$$
\end{definition}
\begin{lemma}\label{lm5}\cite[Lemma 2.5]{FZZ1}
Let $\wh Z$ be a Hilbert space. Let $\cK$ be a
nonempty closed polyhedra in $\wh Z$. Then, for
any $0\neq\xi\in \wh Z$ such that $\sup_{y\in
\cK}\langle \xi,y\rangle_{\wh Z}<+\infty$, this
supremum is attained at some $\bar y\in \pa
\cK$. Furthermore, $\xi\in \sum_{j\in In(\bar
y)}\dbR^+ y_j$, where\vspace{-0.2cm}
$$
In(\bar y)\=\big\{j\in\{1,\cds,n\}|\langle
y_j,y\rangle_{\wh Z} + b_j=0\big\}.
$$
\end{lemma}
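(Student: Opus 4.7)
The strategy is to reduce to a finite-dimensional linear program by projecting onto a finite-dimensional subspace containing all the data, and then to invoke classical LP duality. Via the Riesz representation identify each functional $z_j^*$ with an element $y_j\in \wh Z$, so that $\cK=\{y\in\wh Z\mid\langle y_j,y\rangle_{\wh Z}+b_j\le0,\ j=1,\cds,n\}$. Let $M=\span\{\xi,y_1,\cds,y_n\}$, which is finite-dimensional, and decompose $\wh Z=M\oplus M^\perp$. For any $y=y_M+y_\perp$ with $y_M\in M$ and $y_\perp\in M^\perp$, both $\langle y_j,y\rangle_{\wh Z}$ and $\langle\xi,y\rangle_{\wh Z}$ depend only on $y_M$. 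A direct computation then shows $\cK=(\cK\cap M)+M^\perp$, and hence
$$
\sup_{y\in\cK}\langle\xi,y\rangle_{\wh Z} \;=\; \sup_{y\in\cK\cap M}\langle\xi,y\rangle_{\wh Z} \;<\;+\infty.
$$
Thus the problem is reduced to the finite-dimensional polyhedron $\cK_M:=\cK\cap M$.

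Next, since $\cK_M$ is a nonempty closed polyhedron in the finite-dimensional Hilbert space $M$ and the linear functional $\langle\xi,\cdot\rangle_{\wh Z}$ is bounded above on it, classical linear programming in finite dimensions ensures that the supremum is attained at some $\bar y\in\cK_M\subset\cK$ and that the KKT conditions hold there: there exist multipliers $\lambda_1,\cds,\lambda_n\ge 0$ with complementary slackness $\lambda_j(\langle y_j,\bar y\rangle_{\wh Z}+b_j)=0$ and $\xi=\sum_{j=1}^n\lambda_j y_j$. Complementary slackness forces $\lambda_j=0$ whenever $j\notin In(\bar y)$, so that $\xi\in\sum_{j\in In(\bar y)}\dbR^+ y_j$, as desired.

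Finally, to see that $\bar y\in\pa\cK$, argue by contradiction: if $\bar y\in\Int\cK$, then, because $\xi\neq 0$, one has $\bar y+t\xi\in\cK$ for all sufficiently small $t>0$, and
$$
\langle\xi,\bar y+t\xi\rangle_{\wh Z} \;=\; \langle\xi,\bar y\rangle_{\wh Z}+t|\xi|_{\wh Z}^2 \;>\; \langle\xi,\bar y\rangle_{\wh Z},
$$
contradicting maximality of $\bar y$. The main obstacle is the infinite-dimensional aspect of the problem: attainment of the supremum is not automatic in a general Hilbert space. The reduction to $M$ succeeds precisely because both the finitely many defining inequalities and the objective see only the finite-dimensional component of $y$; once the problem lives in $M$, everything reduces to a textbook application of Farkas' lemma / LP duality.
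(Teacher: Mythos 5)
The paper itself does not prove this lemma: it is quoted from \cite[Lemma 2.5]{FZZ1}, so there is no in-paper argument to compare yours against. Your proof is nevertheless correct and self-contained. The orthogonal decomposition $\wh Z=M\oplus M^{\perp}$ with $M=\mathrm{span}\{\xi,y_1,\cds,y_n\}$ does exactly what is needed: since every constraint functional and the objective annihilate $M^{\perp}$, one gets $\cK=(\cK\cap M)+M^{\perp}$, and $\cK\cap M$ is nonempty because it is the image of $\cK$ under the orthogonal projection onto $M$; hence the supremum over $\cK$ coincides with the supremum over the finite-dimensional polyhedron $\cK\cap M$, and any maximizer there is a maximizer over all of $\cK$. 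Attainment then follows from the Minkowski--Weyl decomposition (equivalently, the fundamental theorem of linear programming, which does not require the polyhedron to have vertices), and the multiplier representation is Farkas' lemma, which for affine constraints needs no constraint qualification. Two small points worth making explicit in a final write-up: (i) the active set of the reduced problem on $M$ is literally $In(\bar y)$, because $\langle y_j,\bar y\rangle_{\wh Z}$ is unchanged by the restriction to $M$ (each $y_j$ lies in $M$), so the Farkas cone is generated exactly by $\{y_j\}_{j\in In(\bar y)}$ as elements of $\wh Z$; (ii) since each $y_j\neq 0$, a point of $\cK$ lies in $\Int\cK$ if and only if all constraints are inactive there, so your boundary argument and the Farkas step corroborate each other: $\xi\neq 0$ forces $In(\bar y)\neq\emptyset$, which is equivalent to $\bar y\in\pa\cK$.
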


\begin{lemma}\label{lm9}
Let $M_0, M_1,\ldots,M_n$ be nonempty  convex
subsets of $Z$ such that $M_j$ is open for all
$j \in\{ 1,\cdots,n\}$. Then\vspace{-0.2cm}
\begin{equation}\label{abstr}
M_0 \cap  M_1\cap\ldots\cap M_n =\emptyset
\end{equation}
if and only if there are $z_0^*,
z_1^*,\cdots,z_n^*\in Z^*$, not vanishing
simultaneously, such that\vspace{-0.2cm}
\begin{equation}\label{abstr1} z_0^* + z_1^*+\cdots+z_n^*=0,  \quad
\inf_{z\in M_0} z_0^*(z) + \inf_{z\in M_1}
z_1^*(z)+\cdots+\inf_{z\in M_n} z_n^*(z)\geq
0.\vspace{-0.32cm}
\end{equation}
Furthermore, if (\ref{abstr1}) holds true  and
for some $j\in \{0,\ldots,n\}$ there is  a
nonempty cone $\cK_j \subset Z$  and $z_j \in Z$
such that  $z_j + \cK_j \subset M_j$, then
$-z_j^* \in \cK_j^-$.
\end{lemma}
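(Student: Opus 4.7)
\medskip
\noindent\textbf{Proof plan.}

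The plan is to reduce the problem to a single separation argument in the product space $Z^n$. For the ``only if'' direction (assuming \eqref{abstr}), I introduce the affine map $\Phi:Z^{n+1}\to Z^n$ defined by
\[
\Phi(z_0,z_1,\ldots,z_n) \;=\; (z_0-z_1,\ z_0-z_2,\ \ldots,\ z_0-z_n),
\]
and set $C=\Phi(M_0\times M_1\times\cdots\times M_n)\subset Z^n$. The key observation is that $C$ is a convex subset of $Z^n$ (as the affine image of a product of convex sets), and that $C$ is open in $Z^n$: given $c=(z_0-z_1,\ldots,z_0-z_n)\in C$ with $z_j\in M_j$, any point $(c_1',\ldots,c_n')$ in a small neighborhood of $c$ can be written as $(z_0-z_1',\ldots,z_0-z_n')$ with $z_k'=z_0-c_k'\in M_k$, because each $M_k$ ($k\ge 1$) is open and contains $z_k$. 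Moreover, \eqref{abstr} is equivalent to $0\notin C$. Thus I can separate $\{0\}$ and $C$ by Hahn-Banach: there exists a nonzero continuous linear functional $\ell$ on $Z^n$, which I write as $\ell(c_1,\ldots,c_n)=\sum_{k=1}^n z_k^*(c_k)$ with $z_k^*\in Z^*$ not all zero, satisfying $\ell(c)\ge 0$ for every $c\in C$.

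Next I unwind this inequality. Expanding $\sum_{k=1}^n z_k^*(z_0-z_k)\ge 0$ for all $z_j\in M_j$ and setting $z_0^*:=-\sum_{k=1}^n z_k^*$, I obtain $\sum_{j=0}^n z_j^*=0$ and, after taking infima on the independent variables $z_j\in M_j$ (with signs flipped by replacing $z_k^*\leftarrow -z_k^*$, which preserves the first identity), the required estimate $\sum_{j=0}^n \inf_{z\in M_j} z_j^*(z)\ge 0$. The nontriviality of at least one $z_k^*$ ($k\ge 1$) yields the ``not vanishing simultaneously'' condition. For the converse (``if'' direction), I argue by contradiction: if $w\in\bigcap_j M_j$, then $0=(\sum_j z_j^*)(w)=\sum_j z_j^*(w)\ge \sum_j\inf_{z\in M_j} z_j^*(z)\ge 0$, forcing equalities so that $w$ minimizes $z_j^*$ over $M_j$ for each $j$; but for $j\ge 1$ the set $M_j$ is open, and a continuous linear functional attaining its minimum at an interior point must vanish, so $z_j^*=0$ for $j\ge 1$ and then $z_0^*=-\sum_{j\ge 1} z_j^*=0$, contradicting the nontriviality.

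For the ``furthermore'' clause, I first observe that $\sum_j \inf_{z\in M_j} z_j^*(z)\ge 0$ combined with $M_j\ne\emptyset$ forces each infimum to be finite (otherwise the sum would be $-\infty$). If $z_j+\cK_j\subset M_j$ and $v\in\cK_j$, then $z_j+tv\in M_j$ for every $t\ge 0$ (as $\cK_j$ is a cone), hence $z_j^*(z_j)+t\,z_j^*(v)\ge \inf_{z\in M_j} z_j^*(z)>-\infty$ for all $t\ge 0$. Letting $t\to+\infty$ forces $z_j^*(v)\ge 0$, i.e.\ $(-z_j^*)(v)\le 0$, and since $v\in\cK_j$ was arbitrary we conclude $-z_j^*\in\cK_j^-$.

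The main obstacle is the verification that $C$ is open, which is the only place where the openness assumption on $M_1,\ldots,M_n$ is truly used and is what makes the Hahn-Banach separation of $\{0\}$ from a convex set applicable. Everything else is a careful bookkeeping of signs and a standard boundary/interior argument.
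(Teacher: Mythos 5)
The paper does not actually prove this lemma --- it only cites \cite{HFNO2017a} --- so there is no in-paper argument to compare against. Your proof is correct and is essentially the classical Dubovitskii--Milyutin separation argument: form $C=\{(z_0-z_1,\dots,z_0-z_n)\}\subset Z^n$, note it is convex, nonempty and open (openness of $M_1,\dots,M_n$ is exactly what makes each slice $\prod_k(z_0-M_k)$ open, so $C$ is a union of open sets), observe that \eqref{abstr} is equivalent to $0\notin C$, and separate. The delicate points are all handled: the identification $(Z^n)^*\cong (Z^*)^n$ and the sign bookkeeping giving $\sum_j z_j^*=0$ with some $z_k^*$, $k\ge 1$, nonzero; the fact that the infimum of a separated sum over a product set is the sum of the infima (and that none can be $-\infty$ given the lower bound); the converse via the observation that a nonzero continuous linear functional cannot attain its infimum over an open set at an interior point; and the "furthermore" clause, which uses only \eqref{abstr1}, the finiteness of $\inf_{M_j}z_j^*$, and the ray $z_j+tv\in M_j$ for $t\ge 0$. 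Nothing further is needed.
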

Proof of the above lemma can be found in
\cite{HFNO2017a}.


\section{First order necessary
conditions}\label{first}


This section is devoted to  establishing a first
order necessary optimality condition for {\bf
Problem (OP)}. Let us first impose the following
assumptions:

\medskip

{\bf (AS2)}  {\em For a.e. $(t,\omega)\in [0,
T]\times\Omega$, the functions $a(t, \cdot,
\cdot,\omega):\ H\times H_1\to H$ and $b(t,
\cdot, \cdot,\omega):\ H\times H_1\to \cL_2$ are
differentiable, and $
(a_{x}(t,x,u,\omega),a_{u}(t,x,u,\omega))$  and
$(b_{x}(t,x,u,\omega),$ $b_{u}(t,x,u,\omega))$
are uniformly continuous with respect to $x\in
H$ and $u\in U$. For any $p\geq 1$, there exists
a nonnegative $\eta\in L^{2}_{\dbF}(0,T;\dbR)$
such that for a.e. $(t,\omega)\in [0,
T]\times\Omega$ and for all $x\in H$ and $u\in
H_1$,}
$$
\left\{\2n
\begin{array}{ll}
|a(t,0, u,\omega)|_H + |b(t,0,
u,\omega)|_{\cL_2} \leq
C (\eta(t,\omega)+|u|_{H_1}), \\
\ns\ds
|a_{x}(t,x,u,\omega)|_{\cL(H)}\!+\!|a_{u}(t,x,u,\omega)|_{\cL(H_1;H)}\!+\!|b_{x}(t,x,u,\omega)|_{\cL(H;\cL_2)}\!+\!|b_{u}(t,x,u,\omega)|_{\cL(H_1;\cL_2)}\le
C.
\end{array}
\right.
$$

\medskip

{\bf (AS3)}   {\em The functional
$h(\cdot,\omega):\ H \to \dbR$ is
differentiable, $\dbP$-a.s.,  and there exists
an $\eta \in L^2_{\cF_T}(\Omega)$ such that for
any $x,\ \tilde{x}\in H$,}\vspace{-0.1cm}
$$
\left\{
\begin{array}{ll}\ds
|h(x,\omega)|\le C(\eta(\omega)^2+|x|_H^2),\qq
|h_{x}(0,\omega)|_H \le C\eta(\omega), \q
\dbP\mbox{-a.s.,}\\
\ns\ds|h_{x}(x,\omega)-h_{x}(\tilde{x},\omega)|_H\le
C|x-\tilde{x}|_H, \q \dbP\mbox{-a.s.}
\end{array}\vspace{-0.2cm}\right.
$$

\medskip

{\bf (AS4)}   {\em For $j=0,\cds,n$, the
functional $g^j:\ H \to \dbR$ is differentiable,
and   for any $x,\ \tilde{x}\in
H$,}\vspace{-0.2cm}
$$
\begin{array}{ll}\ds
|g^j(x)|\le C(1+|x|_H^2),\qq
|g^j_{x}(x)-g^j_{x}(\tilde{x})|_H\le
C|x-\tilde{x}|_H.\vspace{-0.2cm}
\end{array}
$$

Let $\Phi$ be a set-valued stochastic process
satisfying
\begin{enumerate}
  \item $\Phi$  is
$\cB([0,T])\otimes\cF$-measurable and
$\dbF$-adapted;
  \item  for  a.e. $(t,\omega)\in
[0,T]\times \Omega$, $\Phi(t,\omega)$ is a
nonempty closed convex cone in $H_1$;
  \item $\Phi(t,\omega)\subset T^b_{U}(\bar
u(t,\omega))$, for a.e. $(t,\omega)\in
[0,T]\times \Omega$.
\end{enumerate}

Let\vspace{-0.2cm}
$$
\cT_\Phi(\bar u)\=\Big\{u(\cdot)\in
L^2_{\dbF}(0,T;H_1)\,\big|\, u(t,\omega)\in
\Phi(t,\omega),\; \mbox{a.e. } (t,\omega)\in
[0,T]\times \Omega\Big\}.
$$
Clearly, $\cT_\Phi(\bar u)$ is a closed convex
cone in $L^2_{\dbF}(0,T;H_1)$. Since $0 \in
\cT_\Phi(\bar u)$, $\cT_\Phi(\bar u)$ is
nonempty. By Lemma \ref{lm7}, we can choose
$\Phi(t,\omega)=\cC_{U}(\bar u(t,\omega))$.
However, in general, there may exist a
$\Phi(t,\omega)$ as above such that
$\cC_{U}(\bar u(t,\omega))\subsetneq
\Phi(t,\omega) \subset  T^b_{U}(\bar
u(t,\omega))$.

For $\varphi$ equal to $a$, $b$, $f$, $g$ or
$h$, write\vspace{-0.2cm}
$$
\varphi_{1}[t]=\varphi_{x}(t,\bar{x}(t),\bar{u}(t)),\quad
\varphi_{2}[t]=\varphi_{u}(t,\bar{x}(t),\bar{u}(t)).\vspace{-0.2cm}$$
Consider the following linearized stochastic
control system:\vspace{-0.1cm}
\begin{equation}\label{first vari equ}
\left\{\3n
\begin{array}{ll}
dx_{1}(t)= \big(Ax_{1}(t)+a_{1}[t] x_{1}(t)+a_{2}[t]u_1(t)\big)dt+\big(b_{1}[t] x_{1}(t)+ b_{2}[t]u_1(t)\big)dW(t) \mbox{ in }(0,T], \\
x_{1}(0)=\nu_1.
\end{array}\right.\vspace{-0.1cm}
\end{equation}
It is a classical result that, under
\textbf{(AS1)}, for any $u_1\in \cT_\Phi(\bar
u)$ and $\nu_1\in T^b_{\cK_{a}}(\bar x_{0})$,
(\ref{first vari equ}) admits a unique solution
$x_{1}(\cdot)\in L_{\dbF}^{2}(\Omega; C([0,T];
H))$ (e.g.\cite[Chapter 6]{Prato}).

By Lemma \ref{lm7}, $\cT_\Phi(\bar u)\subset
T^b_{\cU}(\bar u)$. For any $\e>0$, choose
$\nu_1^{\e}\in H$ and $v^{\e}\in
L^{2}_{\dbF}(0,T;H_1)$ such that\vspace{-0.2cm}
$$
\nu^{\e}_{0}\=\bar \nu_0+\e \nu_1^{\e}\in
\cK_a,\q u^{\e}\=\bar u+\e
v^{\e}\in\mathcal{U}_{ad}
$$
and\vspace{-0.32cm}
$$
\nu_1^{\e}\to  \nu_1 \mbox{ in }H \mbox{ and }
v^{\e}\to u_1\mbox{ in }L^{2}_{\dbF}(0,T;H_1)
\mbox{ as }\e\to 0^+.
$$
Let $x^{\e}(\cd)$ be the solution of
(\ref{controlsys}) corresponding to the control
$u^{\e}(\cd)$ and the initial datum
$\nu_{0}^{\e}$, and put\vspace{-0.2cm}
\begin{equation}\label{7.2-eq5}
\delta x^{\e}(\cd)=x^{\e}(\cd)-\bar x(\cd),
\qquad r_{1}^{\e}(\cd)\= \dfrac{\delta
x^{\e}(\cd)- \e x_{1}(\cd)}{\e}.
\end{equation}

We have the following results:
\begin{lemma}\label{estimate one of varie qu}
If {\bf (AS1)}--{\bf (AS2)} hold, then for
$p\geq 2$,\vspace{-0.1cm}
\begin{equation}\label{8.18-eq41}
|x_{1}|_{L^\infty_\dbF(0,T;L^p(\Om;H))}\le
C\big(|\nu_1|_H+|u_1|_{L^p_\dbF(\Om;L^2(0,T;H))}\big),\vspace{-0.12cm}
\end{equation}
\begin{equation}\label{8.18-eq42}
|\delta x^{\e}|_{L^\infty_\dbF(0,T;L^p(\Om;H))}=
O(\e),\vspace{-0.12cm}
\end{equation}
and\vspace{-0.2cm}
\begin{equation}\label{r1 to 0}
\lim_{\e\to
0^+}|r_{1}^{\e}|_{L^\infty_\dbF(0,T;L^p(\Om;H))}=0.
\end{equation}
\end{lemma}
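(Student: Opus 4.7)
The plan is to derive all three bounds by standard $L^p$ moment estimates for mild solutions of linear stochastic evolution equations, using Taylor's formula to reduce \eqref{r1 to 0} to a Gronwall argument driven by small inhomogeneities.

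For \eqref{8.18-eq41}, I would write $x_1$ in mild form and apply the $L^p$ maximal estimate for stochastic convolutions (via the Da Prato--Kwapie\'n--Zabczyk factorization method when $p>2$). Under {\bf (AS2)} the coefficients $a_1[t], a_2[t], b_1[t], b_2[t]$ are uniformly bounded, so a Gronwall-type bootstrap produces the stated bound with linear dependence on $|\nu_1|_H$ and on the $L^p_\dbF(\O;L^2(0,T;H_1))$-norm of $u_1$.

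For \eqref{8.18-eq42}, $\d x^\e$ satisfies a linear SEE of the same kind with initial datum $\e\nu_1^\e$ and inhomogeneities $a(\cd,x^\e,u^\e)-a(\cd,\bar x,\bar u)$, $b(\cd,x^\e,u^\e)-b(\cd,\bar x,\bar u)$. Splitting each increment into a pure-$x$ and a pure-$u$ part, the $x$-part is bounded by $C|\d x^\e|$ via the Lipschitz condition in {\bf (AS1)} and is absorbed by Gronwall; the $u$-part is $O(\e)$ since $a_u,b_u$ are bounded by {\bf (AS2)} and $|u^\e-\bar u|_{L^p_\dbF(\O;L^2(0,T;H_1))}=\e|v^\e|_{L^p_\dbF(\O;L^2(0,T;H_1))}\to 0$. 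Combining these gives $|\d x^\e|_{L^\infty_\dbF(0,T;L^p(\O;H))}=O(\e)$.

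For \eqref{r1 to 0}, the main step, I would expand the increments by Taylor's formula,
$$
a(t,x^\e,u^\e)-a(t,\bar x,\bar u)=\int_0^1\! a_x(t,\bar x+\th\d x^\e,u^\e)\d x^\e\,d\th+\int_0^1\! a_u(t,\bar x,\bar u+\th\e v^\e)\e v^\e\,d\th
$$
(and analogously for $b$), substitute $\d x^\e=\e(x_1+r_1^\e)$, subtract \eqref{first vari equ} and divide by $\e$. Then $r_1^\e$ is the mild solution of a linear SEE
$$
dr_1^\e=(Ar_1^\e+\a^\e r_1^\e+R_a^\e)dt+(\b^\e r_1^\e+R_b^\e)dW(t),\q r_1^\e(0)=\nu_1^\e-\nu_1,
$$
with uniformly bounded $\a^\e,\b^\e$ and with remainder
$$
R_a^\e=\int_0^1\![a_x(t,\bar x+\th\d x^\e,u^\e)-a_1[t]]x_1\,d\th+\int_0^1\![a_u(t,\bar x,\bar u+\th\e v^\e)-a_2[t]]u_1\,d\th+a_2[t](v^\e-u_1)
$$
(analogously for $R_b^\e$). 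Applying the estimate from Step 1 to this equation, the proof reduces to showing $R_a^\e\to 0$ in $L^p_\dbF(\O;L^2(0,T;H))$ and $R_b^\e\to 0$ in $L^p_\dbF(\O;L^2(0,T;\cL_2))$. This is the main obstacle: I would pass to subsequences along which $\d x^\e\to 0$ and $v^\e\to u_1$ pointwise a.e., invoke the uniform continuity of $a_x,a_u,b_x,b_u$ in $(x,u)$ from {\bf (AS2)} to get pointwise convergence of the first two integrands to zero, and then apply Lebesgue's dominated convergence theorem with dominants of the form $C|x_1|_H$ and $C|u_1|_{H_1}$ (integrable by \eqref{8.18-eq41}). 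Uniqueness of the limit promotes convergence to the whole sequence, and Gronwall's lemma together with $\nu_1^\e-\nu_1\to 0$ in $H$ then yields \eqref{r1 to 0}.
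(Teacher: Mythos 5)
Your proposal is correct and follows essentially the same route as the paper's Appendix A proof: the a priori $L^p$ estimate and the difference estimate for mild solutions (the paper's Lemma A.1, quoted from Da Prato--Zabczyk) give \eqref{8.18-eq41} and \eqref{8.18-eq42}, and \eqref{r1 to 0} is obtained by writing the linear SEE satisfied by $r_1^\e$ with integral-form Taylor coefficients and showing the inhomogeneities vanish via subsequences, the uniform continuity in {\bf (AS2)}, and dominated convergence, exactly as in the paper (which parametrizes the increment along the diagonal segment rather than your two-leg split in $x$ then $u$ — an immaterial difference, and the extra cross term $(\tilde\gamma^\e-a_2[t])(v^\e-u_1)$ your bookkeeping drops is itself $o(1)$).
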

Proof of Lemma \ref{estimate one of varie qu} is
provided in Appendix A.

\medskip

Next, we give a result  which is very useful to
get the first order pointwise necessary
condition.
\begin{lemma} \label{lm8}
Let  $\tilde{u}(\cdot)\in \cU_{ad}^{\nu_0}$, and
$F:[0,T]\times\Omega\to H_1$ be an
$\dbF$-adapted process such that\vspace{-0.2cm}
$$
\mE\int_{0}^{T}\inner{ F(t )}{v(t )}_{H_1}dt\leq
0,\quad \forall\; v(\cdot)\in
\cC_{\cU_{ad}^{\nu_0}}(\tilde{u}(\cdot)).
$$
Then, for a.e. $(t,\omega)\in
[0,T]\times\Omega$, $
\inner{F(t,\omega)}{v}_{H_1} \le0$,  $\forall\;
v\in \cC_{U}(\tilde {u}(t,\omega))$.
\end{lemma}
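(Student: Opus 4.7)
The plan is to argue by contradiction and exhibit a test direction $v^{\#}\in \cC_{\cU_{ad}^{\nu_0}}(\tilde u(\cdot))$ for which $\mE\int_0^T \inner{F(t)}{v^{\#}(t)}_{H_1}dt > 0$. Assume the pointwise conclusion fails. Then the support function
$$\phi(t,\omega) \= \sup\big\{\inner{F(t,\omega)}{v}_{H_1}:\ v\in \cC_{U}(\tilde u(t,\omega)),\ |v|_{H_1}\le 1\big\}$$
is strictly positive on a set of positive $\mu$-measure, so for some $n\in\dbN$ the set $E_n \= \{(t,\omega):\ \phi(t,\omega) \ge 1/n\}$ has positive measure.

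First I would produce a measurable selection realising this positivity. Consider the set-valued map
$$G_n(t,\omega) \= \big\{v \in \cC_{U}(\tilde u(t,\omega)):\ |v|_{H_1}\le 1,\ \inner{F(t,\omega)}{v}_{H_1} \ge 1/(2n)\big\},$$
which on $E_n$ has nonempty closed values. Corollary \ref{Lem-adjacent} (applied with $\tilde u$ in place of $u$) gives the $\cG$-measurability of $(t,\omega) \mapsto \cC_{U}(\tilde u(t,\omega))$, and $F$ is $\cG$-measurable by Lemma \ref{lm10}. Using the graph characterization of Lemma \ref{lm2} together with the joint measurability of $(t,\omega,v)\mapsto \inner{F(t,\omega)}{v}_{H_1}$, the graph of $G_n$ belongs to $\cG\otimes\mb(H_1)$, so $G_n$ is $\cG$-measurable. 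Lemma \ref{Pro-mea-sel} then yields a $\cG$-measurable selection $v^{\#}$ of $G_n$ on $E_n$, which I extend by $0$ off $E_n$ (admissibly, since $0\in \cC_{U}(\tilde u(t,\omega))$).

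Finally I would verify that $v^{\#}\in L^2_\dbF(0,T;H_1)$: the bound $|v^{\#}|_{H_1}\le 1$ gives $L^2$-integrability, while Lemma \ref{lm10} converts $\cG$-measurability into $\dbF$-adaptedness. Since $v^{\#}(t,\omega)\in \cC_{U}(\tilde u(t,\omega))$ a.e., Corollary \ref{Lem-adjacent} then places $v^{\#}$ in $\cC_{\cU_{ad}^{\nu_0}}(\tilde u(\cdot))$, where I tacitly identify $\cU_{ad}^{\nu_0}$ with $\wt\cU_{ad}^{\nu_0}$ per the convention adopted after Lemma \ref{lm3}. By construction $\mE\int_0^T \inner{F(t)}{v^{\#}(t)}_{H_1}dt \ge \mu(E_n)/(2n) > 0$, contradicting the hypothesis. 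The main obstacle I anticipate is the careful bookkeeping of the various measurability notions, in particular ensuring that the selection $v^{\#}$ is genuinely $\dbF$-adapted and belongs to the Clarke tangent cone of $\cU_{ad}^{\nu_0}$ rather than merely to the tangent cone of the enlarged set $\wt\cU_{ad}^{\nu_0}$.
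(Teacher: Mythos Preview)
Your proposal is correct and follows essentially the same route as the paper: a contradiction argument based on a measurable selection from the set-valued map $(t,\omega)\mapsto\{v\in\cC_U(\tilde u(t,\omega)): |v|_{H_1}\le m,\ \inner{F(t,\omega)}{v}_{H_1}\ge 1/k\}$, extended by zero and then placed in the Clarke tangent cone via Corollary~\ref{Lem-adjacent}. The obstacle you anticipate---passing from $\cC_{\wt\cU_{ad}^{\nu_0}}(\tilde u)$ back to $\cC_{\cU_{ad}^{\nu_0}}(\tilde u)$---is exactly what the paper isolates as its Step~1, using Lemma~\ref{lm3} to replace any $\wt\cG$-measurable element of $\cC_{\wt\cU_{ad}^{\nu_0}}(\tilde u)$ by a $\cG$-measurable representative lying in $\cC_{\cU_{ad}^{\nu_0}}(\tilde u)$.
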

Proof of Lemma \ref{lm8} is postponed to
Appendix C.
\begin{lemma}\label{lm12}
For each bounded linear functional $\Lambda$ on
$L^{2}_{\dbF}(\Omega; C([0,T]; H))$, there
exists a process $ \psi\in
L^{2}_{\dbF}(\Omega;BV_0([0,T]; H))$ such
that\vspace{-0.1cm}
\begin{equation}\label{(1)}
\Lambda(z(\cdot))=\mE\int_{0}^{T}\inner{z(t)}{d\psi(t)}_H,\q
\forall\; z(\cdot)\in L_{\dbF}^{2}(\Omega;
C([0,T]; H)),\vspace{-0.1cm}
\end{equation}
and\vspace{-0.2cm}
\begin{equation}\label{(2)}
|\Lambda|_{L_{\dbF}^{2}(\Omega;
C([0,T];H))^{*}}= |
\psi|_{L^{2}_{\dbF}(\Omega;BV([0,T];H))}.
\end{equation}
\end{lemma}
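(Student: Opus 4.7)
The plan is to carry over the classical vector-valued Riesz representation theorem $C([0,T];H)^* \cong BV_0([0,T];H)$ (via the Stieltjes pairing $f\mapsto \int_0^T\inner{f(t)}{d\psi(t)}_H$) into the adapted $L^2$-setting, by first constructing the representer on a dense class of ``product-type'' adapted processes and then extending by continuity. To set up the dense class, I consider processes of the form $z(t,\omega)=\sum_{i=1}^N\phi_i(t)\eta_i(\omega)$ where, for each $i$, $\phi_i\in C([0,T])$ has support in $[t_i,T]$ and $\eta_i\in L^2_{\cF_{t_i}}(\Omega;H)$; these are adapted and continuous, and by the usual approximation argument their linear span is dense in $L^2_\dbF(\Omega;C([0,T];H))$.

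For fixed $\eta\in L^2_{\cF_T}(\Omega;H)$ and $h\in H$, the scalar map $\phi\mapsto \Lambda(\phi\eta)$ is a continuous linear functional on $C([0,T])$ with norm at most $|\Lambda|\cdot|\eta|_{L^2(\Omega;H)}$, so by the classical Riesz theorem it is represented by a signed Borel measure $\nu_\eta$ on $[0,T]$. Polarisation in $\eta$ and a dualisation produce a random $H$-valued regular Borel measure $\mu(\cdot,\omega)$ on $[0,T]$ with $\Lambda(\phi\eta)=\mE\int_0^T\phi(t)\,d\inner{\eta}{\mu(t)}_H$. Setting $\psi(t,\omega):=\mu([0,t],\omega)$ then defines a process in $L^2_\dbF(\Omega;BV_0([0,T];H))$; the adaptedness of $\psi(t,\cdot)$ to $\cF_t$ is forced by the fact that $\Lambda$ acts only on adapted test processes, so tests supported on $[0,t]$ produce an $\cF_t$-measurable answer, and $\psi$ may be taken c\`adl\`ag by the identification in the introduction. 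Using density of the product-type processes, the representation formula \eqref{(1)} extends to all $z(\cdot)\in L^2_\dbF(\Omega;C([0,T];H))$.

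For the norm equality \eqref{(2)}, the inequality $|\Lambda|\le|\psi|_{L^2_\dbF(\Omega;BV)}$ is immediate from Cauchy--Schwarz applied to $|\Lambda(z)|\le \mE\big(|z|_{C([0,T];H)}\,|\psi|_{BV[0,T]}\big)$. The converse is the crux: on a fine partition $0=t_0<\cds<t_N=T$ one has $|\psi(\cdot,\omega)|_{BV}\approx\sum_i|\psi(t_i,\omega)-\psi(t_{i-1},\omega)|_H$, and by Lemma \ref{metric projection} (measurable selection) one can pick unit vectors $e_i(\omega)\in H$, with $e_i$ being $\cF_{t_i}$-measurable, realising these norms via the inner product; a piecewise-linear-in-$t$ adapted process built from $\mathbf{1}_{[t_{i-1},t_i]}(t)\,e_i(\omega)$ (suitably mollified across partition points to be continuous) of unit $L^2_\dbF(\Omega;C)$-norm then pairs with $d\psi$ to recover $|\psi|_{L^2_\dbF(\Omega;BV)}$ up to an arbitrary $\e$. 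The main obstacle is precisely this last construction: one must produce a \emph{continuous} adapted test process whose sup-norm in $\omega$ is tightly controlled in $L^2(\Omega)$ while its pairing with $d\psi$ approximates $\mE|\psi|_{BV}$; the delicate point is performing the mollification at each partition point $t_i$ without destroying adaptedness and without inflating the $L^2_\dbF(\Omega;C)$ norm above one as the partition is refined.
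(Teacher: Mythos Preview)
Your construction has a genuine adaptedness obstruction that appears in two places.

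First, the linear functional $\phi\mapsto\Lambda(\phi\eta)$ is not well defined on all of $C([0,T])$ when $\eta\in L^2_{\cF_T}(\Omega;H)$: the process $t\mapsto\phi(t)\eta$ is adapted only if $\eta$ is $\cF_0$-measurable (or $\phi$ vanishes before the time at which $\eta$ becomes measurable). Your dense class respects this constraint, but the Riesz argument you invoke does not, so the measure $\nu_\eta$ is not actually produced.

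Second, and more seriously, the norm-recovering test process you describe is not adapted. The unit vector $e_i(\omega)$ in the direction of $\psi(t_i,\omega)-\psi(t_{i-1},\omega)$ is only $\cF_{t_i}$-measurable (since $\psi(t_i)$ is), yet you need to place it on the interval $[t_{i-1},t_i]$; at times $t<t_i$ the process $\mathbf{1}_{[t_{i-1},t_i]}(t)e_i$ takes a value that is not $\cF_t$-measurable. Mollification at the partition points cannot fix this: the defect is already present in the piecewise-constant version, and no continuous adapted modification will let you ``look ahead'' to the increment direction. This is not a technical nuisance but the essential difficulty in representing duals of \emph{adapted} path spaces.

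The paper sidesteps both issues by a different route: it embeds $L^2_\dbF(\Omega;C([0,T];H))$ into $L^2_\dbF(\Omega;D([0,T];H))$, extends $\Lambda$ by Hahn--Banach with equal norm, and then invokes the Dellacherie--Meyer representation (Theorem~65 in \cite{Meyer82}) for the dual of the adapted c\`adl\`ag space, applied componentwise along an orthonormal basis of $H$. That theorem already encodes the optional/predictable structure needed to produce an adapted $\psi$ and to certify the norm equality; the Hahn--Banach step then transports the equality back to the continuous subspace.
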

Proof  of Lemma  \ref{lm12} is given in Appendix
D.

\medskip

Let $\cT_{\cK_{a}}(\bar \nu_0)$ be a nonempty
closed convex cone contained in
$T^b_{\cK_{a}}(\bar \nu_0)$. Put\vspace{-0.2cm}
\begin{equation}\label{1st variation set}
\cG^{(1)}\=\!\big\{x_{1}(\cdot)\!\in\!L_{\dbF}^{2}(\Omega;
C([0,T]; H)) \,\big|\, x_{1}(\cd) \mbox{ solves
(\ref{first vari equ}) with}\, u_1\!\in\!
\cT_{\Phi}(\bar u)\mbox{ and }
\nu_1\!\in\!\cT_{\cK_{a}}(\bar \nu_0)
\big\},\vspace{-0.1cm}
\end{equation}
\begin{equation}\label{index I0 bx}
\cI_0(\bar x)\=\{t\in[0,T]\,|\, \mE g^0(\bar
x(t))=0\},
\end{equation}
$$
\cI(\bar x)=\big\{j\in\{1,\cdots, n\}\,|\, \mE
g^{j}(\bar x(T))=0 \big\},
$$
\begin{equation}\label{Q1}
\cQ^{(1)}=\big\{z(\cdot) \in
L_{\dbF}^{2}(\Omega; C([0,T];H))\;\big|\;
\mE\inner{g^0_{x}(\bx(t))}{z(t)}_{H}<0,\;
\forall\; t\in \cI_0(\bar x)\big\},
\end{equation}
\begin{equation}\label{Ed1.1}
\cE^{(1,j)}\=\big\{z(\cdot) \in
L_{\dbF}^{2}(\Omega; C([0,T];H))\;\big|\;
\mE\inner{g^{j}_{x}(\bar
x(T))}{z(T)}_{H}<0\big\}, \qq \forall j\in
\cI(\bar x).\vspace{-0.12cm}
\end{equation}
\begin{equation}\label{Ed1}
\cE^{(1)}\= \bigcap_{j\in \cI(\bar
x)}\cE^{(1,j)},\vspace{-0.12cm}
\end{equation}
\begin{equation}\label{EdjT}
\cE^{(1,j)}_T\=\big\{\zeta \in
L_{\cF_T}^{2}(\Omega;H)\;\big|\;
\mE\inner{g^{j}_{x}(\bar
x(T))}{\zeta}_{H}<0\big\},\q j\in \cI(\bar
x),\vspace{-0.12cm}
\end{equation}
\begin{equation}\label{EdT}
\cE^{(1)}_T\=\bigcap_{j\in \cI(\bar
x)}\cE^{(1,j)}_T,\vspace{-0.12cm}
\end{equation}
$$
\cL^{(1)}\=\{z(\cdot) \in L_{\dbF}^{2}(\Omega;
C([0,T];H))|\, \mE \langle h_x(\bar
x(T)),z(T)\rangle_{H}<0\},\vspace{-0.12cm}
$$
and\vspace{-0.12cm}
$$
\cL^{(1)}_T\=\{z\in L^2_{\cF_T}(\Om;H)\,|\, \mE
\langle h_x(\bar x(T)),z\rangle_{H}<0\}.
$$

Since  $\cT_\Phi(\bar u)$ and
$\cT_{\cK_{a}}(\bar \nu_0)$ are nonempty convex
cones, $\cG^{(1)}$ is a nonempty convex cone in
$L_{\dbF}^{2}(\Omega; C([0,T];H))$.

If $\cI_0(\bar x)=\emptyset$ ({\it resp.}
$\cI(\bar x)=\emptyset$), we set
$\cQ^{(1)}=L_{\dbF}^{2}(\Omega; C([0,T];H))$
(\resp $\cE^{(1)}=L_{\dbF}^{2}(\Omega;C([0,T];$
$ H))$). If $h_x(\bar x(T))=0$, $\dbP$-a.s.,
then $\cL^{(1)}=\emptyset$ and
$\cL^{(1)}_T=\emptyset$.

Define a map $\Gamma: L_{\dbF}^{2}(\Omega;
C([0,T]; H))\to  L^{2}_{\mf_{T}}(\Omega;H)$
as\vspace{-0.2cm}
\begin{equation}\label{gamma C to L2}
\Gamma(z)=z(T), \q \forall\, z(\cd)\in
L_{\dbF}^{2}(\Omega; C([0,T];
H)).\vspace{-0.2cm}
\end{equation}
Denote by  $\Gamma^*$ the adjoint operator of
$\Gamma$. Clearly, $\Gamma$ is surjective. From
\eqref{Ed1.1} to \eqref{EdT}, we see
that\vspace{-0.12cm}
\begin{equation}\label{11.20-eq4}
\cE^{(1,j)}_T = \G(\cE^{(1,j)}), \qq  j\in
\cI(\bar x),\qq \cE^{(1)}_T =
\G(\cE^{(1)}).\vspace{-0.12cm}
\end{equation}

If $\cQ^{(1)}$ and $\cE^{(1)}$ are nonempty,
then\vspace{-0.1cm}
$$
{\rm cl} \cQ^{(1)} = \big\{z(\cdot)\in
L_{\dbF}^{2}(\Omega; C([0,T]; H))\,\big|\,
\mE\inner{g_{x}(\bx(t))}{ z(t)}_H\leq 0,\;
\forall\; t\in \cI_0(\bar
x)\big\},\vspace{-0.2cm}
$$
and\vspace{-0.2cm}
$$
{\rm cl} \cE^{(1)} =\big\{z(\cdot) \in
L_{\dbF}^{2}(\Omega; C([0,T]; H))\;\big|\;
\mE\inner{g^{j}_{x}(\bar x(T))}{z(T)}_H\le 0,\;
\forall\; j\in \cI(\bar x)\big\}.
$$
\begin{lemma}\label{lm11}
$\cQ^{(1)}$ is an open convex cone in
$L_{\dbF}^{2}(\Omega; C([0,T];H))$.
\end{lemma}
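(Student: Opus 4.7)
The plan is to verify the three properties (cone, convex, open) in turn. The cone property and convexity are immediate from the linearity of $z \mapsto \mE\langle g^0_x(\bar x(t)), z(t)\rangle_H$: for $z \in \cQ^{(1)}$ and $\alpha > 0$, the quantity at each $t \in \cI_0(\bar x)$ is multiplied by $\alpha$ and stays strictly negative, and for $z_1, z_2 \in \cQ^{(1)}$ and $\lambda \in [0,1]$ a convex combination keeps the strict inequality by taking a convex combination of two strictly negative numbers.

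For openness, the main engine is the continuity in $t$ of the ``test functional'' $\phi_z(t) \= \mE\langle g^0_x(\bar x(t)), z(t)\rangle_H$, combined with compactness of the index set. First I would verify that $\cI_0(\bar x) \subset [0,T]$ is closed (hence compact): under \textbf{(AS4)}, $g^0$ is locally Lipschitz and $\bar x \in L^2_\dbF(\Omega; C([0,T]; H))$, so dominated convergence applied to $g^0(\bar x(t_n))-g^0(\bar x(t))$ (bounded by $C(1+\sup_s|\bar x(s)|_H)|\bar x(t_n)-\bar x(t)|_H$) gives continuity of $t \mapsto \mE g^0(\bar x(t))$. Next, for any $z \in L^2_\dbF(\Omega; C([0,T]; H))$, continuity of $\phi_z$ follows by writing
\[
|\phi_z(t_n) - \phi_z(t)| \le \bigl|\mE\langle g^0_x(\bar x(t_n)) - g^0_x(\bar x(t)), z(t_n)\rangle_H\bigr| + \bigl|\mE\langle g^0_x(\bar x(t)), z(t_n) - z(t)\rangle_H\bigr|,
\]
applying Cauchy--Schwarz, the Lipschitz bound on $g^0_x$, and the fact that $t \mapsto \bar x(t)$ and $t \mapsto z(t)$ are continuous from $[0,T]$ into $L^2(\Omega;H)$ (by dominated convergence using $\mE\sup_s|\cdot|_H^2 < \infty$).

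Having established these two facts, fix $z_0 \in \cQ^{(1)}$ (the case $\cI_0(\bar x) = \emptyset$ is trivial since $\cQ^{(1)}$ is then the whole space). Since $\phi_{z_0}$ is continuous and strictly negative on the compact set $\cI_0(\bar x)$, it attains a maximum, so there exists $\delta > 0$ with $\phi_{z_0}(t) \le -\delta$ for all $t \in \cI_0(\bar x)$. Using $|g^0_x(\bar x(t))|_{L^2(\Omega;H)} \le |g^0_x(0)|_H + C|\bar x(t)|_{L^2(\Omega;H)} \le M$ uniformly in $t \in [0,T]$ (from \textbf{(AS4)} and Lemma \ref{well lemma s1}), Cauchy--Schwarz gives
\[
|\phi_z(t) - \phi_{z_0}(t)| \le M \bigl(\mE|z(t)-z_0(t)|_H^2\bigr)^{1/2} \le M\,|z-z_0|_{L^2_\dbF(\Omega; C([0,T]; H))}.
\]
Choosing any $z$ with $|z - z_0|_{L^2_\dbF(\Omega; C([0,T]; H))} < \delta/(2M)$ yields $\phi_z(t) < -\delta/2 < 0$ on $\cI_0(\bar x)$, i.e.\ $z \in \cQ^{(1)}$, proving openness.

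The only non-routine step is the continuity claim, which is the reason compactness of $\cI_0(\bar x)$ can be leveraged; once $\phi_{z_0}$ is known to be continuous on $[0,T]$ the perturbation argument is standard. I do not anticipate a genuine obstacle: no adjoint-equation machinery or set-valued analysis is required here, and the estimates are elementary consequences of the ambient assumptions.
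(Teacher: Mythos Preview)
Your proposal is correct and follows essentially the same route as the paper's own proof: both use compactness of $\cI_0(\bar x)$ together with continuity of $t\mapsto \mE\langle g^0_x(\bar x(t)), z(t)\rangle_H$ to extract a uniform negative upper bound on $\cI_0(\bar x)$, and then perturb via the uniform $L^2$-bound on $g^0_x(\bar x(\cdot))$. Your write-up simply makes explicit the continuity and compactness verifications that the paper asserts without proof.
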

\begin{proof}
Clearly, $\cQ^{(1)}$ is a cone. It is sufficient
to prove that it is open.

Let $z(\cd)\in\cQ^{(1)}$. Since $\bar x(\cd)\in
L_{\dbF}^{2}(\Omega; C([0,T]; H))$, $\cI_0(\bar
x)$ is a compact subset of $[0,T]$. This,
together with the fact that
$\mE\inner{g_{x}(\bar x(\cd))}{z(\cd)}_H$ is
continuous with respect to $t$, implies that
there exists a constant $\rho>0$ such
that\vspace{-0.2cm}
$$
\mE\inner{g_{x}(\bar x(t))}{z(t)}_H<-\rho,\qq
\forall\, t\in \cI_0(\bar x).
$$
Let\vspace{-0.32cm}
$$
\d=\frac{\rho}{2|g_{x}(\bar
x(\cd))|_{L^\infty_\dbF(0,T;L^2(\Om;H))}}.
$$
Then for any $\eta\in L_{\dbF}^{2}(\Omega;
C([0,T]; H))$ with
$\|\eta\|_{L_{\dbF}^{2}(\Omega; C([0,T]; H))}\le
\delta$,\vspace{-0.1cm}
$$
\mE\inner{g_{x}(\bar x(t))}{z(t)+\eta(t)}_H
<-\frac{\rho}{2},\q \forall\, t\in \cI_0(\bar
x).\vspace{-0.2cm}
$$
This proves that $z\in {\rm int}\cQ^{(1)}$.
\end{proof}

Now we introduce the first order adjoint
equation for \eqref{first vari
equ}:\vspace{-0.1cm}
\begin{equation}\label{first ajoint equ}
\left\{
\begin{array}{ll}
dy(t)=-\big(A^*y(t)+a_{1}[t]^{*}y(t)
+b_{1}[t]^{*}Y(t)
\big)dt+d\psi(t)+Y(t)dW(t) &\mbox{ in }[0,T), \\
y(T)=y_T,
\end{array}
\right.\vspace{-0.1cm}
\end{equation}
where $y_T\in L^2_{\cF_T}(\Om;H)$ and $ \psi\in
L^{2}_{\dbF}(\Omega;BV_0([0,T]; H))$.

Since neither the usual natural filtration
condition nor the quasi-left continuity is
assumed for the filtration $\dbF$ in this paper,
one cannot apply the existence results for mild
or weak solution of infinite dimensional BSEEs
(e.g. \cite{HP2,MY}) to obtain the
well-posedness of the equation \eqref{first
ajoint equ}. Thus, we use the notion of
transposition solution here. To this end,
consider the following (forward)
SEE:\vspace{-0.1cm}
\begin{eqnarray}\label{fsystem2}
\left\{
\begin{array}{lll}\ds
d\phi(s) = \big(A\phi(s) + f_1(s)\big)ds +  f_2(s) dW(s) &\mbox{ in }(t,T],\\
\ns\ds \phi(t)=\eta,
\end{array}
\right.\vspace{-0.1cm}
\end{eqnarray}
where $t\in[0,T]$, $f_1\in
L^1_{\dbF}(t,T;L^{2}(\O;H))$, $f_2\in
L^2_{\dbF}(t,T;\cL_2)$, $\eta\in
L^{2}_{\cF_t}(\O;H)$ (See \cite[Chapter
6]{Prato} for the well-posedness of
\eqref{fsystem2} in the sense of mild solution).
We now introduce the following notion.

\begin{definition}\label{definition1}
We call $(y(\cdot), Y(\cdot)) \in
D_{\dbF}([0,T];L^{2}(\O;H)) \times
L^2_{\dbF}(0,T;\cL_2)$  a transposition solution
of \eqref{first ajoint equ} if for any $t\in
[0,T]$, $f_1(\cdot)\in
L^1_{\dbF}(t,T;L^2(\O;H))$, $f_2(\cdot)\in
L^2_{\dbF}(t,T;\cL_2)$, $\eta\in
L^2_{\cF_t}(\O;H)$ and the corresponding
solution $\phi\in L^2_{\dbF}(\O;C([t,T];H))$ to
the equation (\ref{fsystem2}), we
have\vspace{-0.2cm}
\begin{equation}\label{eq def solzz}
\begin{array}{ll}\ds
\q\dbE \big\langle \phi(T),y(T)\big\rangle_{H}+
\dbE\int_t^T \big\langle
\phi(s),a_{1}[s]^{*}y(s)
+b_{1}[s]^{*}Y(s))\big\rangle_Hds \\
\ns\ds = \dbE
\big\langle\eta,y(t)\big\rangle_H\! +
\dbE\int_t^T\! \big\langle
f_1(s),y(s)\big\rangle_H ds + \dbE\int_t^T\!
\big\langle f_2(s),Y(s)\big\rangle_{\cL_2} ds+
\dbE\int_t^T\! \big\langle \phi(s),d\psi(s)
\big\rangle_H.
\end{array}
\end{equation}
\end{definition}
\begin{lemma}\label{lm13}
Assume that {\bf(AS1)}--{\bf(AS2)} hold  and $
\psi\in L^{2}_{\dbF}(\Omega;BV_0([0,T]; H))$.
Then the equation \eqref{first ajoint equ}
admits a unique transposition solution $(y,Y)\in
D_{\dbF}([0,T];L^{2}(\Omega; H))\times
L_{\dbF}^{2}(0,T;\cL_2)$.
\end{lemma}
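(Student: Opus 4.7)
The plan is to treat $y_T$ and $\psi$ as two independent inputs. For uniqueness, the difference $(\hat y, \hat Y) := (y^1-y^2, Y^1-Y^2)$ of two transposition solutions of \eqref{first ajoint equ} satisfies the identity \eqref{eq def solzz} with $y_T = 0$ and $\psi \equiv 0$, which is precisely the transposition identity for the standard unperturbed linear BSEE; this has only the trivial solution by the Lü-Zhang transposition theory \cite{LZ1,LZ,LZ2} on which this paper already relies, so $(\hat y, \hat Y)\equiv(0,0)$ in $D_{\dbF}([0,T];L^2(\Om;H))\times L^2_{\dbF}(0,T;\cL_2)$.

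For existence, I would split $(y, Y) = (y^{(1)}, Y^{(1)}) + (y^{(2)}, Y^{(2)})$, where the first pair solves \eqref{first ajoint equ} with $\psi \equiv 0$ and terminal $y_T$ (this is the standard Lü-Zhang result) and the second pair has zero terminal but carries the full BV forcing $d\psi$. Linearity of \eqref{eq def solzz} in $(y_T, \psi)$ makes the sum a transposition solution of the original equation. To construct $(y^{(2)}, Y^{(2)})$, I would run a Picard iteration freezing the coupling $a_{1}[\cdot]^{*}y + b_{1}[\cdot]^{*}Y$: for frozen $(\bar y, \bar Y)$, with $G(t):=-a_{1}[t]^{*}\bar y-b_{1}[t]^{*}\bar Y$, the decoupled problem amounts to finding $(y, Y)$ such that
\[
\dbE\langle \eta, y(t)\rangle_{H} + \dbE\int_{t}^{T} \langle f_{1}, y\rangle_{H} ds + \dbE\int_{t}^{T} \langle f_{2}, Y\rangle_{\cL_2} ds = -\dbE\int_{t}^{T} \langle\phi, G\rangle_{H} ds - \dbE\int_{t}^{T} \langle\phi, d\psi\rangle_{H}.
\]
The right-hand side is a bounded linear functional of $(\eta, f_{1}, f_{2})$ thanks to the SEE estimate $|\phi|_{L^{2}_{\dbF}(\Om;C([t,T];H))}\le C(|\eta|_{L^{2}(\Om;H)}+|f_{1}|_{L^{1}_{\dbF}(t,T;L^{2}(\Om;H))}+|f_{2}|_{L^{2}_{\dbF}(t,T;\cL_2)})$ and $|\psi|_{L^{2}_{\dbF}(\Om;BV([0,T];H))}<\infty$. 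Riesz representation at each fixed $t$ (with $f_{1}=f_{2}=0$) delivers $y(t)\in L^{2}_{\cF_{t}}(\Om;H)$; the semigroup property of the forward SEE gives consistency across $t$; the martingale representation theorem for cylindrical Wiener processes then extracts $Y\in L^{2}_{\dbF}(0,T;\cL_2)$; and a weighted-norm Gronwall estimate closes the Picard contraction on $D_{\dbF}([0,T];L^{2}(\Om;H))\times L^{2}_{\dbF}(0,T;\cL_2)$.

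The \emph{main obstacle} is not the fixed-point argument but rather proving that the family $\{y(t)\}_{t\in[0,T]}$ produced pointwise by Riesz representation assembles into a c\`adl\`ag $\dbF$-adapted process when $\psi$ is only c\`adl\`ag of bounded variation (not absolutely continuous). I would handle this by approximating $\psi$ by absolutely continuous processes $\psi^{n}$ (e.g.\ by convolution in time), for which $d\psi^{n}=(\psi^{n})'(t)dt$ can be absorbed into the drift and the resulting approximate adjoint $y^{n}$ lies in $D_{\dbF}([0,T];L^{2}(\Om;H))$ by the standard Lü-Zhang theory. The uniform bound $|\psi^{n}|_{L^{2}_{\dbF}(\Om;BV([0,T];H))}\le |\psi|_{L^{2}_{\dbF}(\Om;BV([0,T];H))}$ together with the right-continuity of $\dbF$ then allows passing to the limit both in the transposition identity \eqref{eq def solzz} and in the c\`adl\`ag regularity, yielding the desired transposition solution.
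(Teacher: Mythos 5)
Your overall architecture (freeze the coupling $a_{1}[\cdot]^{*}y+b_{1}[\cdot]^{*}Y$ into a source term, construct the decoupled solution by duality, close with a fixed point) is the same as the paper's, but two of your key steps do not work under the paper's standing hypotheses. First, you cannot invoke the martingale representation theorem to ``extract'' $Y$. The filtration $\dbF$ here is only assumed to be complete and c\`adl\`ag; it is \emph{not} assumed to be the natural filtration of $W$, nor quasi-left continuous --- the paper states explicitly that this is the very reason the transposition notion is introduced instead of the mild/weak solution theory of \cite{HP2,MY}. Martingale representation with respect to $W$ fails for such a general filtration. The paper instead obtains $Y$ with no representation theorem at all: for each $t$ it applies the Riesz-type duality of Lemma \ref{lemma1} to the bounded linear functional $(f_1,f_2,\eta)\mapsto \mE\langle\phi(T),y_T\rangle_H-\mE\int_t^T\langle\phi,f\rangle_H ds-\mE\int_t^T\langle\phi,d\psi\rangle_H$ on the \emph{product} space $L^1_{\dbF}(t,T;L^2(\Om;H))\times L^2_{\dbF}(t,T;\cL_2)\times L^2_{\cF_t}(\Om;H)$, so that $Y^t$ appears directly as the $L^2_{\dbF}(t,T;\cL_2)$-component of the representing triple $(y^t,Y^t,\xi^t)$; a time-consistency argument then glues these into a single pair $(y,Y)$ satisfying \eqref{eq def solzz}.

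Second, the mollification of $\psi$ that you propose for the c\`adl\`ag regularity of $y$ does not close. To keep $\psi^{n}$ adapted you are forced to use a one-sided (backward-in-time) average, and then $\psi^{n}(t)\to\psi(t-)$ at jump times, so the approximation sees the wrong one-sided limit exactly where the regularity question is nontrivial. More seriously, $d\psi^{n}\to d\psi$ only in the weak-$*$ sense, and $|\psi^{n}-\psi|_{BV}$ does \emph{not} tend to zero unless $\psi$ is already absolutely continuous; hence you have no uniform-in-$t$ convergence of $y^{n}(t)\to y(t)$ in $L^{2}(\Om;H)$, which is precisely what is needed for the c\`adl\`ag property to survive the passage to the limit. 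The paper's Step 3 avoids perturbing $\psi$ altogether: it writes $\xi^t=\dbE\big(S^*(T-t)y_T-\int_t^T S^*(s-t)f\,ds-\int_t^T S^*(s-t)\,d\psi(s)\,\big|\,\cF_t\big)$ and approximates the \emph{operator} $A^*$ by its Yosida approximations $A^*_\l$, for which $S^*_\l$ is a $C_0$-group; the process $\Phi_\l(t)$ built from $\xi^t_\l$ is then an $H$-valued $\dbF$-martingale, hence has a c\`adl\`ag modification for any right-continuous filtration, and $\xi^{\cdot}_\l\to\xi^{\cdot}$ in $L^\infty_{\dbF}(0,T;L^2(\Om;H))$ by dominated convergence in the operator parameter, with the BV measure held fixed. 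A final Lebesgue-point argument identifies $\xi^t$ with $y(t)$ for a.e.\ $t$. Unless you replace your two steps by arguments of this kind, the proposal has genuine gaps.
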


If $\psi=0$ and $W(\cd)$ is a one dimensional
Brownian motion, Lemma \ref{lm13} is proved in
\cite[Chapter 3]{LZ1}. The proof for the case
$\psi\neq 0$ is similar. We only give a sketch
in Appendix \ref{sec-ap-E}.

Define the Hamiltonian\vspace{-0.2cm}
\begin{equation}\label{Hamiltonianconvex}
\dbH(t,x,u, p,q,\omega)
\=\inner{p}{a(t,x,u,\omega)}_H+
\inner{q}{b(t,x,u,\omega)}_{\cL_2},
\end{equation}
where $(t,x,u,p,q,\omega)\in [0,T]\times H\times
H_1\times H\times \cL_2\times\Omega$.

\medskip

Now we state a first order necessary optimality
condition  in the integral form.

\begin{theorem}\label{th1}
Let {\bf(AS1)}--{\bf(AS4)} hold and $(\bar
x(\cd),\bar u(\cd),\bar \nu_{0})$ be an  optimal
triple for {\bf Problem (OP)}. If
$\mE|g^0_{x}(\bar x(t))|_H\neq 0$ for any $t\in
\cI_0(\bar x)$, then there exist $\lambda_{0}\in
\{0,1\}$, $\lambda_{j} \geq 0$ for  $j \in
\cI(\bar x)$ and $\psi\in
\big(\cQ^{(1)}\big)^{-}$ with $\psi(0)=0$
satisfying\vspace{-0.2cm}
\begin{equation}\label{6.23-eq2}
\lambda_{0}+\sum_{j\in \cI(\bar
x)}\lambda_j+|\psi|_{L^2_\dbF(\Om;BV(0,T;H))}\neq
0,\vspace{-0.2cm}
\end{equation}
such that the corresponding transposition
solution $(y(\cd),Y(\cd))$ of the first order
adjoint equation (\ref{first ajoint equ}) with
$\ds y(T)=-\l_0 h_x(\bar x(T))-\sum_{j\in
\cI(\bar x)}\l_j g^j_x(\bar x(T))$ verifies
that\vspace{-0.2cm}
\begin{equation}\label{first order integraltype condition}
\mE\inner{y(0)}{\nu}_H+\mE\int_{0}^{T}
\inner{\dbH_{u}[t]}{v(t)}_{H_1}dt\le
0,\;\;\forall\; \nu\in\cT_{\cK_{a}}(\bar
\nu_{0}),\;\; \forall\; v(\cdot)\in\cT_\Phi(\bar
u),
\end{equation}
where $ \dbH_{u}[t]=\dbH_{u}(t,\bar x(t),\bar
u(t), y(t),Y(t),\omega)$. In addition, if
$\cQ^{(1)} \cap \cG^{(1)}\cap \cE^{(1)}
\neq\emptyset$, the above holds with
$\lambda_0=1$.
\end{theorem}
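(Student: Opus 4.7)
The argument is a separation-of-cones construction in the Banach space $\cZ \= L_\dbF^2(\Om; C([0,T]; H))$, with multipliers extracted via Lemma \ref{lm9} and then identified through the adjoint equation. First I claim that optimality forces
\[
\cG^{(1)} \cap \cQ^{(1)} \cap \cE^{(1)} \cap \cL^{(1)} = \emptyset.
\]
Given $x_1$ in this intersection with data $(\nu_1,u_1) \in \cT_{\cK_a}(\bar\nu_0) \times \cT_\Phi(\bar u)$, the definition of the adjacent cones produces $\nu_1^\e \to \nu_1$ and $v^\e \to u_1$ with $\bar\nu_0 + \e\nu_1^\e \in \cK_a$ and $\bar u + \e v^\e \in \cU$; Lemma \ref{estimate one of varie qu}, in particular \eqref{r1 to 0}, writes the resulting trajectory as $x^\e = \bar x + \e x_1 + o(\e)$ in $L^\infty_\dbF(0,T;L^2(\Om;H))$. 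Openness of $\cQ^{(1)}, \cE^{(1)}, \cL^{(1)}$ (Lemma \ref{lm11} together with analogous arguments for the two single-inequality cones) combined with compactness of $\cI_0(\bar x)$ and the non-degeneracy $\mE|g^0_x(\bar x(t))|_H \neq 0$ on $\cI_0(\bar x)$ absorbs the $o(\e)$ remainder, so $(\nu_0^\e,u^\e)$ is admissible with strictly smaller cost, contradicting optimality.

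Next, applying Lemma \ref{lm9} with $M_0 = \cG^{(1)}$, $M_1 = \cQ^{(1)}$, $M_2 = \cE^{(1)}$, $M_3 = \cL^{(1)}$ yields $\Lambda_0,\dots,\Lambda_3 \in \cZ^*$, not all zero, with $\sum_i \Lambda_i = 0$; since each $M_i$ is a cone, the condition $\sum_i \inf_{M_i}\Lambda_i \geq 0$ upgrades to $\Lambda_i \geq 0$ pointwise on $M_i$. Lemma \ref{lm12} represents $-\Lambda_1$ by some $\psi \in L^2_\dbF(\Om; BV_0([0,T];H))$, so that $\Lambda_1(z) = -\mE\int_0^T \langle z(t),d\psi(t)\rangle_H$, $\psi(0)=0$, and $\psi \in (\cQ^{(1)})^-$. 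Since $\cE^{(1)}, \cL^{(1)}$ depend only on $z(T)$, each of $\Lambda_2,\Lambda_3$ annihilates $\{z: z(T)=0\}$ and factors through the evaluation $\Gamma z = z(T)$; Riesz representation on $L^2_{\cF_T}(\Om;H)$ combined with Lemma \ref{lm5} applied to the polyhedral closures of $\cE^{(1)}_T$ and $\cL^{(1)}_T$ gives $\lambda_j \geq 0$ for $j \in \cI(\bar x)$ and $\lambda_0 \geq 0$ with
\[
\Lambda_2(z) = -\mE\Bigl\langle z(T),\, \sum_{j\in\cI(\bar x)} \lambda_j g_x^j(\bar x(T))\Bigr\rangle_H,\qquad \Lambda_3(z) = -\lambda_0 \mE\langle z(T),\, h_x(\bar x(T))\rangle_H.
\]
A global rescaling normalizes $\lambda_0 \in \{0,1\}$.

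Now set $y(T) \= -\lambda_0 h_x(\bar x(T)) - \sum_{j\in\cI(\bar x)} \lambda_j g_x^j(\bar x(T))$ and invoke Lemma \ref{lm13} with this terminal datum and the $\psi$ above to obtain the transposition solution $(y,Y)$ of \eqref{first ajoint equ}. For arbitrary $x_1 \in \cG^{(1)}$ with data $(\nu_1,u_1)$, substituting $\phi = x_1$, $f_1 = a_1[t]x_1 + a_2[t]u_1$, $f_2 = b_1[t]x_1 + b_2[t]u_1$, $\eta = \nu_1$ into the defining identity \eqref{eq def solzz} and cancelling the $a_1^*y$ and $b_1^*Y$ contributions yields
\[
\mE\langle x_1(T), y(T)\rangle_H - \mE\int_0^T \langle x_1(t), d\psi(t)\rangle_H = \mE\langle \nu_1, y(0)\rangle_H + \mE\int_0^T \langle \dbH_u[t], u_1(t)\rangle_{H_1}\, dt.
\]
The left-hand side equals $(\Lambda_1+\Lambda_2+\Lambda_3)(x_1) = -\Lambda_0(x_1) \leq 0$ on $\cG^{(1)}$, which is precisely \eqref{first order integraltype condition}. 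Nontriviality \eqref{6.23-eq2} is immediate, since $\lambda_0=0$, $\lambda_j=0$, $\psi=0$ would force $\Lambda_1=\Lambda_2=\Lambda_3=0$ and hence $\Lambda_0=0$. For the last assertion, suppose $x_1^0 \in \cQ^{(1)}\cap\cG^{(1)}\cap\cE^{(1)}$ and $\lambda_0=0$; then $\Lambda_3=0$ and $\Lambda_0(x_1^0)+\Lambda_1(x_1^0)+\Lambda_2(x_1^0)=0$ with each summand $\geq 0$, so all vanish at $x_1^0$; openness of $\cQ^{(1)},\cE^{(1)}$ around $x_1^0$ propagates this to $\Lambda_1\equiv\Lambda_2\equiv 0$, contradicting nontriviality. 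Hence $\lambda_0 = 1$.

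\textbf{Main difficulty.} The delicate part is the emptiness claim in Step 1: in an infinite-dimensional, continuous-time state-constrained setting, preserving admissibility of the perturbed trajectory requires both the uniform $o(\e)$ remainder control of \eqref{r1 to 0} and a \emph{uniform} strict negativity of $\mE\langle g^0_x(\bar x(t)), x_1(t)\rangle_H$ on $\cI_0(\bar x)$, which rests on compactness of $\cI_0(\bar x)$ and the non-degeneracy hypothesis on $g^0_x$. A secondary technical point is the sign and support tracking required to place $\psi$ in $(\cQ^{(1)})^-$ from the abstract representation of Lemma \ref{lm12}.
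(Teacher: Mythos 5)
Your overall strategy is genuinely different from the paper's: you run a single Dubovitskii--Milyutin separation (Lemma \ref{lm9}) on the four cones $\cG^{(1)},\cQ^{(1)},\cE^{(1)},\cL^{(1)}$ at once, whereas the paper never invokes Lemma \ref{lm9} in this proof and instead performs a three-case analysis ($\cQ^{(1)}\cap\cG^{(1)}=\emptyset$; $\cQ^{(1)}\cap\cG^{(1)}\neq\emptyset$ but $\cQ^{(1)}\cap\cG^{(1)}\cap\cE^{(1)}=\emptyset$; and the remaining case) with pairwise Hahn--Banach separations and dual-cone decompositions via Lemma \ref{lm6}. When it applies, your route is cleaner: the multipliers $\lambda_0,\lambda_j,\psi$ all come out of one application of \eqref{abstr1}, the identity $\sum_i\Lambda_i=0$ combined with the duality relation \eqref{eq def solzz} gives \eqref{first order integraltype condition} in one stroke, and your argument for $\lambda_0=1$ when $\cQ^{(1)}\cap\cG^{(1)}\cap\cE^{(1)}\neq\emptyset$ (a nonnegative functional vanishing at an interior point of an open set must vanish identically) is correct and tidy. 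Your Step 1 (emptiness of the fourfold intersection) and the cancellation computation in the transposition identity both match the paper's \eqref{contodicition phiy133T} and \eqref{duality y1 p1}.

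There is, however, a genuine gap: Lemma \ref{lm9} requires every $M_j$ to be \emph{nonempty}, and two of your four sets can be empty under the hypotheses of the theorem. The paper states explicitly that $\cL^{(1)}=\emptyset$ whenever $h_x(\bar x(T))=0$ $\dbP$-a.s., and $\cE^{(1)}$ is empty whenever some $g^j_x(\bar x(T))=0$ for $j\in\cI(\bar x)$, or more generally whenever the open half-spaces $\cE^{(1,j)}$ fail to intersect. In those situations your separation lemma simply does not apply, and the subsequent steps also break down: the factorization of $\Lambda_2$ and $\Lambda_3$ through $\Gamma$ relies on the argument that a functional nonnegative on a nonempty set of the form $\{z:\ z(T)\in S\}$ must annihilate $\ker\Gamma$, which is vacuous if the set is empty, and the dual-cone decomposition $(\cE^{(1)})^-=\sum_j(\cE^{(1,j)})^-$ via Lemma \ref{lm6} needs $\bigcap_j\operatorname{int}\cE^{(1,j)}\neq\emptyset$. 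These degenerate cases are precisely where the paper's Case 2 does nontrivial work (producing $\lambda_j\geq 0$, not all zero, with $\sum_{j\in\cI(\bar x)}\lambda_j g^j_x(\bar x(T))=0$ when $\cE^{(1)}=\emptyset$, so that $y(T)=0$ and the conclusion holds trivially). The repair is not difficult --- treat $\cE^{(1)}=\emptyset$ and $\cL^{(1)}=\emptyset$ separately before invoking Lemma \ref{lm9} on whichever sets remain --- but as written your proof is silent on them, and a complete argument must include them. (Note that $\cG^{(1)}$ always contains $0$ and $\cQ^{(1)}$ is nonempty under the standing hypothesis $\mE|g^0_x(\bar x(t))|_H\neq 0$ on $\cI_0(\bar x)$, since $-g^0_x(\bar x(\cdot))\in\cQ^{(1)}$; so those two sets pose no problem.)
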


\begin{proof} We first claim that\vspace{-0.2cm}
\begin{equation}\label{contodicition phiy133T}
\cG^{(1)}\cap
\cQ^{(1)}\cap\cE^{(1)}\cap\cL^{(1)}=\emptyset.\vspace{-0.2cm}
\end{equation}
If this is not the case, then there would exist
$\tilde x_{1}(\cd)\in \cG^{(1)}\cap
\cQ^{(1)}\cap\cE^{(1)}$ such that\vspace{-0.1cm}
\begin{equation}\label{contodicition phiy1T}
\mE \inner{h_{x}(\bx(T))}{\tilde
x_{1}(T)}_H<0.\vspace{-0.1cm}
\end{equation}
Let
$\tilde{\nu}_{1}\in\cT_{\cK_{a}}(\bar\nu_{0})$
be the initial datum and $\tilde u_1(\cdot)\in
\cT_\Phi(\bu(\cdot))$ the control corresponding
to $\tilde x_{1}(\cd)$. Let $\mu^\eps \in H$
with $|\mu^\eps|=o(\eps)$ and $\eta^\eps(\cd)\in
L_{\dbF}^{2}(0,T; H_1)$ with
$|\eta^\eps|_{L_{\dbF}^{2}(0,T; H_1)}=o(\eps)$
be such that\vspace{-0.2cm}
$$
\nu^{\eps}_{0}\=\bar\nu_0+\eps\tilde\nu_1+\mu^\eps\in
\cK_a,\qq u^{\eps}(\cd)\=\bu(\cd)+\eps\tilde
u_1(\cd)+\eta^\eps(\cd)\in
\mathcal{U}_{ad}.\vspace{-0.2cm}
$$
Let $x^{\eps}(\cd)$ be the solution of the
control system (\ref{controlsys}) with  the
initial datum $\nu^{\eps}_{0}$ and the control
$u^{\eps}(\cd)$.

Since  $\tilde x_{1}(\cd)\in \cQ^{(1)}$, we know
that $\mE\inner{g_{x}^0(\bx(\cd))}{\tilde
x_{1}(\cd)}_H$ is continuous with respect to
$t$. This, together with the compactness of
$\cI^0(\bar x)$, implies that there exists
$\rho_{0}>0$ such that\vspace{-0.2cm}
$$
\mE\inner{g_{x}^0(\bx(t))}{\tilde x_{1}(t)}_H<
-\rho_{0} \mbox{  for every } t\in \cI^0(\bar
x).
$$
Moreover, there exists $\delta>0$ (independent
of $t\in \cI^0(\bar x)$) such
that\vspace{-0.2cm}
$$
\mE\inner{g^0_{x}(\bx(s))}{\tilde x_{1}(s)}_H<
-\frac{\rho_{0}}{2}, \forall\, s\in
(t-\delta,t+\delta)\cap [0,T] \mbox{ and } t\in
\cI^0(\bar x).
$$
By Lemma \ref{estimate one of varie qu}, there
is an $\eps_{0}>0$  such that for every $\eps
\in [0,\eps_{0}]$,\vspace{-0.2cm}
\begin{equation}\label{bar y1 admissible eq1}
\begin{array}{ll}\ds
\mE g^0(x^{\eps}(s))\3n&=\ds\mE g(\bx(s))
+\eps \mE\inner{g_{x}(\bx(s))}{\tilde x_{1}(s)}_H
+o(\eps) \\
\ns&\leq\ds\eps\,\mE\inner{g_{x}(\bx(s))}{\tilde x_{1}(s)}_H+o(\eps) \\
\ns&<\ds-\frac{\eps\rho_{0}}{4}<0,\q \forall\;
s\in (t-\delta,t+\delta)\cap[0,T],\;\; t\in
\cI^0(\bar x).
\end{array}\vspace{-0.32cm}
\end{equation}
Since $\cI_{\delta}^{c}\=[0,T]\setminus
\bigcup_{t\in \cI^0(\bar x)}(t-\delta,t+\delta)$
is compact, there exist $\rho_{1}>0$ and
$\eps_{1}>0$ such that for any $\eps \in
[0,\eps_{1}]$,
\begin{equation}\label{bar y1 admissible eq2}
\begin{array}{ll}\ds
\mE g(x^{\eps}(t))\3n&=\ds\mE g(\bx(t))
+\eps\mE\inner{g_{x}(\bx(t))}{\tilde x_{1}(t)}_H+o(\eps) \\
\ns &<\ds-\rho_{1}+\eps\mE\inner{g_{x}(\bx(t))}{\tilde x_{1}(t)}_H+o(\eps) \\
\ns&<\ds-\frac{\rho_{1}}{2}<0,\q \forall\; t\in
\cI_{\delta}^{c}.
\end{array}
\end{equation}
By (\ref{bar y1 admissible eq1}) and (\ref{bar
y1 admissible eq2}),   $x^{\eps}(\cd)$ satisfies
the state constraint \eqref{constraints1} for
$\eps< \min \{\eps_{0},\eps_{1}\}$.

Since $\tilde x_{1}(T)\in \cE^{(1)}_T$, $
\mE\langle g^{j}_{x}(\bx(T)),\tilde
x_{1}(T)\rangle_H<0 \mbox{ for every } j\in
\cI(\bx). $ Similar to the proof of \eqref{bar
y1 admissible eq2},  for every sufficiently
small $\eps$, $x^{\eps}(\cd)$ satisfies the
final state constraint \eqref{constraints}, and
$(x^{\eps}(\cd), u^{\eps}(\cd))\in \cP_{ad}$.
Following (\ref{contodicition phiy1T}), there
exists $\rho_{2}>0$ such that for all
sufficiently small $\eps$,\vspace{-0.2cm}
$$
\begin{array}{ll}
\mE h(x^{\eps}(T))\3n&=\ds\mE h(\bx(T))+\eps\mE\inner{h_{x}(\bx(T))}{\tilde x_{1}(T)}_H+o(\eps)\\
\ns&<\ds\mE h(\bx(T))-\eps\rho_{2}+o(\eps)< \mE
h(\bx(T)),
\end{array}\vspace{-0.2cm}
$$
contradicting  the optimality of
$(\bx(\cd),\bu(\cd))$. This completes the proof
of (\ref{contodicition phiy133T}).

\vspace{+0.25cm}

To finish the proof, we consider three different
cases.

\vspace{+0.25cm}

{\bf Case 1}: $\cQ^{(1)}\cap
\cG^{(1)}=\emptyset$.

Noting that $\cQ^{(1)}$ is nonempty, open and
convex, and $\cG^{(1)}$ is nonempty and convex,
by the Hahn-Banach separation theorem and Lemma
\ref{lm12}, there exists a nonzero $\psi(\cd)\in
L^{2}_{\dbF}(\Omega;$ $BV_0([0,T]; H))$ such
that
$$
\sup_{z\in
\cQ^{(1)}}\mE\int_{0}^{T}\inner{z(t)}{d
\psi(t)}_H \le \inf_{\tilde z\in \cG^{(1)}}
\mE\int_{0}^{T}\inner{\tilde z(t)}{d \psi(t)}_H.
$$
Since $\cQ^{(1)}$ and $\cG^{(1)}$ are
cones,\vspace{-0.2cm}
$$
0=\sup_{z\in
\cQ^{(1)}}\mE\int_{0}^{T}\inner{z(t)}{d
\psi(t)}_H = \inf_{\tilde z\in \cG^{(1)}}
\mE\int_{0}^{T}\inner{\tilde z(t)}{d \psi(t)}_H.
$$
Therefore, $ \psi\in \big(\cQ^{(1)}\big)^-$ and
$-\psi\in \big(\cG^{(1)}\big)^-$. Consequently,
for all $z_1(\cd)\in\cG^{(1)}$,\vspace{-0.2cm}
\begin{equation}\label{integral by part psi y1 case1}
\mE\int_{0}^{T}\inner{z_{1}(t)}{d\psi(t)}_H \geq
0.\vspace{-0.2cm}
\end{equation}
Furthermore, it follows from the definition of
the transposition solution to \eqref{first
ajoint equ} that for every $x_1$ solving
\eqref{first vari equ} with $u_1\in
\cT_\Phi(\bar u)$ and $\nu_1\in
T^b_{\cK_{a}}(\bar x_{0})$,
\begin{eqnarray}\label{duality y1 p1}
&&\3n\3n\!\!\mE\inner{y(T)}{x_{1}(T)}_H-
\inner{y(0)}{\nu_1}_H \nonumber\\
&&\3n\3n\!\!=
\mE\int_{0}^{T}\!\!\Big(\inner{y(t)}{a_{1}[t]x_{1}(t)}_H
+\inner{y(t)}{a_{2}[t]u_1(t)}_H-
\inner{a_{1}[t]^* y(t) }{x_{1}(t)}_H -\!
\inner{b_{1}[t]^*Y(t)}{x_{1}\!(t)}_H\!\nonumber\\
&&\3n\3n\!\!\qq\qq\!\!\!  +\!
\inner{Y(t)}{b_{1}[t]x_{1}(t)}_{\cL_2} \!+\!
\inner{Y(t)}{b_{2}[t]u_1\!(t)}_{\cL_2}\!\Big)dt\!
+\!\mE\!\int_{0}^{T}\!\!
\inner{x_1(t)}{d\psi(t)}_H
 \\
&&\!\!\3n\3n= \mE\int_{0}^{T}\Big(
\inner{y(t)}{a_{2}[t]u_1(t)}_H +
\inner{Y(t)}{b_{2}[t]u_1(t)}_{\cL_2} \Big)dt
+\mE\int_{0}^{T}
\inner{x_1(t)}{d\psi(t)}_H.\nonumber
\end{eqnarray}
Set $\lambda_{0}=0$, $\lambda_{j}=0$, $j\in
\cI(\bx)$ and $y(T)=0$. Then, \eqref{6.23-eq2}
holds and (\ref{first order integraltype
condition}) follows from (\ref{integral by part
psi y1 case1}) and (\ref{duality y1 p1}).

\vspace{+0.2cm}

{\bf Case 2}: $\cQ^{(1)}\cap
\cG^{(1)}\neq\emptyset$ and $\cQ^{(1)}\cap
\cG^{(1)}\cap\cE^{(1)}=\emptyset$.

\vspace{+0.2cm}

If $\cE^{(1)}=\emptyset$, we claim that for each
$j\in \cI(\bar x)$, there exists $\lambda_{j}\ge
0$ such that\vspace{-0.12cm}
\begin{equation}\label{11.20-eq3}
\sum\limits_{j\in \cI(\bar x)} \lambda_{j}>0,\qq
\sum\limits_{j\in \cI(\bar x)}
\lambda_{j}g^{j}_{x}(\bx(T))=0.\vspace{-0.2cm}
\end{equation}
Indeed, if there is a $j_0\in \cI(\bar x)$ such
that $g^{j_0}_{x}(\bx(T))=0$, then we can take
$\l_{j_0}=1$ and $\l_j=0$ for all $j\in \cI(\bar
x)\setminus \{j_0\}$. In this context,
\eqref{11.20-eq3} hold.

If $g^{j}_{x}(\bx(T))\neq0$ for all $j\in
\cI(\bar x)$,  then $\cE^{(1,j)} \neq\emptyset$
for all $j\in \cI(\bar x)$ since $\Gamma$ is
surjective (recall \eqref{Ed1.1} for the
definition of $\cE^{(1,j)}$). From
\eqref{11.20-eq4}, we find that $\cE^{(1,j)}_T
\neq\emptyset$ for all $j\in \cI(\bar x)$. On
the other hand, since $\cE^{(1)}=\bigcap_{j\in
\cI(\bar x)}\cE^{(1,j)}=\emptyset$, by
\eqref{11.20-eq4}, we get that
$\cE^{(1)}_T=\bigcap_{j\in \cI(\bar
x)}\cE^{(1,j)}_T=\emptyset$. Then one can find a
$j_0\in \cI(\bar x)$ and a subset
$\cI_{j_0}\subset \cI(\bar x)\setminus\{j_0\}$
such that $\bigcap_{j\in \cI_{j_0}}
\cE^{(1,j)}_T\neq\emptyset$ and\vspace{-0.12cm}
$$
\cE^{(1,j_0)}_T\bigcap\(\bigcap_{j\in \cI_{j_0}}
\cE^{(1,j)}_T\)=\emptyset.\vspace{-0.252cm}
$$
By the Hahn-Banach separation theorem, there
exists a nonzero $\xi\in L^2_{\cF_T}(\Omega;H)$
such that\vspace{-0.12cm}
$$
\sup_{\eta\in  \cE^{(1,j_0)}_T}\mE\langle\xi,
\eta \rangle_H\leq \inf_{\eta\in \bigcap_{j\in
\cI_{j_0}} \cE^{(1,j)}_T}\mE\langle\xi, \eta
\rangle_H.\vspace{-0.12cm}
$$
Noting that $\cE^{(1,j)}_T$ ($j\in \cI(\bar x)$)
is a cone,  $\xi\in \big(\cE^{(1,j_0)}_T\big)^-$
and $-\xi\in \big(\bigcap_{j\in \cI_{j_0}}
\cE^{(1,j)}_T\big)^-$. By Lemma \ref{lm6}, $\xi
= \l_{j_0}g_x^{j_0} (\bar x(T))$ for some
$\l_{j_0}
> 0$. Further, for every $j \in \cI_{j_0}$, there
exists   $\l_j \geq 0$ such that
$\ds-\xi=\sum_{j\in \cI_{j_0}}\l_jg_x^j (\bar
x(T))$. Let $\l_j= 0$ for $j \in \cI(\bar
x)\setminus(\cI_{j_0}\cup\{j_0\})$, we get
\eqref{11.20-eq3}.

By taking $\lambda_{0}=0$, $\psi= 0$ and
$y(T)=0$, we have \eqref{6.23-eq2} and the
condition (\ref{first order integraltype
condition}) holds trivially with $(y,Y)\equiv
0$.

If $\cE^{(1)}\neq\emptyset$, then
$\Gamma\big(\cQ^{(1)}\cap \cG^{(1)}\big)\cap
\cE^{(1)}_{T}=\emptyset$. By the Hahn-Banach
theorem, there exists a nonzero $\xi\!\in\!
L_{\cF_{T}}^{2}(\Omega;H)$ such
that\vspace{-0.2cm}
$$\sup_{\alpha\in \Gamma(\cQ^{(1)}\cap \cG^{(1)})}\mE\inner{\xi}{\alpha}_H \leq
\inf_{\beta\in \cE^{(1)}_{T}}
\mE\inner{\xi}{\beta}_H.$$
Since both $\Gamma(\cQ^{(1)}\cap \cG^{(1)})$ and
$\cE^{(1)}_{T}$ are cones, $$ 0=\sup_{\alpha\in
\Gamma(\cQ^{(1)}\cap \cG^{(1)})}
\mE\inner{\xi}{\alpha}_H = \inf_{\beta\in
\cE^{(1)}_{T}} \mE\inner{\xi}{\beta}_H. $$
Therefore, $ \xi \in \big(\Gamma(\cQ^{(1)}\cap
\cG^{(1)})\big)^-$ and $-\xi \in
\big(\cE^{(1)}_{T}\big)^-. $

By Lemma \ref{lm6}, for each $j\in \cI(\bar x)$,
there exists $\lambda_j\geq 0$ such
that\vspace{-0.12cm}
$$
\sum_{j\in \cI(\bar x)}\lambda_j>0, \qq -\xi=
\sum_{j\in \cI(\bar x)}\lambda_{j}
g_{x}^{j}(\bx(T)).\vspace{-0.2cm}
$$
Since $ 0\ge\mE\inner{\xi}{\Gamma(z)}_H$  for
all $z\in \cQ^{(1)}\cap \cG^{(1)}$, we have that
$\Gamma^{*}(\xi)\in\big(\cQ^{(1)}\cap\cG^{(1)}\big)^-$.
By Lemma \ref{lm6}, there exists
$\psi\in\big(\cQ^{(1)}\big)^-$ with $\psi(0)=0$
such that $\Gamma^{*}(\xi)-\psi \in
\big(\cG^{(1)}\big)^-$.  Thus,  for all
$z(\cd)\in \cG^{(1)}$,\vspace{-0.2cm}
\begin{equation}\label{integral by part psi y1 caseb}
0 \ge \mE \inner{\xi}{z(T)}_H
-\mE\int_{0}^{T}\inner{z(t)}{d
\psi(t)}_H.\vspace{-0.2cm}
\end{equation}
Let $\lambda_{0}=0$. Since $\xi\neq 0$,
\eqref{6.23-eq2} holds. Set $\ds
y(T)=-\sum_{j\in \cI(\bar x)}\lambda_{j}
g_{x}^{j}(\bx(T))$.
By (\ref{duality y1 p1}) and (\ref{integral by
part psi y1 caseb}), we obtain (\ref{first order
integraltype condition}).

\vspace{+0.6em}

{\bf Case 3}: $\cQ^{(1)}\cap
\cG^{(1)}\cap\cE^{(1)}\neq\emptyset$.

In this case, it holds that
$\Gamma\big(\cQ^{(1)}\cap \cG^{(1)}\big)\cap
\cE^{(1)}_{T}\neq\emptyset.
$
By (\ref{contodicition
phiy133T}),\vspace{-0.1cm}
$$
\mE\inner{h_{x}(\bx(T))}{z(T)}_H\geq 0 , \q
\forall z(\cd)\in \cQ^{(1)}\cap \cG^{(1)}\cap
\cE^{(1)}.\vspace{-0.1cm}
$$
This yields that\vspace{-0.2cm}
\begin{equation*}
\mE\inner{h_{x}(\bx(T))}{\zeta}_H \geq 0,\; \;\;
\forall\, \zeta\in \Gamma\big(\cQ^{(1)}\cap
\cG^{(1)}\big)\cap \cE^{(1)}_{T}.
\end{equation*}
Consequently,\vspace{-0.2cm}
$$
-h_{x}(\bx(T))\in \big[\Gamma\big(\cQ^{(1)}\cap
\cG^{(1)}\big)\cap \cE^{(1)}_{T}\big]^{-}.
$$
By Lemma \ref{lm6},\vspace{-0.2cm}
$$
\big[\Gamma\big(\cQ^{(1)}\cap \cG^{(1)}\big)\cap
\cE^{(1)}_{T}\big]^{-}
=\big[\Gamma\big(\cQ^{(1)}\cap
\cG^{(1)}\big)\big]^{-}+\big(\cE^{(1)}_{T}\big)^{-}.
$$
Then, for each $j\in \cI(\bar x)$, there exists
$\lambda_j\ge 0$ such that\vspace{-0.2cm}
$$
\xi\=\sum\limits_{j\in \cI(\bar
x)}\lambda_{j}g^{j}_{x}(\bx(T)) \in
\big(\cE^{(1)}_{T}\big)^{-}\vspace{-0.2cm}
$$
and that\vspace{-0.2cm}
$$
-h_{x}(\bx(T))-\sum\limits_{j\in \cI(\bar
x)}\lambda_{j}g^{j}_{x}(\bx(T))\in
\big[\Gamma\big(\cQ^{(1)}\cap
\cG^{(1)}\big)\big]^{-}.\vspace{-0.2cm}
$$
Therefore,\vspace{-0.32cm}
$$
\Gamma^{*}\Big(-h_{x}(\bx(T))-\sum\limits_{j\in
\cI(\bar x)}\lambda_{j}g^{j}_{x}(\bx(T))\Big)\in
\big(\cQ^{(1)}\cap \cG^{(1)}\big)^{-}=
\big(\cQ^{(1)}\big)^{-}+
\big(\cG^{(1)}\big)^{-}.
$$

Let  $\psi\in\big(\cQ^{(1)}\big)^-$ with
$\psi(0)=0$ be such that\vspace{-0.2cm}
$$
\Gamma^{*}\Big(-h_{x}(\bx(T))-\sum\limits_{j\in
\cI(\bar
x)}\lambda_{j}g^{j}_{x}(\bx(T))\Big)-\psi\in
\big(\cG^{(1)}\big)^{-}.\vspace{-0.1cm}
$$
Set $\lambda_{0}=1$ and $
y(T)=-h_{x}(\bx(T))-\sum\limits_{j\in \cI(\bar
x)}\lambda_{j}g^{j}_{x}(\bx(T)).
$
Then, \eqref{6.23-eq2} holds and for all $z\in
\cG^{(1)}$,\vspace{-0.2cm}
\begin{equation}\label{integral by part psi y1 case2}
0 \geq -\mE
\inner{h_{x}(\bx(T)))}{z(T)}_H-\sum_{j\in
\cI(\bar x)}\lambda_{j}\mE
\inner{g^{j}_{x}(\bx(T))}{z(T)}_H -
\mE\int_{0}^{T}\inner{z(t)}{d\psi(t)}_H.\vspace{-0.2cm}
\end{equation}
Combining (\ref{integral by part psi y1 case2})
with (\ref{duality y1 p1}),  we obtain
(\ref{first order integraltype condition}). This
completes the proof of Theorem \ref{th1}.
\end{proof}

Let $\Phi(t,\omega)=\cC_{U}(\bu(t,\omega))$, for
a.e. $ (t,\omega)\in[0,T]\times\Omega$ and
$\cT_{\cK_a}(\bar \nu_0)=\cC_{\cK_a}(\bar
\nu_0)$. From Theorem \ref{th1} and Lemma
\ref{lm8}, it is easy to obtain the following
pointwise first order necessary condition.

\vspace{0.1cm}
\begin{theorem}\label{th2}
Let {\bf(AS1)}--{\bf(AS4)} hold and $(\bar
x(\cd),\bar u(\cd),\bar \nu_{0})$ be an optimal
triple for {\bf Problem (OP)}  such that
$\mE|g^0_{x}(\bx(t))|_H \neq 0$ for any $t\in
\cI^0(\bar x)$. Then  for $(y,Y)$ as in Theorem
\ref{th1},\vspace{-0.2cm}
\begin{equation}\label{th2-eq1}
y(0)\in  \cN^C_{\cK_a}(\bar \nu_0),\q
\dbH_{u}[t] \in \cN^C_{U}(\bar u(t)),\ a.e.\
t\in [0,T],\;\dbP\mbox{-a.s.}
\end{equation}
\end{theorem}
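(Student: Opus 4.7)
The plan is to specialise the integral-form condition of Theorem~\ref{th1} to the choices $\Phi=\cC_U(\bar u)$ and $\cT_{\cK_a}(\bar\nu_0)=\cC_{\cK_a}(\bar\nu_0)$, then to separate the two summands by varying one argument at a time, and finally to apply Lemma~\ref{lm8} to pass from the integral inequality in $(t,\omega)$ to the desired pointwise inequality.

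Concretely, Theorem~\ref{th1} supplies $\lambda_0$, $\{\lambda_j\}_{j\in\cI(\bar x)}$, $\psi$ and a transposition solution $(y,Y)$ of the first order adjoint equation such that
\begin{equation*}
\mE\inner{y(0)}{\nu}_H+\mE\int_0^T\inner{\dbH_u[t]}{v(t)}_{H_1}\,dt\leq 0
\end{equation*}
for every $\nu\in\cC_{\cK_a}(\bar\nu_0)$ and every $v(\cdot)\in\cT_\Phi(\bar u)$. Since each $\Phi(t,\omega)=\cC_U(\bar u(t,\omega))$ is a convex cone containing the origin, $0\in\cT_\Phi(\bar u)$. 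Taking $v\equiv 0$ leaves $\inner{\mE\,y(0)}{\nu}_H\leq 0$ for every deterministic $\nu\in\cC_{\cK_a}(\bar\nu_0)$; under the standing convention that $\cF_0$ is $\dbP$-trivial, $y(0)$ is deterministic and coincides with $\mE\,y(0)$, so by definition of the Clarke normal cone this reads $y(0)\in\cN^C_{\cK_a}(\bar\nu_0)$, which is the first assertion of \eqref{th2-eq1}.

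For the second assertion, taking instead $\nu=0\in\cC_{\cK_a}(\bar\nu_0)$ gives
\begin{equation*}
\mE\int_0^T\inner{\dbH_u[t]}{v(t)}_{H_1}\,dt\leq 0,\qquad\forall\,v\in\cT_\Phi(\bar u).
\end{equation*}
By Corollary~\ref{Lem-adjacent}, $\cT_\Phi(\bar u)$ consists exactly of the $L^2_\dbF$-selectors of $\cC_U(\bar u(\cdot))$ and is contained in $\cC_{\wt\cU_{ad}^{\bar\nu_0}}(\bar u)$. This places $F(\cdot)=\dbH_u[\cdot]$ in the scope of Lemma~\ref{lm8}, yielding $\inner{\dbH_u[t,\omega]}{w}_{H_1}\leq 0$ for every $w\in\cC_U(\bar u(t,\omega))$, for a.e.\ $(t,\omega)$, which is precisely $\dbH_u[t]\in\cN^C_U(\bar u(t))$ a.e., $\dbP$-a.s.

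The only delicate point is the second step, i.e.\ verifying that the integral inequality restricted to $\cT_\Phi(\bar u)$ is rich enough to trigger Lemma~\ref{lm8}. I do not expect any real obstacle here, because the proof of Lemma~\ref{lm8} proceeds by a measurable-selection/localisation argument (built on Lemma~\ref{Pro-mea-sel} and Lemma~\ref{metric projection}) whose test variations are themselves selectors of $\cC_U(\bar u(\cdot))$, i.e.\ elements of $\cT_\Phi(\bar u)$. Both inclusions in \eqref{th2-eq1} thus follow by direct specialisation of Theorem~\ref{th1} combined with Lemma~\ref{lm8}.
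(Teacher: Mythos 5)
Your proposal is correct and follows exactly the route the paper intends: the paper gives no explicit proof of Theorem~\ref{th2}, stating only that it follows from Theorem~\ref{th1} (specialised to $\Phi(t,\omega)=\cC_U(\bar u(t,\omega))$ and $\cT_{\cK_a}(\bar\nu_0)=\cC_{\cK_a}(\bar\nu_0)$) together with Lemma~\ref{lm8}, and your argument is a faithful and careful fleshing-out of that. Your observation that Lemma~\ref{lm8} is nominally stated under an inequality over all of $\cC_{\cU_{ad}^{\nu_0}}(\bar u)$ whereas Theorem~\ref{th1} only delivers it on $\cT_\Phi(\bar u)$, and that the proof in Appendix~C in fact only tests against the truncated selectors $\wt v^{k,m}$ of $\cC_U(\bar u(\cdot))$ (which lie in $\cT_\Phi(\bar u)$ up to modification on a $\wt\mu$-null set), is precisely the right way to close that small mismatch.
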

\begin{remark}
If both the control set $U$ and the initial
state constraint set $\cK_a$ are convex, then
$\cN_{U}^C(\bu)$ and $\cN_{\cK_a}^C(\bx_{0})$
are simply the normal cones of convex analysis.
\end{remark}
\begin{remark}
Let\vspace{-0.2cm}
$$
\begin{array}{ll}\ds
{\cal H}(t,x,u,\omega) \3n&\ds= \dbH(t,x,u,
y(t),Y(t),\omega)
 - \frac{1}{2}\inner{ P(t)b(t,\bx(t),\bu(t),\omega)}{b(t,\bx(t),\bu(t),\omega)}_{\cL_2}\\
\ns &\ds\q +  \frac{1}{2}\big\langle
P(t)\big(b(t,x,u,\omega)
 - b(t,\bx(t),\bu(t),\omega)\big),
b(t,x,u,\omega)-b(t,\bx(t),\bu(t),\omega)\big\rangle_{\cL_2},
\end{array}\vspace{-0.2cm}
$$
where $P(\cd)$ is the first element of the
solution of the second order adjoint process
with respect to $(\bx(\cd),\bu(\cd),\bar\nu_0)$
(defined by (\ref{op-bsystem3}) in Section 4).
If there is no state constraint, the stochastic
maximum principle (e.g. \cite{LZ1,LZ}) says
that, if $(\bx(\cd),\bu(\cd),\bar\nu_0)$ is an
optimal triple, then\vspace{-0.12cm}
\begin{equation}\label{maximum principle}
{\cal H}(t,\bx(t),\bu(t)) =\max_{v\in U} {\cal
H}(t,\bx(t),v),\quad \ a.e.\; t \in [0,T],
\;\;\dbP\mbox{-a.s.}\vspace{-0.2cm}
\end{equation}
This implies that\vspace{-0.32cm}
$$
\inner{\dbH_{u}(t,\omega)}{v}_{H_1}\le 0,\quad
\forall\, v\in \cC_{U}(\bu(t,\omega)),\ a.e.\
(t,\omega)\in [0,T]\times\Omega,\vspace{-0.2cm}
$$
i.e., the second condition in \eqref{th2-eq1}
holds. However, to derive \eqref{maximum
principle}, one has to assume that $a$, $b$  and
$h$ are $C^2$ with respect to the variable $x$.
Therefore, in practice, under some usual
structural assumptions on $U$, it is more
convenient to use the condition \eqref{th2-eq1}
directly.
\end{remark}

As for the deterministic optimal control
problems with state constraints, we call the
first order necessary condition \eqref{first
order integraltype condition} {\em normal} if
the Lagrange multiplier $\lambda_{0}\neq 0$. By
Theorem \ref{th1}, this is the case when
$\cG^{(1)}\cap\cQ^{(1)} \cap\cE^{(1)}
\neq\emptyset$. Let us give some  conditions to
guarantee it. To this end, we first introduce
the following equation:\vspace{-0.1cm}
\begin{equation}\label{first ajoint equ1}
\left\{
\begin{array}{ll}
d\tilde y(t)=- \big(A^*\tilde
y(t)+a_{1}[t]^{*}\tilde y(t) +b_{1}[t]^{*}\wt
Y(t) + \a(t)
\big)ds +\wt Y(t)dW(t) &\mbox{ in }[0,T), \\
\tilde y(T)=0,
\end{array}
\right.\vspace{-0.1cm}
\end{equation}
where  $\a(\cd)\in L^2_{\dbF}(0,T;H)$. The
equation \eqref{first ajoint equ1} is a special
case  of \eqref{first ajoint equ}, where
$d\psi(\cd)=\a(\cd)$.

Let us make the following assumptions:

\vspace{0.2cm}

{\bf (AAS1)}  $\a(\cd)=0$ whenever
$a_{1}(\cd)^{*}\tilde y[\cd]+b_{1}[\cd]^{*}\wt
Y(\cd) =0$.

\vspace{0.2cm}

{\bf (AAS2)} $\cC_{U}(\bu(t,\omega))=H_1$, for
a.e. $ (t,\omega)\in[0,T]\times\Omega$.

\vspace{0.2cm}

{\bf (AAS3)} There is a $\b(\cd)\in
C_\dbF([0,T];L^2(\Om;H))$ such
that\vspace{-0.1cm}
$$
\begin{cases}
\mE\inner{g^{0}_{x}(\bar x(t))}{\b(t)}_{H}<0,\q
\forall\, t\in \cI^0(\bar x),\\
\ns\ds\mE\inner{g^{j}_{x}(\bar
x(T))}{\b(T)}_{H}<0,\; \forall\, j\in \cI(\bar
x).
\end{cases}\vspace{-0.1cm}
$$
\begin{remark}
{\bf (AAS1)} is a condition about the unique
continuation for the solution of \eqref{first
ajoint equ1}. It means that if
$a_{1}[\cd]^{*}\tilde y(\cd)+b_{1}[\cd]^{*}\wt
Y(\cd) =0$, then the nonhomogeneous term
$\a(\cd)$ must be zero. A sufficient condition
for {\bf (AAS1)} is that $a_{1}[\cd]^{*}$ is
injective and $b_{1}[\cd]^{*}=0$.
\end{remark}
\begin{remark}
{\bf (AAS2)} means that $\cT_\Phi(\bar
u)=L^2_\dbF(0,T;H_1)$. This, together with {\bf
(AAS1)}, guarantees that the solution set of
\eqref{first vari equ} is rich enough for us to
choose one belonging to $\cQ^{(1)}
\cap\cE^{(1)}$. {\bf (AAS2)} holds for some
trivial cases. For example, $U=H_1$ or $\bar
u(t,\omega)\in{\rm int}U$, $\dbP$-a.s. for a.e.
$t\in [0,T]$. Note that we put state constraints
\eqref{constraints1} and \eqref{constraints} in
the control problem. Hence, even for $U=H_1$,
the optimal control problem is not trivial. We
believe that for some concrete control problem,
both {\bf (AAS1)} and {\bf (AAS2)} can be
dropped.   A possible way to do it is to follow
the idea in the proof of Proposition 3.3 in
\cite{Frankowska2018a}. The detailed analysis is
beyond the scope of this paper and will be
investigated in future work.
\end{remark}
\begin{remark}
From the definition of $\cG^{(1)}$, $\cQ^{(1)}$
and $\cE^{(1)}$, it is clear that {\bf (AAS3)}
is necessary for $\cG^{(1)}\cap\cQ^{(1)}
\cap\cE^{(1)} \neq\emptyset$.
\end{remark}
\begin{proposition}\label{prop1}
Let {\bf(AS1)}--{\bf(AS4)} and
{\bf(AAS1)}--{\bf(AAS3)} hold. Then
$\cG^{(1)}\cap\cQ^{(1)} \cap\cE^{(1)}
\neq\emptyset$.
\end{proposition}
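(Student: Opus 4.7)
The approach is by contradiction, adapting the multiplier construction from Case~2 in the proof of Theorem \ref{th1}. First, by (AAS3), the process $\beta$ satisfies the strict inequalities that define $\cQ^{(1)}$ and $\cE^{(1)}$; after an inessential time-regularization it lies in $\cQ^{(1)}\cap\cE^{(1)}$. Hence this intersection is a nonempty open convex cone (Lemma \ref{lm11} and its counterpart for $\cE^{(1)}$), while $\cG^{(1)}$ is a nonempty convex cone containing $0$.

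Suppose for contradiction that $\cG^{(1)}\cap\cQ^{(1)}\cap\cE^{(1)}=\emptyset$. Separate $\cG^{(1)}$ from $\cQ^{(1)}\cap\cE^{(1)}$ by Hahn--Banach (Lemma \ref{lm9}) and decompose the separating functional via Lemmas \ref{lm6} and \ref{lm12} to obtain nontrivial multipliers $\psi\in L^2_\dbF(\Omega;BV_0([0,T];H))$ with $\psi(0)=0$, $\psi\in(\cQ^{(1)})^-$, and $\lambda_j\ge 0$ for $j\in\cI(\bar x)$, such that the transposition solution $(y,Y)$ of \eqref{first ajoint equ} with this $\psi$ and terminal value $y(T)=-\sum_{j\in\cI(\bar x)}\lambda_j g^j_x(\bar x(T))$ satisfies the duality inequality
\begin{equation*}
\mE\langle y(0),\nu_1\rangle_H+\mE\int_0^T\langle\dbH_u[t],u_1(t)\rangle_{H_1}\,dt\le 0,\qquad \forall\,(\nu_1,u_1)\in\cT_{\cK_a}(\bar\nu_0)\times\cT_\Phi(\bar u).
\end{equation*}
By (AAS2) one has $\cT_\Phi(\bar u)=L^2_\dbF(0,T;H_1)$, so replacing $u_1$ by $\pm u_1$ yields $\dbH_u[t]=a_2[t]^*y(t)+b_2[t]^*Y(t)=0$ a.e., leaving $\mE\langle y(0),\nu_1\rangle_H\le 0$ for every $\nu_1\in\cT_{\cK_a}(\bar\nu_0)$.

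To finish, one must force the multipliers to vanish. The intended route is through (AAS1): after an affine shift absorbing $y(T)$ into an inhomogeneous term and a smoothing of the BV-measure $d\psi$ to an absolutely continuous source $\alpha(t)\,dt$, the adjoint \eqref{first ajoint equ} takes the form of \eqref{first ajoint equ1}; (AAS1) then yields $\alpha\equiv 0$, whence $\psi=0$ and $\sum_{j\in\cI(\bar x)}\lambda_j g^j_x(\bar x(T))=0$. The nondegeneracy implied by (AAS3)—namely $g^j_x(\bar x(T))\neq 0$—together with Lemma \ref{lm6} then forces $\lambda_j=0$ for every $j\in\cI(\bar x)$, contradicting the nontriviality of the multipliers. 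The main obstacle is precisely this last step: (AAS1) is stated for \eqref{first ajoint equ1} with an $L^2$-source and zero terminal data, whereas we have a BV-measure source and nonzero terminal condition. Overcoming it requires the affine shift and smoothing sketched above, together with a careful exploitation of the fact that $\psi\in(\cQ^{(1)})^-$ localizes the support of $\psi$ on $\cI_0(\bar x)$; moreover one must connect the derived relation $a_2^*y+b_2^*Y=0$ with the hypothesis $a_1^*\tilde y+b_1^*\tilde Y=0$ of (AAS1), most plausibly by substituting $\dbH_u=0$ back into the adjoint dynamics to cancel the control contribution and recover an equation in the form of \eqref{first ajoint equ1}.
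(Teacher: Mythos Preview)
Your approach is fundamentally different from the paper's, and the obstacles you flag are real and not easily repaired.

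The paper proceeds \emph{directly}, not by contradiction on multipliers. Step~1 shows that under {\bf(AAS2)} the linearized control-to-state map $\Pi:u_1\mapsto x_1$ has dense range in $L^2_\dbF(0,T;H)$. This is where {\bf(AAS1)} enters, and only here: if some nonzero $\beta_0$ were orthogonal to the range, set $\alpha=\beta_0$ in \eqref{first ajoint equ1}; the transposition duality gives $\mE\int_0^T\langle u_1,a_2^*\tilde y+b_2^*\tilde Y\rangle_{H_1}dt=0$ for all $u_1$, hence the coupling term vanishes, and {\bf(AAS1)} forces $\alpha=\beta_0=0$. Step~2 then takes the process $\beta$ supplied by {\bf(AAS3)}, approximates it in $L^2_\dbF(0,T;H)$ by genuine solutions $x_{1,k}=\Pi(u_{1,k})$, upgrades the convergence to $C_\dbF([0,T];L^2(\Omega;H))$ via a subsequence and continuity, and concludes that some $x_{1,N}$ satisfies the strict inequalities defining $\cQ^{(1)}$ and $\cE^{(1)}$. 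Thus $x_{1,N}\in\cG^{(1)}\cap\cQ^{(1)}\cap\cE^{(1)}$.

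Your route---separate, extract multipliers $(\psi,\lambda_j)$, derive $a_2^*y+b_2^*Y=0$ from {\bf(AAS2)}, then invoke {\bf(AAS1)} to kill the multipliers---runs into exactly the difficulties you name, and they are not cosmetic. First, {\bf(AAS1)} is formulated for \eqref{first ajoint equ1}, which has an $L^2$ drift source $\alpha$ and \emph{zero} terminal datum; your adjoint \eqref{first ajoint equ} carries a $BV$-measure $d\psi$ and a nonzero $y(T)=-\sum\lambda_jg^j_x(\bar x(T))$. No ``smoothing'' of $d\psi$ to an $L^2$ density is available in general, and an affine shift to absorb $y(T)$ introduces new inhomogeneous terms rather than eliminating them. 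Second, even granting the reduction, what you obtain is the vanishing of $a_2^*y+b_2^*Y$, whereas the hypothesis of {\bf(AAS1)} (as stated) concerns $a_1^*\tilde y+b_1^*\tilde Y$; there is no mechanism in the adjoint dynamics that converts one into the other, since $a_1,b_1$ and $a_2,b_2$ are independent data. The paper sidesteps both issues by using {\bf(AAS1)} solely as a unique-continuation statement yielding density of the forward reachable set, and then constructing an element of the triple intersection explicitly.
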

\begin{proof}
We divide the proof into two
steps.

{\bf Step 1}. It follows from {\bf(AAS2)} that
$\cT_\Phi(\bar u)=L^2_\dbF(0,T;H_1)$. Define a
map $\Pi:\cT_\Phi(\bar u)\to L^2_\dbF(0,T;H)$ in
the following way:\vspace{-0.1cm}
$$
\Pi(u_1)(\cd)=x_1(\cd),\vspace{-0.1cm}
$$
where $x_1(\cd)$ is the solution of \eqref{first
vari equ} for some $u_1(\cd)\in \cT_\Phi(\bar
u)$.

We claim that\vspace{-0.2cm}
\begin{equation}\label{8.3-eq5}
\Pi(\cT_\Phi(\bar u)) \mbox{ is dense in }
L^2_\dbF(0,T;H).
\end{equation}
Let us prove \eqref{8.3-eq5} by a contradiction
argument. Without loss of generality, we assume
that $\nu_1=0$. If \eqref{8.3-eq5} was false,
then there would exist a nonzero $\b_0(\cd)\in
L^2_\dbF(0,T;H)$ such that for any $u_1(\cd)\in
\cT_\Phi(\bar u)$,\vspace{-0.1cm}
\begin{equation}\label{8.3-eq4}
 \mE\int_0^T \lan x_1(t),\b_0(t) \ran_{H} dt=0.\vspace{-0.1cm}
\end{equation}
Let  $\a=\b_0$. By the definition of the
transposition solution of \eqref{first ajoint
equ1}, we have that for any $u_1(\cd)\in
\cT_\Phi(\bar u)$,\vspace{-0.285cm}
\begin{eqnarray}\label{8.3-eq3}
0 \!=\! \mE\!\int_0^T\!\! \lan x_1(t),\b_0(t)
\ran_{H} dt \!=\!\mE\!\int_0^T\!\! \lan
u_1(t),a_2(t)^*\tilde y(t)\ran_{H_1}
ds+\mE\!\int_0^T\!\! \lan u_1(t),b_2(t)^*\wt
Y(t)\ran_{H_1} dt.\vspace{-0.321cm}
\end{eqnarray}
This, together with the choice of $u_1(\cd)$,
implies that $a_{1}[\cd]^{*}\tilde
y(\cd)+b_{1}[\cd]^{*}\wt Y(\cd) =0$ for a.e.
$t\in [0,T]$. By {\bf (AAS1)}, we see  $\a=0$ in
$L^2_\dbF(0,T;H)$, a contradiction.
Consequently, \eqref{8.3-eq5} holds.

\vspace{0.1cm}

{\bf Step 2}. Since $\cI^0(\bar x)$ is compact,
by {\bf (AAS3)}, one can find a $\b(\cd)\in
C_\dbF([0,T];L^2(\Om;H))$ such that, there are
$\e_0>0$ and $M_0>0$ so that\vspace{-0.1cm}
\begin{equation}\label{11.20-eq10}
\begin{cases}
\mE\inner{g^{0}_{x}(\bar
x(t))}{\b(t)}_{H}<-\e_0,\q |g^{0}_{x}(\bar
x(t))|_{L^2_{\cF_t}(\Om;H)}\leq M_0, \q
\forall\, t\in \cI^0(\bar x),\\
\ns\ds\mE\inner{g^{j}_{x}(\bar
x(T))}{\b(T)}_{H}<-\e_0,\q |g^{j}_{x}(\bar
x(T))|_{L^2_{\cF_t}(\Om;H)}\leq M_0,\q \forall\,
j\in \cI(\bar x).
\end{cases}\vspace{-0.1cm}
\end{equation}

It follows from \eqref{8.3-eq5} that for every
$k\in\dbN$, there is $u_{1,k}\in \cT_\Phi(\bar
u)$ such that the corresponding solution
$x_{1,k}=\Pi(u_{1,k})$ satisfies
that\vspace{-0.4332cm}
\begin{equation*}\label{8.3-eq1}
|x_{1,k}-
\b|_{L^2_\dbF(0,T;H)}<\frac{1}{k}.\vspace{-0.1cm}
\end{equation*}
Consequently, there is a subsequence
$\{u_{1,k_j}\}_{j=1}^\infty$ of
$\{u_{1,k}\}_{k=1}^\infty$ such
that\vspace{-0.1cm}
\begin{equation}\label{8.3-eq6}
\lim_{j\to\infty}x_{1,k_j}(t)=\b(t) \mbox{ in
}L^2_{\cF_T}(\Om;H),\q \mbox{ for a.e. }t\in
[0,T].\vspace{-0.2cm}
\end{equation}
Since both $x_{1,k_j}(\cd)$ and $\b(\cd)$ belong
to $C_\dbF([0,T];L^2(\Om;H))$, we get from
\eqref{8.3-eq6} that\vspace{-0.2cm}
\begin{equation*}\label{8.3-eq7*}
\lim_{j\to\infty}x_{1,k_j}(\cd)=\b(\cd)\; \mbox{
in }\;C_\dbF([0,T];L^2(\Om;H)).\vspace{-0.2cm}
\end{equation*}
Hence, there exists $N\in\dbN$ such
that\vspace{-0.2cm}
\begin{equation*}\label{8.3-eq8*}
|x_{1,N}(t)-\b(t)|_{L^2_{\cF_T}(\Om;H)}<
\frac{\e_0}{2M_0}\; \mbox{ for all }t\in
[0,T].\vspace{-0.1cm}
\end{equation*}
This, together with {\bf (AAS3)} and
\eqref{11.20-eq10}, implies that\vspace{-0.1cm}
$$
\begin{array}{ll}\ds
\mE\inner{g^{0}_{x}(\bar
x(t))}{x_1(t)}_{H}\3n&\ds
=\mE\inner{g^{0}_{x}(\bar
x(t))}{x_1(t)-\b(t)}_{H}+
\mE\inner{g^{0}_{x}(\bar x(t))}{\b(t)}_{H}\\
\ns&\ds \leq M_0 \times \frac{\e_0}{2M_0} - \e_0
<0,\qq\qq \forall\, t\in \cI^0(\bar x)
\end{array}\vspace{-0.21cm}
$$
and
$$
\begin{array}{ll}\ds
\mE\inner{g^{j}_{x}(\bar
x(T))}{x_1(T)}_{H}\3n&\ds=\mE\inner{g^{j}_{x}(\bar
x(T))}{x_1(T)-\b(T)}_{H}+
\mE\inner{g^{j}_{x}(\bar
x(T))}{\b(T)}_{H}\\
\ns&\ds \leq  M_0 \times \frac{\e_0}{2M_0} -
\e_0 <0,\qq\qq \forall\, j\in \cI(\bar x).
\end{array}\vspace{-0.1cm}
$$
This completes the proof.
\end{proof}


\section{Second order necessary conditions}\label{second}


In this section, we establish second order
necessary conditions for the optimal triple of
{\bf Problem (OP)}. In addition to {\bf
(AS1)}--{\bf (AS4)}, we impose the following:

\vspace{0.2cm}

{\bf (AS5)}  {\em For   a.e. $(t,\omega)\in [0,
T]\times\Omega$, the operators $a(t, \cdot,
\cdot,\omega):\ H\times H_1\to H$ and $b(t,
\cdot, \cdot,\omega):\ H\times H_1\to \cL_2$ are
$C^2$, and $a_{xu}(t,x,u,\omega)$ and
$b_{xu}(t,x,u,\omega)$ are uniformly continuous
with respect to $x\in H$ and $u\in H_1$, and
}\vspace{-0.2cm}
$$
|a_{xu}(t,x,u,\omega)|_{\cL(H\times H_1;H)} +
|b_{xu}(t,x,u,\omega)|_{\cL(H\times H_1;\cL_2)}
\leq C,\q \forall\; (x,u)\in H\times
H_1.\vspace{-0.1cm}
$$

\vspace{0.1cm}

{\bf (AS6)}  {\em The functional
$h(\cdot,\omega):\ H \to \dbR$ is $C^2$,
$\dbP$-a.s., and for any $x,\ \tilde{x}\in
H$,}\vspace{-0.2cm}
$$
|h_{xx}(x,\omega)|_{\cL(H\times H;\dbR)} \le C,
\q
|h_{xx}(x,\omega)-h_{xx}(\tilde{x},\omega)|_{\cL(H\times
H;\dbR)}\le C|x-\tilde{x}|_H.\vspace{-0.2cm}
$$

\vspace{0.2cm}

{\bf (AS7)}  {\em For $j=0,1,\cds,n$, the
functional $g^j(\cdot):\ H \to \dbR$ is $C^2$,
and for any $x,\ \tilde{x}\in
H$,}\vspace{-0.2cm}
$$
|g^j_{xx}(x,\omega)|_{\cL(H\times H;\dbR)} \leq
C, \q
|g^j_{xx}(x,\omega)-g^j_{xx}(\tilde{x},\omega)|_{\cL(H\times
H;\dbR)}\leq C|x-\tilde{x}|_H.\vspace{-0.2cm}
$$

\vspace{0.1cm}

{\bf (AS8)}  {\em The optimal control }$\bar
u\in\cV\= \cU\cap L^4_{\dbF}(0,T;H_1)$.

\vspace{0.2cm}

In what follows,  $\mathcal{V}$ is viewed as a
subset of $L^{4}_{\dbF}(0,T;H_1)$ in the
definitions of $T^b_{\mathcal{V}}(\bu)$ and
$T^{b(2)}_{\mathcal{V}}(\bu,v)$.

\vspace{0.2cm}

{\bf (AS9)}  {\em $(\Om,\cF_T,\dbP)$ is
separable.}

\begin{remark}
Recall that $(\Om,\cF_T,\dbP)$ is separable  if
there exists a countable family
$\cD\subset\cF_T$ such that, for any $\e>0$ and
$B\in \cF_T$ one can find $B_1\in\cD$ with
$\dbP\big((B\setminus B_1)\cup (B_1\setminus
B)\big)<\e$. Probability space enjoying such
kind of property is called a standard
probability space. Except some artificial
examples, almost all frequently used probability
spaces are standard ones(e.g. \cite{Rohlin}).
From \cite[Section 13.4]{BBT},  if {\bf (AS9)}
holds,  then $L^{p}_{\cF_T}(\Om)$ ($1\leq
p<\infty$) is separable.
\end{remark}

Consider the following $\cL(H)$-valued
BSEE\footnote{Throughout this paper, for any
operator-valued process (\resp random variable)
$R$, we denote by $R^*$ its pointwisely dual
operator-valued process (\resp random variable),
e.g., if $R\in L^{r_1}_\dbF(0,T; L^{r_2}(\Omega;
\cL(H)))$, then $R^*\in L^{r_1}_\dbF(0,T;
L^{r_2}(\Omega; \cL(H)))$, and
$\|R\|_{L^{r_1}_\dbF(0,T; L^{r_2}(\Omega;
\cL(H)))}=\|R^*\|_{L^{r_1}_\dbF(0,T;
L^{r_2}(\Omega; \cL(H)))}$.}:\vspace{-0.1cm}
\begin{eqnarray}\label{op-bsystem3}
\2n\left\{\2n
\begin{array}{ll} \ds dP \! =\!  -
(A^* \!+\! J^* )P dt \! - \! P(A \!+\! J )dt
\!-\!K^*PKdt \! - (K^* Q \!+\! Q K)dt
\!+\!   Fdt \! + \! Q dW(t) \;\mbox{ in } [0,T),\\
\ns\ds P(T) = P_T,
\end{array}
\right.\vspace{-0.1cm}
\end{eqnarray}
where $F\in L^1_\dbF(0,T;L^2(\Omega;\cL(H)))$,
$P_T\in L^2_{\cF_T}(\Omega;\cL(H))$, $J\in
L^4_\dbF(0,T; L^\infty(\Omega; \cL(H)))$  and
$K\in L^4_\dbF(0,T; L^\infty(\Omega;
\cL(H;\cL_2)))$. In \eqref{op-bsystem3}, the
unknown (or solution) is a pair $(P,Q)$.

Let us first recall the definition of the
relaxed transposition solution of
\eqref{op-bsystem3}. To this end, consider two
SEEs:\vspace{-0.2cm}
\begin{equation}\label{op-fsystem2}
\left\{
\begin{array}{ll}
\ds d\p_1(s) = \big[(A+J)\p_1(s) + \tilde f_1(s)\big]ds + \big(K\p_1(s) + \hat f_1(s)\big) dW(s) &\mbox{ in } (t,T],\\
\ns\ds \p_1(t)=\xi_1
\end{array}
\right.
\end{equation}
and\vspace{-0.2cm}
\begin{equation}\label{op-fsystem3}
\left\{
\begin{array}{ll}
\ds d\p_2(s) = \big[(A+J)\p_2(s) + \tilde
f_2(s)\big]ds
+ \big(K\p_2(s) + \hat f_2(s)\big) dW(s) &\mbox{ in } (t,T],\\
\ns\ds \p_2(t)=\xi_2.
\end{array}
\right.
\end{equation}
Here $t\in [0,T)$, $\xi_1,\xi_2 \in
L^4_{\cF_t}(\Omega;H)$, $\tilde f_1,\tilde f_2
\in L^2_\dbF(t,T;L^4(\Omega;H))$ and $\hat
f_1,\hat f_2 \in
L^2_\dbF(t,T;L^4(\Omega;\cL_2))$.

Write\vspace{-0.2cm}
\begin{equation*}\label{jshi1}
\!\!\!\!\!\!\begin{array}{ll}\ds
D_{\dbF,w}([0,T];L^{2}(\Omega;\cL(H)))\\
\ns\ds\= \Big\{P(\cd,\cd)\;\Big|\; P(\cd,\cd)\in
\cL\big(L^{2}_{\dbF}(0,T;L^{4}(\Omega;H)),
\;L^2_{\dbF}(0,T;L^{\frac{4}{3}}(\Omega;H))\big),\;
P(t,\o)\in\cL(H) \mbox{ for a.e. }
\\
\ns\ds\q\, (t,\o)\!\in\!
[0,T]\!\times\!\Om,\mbox{ and for every }
t\!\in\![0,T]\hb{ and }\xi\! \in\!
L^4_{\cF_t}(\Omega;H),\, P(\cd,\cd)\xi\!\in\!
D_{\dbF}([t,T];L^{\frac{4}{3}}(\Omega;H))\\
\ns\ds\q  \mbox{ and }
\|P(\cd,\cd)\xi\|_{D_{\dbF}([t,T];L^{\frac{4}{3}}(\Omega;H))}
\leq C\|\xi\|_{L^4_{\cF_t}(\Omega;H)} \Big\}
\end{array}\vspace{-0.12cm}
\end{equation*}
and\vspace{-0.32cm}
\begin{equation*}\label{jshi2}
\3n\begin{array}{ll}\ds
\dbQ[0,T]\!\=\!\Big\{\big(Q^{(\cd)},\widehat
Q^{(\cd)}\big)\;\Big|\;\mbox{For any } t\in
[0,T], \mbox{ both }Q^{(t)}\mbox{ and }\widehat
Q^{(t)}\mbox{ are bounded
linear operators}\\
\ns\ds\hspace{1.7cm}\mbox{ from
}L^4_{\cF_t}(\O;H)\times
L^2_\dbF(t,T;L^4(\Omega;H))\times
L^2_\dbF(t,T;L^4(\Omega;\cL_2)) \mbox{ to }
L^{2}_\dbF(t,T;L^{\frac{4}{3}}(\Omega;\cL_2))\\
\ns\ds \hspace{1.7cm} \mbox{ and
}Q^{(t)}(0,0,\cd)^*=\widehat
Q^{(t)}(0,0,\cd)\Big\}.
\end{array}\vspace{-0.1cm}
\end{equation*}

\begin{definition}\label{op-definition2x}
We call $\big(P(\cd),(Q^{(\cd)},\widehat
Q^{(\cd)})\big)\in D_{\dbF,w}([0,T];
L^{2}(\Omega;\cL(H)))\times \dbQ[0,T]$ a relaxed
transposition solution of \eqref{op-bsystem3} if
for every $t\in [0,T]$, $\xi_1,\xi_2\in
L^4_{\cF_t}(\Omega;H)$, $\tilde f_1(\cd), \tilde
f_2(\cd)\in L^2_{\dbF}(t,T;$ $L^4(\Omega;H))$
and $\hat f_1(\cd), \hat f_2(\cd)\in
L^2_{\dbF}(t,T; L^4(\Omega;\cL_2))$, the
following is satisfied\vspace{-0.2cm}
\begin{equation}\label{6.18eq1}
\begin{array}{ll}
\ds \q\mE\big\langle P_T \p_1(T),  \p_2(T)
\big\rangle_{H} - \mE \int_t^T \big\langle
F(s) \p_1(s), \p_2(s) \big\rangle_{H}ds\\
\ns\ds =\mE\big\langle P(t) \xi_1,\xi_2
\big\rangle_{H} + \mE \int_t^T \big\langle
P(s)\tilde f_1(s), \p_2(s)\big\rangle_{H}ds +
\mE \int_t^T \big\langle P(s)\p_1(s),
\tilde f_2(s)\big\rangle_{H}ds \\
\ns\ds \q  + \mE \int_t^T \big\langle P(s)K
(s)\p_1 (s), \hat f_2(s)\big\rangle_{\cL_2}ds +
\mE \int_t^T \big\langle  P(s)\hat
f_1(s), K (s)\p_2 (s)+ \hat f_2(s)\big\rangle_{\cL_2}ds\\
\ns\ds \q + \mE \int_t^T \big\langle \hat
f_1(s), \widehat Q^{(t)}(\xi_2,\tilde f_2 ,\hat
f_2)(s)\big\rangle_{\cL_2}ds+ \mE \int_t^T
\big\langle Q^{(t)}(\xi_1,\tilde f_1,\hat
f_1)(s), \hat f_2(s) \big\rangle_{\cL_2}ds.
\end{array}\vspace{-0.1cm}
\end{equation}
Here, $\p_1(\cd)$ and $\p_2(\cd)$ solve
\eqref{op-fsystem2} and \eqref{op-fsystem3},
respectively.
\end{definition}

\begin{lemma}\label{OP-th2}
Let {\bf (AS9)} hold. Then the equation
\eqref{op-bsystem3} admits a unique relaxed
transposition solution $\big(P(\cd),(Q^{(\cd)},$
$\widehat Q^{(\cd)})\big) \in D_{\dbF,w}([0,T];$
$ L^{2}(\Omega;\cL(H))) \times \dbQ[0,T]$.
Furthermore,\vspace{-0.21cm}
$$
\begin{array}{ll}\ds
|P|_{D_{\dbF,w}([0,T];L^{2}(\Om; \cL(H)))} +
|\big(Q^{(\cd)},\widehat Q^{(\cd)}\big)|_{\dbQ
[0,T]} \leq C\big(
|F|_{L^1_\dbF(0,T;\,L^{2}(\Om;\cL(H)))} +
|P_T|_{L^{2}_{\cF_T}(\Om;\,\cL(H))}\big).
\end{array}\vspace{-0.1cm}
$$
\end{lemma}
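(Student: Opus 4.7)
The plan is to build $P$ and the pair $(Q^{(\cdot)},\widehat Q^{(\cdot)})$ purely by duality, mimicking the construction of the transposition solution used for the vector-valued equation in Lemma \ref{lm13}, but now carried out at the operator level. Well-posedness of the two forward equations \eqref{op-fsystem2} and \eqref{op-fsystem3} in $L^2_\dbF(\Omega;C([t,T];H))\cap L^4_\dbF(\Omega;C([t,T];H))$ together with the linear estimate
$$
\mE\sup_{s\in[t,T]}|\p_i(s)|_H^4\le C\big(\mE|\xi_i|_H^4+|\tilde f_i|_{L^2_\dbF(t,T;L^4(\Om;H))}^4+|\hat f_i|_{L^2_\dbF(t,T;L^4(\Om;\cL_2))}^4\big)
$$
is the standard first input; the $L^4$ integrability matches the $L^4_\dbF(0,T;L^\infty(\Om;\cdot))$ regularity of $J$ and $K$.

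\emph{Step 1 (the operator $P$).} For $t\in[0,T]$ and $\xi_1,\xi_2\in L^4_{\cF_t}(\Om;H)$, let $\p_i$ solve \eqref{op-fsystem2}--\eqref{op-fsystem3} with $\tilde f_i=\hat f_i=0$, and define
$$
\Lambda_t(\xi_1,\xi_2)\=\mE\langle P_T\p_1(T),\p_2(T)\rangle_H-\mE\int_t^T\langle F(s)\p_1(s),\p_2(s)\rangle_H ds.
$$
The forward estimate and Hölder give $|\Lambda_t(\xi_1,\xi_2)|\le C(|P_T|_{L^2_{\cF_T}(\Om;\cL(H))}+|F|_{L^1_\dbF(0,T;L^2(\Om;\cL(H)))})|\xi_1|_{L^4_{\cF_t}(\Om;H)}|\xi_2|_{L^4_{\cF_t}(\Om;H)}$. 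By \textbf{(AS9)}, $L^4_{\cF_t}(\Om;H)$ is separable, hence the bounded bilinear form $\Lambda_t$ defines an operator $P(t)\in\cL(L^4_{\cF_t}(\Om;H);L^{4/3}_{\cF_t}(\Om;H))$ via $\mE\langle P(t)\xi_1,\xi_2\rangle_H=\Lambda_t(\xi_1,\xi_2)$. Inserting $\xi_i=\mathbf{1}_A\eta_i$ with $A\in\cF_t$ and $\eta_i\in H$ recovers a pointwise $\cF_t$-measurable $\cL(H)$-valued representative of $P(t)$; varying $t$ and exploiting continuity of the data $s\mapsto\p_i(s)$ in $L^{4/3}$ gives the càdlàg-in-the-weak-sense property needed for $P(\cdot)\in D_{\dbF,w}([0,T];L^2(\Om;\cL(H)))$.

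\emph{Step 2 (the operators $Q^{(t)}$ and $\widehat Q^{(t)}$).} Once $P$ is fixed, reorganise \eqref{6.18eq1} so that, for prescribed $(\xi_1,\tilde f_1,\hat f_1)$, the sum of all terms not containing $\hat Q^{(t)}$ is a bounded linear functional of $(\xi_2,\tilde f_2,\hat f_2)$; isolating the dependence on $\hat f_2$ produces, by Riesz representation on the separable space $L^2_\dbF(t,T;L^4(\Om;\cL_2))$, the element $Q^{(t)}(\xi_1,\tilde f_1,\hat f_1)\in L^2_\dbF(t,T;L^{4/3}(\Om;\cL_2))$. Symmetrically, freezing $(\xi_2,\tilde f_2,\hat f_2)$ and reading the identity as a functional of $\hat f_1$ defines $\widehat Q^{(t)}(\xi_2,\tilde f_2,\hat f_2)$. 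Tri-linearity and boundedness in the three arguments follow from the corresponding forward estimates, placing $(Q^{(\cdot)},\widehat Q^{(\cdot)})$ in $\dbQ[0,T]$. The constraint $Q^{(t)}(0,0,\cdot)^*=\widehat Q^{(t)}(0,0,\cdot)$ is automatic because when $\xi_1=\xi_2=0$ and $\tilde f_1=\tilde f_2=0$ the defining identity is symmetric under the swap $(\p_1,\hat f_1)\leftrightarrow(\p_2,\hat f_2)$, forcing the two operator-valued pairings to coincide.

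\emph{Step 3 (uniqueness and norm bound).} If $(P',(Q')^{(\cdot)},(\widehat Q')^{(\cdot)})$ is another relaxed transposition solution, subtracting the two versions of \eqref{6.18eq1} and specialising the test data gives $\mE\langle(P(t)-P'(t))\xi_1,\xi_2\rangle_H=0$ for all $\xi_1,\xi_2\in L^4_{\cF_t}(\Om;H)$, hence $P=P'$ in $D_{\dbF,w}([0,T];L^2(\Om;\cL(H)))$; similarly varying only $\hat f_2$ (respectively $\hat f_1$) forces $Q^{(t)}=(Q')^{(t)}$ and $\widehat Q^{(t)}=(\widehat Q')^{(t)}$. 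The claimed norm estimate then comes directly from the bounds produced in Steps 1 and 2.

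The main obstacle will be Step 2, namely exhibiting $Q^{(t)}$ and $\widehat Q^{(t)}$ as jointly bounded tri-linear operators and checking the compatibility relation $Q^{(t)}(0,0,\cdot)^*=\widehat Q^{(t)}(0,0,\cdot)$; this is where the separability assumption \textbf{(AS9)} is genuinely used, because the Riesz representation that produces the relaxed integrands requires the test space $L^2_\dbF(t,T;L^4(\Om;\cL_2))$ to be separable. Everything else reduces to the forward linear estimates and bookkeeping of the duality.
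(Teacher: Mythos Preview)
Your proposal is correct and follows essentially the same duality-based construction as the paper; indeed, the paper does not give an independent proof but simply refers to \cite[Theorem 6.1]{LZ1}, noting that one only has to replace the $H$-inner products by $\cL_2$-inner products in the terms involving $\hat f_1,\hat f_2$. Your Steps 1--3 are precisely the scheme carried out in that reference: build $P(t)$ from the bilinear functional $\Lambda_t$ on $L^4_{\cF_t}(\Om;H)\times L^4_{\cF_t}(\Om;H)$, then identify the residual pairings with $\hat f_1,\hat f_2$ as $Q^{(t)},\widehat Q^{(t)}$ via Riesz representation, using separability (\textbf{(AS9)}) to ensure the dual spaces have the claimed form.
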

The proof is almost the same as the one of
\cite[Theorem 6.1]{LZ1}. The only difference is
that one should replace the inner product of $H$
by $\cL_2$ for terms involving $\hat f_1$ and
$\hat f_2$. Hence we omit it.

\vspace{0.2cm}

For $\varphi$ equal to $a$ or $b$,
let\vspace{-0.2cm}
$$
\varphi_{11}[t]=\varphi_{xx}(t,\bar{x}(t),\bar{u}(t)),\quad
\varphi_{12}[t]=\varphi_{xu}(t,\bar{x}(t),\bar{u}(t)),\quad
\varphi_{22}[t]=\varphi_{uu}(t,\bar{x}(t),\bar{u}(t)).\vspace{-0.1cm}
$$

For  $\nu_1\in T^b_{\cK_a}(\bx_{0})$, $u_1\in
T^b_{\mathcal{V}}(\bu)$,  $\nu_{2}\in
T^{b(2)}_{\cK_a}(\bx_{0},\nu_1)$ and $u_2\in
T^{b(2)}_{\mathcal{V}}(\bu,u_1)$, consider the
following second order variational
equation:\vspace{-0.2cm}
\begin{equation}\label{second order vari equ}
\left\{\!\!
\begin{array}{ll}\ds
dx_{2}(t)= \Big[Ax_{2}(t) +a_{1}[t]x_{2}(t) +
a_{2}[t]u_2(t) +\frac{1}{2}a_{11}[t]\big(x_{1}(t),x_{1}(t)\big)+ a_{12}[t]\big(x_{1}(t),u_1(t)\big)\\
\ns\ds\qquad\qquad\;+\frac{1}{2}a_{22}[t]\big(u_1(t),u_1(t)\big)\Big]dt
+\Big[b_{1}[t]x_{2}(t)+b_{2}[t]u_2(t) +\frac{1}{2}b_{11}[t]\big(x_{1}(t),x_{1}(t)\big)\\
\ns\ds\qquad\qquad\;+
b_{12}[t]\big(x_{1}(t),u_1(t)\big)
+\frac{1}{2}b_{22}[t]\big(u_1(t),u_1(t)\big)\Big]dW(t)  \qq\qq\qq\q\;\mbox{ in }(0,T],\\
x_{2}(0)=\nu_{2},
\end{array}
\right.
\end{equation}
where $x_{1}(\cd)$ is the solution of the first
order variational equation (\ref{first vari
equ}) (for $u_1(\cdot)$ and $\nu_1$ as above).

By the definition of the second order adjacent
tangent, for any $\e>0$, there exist
$\nu_{2}^\e\in H$ and $u_2^\e(\cd)\in
L^{4}_{\dbF}(0,T;H_1)$ such that\vspace{-0.2cm}
$$
\nu^{\e}_{0}\=\bar\nu_{0}+\e  \nu_1+\e^2
\nu_{2}^{\e}\in \cK_a, \qquad
u^{\e}(\cd)\=\bu(\cd)+\e u_1(\cd)+\e^2
u_2^{\e}(\cd)\in \cV\vspace{-0.2cm}
$$
and\vspace{-0.2cm}
$$
\lim_{\e\to 0^+}\nu_{2}^{\e}=\nu_{2} \mbox{ in
}H, \qq \lim_{\e\to 0^+}u_2^{\e}=u_2 \mbox{ in
}L^{4}_{\dbF}(0,T;H_1).
$$

Denote by $x^{\e}(\cd)$ the solution  of
(\ref{controlsys}) corresponding to the control
$u^{\e}(\cd)$ and the initial datum
$\nu^{\e}_{0}$. Put\vspace{-0.2cm}
$$
\delta x^{\e}(\cd)\=x^{\e}(\cd)-\bar{x}(\cd),
\qq r_2^{\e}(\cd)\=\frac{\delta x^{\e}(\cd)-\e
x_1(\cd)-\e^2 x_2(\cd)}{\e^2}.
$$

We have the following result.
\begin{lemma}\label{estimate two of varie qu}
Suppose that {\bf(AS1)}, {\bf(AS2)} and
{\bf(AS5)} hold. Then, for $\nu_1, \!\nu_{2},
\nu_{2}^{\e}\!\in\! H$ and $u_1(\cd), u_2(\cd)$,
$u_2^{\e}(\cd)\in L^{4}_{\dbF}(0,T;H_1)$ as
above, we have\vspace{-0.1cm}
$$
\|x_{2}\|_{L^\infty_\dbF(0,T;L^2(\Om;H))} \leq
C\big(|\nu_{2}|_H +|\nu_1|_H^2 +
|u_1|_{L^{4}_\dbF(0,T;H_1)}^2
+|u_2|_{L^{2}_\dbF(0,T;H_1)}
\big)\vspace{-0.1cm}
$$
and\vspace{-0.1cm}
\begin{equation}\label{r2 to 0}
\lim_{\e\to
0^+}|r_{2}^{\e}|_{L^\infty_\dbF(0,T;L^2(\Om;H))}=
0.
\end{equation}
\end{lemma}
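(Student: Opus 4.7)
The plan is to establish both estimates by standard well-posedness of linear SEEs applied to the equations governing $x_2$ and $r_2^\varepsilon$. The first bound is a direct energy estimate; the real work is in deriving the equation for $r_2^\varepsilon$ via a second-order Taylor expansion and showing that the forcing terms vanish in $L^\infty_\dbF(0,T;L^2(\Om;H))$.

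For the first estimate, I would view \eqref{second order vari equ} as a linear SEE for $x_2$ with bounded coefficients $A+a_1[t]$ and $b_1[t]$, and an inhomogeneous drift/diffusion made of the control term $a_2[t]u_2$, $b_2[t]u_2$ and the five quadratic ``data'' terms $a_{11}[t](x_1,x_1)$, $a_{12}[t](x_1,u_1)$, $a_{22}[t](u_1,u_1)$, and their $b$-analogues. Under \textbf{(AS2)} and \textbf{(AS5)}, the second derivatives are uniformly bounded, so the data are controlled by $|x_1|_H^2 + |x_1|_H|u_1|_{H_1} + |u_1|_{H_1}^2 + |u_2|_{H_1}$. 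Applying Lemma \ref{well lemma s1} with $p=4$ to \eqref{first vari equ} yields $|x_1|_{L^\infty_\dbF(0,T;L^4(\Om;H))}\le C(|\nu_1|_H + |u_1|_{L^4_\dbF(0,T;H_1)})$, so the quadratic terms land in $L^2_\dbF(0,T;L^2(\Om;H))$ with a bound of the desired form $|\nu_1|_H^2 + |u_1|_{L^4_\dbF(0,T;H_1)}^2$. The standard mild-solution estimate for the linear SEE then delivers the first inequality.

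For \eqref{r2 to 0}, I would first derive the equation satisfied by $r_2^\varepsilon$. Writing $v^\varepsilon(\cd) \= u_1(\cd) + \varepsilon u_2^\varepsilon(\cd)$ and $\delta x^\varepsilon = \varepsilon x_1 + \varepsilon^2 x_2 + \varepsilon^2 r_2^\varepsilon$, I apply a second-order Taylor expansion to $a(t,x^\varepsilon,u^\varepsilon)-a(t,\bar x,\bar u)$ and $b(t,x^\varepsilon,u^\varepsilon)-b(t,\bar x,\bar u)$ around $(\bar x,\bar u)$. Matching the $O(\varepsilon)$ and $O(\varepsilon^2)$ terms against \eqref{first vari equ} and \eqref{second order vari equ} (and using that $\nu_2^\varepsilon\to\nu_2$, $u_2^\varepsilon\to u_2$), I obtain an SEE of the form
\begin{equation*}
\left\{
\begin{array}{ll}
dr_2^\varepsilon = (Ar_2^\varepsilon + a_1[t]r_2^\varepsilon + \rho^\varepsilon(t))dt + (b_1[t]r_2^\varepsilon + \sigma^\varepsilon(t))dW(t), & t\in (0,T],\\
r_2^\varepsilon(0) = \nu_2^\varepsilon - \nu_2,
\end{array}
\right.
\end{equation*}
where $\rho^\varepsilon$ and $\sigma^\varepsilon$ gather (i) the difference between $a_{ij}[t]$ and the corresponding second derivatives evaluated at intermediate points produced by the integral form of Taylor's remainder, (ii) the error caused by replacing $u_2^\varepsilon$ by $u_2$ in the quadratic data, and (iii) the third-order Taylor remainder divided by $\varepsilon^2$. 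The energy estimate then reduces \eqref{r2 to 0} to showing $|\rho^\varepsilon|_{L^2_\dbF(0,T;L^2(\Om;H))} + |\sigma^\varepsilon|_{L^2_\dbF(0,T;L^2(\Om;\cL_2))} \to 0$.

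The main obstacle is controlling the third-order Taylor remainder. Because $a$ and $b$ are only assumed $C^2$ with uniformly continuous second derivatives (not $C^3$), I cannot simply bound the remainder by $|\delta x^\varepsilon|^3 + \varepsilon^3|v^\varepsilon|^3$. Instead I would write the remainder in integral form
\begin{equation*}
\int_0^1 \bigl(a_{xx}(t,\bar x + \theta\delta x^\varepsilon,\bar u + \theta\varepsilon v^\varepsilon) - a_{xx}[t]\bigr)(\delta x^\varepsilon,\delta x^\varepsilon)(1-\theta)\,d\theta
\end{equation*}
plus the analogous $xu$ and $uu$ pieces, then use \eqref{r1 to 0} (which gives $\delta x^\varepsilon/\varepsilon \to x_1$ in $L^\infty_\dbF(0,T;L^p(\Om;H))$ for every $p\ge 2$) together with the uniform continuity of $a_{xx}, a_{xu}, a_{uu}$ and the bound $|v^\varepsilon|_{L^4_\dbF(0,T;H_1)}\le C$ to apply the dominated convergence theorem after dividing by $\varepsilon^2$. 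The companion errors from $u_2^\varepsilon - u_2$ enter linearly in the data and vanish because $u_2^\varepsilon \to u_2$ in $L^4_\dbF(0,T;H_1)$. Combining these with the estimate for the linear SEE for $r_2^\varepsilon$ yields \eqref{r2 to 0}.
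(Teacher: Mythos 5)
Your proposal is correct and follows essentially the same route as the paper's Appendix~B: the first bound via H\"older's inequality and the $L^4$ estimate on $x_1$ (which comes from Lemma \ref{estimate one of varie qu} applied with $p=4$, not from Lemma \ref{well lemma s1} as you cite), and \eqref{r2 to 0} via the integral-form second-order Taylor remainder, uniform continuity of the second derivatives from \textbf{(AS5)}, convergence in measure plus dominated convergence, and the energy estimate for the linear SEE satisfied by $r_2^{\e}$. Apart from that minor citation slip, there is nothing to add.
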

Proof of Lemma \ref{estimate two of varie qu} is
provided in Appendix B.

Put\vspace{-0.1cm}
\begin{equation}\label{8.18-eq6}
\begin{array}{ll}\ds
\cY(\bar x,\bar u)\=\big\{ (x_1(\cd),u_1(\cd),
\nu_1)\in C_\dbF([0,T];L^4(\Om;H))\times
T_\cV^b(\bar u)\times T_{\cK_a}^b(\bar
\nu_0)\big|\ x_1(\cd)\mbox{
solves \eqref{first vari equ},}\\
\ns\ds\qq\qq\;\; x_{1}(\cd) \in {\rm cl}
\cQ^{(1)}\cap {\rm cl} \cE^{(1)}\mbox{ and }
\mE\langle g_x^j(\bar x(T)), x_1(T)\rangle_H\leq
0,\; \forall\ j\in \cI(\bar x)\big\}
\end{array}\vspace{-0.1cm}
\end{equation}
and define the critical cone\vspace{-0.1cm}
\begin{equation}\label{8.18-eq7}
\begin{array}{ll}\ds
\cZ(\bar x,\bar u)\=\Big\{(x_1(\cd),u_1(\cd),
\nu_1)\in \cY(\bar x,\bar u)\Big|\ \mE\langle
h_x (\bar x(T)), x_{1}(T)\rangle_H = 0 \Big\}.
\end{array}\vspace{-0.21cm}
\end{equation}
For a fixed $(x_1(\cd),u_1(\cd), \nu_1)\in
\cZ(\bar x,\bar u)$, let $\cW(\bar\nu_{0},
\nu_1)$ and $\cM(\bu,u_1)$ be convex subsets of
$T_{\cK_a}^{b(2)}(\bar\nu_{0}, \nu_1)$ and
$T_{\mathcal{V}}^{b(2)}(\bu,u_1)$, respectively.
Put\vspace{-0.21cm}
\begin{equation}\label{8.18-eq8}
\begin{array}{ll}\ds
\cG^{(2)}(x_1,u_1)\=\big\{x_2(\cd)\in
L^2_{\dbF}(\Om;C([0,T];H))\big|\,x_2(\cd) \mbox{
is the solution of \eqref{second order vari equ}
corresponding } \\
\ns\ds\qq\qq\qq \mbox{ to some }(\nu_2,u_2)\in
\cW(\bar\nu_0, \nu_1)\times \cM(\bar u,u_1)
\big\}.
\end{array}\vspace{-0.1cm}
\end{equation}

Let\vspace{-0.21cm}
$$
\dbI^0(\bar x,x_1)\=\big\{t\in \cI^0(\bar x)|\
\mE\langle g_x^0(\bar x(t)), x_1(t)
\rangle_H=0\big\},\vspace{-0.1cm}
$$
$$
\dbI(\bar x,x_1)\=\big\{j\in \cI(\bar x)|\
\mE\langle g_x^j(\bar x(T)), x_1(T)
\rangle_H=0\big\},
$$
\begin{equation}\label{index Tg bx y1}
\begin{array}{ll}\ds
\t^{g}(\bx)\=\big\{t\in [0,T]\; \big|\; \exists\; \{s_{k}\}_{k=1}^\infty\subset[0,T]\mbox{ such that }\lim_{k\to\infty}s_k= t,\,  \mE g^0(\bx(s_{k}))<0,\\
\ns\ds\qq\qq\qq\qq\q\mE\inner{g^0_{x}(\bx(s_{k}))}{x_{1}(s_{k})}_H>0,\;\forall\;
k=1,2,\cdots\big\},
\end{array}
\end{equation}
\begin{equation}\label{e(t)}
e(t)\=\left\{
\begin{array}{ll}
\limsup\limits_{\substack{s\to t\\ \mE~ g^0(\bx(s))<0\\
\mE\inner{g^0_{x}(\bx(s))}{x_{1}(s)}_H>0}}\dfrac{\big|\mE \inner{g^0_{x}(\bx(s))}{x_{1}(s)}_H\big|^2}{4\big|\mE g^0(\bx(s))\big|_H},  \q t\in \t^{g}(\bx),\\[+0.9em]
0,\qq\qq\qq\qq\qq\qq\qq\qq\ \mbox{otherwise},
\end{array}\right.
\end{equation}
\begin{equation}\label{Q2}
\begin{array}{ll}
\cQ^{(2)}(x_1)\=\Big\{z\in
L^2_{\dbF}(\Om;C([0,T];H))\Big|\ \mbox{for all
}t\in  \dbI^0(\bar x,x_1),\\
\ns\ds\hspace{2cm} \mE\langle g^0_x(\bar x(t)),
z(t) \rangle_H + \frac{1}{2} \mE\langle
g^0_{xx}(\bar x(t))x_1(t), x_1(t) \rangle_H+
e(t)<0\Big\},
\end{array}
\end{equation}
$$
\begin{array}{ll}\ds
\cE^{(2,j)}(x_1)\!\=\!\Big\{z\!\in\!
L^2_{\dbF}(\Om;C([0,T];H))\Big| \mE\langle
g^j_x(\bar x(T)), z(T) \rangle_H \!+
\!\frac{1}{2} \mE\langle g^j_{xx}(\bar
x(T))x_1(T), x_1(T) \rangle_H\!<\!0\Big\},
\end{array}\vspace{-0.1cm}
$$
\begin{equation}\label{E2}
\cE^{(2)}(x_1)\= \bigcap_{j\in \dbI(\bar
x,x_1)}\cE^{(2,j)}(x_1), \vspace{-0.21cm}
\end{equation}
and\vspace{-0.1cm}
\begin{eqnarray}\label{L2}
\!\!\!\cL^{(2)}\!(x_1\!)\!\=\!\Big\{\!z(\cdot)
\!\in\! L_{\dbF}^{2}(\Omega;
\!C([0,T];\!H))\Big|\, \mE \langle h_x(\bar
x(T)),z(T)\rangle_{H}\! +\! \frac{1}{2}\mE
\langle h_{xx}(\bar
x(T))x_1\!(T),x_1\!(T)\rangle_{H}\!<\!0\!\Big\}.
\end{eqnarray}
\begin{remark}
If $x_1\in \cQ^{(1)}$,  then $\dbI^0(\bar
x,x_1)=\emptyset$. Consequently,
$\cQ^{(2)}(x_1)= L_{\dbF}^{2}(\Omega; C([0,T];
H))$. In addition, if there exists  $\delta>0$
such that\vspace{-0.1cm}
$$\mE\inner{g_{x}(\bx(s))}{x_{1}(s)}_H\le 0,
\q \forall\,s\in (t-\delta,t+\delta)\cap[0,T],\;
t\in \cI^0(\bar x),
$$
then $e(t)= 0$ for any $t\in \dbI^0(\bar
x,x_1)$. In this case,\vspace{-0.1cm}
$$
\begin{array}{ll}\ds
\cQ^{(2)}(x_1)= \Big\{z(\cd)\in
L_{\dbF}^{2}(\Omega; C([0,T];
H))\,\Big|\;\mbox{For all } t\in
\dbI^0(\bar x,x_1), \\
\ns\ds\hspace{2.2cm}
\mE\inner{g^0_{x}(\bx(t))}{z(t)}_H+\frac{1}{2}\mE
\inner{g^0_{xx}(\bx(t))x_{1}(t)}{x_{1}(t)}_H<0\Big\}.
\end{array}\vspace{-0.1cm}
$$
\end{remark}
\begin{remark}\label{11.20-rmk1}
Let $z_1 \in  \mathcal{Q}^{(1)}$ and $z_2 \in
\mathcal{Q}^{(2)}(x_1)$. Then for every $t \in
\dbI^0(\bar x,x_1) \subset \cI^0(\bar x)$, we
have $ \mE\langle g^0_x(\bar x(t)), z_1(t)
\rangle_H<0 $ and $\ds \mE\langle g^0_x(\bar
x(t)), z_2(t) \rangle_H +\frac{1}{2} \mE\langle
g^0_{xx}(\bar x(t))x_1(t), x_1(t) \rangle_H+
e(t)<0$.
Therefore,\vspace{-0.21cm}
$$
\mE\langle g^0_x(\bar x(t)), z_1(t)+z_2(t)
\rangle_H + \frac{1}{2} \mE\langle g^0_{xx}(\bar
x(t))x_1(t), x_1(t) \rangle_H+
e(t)<0,\vspace{-0.1cm}
$$
which implies that $z_1+z_2\in
\mathcal{Q}^{(2)}(x_1)$. Consequently,
$\mathcal{Q}^{(1)} + \mathcal{Q}^{(2)}(x_1)
\subset \mathcal{Q}^{(2)}(x_1)$. Similarly, if
$\Phi(t,\omega)=\cC_{U}(\bar u(t,\omega))$, then
we can prove that $ \cG^{(1)} +
\cG^{(2)}(x_1,u_1) \subset \cG^{(2)}(x_1,u_1)$.
\end{remark}

Let $(y, Y)$, $\psi$ and $\lambda_{j}$, $j\in
\cI(\bar x)$ be defined as in the proof of
Theorem \ref{th1} in the case when
$\cG^{(1)}\cap\cQ^{(1)}\cap\cE^{(1)}\neq
\emptyset$ (See (\ref{1st variation set}),
(\ref{Q1}) and (\ref{Ed1}) for the definitions
of $\cG^{(1)}$, $\cQ^{(1)}$ and $\cE^{(1)}$,
respectively), where $\ds y(T)=-h_x(\bar x(T)) -
\sum_{j\in \cI(\bar x)}\l_jg_x^j(\bar x(T))$.

Let $(P(\cd),(Q^{(\cd)},\widehat Q^{(\cd)}))$ be
the relaxed transposition solution of the
equation \eqref{op-bsystem3} in which $P_T$,
$J(\cd)$, $K(\cd)$ and $F(\cd)$ are given
by\vspace{-0.1cm}
\begin{equation*}\label{zv2}
\begin{array}{ll}\ds
P_T = - h_{xx}\big(\bar x(T)\big),\q J(t) =
a_1[t],\q K(t) =b_1[t],\\
\ns\ds F(t)= -\dbH_{xx}[t]\=-\dbH_{xx}(t,\bar
x(t),\bar u(t), y(t),Y(t),\omega).
\end{array}\vspace{-0.1cm}
\end{equation*}

We have the following result.

\begin{theorem}\label{TH second order integral condition}
Suppose that {\bf(AS1)}--{\bf(AS9)} hold and
that $\cG^{(1)}\cap\cQ^{(1)}\cap\cE^{(1)}\neq
\emptyset$ for an optimal triple
$(\bx(\cd),\bu(\cd),\bar\nu_{0})$ of {\bf
Problem (OP)}. If $\cG^{(2)}(x_1,u_1)\cap
\cQ^{(2)}(x_1)\cap \cE^{(2)}(x_1)\neq
\emptyset$, then for any $x_2(\cd)\in
\cG^{(2)}(x_1,u_1)\cap {\rm cl} \cQ^{(2)}(x_1)
\cap {\rm cl} \cE^{(2)}(x_1)$ with the
corresponding  $\nu_2\in\cW(\bar \nu_0,\nu_1)$
and $u_2(\cd)\in\cM(\bar u,u_1)$, we
have\vspace{-0.2cm}
\begin{equation}\label{second order integral condition}
\begin{array}{ll}\ds
\langle y(0),\nu_2 \rangle_H+ \frac{1}{2}\langle
P(0)\nu_1,\nu_1 \rangle_H +\sum_{j\in \cI(\bar
x)}\mE
\inner{\lambda_{j}g^{j}_{x}(\bx(T))}{x_{2}(T)}_H
\\
\ns\ds  +\mE \int_0^T \(\big\langle \dbH_{u}[t],
u_2(t)\big\rangle_{H_{1}} +
\frac{1}{2}\big\langle \dbH_{uu}[t]u_1(t),
u_1(t)\big\rangle_{H_{1}}+
\frac{1}{2}\big\langle
b_{2}[t]^{*}P(t)b_{2}[t]u_1(t),
u_1(t)\big\rangle_{H_{1}}
\\
\ns\ds \qq\qq + \big\langle\big(\dbH_{xu}[t] +
a_{2}[t]^*
P(t) + b_{2}[t]^*P(t) b_{1}[t]\big)x_{1}(t), u_1(t)\big\rangle_{H_{1}} \\
\ns\ds + \frac{1}{2}\big\langle\big( \widehat
Q^{(0)} + Q^{(0)}\big)\big(0,a_{2}[t]u_1(t),
b_{2}[t]u_1(t)\big),b_{2}[t]u_1(t)
\big\rangle_{\cL_2}\)dt + \mE\int_0^T
\inner{x_2(t)}{d\psi(t)}_H \leq 0,
\end{array}
\end{equation}
where \vspace{-0.32cm}
\begin{equation*}\label{11.20-eq2}
\dbH_{uu}[t]\=\dbH_{uu}(t,\bar x(t),\bar u(t),
y(t),Y(t),\omega),\q
\dbH_{xu}[t]\=\dbH_{xu}(t,\bar x(t),\bar u(t),
y(t),Y(t),\omega).
\end{equation*}
\end{theorem}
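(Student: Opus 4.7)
The strategy is two-fold: first establish the emptiness of a four-cone intersection (the analogue of \eqref{contodicition phiy133T} at second order), then convert the consequent sign condition into \eqref{second order integral condition} by combining the transposition formula for the first-order adjoint $(y,Y)$ with the relaxed transposition formula for the second-order adjoint $(P,Q^{(\cd)},\widehat Q^{(\cd)})$. The first-order multipliers $\lambda_j$, $\psi$ and the terminal value $y(T) = -h_x(\bar x(T)) - \sum_{j\in \cI(\bar x)}\lambda_j g_x^j(\bar x(T))$ are already fixed by Case 3 of the proof of Theorem \ref{th1}.

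The key claim is that $\cG^{(2)}(x_1,u_1)\cap \cQ^{(2)}(x_1)\cap \cE^{(2)}(x_1)\cap \cL^{(2)}(x_1)=\emptyset$. I argue by contradiction: suppose $\tilde x_2$ lies in this intersection, with associated $(\tilde\nu_2,\tilde u_2)\in \cW(\bar \nu_0,\nu_1)\times \cM(\bar u,u_1)$. Using the definition of $T^{b(2)}_{\cK_a}$ and $T^{b(2)}_{\mathcal{V}}$ pick $\nu_2^\e,u_2^\e$ with $\bar \nu_0+\e\nu_1+\e^2\nu_2^\e\in\cK_a$, $\bar u+\e u_1+\e^2 u_2^\e\in \cV$ and $\nu_2^\e\to \tilde\nu_2$, $u_2^\e\to \tilde u_2$. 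Lemma \ref{estimate two of varie qu} yields the expansion $x^\e=\bar x+\e x_1+\e^2 \tilde x_2+o(\e^2)$ in $L^\infty_\dbF(0,T;L^2(\Om;H))$. A second-order Taylor expansion of each constraint functional, together with $\tilde x_2\in \cQ^{(2)}(x_1)$ at $t\in \dbI^0(\bar x,x_1)$, $\tilde x_2\in \cE^{(2)}(x_1)$ at $j\in \dbI(\bar x,x_1)$, and $\tilde x_2\in \cL^{(2)}(x_1)$, then produces strict inequalities in all the constraints and a strict decrease of the cost for small $\e$, contradicting the optimality of $(\bar x,\bar u,\bar \nu_0)$.

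Granting the emptiness, for any $x_2\in \cG^{(2)}\cap \cQ^{(2)}\cap \cE^{(2)}$ one has $\mE\inner{h_x(\bar x(T))}{x_2(T)}_H + \frac12\mE\inner{h_{xx}(\bar x(T))x_1(T)}{x_1(T)}_H \ge 0$. Substituting the expression for $y(T)$ and rewriting $\mE\inner{y(T)}{x_2(T)}_H$ via Definition \ref{definition1}, applied to $x_2$ after moving the linear-in-$x_2$ drift and diffusion $(a_1[t]x_2, b_1[t]x_2)$ to the left-hand side so they cancel against $a_1[t]^* y + b_1[t]^* Y$, produces
\[
\langle y(0),\nu_2\rangle_H + \sum_{j\in \cI(\bar x)}\mE\inner{\lambda_j g_x^j(\bar x(T))}{x_2(T)}_H + \mE\int_0^T\!\!\Bigl[\inner{\dbH_u[t]}{u_2}_{H_1} + \tfrac12\inner{\dbH_{uu}[t]u_1}{u_1}_{H_1} + \inner{\dbH_{xu}[t]x_1}{u_1}_{H_1} + \tfrac12\inner{\dbH_{xx}[t]x_1}{x_1}_H\Bigr]dt + \mE\int_0^T\inner{x_2(t)}{d\psi(t)}_H \le \tfrac12\mE\inner{h_{xx}(\bar x(T))x_1(T)}{x_1(T)}_H.
\]
To absorb the $\dbH_{xx}$ and $h_{xx}$ terms, I apply the relaxed transposition identity \eqref{6.18eq1} with $\phi_1=\phi_2=x_1$, $\xi_1=\xi_2=\nu_1$, $\tilde f_1=\tilde f_2=a_2[t]u_1(t)$, $\hat f_1=\hat f_2=b_2[t]u_1(t)$, $t=0$. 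Since $P_T=-h_{xx}(\bar x(T))$, $F=-\dbH_{xx}$, $J=a_1$, $K=b_1$, the identity converts $\tfrac12\mE\inner{h_{xx}x_1(T)}{x_1(T)}_H - \tfrac12\mE\int \inner{\dbH_{xx}[t]x_1}{x_1}_H dt$ into $-\tfrac12\langle P(0)\nu_1,\nu_1\rangle_H$ plus precisely the cross terms $-\mE\int[\inner{(a_2[t]^*P + b_2[t]^*Pb_1[t])x_1}{u_1}_{H_1} + \tfrac12\inner{b_2[t]^*Pb_2[t]u_1}{u_1}_{H_1} + \tfrac12\inner{(\widehat Q^{(0)}+Q^{(0)})(0,a_2u_1,b_2u_1)}{b_2u_1}_{\cL_2}]dt$ that appear in \eqref{second order integral condition} (using linearity of $(Q^{(0)},\widehat Q^{(0)})$ in their arguments to split off the $(\nu_1,0,0)$-part into the $P(0)\nu_1$ contribution). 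Combining these two identities yields \eqref{second order integral condition}. The extension from $x_2\in \cG^{(2)}\cap \cQ^{(2)}\cap \cE^{(2)}$ to $x_2\in \cG^{(2)}\cap {\rm cl}\,\cQ^{(2)}\cap {\rm cl}\,\cE^{(2)}$ follows by continuity of the left-hand side of \eqref{second order integral condition} with respect to $x_2$ in $L^2_\dbF(\Om;C([0,T];H))$, since $\cG^{(2)} + \cQ^{(1)} \subset \cG^{(2)}$ (cf.\ Remark \ref{11.20-rmk1}) supplies sufficient interior points to approximate.

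The main obstacle is Step 2 of the contradiction argument: verifying $\mE g^0(x^\e(t))\le 0$ uniformly in $t\in [0,T]$. On $\dbI^0(\bar x,x_1)$ the expansion of $\mE g^0(x^\e(t))$ has vanishing zeroth and first-order terms, and membership in $\cQ^{(2)}(x_1)$ makes the quadratic term strictly negative. The delicate region consists of $t$ where $\mE g^0(\bar x(t))<0$ but $\mE\inner{g^0_x(\bar x(t))}{x_1(t)}_H>0$ — there the linear contribution is positive and must be dominated by the strict feasibility of $\bar x$. The auxiliary function $e(t)$ is defined precisely as the $\limsup$ of $|\mE\inner{g^0_x}{x_1}|^2/(4|\mE g^0(\bar x)|)$ along such sequences, so completing the square and splitting $[0,T]$ into a neighborhood of $\tau^g(\bar x)\cup \dbI^0(\bar x,x_1)$ and its complement converts the definition of $\cQ^{(2)}(x_1)$ into a uniform admissibility bound. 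A secondary technical point is ensuring the relaxed transposition formula produces the symmetric combination $\widehat Q^{(0)}+Q^{(0)}$, which is consistent with the compatibility condition $Q^{(0)}(0,0,\cd)^*=\widehat Q^{(0)}(0,0,\cd)$ in the definition of $\dbQ[0,T]$.
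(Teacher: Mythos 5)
Your proposal follows essentially the same route as the paper's proof: the same perturbation $\bu+\e u_1+\e^2 u_2$ of the control, the same covering and completing-the-square argument with the correction term $e(t)$ to verify the state constraint on a neighborhood of $\dbI^0(\bar x,x_1)$ and strict feasibility elsewhere, the same optimality-based sign condition \eqref{2nd exp of phi} (your four-cone emptiness claim is just a reformulation of it), and the same combination of the transposition identity \eqref{dual P1y2} for $(y,Y)$ with the relaxed transposition identity \eqref{dual P2 y1y1} for $(P,Q^{(0)},\widehat Q^{(0)})$. One small correction: the passage to ${\rm cl}\,\cQ^{(2)}(x_1)\cap{\rm cl}\,\cE^{(2)}(x_1)$ is carried out in the paper by the convex combination $(1-\theta)x_2+\theta\hat x_2$ with $\hat x_2$ drawn from the (assumed nonempty) open intersection, exploiting convexity of $\cW(\bar\nu_0,\nu_1)$ and $\cM(\bu,u_1)$ and the affine dependence of \eqref{second order integral condition} on $(\nu_2,u_2,x_2)$, rather than via the inclusion $\cG^{(2)}+\cQ^{(1)}\subset\cG^{(2)}$ that you invoke; your continuity argument is otherwise the right idea.
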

\begin{remark}
In Theorem \ref{TH second order integral
condition}, we  take $\lambda_0=1$ and  $(y,
Y)$, $\psi$ and $\lambda_{j}$, $j\in \cI(\bx)$
as in Theorem \ref{th1}. Accordingly, the terms
$\ds\sum_{j\in \cI(\bar x)}\mE
\inner{\lambda_{j}g^{j}_{x}(\bx(T))}{x_{2}(T)}_H$
and $\ds\mE\int_0^T \inner{x_2(t)}{d\psi(t)}_H$
appear. By doing so,  our second order condition
is valid for any normal multiplier appearing in
the first order conditions.
\end{remark}
In Theorem \ref{TH second order integral
condition} we assumed that
$\cG^{(2)}(x_1,u_1)\cap \cQ^{(2)}(x_1)\cap
\cE^{(2)}(x_1)\neq \emptyset$. It seems that
this condition is not easy to verify. Let us
give a result concerning this below.
\begin{proposition}
Assume that there is $(x_1,u_1,\nu_1)\in
\cZ(\bx,\bu)$ such that the function $e(\cdot)$
defined by (\ref{e(t)}) is  bounded on
$\dbI^0(\bar x,x_1)$, and that
$T^{b(2)}_{\cK_a}(\bar\nu_0, \nu_1)$ and $
T^{b(2)}_{\cV}(\bu,u_1)$ are nonempty. If
$\cG^{(1)}\cap\cQ^{(1)}\cap\cE^{(1)}\neq
\emptyset$ (with $\cT_{\cK_{a}}(\bar\nu_0)$ and
$\cT_{\Phi}(\bu)$ being replaced by
$\cC_{\cK_{a}}(\bar\nu_0)$ and $\cC_{\cV}(\bu)$,
respectively), then $\cG^{(2)}(x_1,u_1)\cap
\cQ^{(2)}(x_1)\cap \cE^{(2)}(x_1)\neq
\emptyset$.
\end{proposition}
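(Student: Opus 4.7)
The plan is to take any $x_2^0 \in \cG^{(2)}(x_1,u_1)$ and push it into $\cQ^{(2)}(x_1)\cap \cE^{(2)}(x_1)$ by adding a large positive multiple of a first-order variation $\tilde x_1 \in \cG^{(1)}\cap \cQ^{(1)}\cap \cE^{(1)}$. The point is that $\tilde x_1$ satisfies the first-order defining inequalities strictly on the relevant compact index sets, so it will dominate the bounded second-order correction terms once the multiplier is large.

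First I would pick $\nu_2^0 \in T^{b(2)}_{\cK_a}(\bar\nu_0,\nu_1)$ and $u_2^0 \in T^{b(2)}_{\cV}(\bar u,u_1)$ (both nonempty by hypothesis), and let $x_2^0 \in L^2_\dbF(\Om;C([0,T];H))$ be the corresponding solution of \eqref{second order vari equ}. Next, I would pick $\tilde x_1 \in \cG^{(1)}\cap \cQ^{(1)}\cap \cE^{(1)}$ (nonempty by hypothesis) with associated $\tilde\nu_1 \in \cC_{\cK_a}(\bar\nu_0)$ and $\tilde u_1 \in \cC_\cV(\bar u)$. Using the standard invariance $T^{b(2)}_\cK(z,v) + \cC_\cK(z) \subset T^{b(2)}_\cK(z,v)$ (proved by choosing $y_\e\in\cK$ with $|y_\e-(z+\e v+\e^2 h)|=o(\e^2)$, noting $y_\e\to z$, and applying the Clarke tangent cone characterization along $\{y_\e\}$ to obtain $\dist(y_\e+\e^2 w,\cK)=o(\e^2)$), for any $\lambda>0$ one has $\nu_2^\lambda \= \nu_2^0+\lambda\tilde\nu_1 \in T^{b(2)}_{\cK_a}(\bar\nu_0,\nu_1)$ and $u_2^\lambda \= u_2^0+\lambda\tilde u_1 \in T^{b(2)}_{\cV}(\bar u,u_1)$. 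Setting $\cW(\bar\nu_0,\nu_1)=\nu_2^0+\cC_{\cK_a}(\bar\nu_0)$ and $\cM(\bar u,u_1)=u_2^0+\cC_\cV(\bar u)$ (both convex), the linearity of \eqref{second order vari equ} in $(\nu_2,u_2)$ with $(x_1,u_1)$ held fixed, together with $\tilde x_1$ solving \eqref{first vari equ}, yields that $x_2^\lambda \= x_2^0+\lambda\tilde x_1$ is the solution of \eqref{second order vari equ} corresponding to $(\nu_2^\lambda,u_2^\lambda)$, and hence $x_2^\lambda\in\cG^{(2)}(x_1,u_1)$.

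Then I would choose $\lambda$ large enough to force $x_2^\lambda$ into $\cQ^{(2)}(x_1)\cap\cE^{(2)}(x_1)$. The set $\dbI^0(\bar x,x_1)$ is closed inside the compact set $\cI^0(\bar x)$, hence compact. Since $\tilde x_1\in\cQ^{(1)}$, the continuous map $t\mapsto \mE\langle g^0_x(\bar x(t)),\tilde x_1(t)\rangle_H$ is strictly negative on $\cI^0(\bar x)$, so bounded above by some $-\rho<0$ on $\dbI^0(\bar x,x_1)$. Under \textbf{(AS7)} and the integrability estimates in Lemmas \ref{estimate one of varie qu}--\ref{estimate two of varie qu}, the quantities $\mE\langle g^0_x(\bar x(t)),x_2^0(t)\rangle_H$ and $\tfrac12\mE\langle g^0_{xx}(\bar x(t))x_1(t),x_1(t)\rangle_H$ are bounded on $[0,T]$, and $e(\cdot)$ is bounded on $\dbI^0(\bar x,x_1)$ by the hypothesis of the proposition. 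Hence for all sufficiently large $\lambda$,
$$
\mE\langle g^0_x(\bar x(t)),x_2^\lambda(t)\rangle_H + \tfrac12\mE\langle g^0_{xx}(\bar x(t))x_1(t),x_1(t)\rangle_H + e(t) \leq -\lambda\rho + C < 0
$$
uniformly for $t\in\dbI^0(\bar x,x_1)$, so $x_2^\lambda\in\cQ^{(2)}(x_1)$. Analogously, $\dbI(\bar x,x_1)$ is finite and $\tilde x_1\in\cE^{(1)}$ gives $\mE\langle g^j_x(\bar x(T)),\tilde x_1(T)\rangle_H<0$ for each $j\in\dbI(\bar x,x_1)$, so enlarging $\lambda$ further places $x_2^\lambda$ in every $\cE^{(2,j)}(x_1)$, and hence in $\cE^{(2)}(x_1)$.

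The main obstacle is verifying the superposition trick: one must justify both that $x_2^0+\lambda\tilde x_1$ still corresponds to an admissible pair $(\nu_2^\lambda,u_2^\lambda)$ with $\nu_2^\lambda \in T^{b(2)}_{\cK_a}(\bar\nu_0,\nu_1)$, $u_2^\lambda \in T^{b(2)}_\cV(\bar u,u_1)$ (which is where the invariance $T^{b(2)}_\cK(z,v)+\cC_\cK(z)\subset T^{b(2)}_\cK(z,v)$ is essential), and that convex sets $\cW$, $\cM$ housing these perturbed data can be selected inside the second-order adjacent sets. Once this is settled, the remainder is a routine compactness/uniform-bound argument that uses the boundedness of $e(\cdot)$ on $\dbI^0(\bar x,x_1)$ in an essential way.
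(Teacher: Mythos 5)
Your proposal is correct and follows essentially the same route as the paper: pick $\hat x_1\in\cG^{(1)}\cap\cQ^{(1)}\cap\cE^{(1)}$, enlarge the convex sets $\cW,\cM$ by adding the Clarke tangent cones (the invariance $T^{b(2)}_{\cK}(z,v)+\cC_{\cK}(z)\subset T^{b(2)}_{\cK}(z,v)$ you verify is exactly Lemma 2.4 of \cite{FHT2013} cited in the paper), and then add a large multiple $\lambda\hat x_1$ to a fixed second-order solution, using compactness of the index sets, the strict first-order inequalities, and the boundedness of $e(\cdot)$ to conclude. The only cosmetic difference is that you take the singleton $\{(\nu_2^0,u_2^0)\}$ where the paper allows arbitrary nonempty convex subsets $\cW^1,\cM^1$ of the second-order adjacent sets.
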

\begin{proof}
If $\cG^{(1)}\cap\cQ^{(1)}\cap\cE^{(1)}\neq
\emptyset$ (with $\cT_{\cK_{a}}(\bar\nu_0)$ and
$\cT_{\Phi}(\bu)$ being replaced by
$\cC_{\cK_{a}}(\bar\nu_0)$ and $\cC_{\cV}(\bu)$,
respectively), then there exists $\hat
x_{1}(\cd)\in
\cG^{(1)}\cap\cQ^{(1)}\cap\cE^{(1)}$ with the
initial datum $\hat\nu_1\in
\cC_{\cK_{a}}(\bar\nu_0)$ and the control $\hat
u_1(\cd)\in \cC_{\cV}(\bu)$.

Since $T^{b(2)}_{\cK_{a}}(\bar\nu_0, \nu_1)$ and
$T^{b(2)}_{\cV}(\bu,v)$ are nonempty, they
contain some  nonempty convex  subsets
$\cW^1(\bar\nu_0,  \nu_1)$ and $\cM^1(\bu,u_1)$,
respectively.

Put\vspace{-0.2cm}
$$
\cW(\bar\nu_0, \nu_1)\=\cC_{\cK_{a}}(\bar\nu_0)+
\cW^1(\bar\nu_0,  \nu_1), \qq
\cM(\bu,u_1)\=\cC_{\cV}(\bu)+  \cM^1(\bu,u_1).
$$
It follows from Lemma 2.4 in \cite{FHT2013} that
$ \cW(\bar\nu_0,  \nu_1)\subset
T^{b(2)}_{\cK_{a}}(\bar\nu_0, \nu_1)$ and
$\cM(\bu,u_1)\subset T^{b(2)}_{\cV}(\bu,u_1). $
Moreover, for every $\tilde\nu_{2}\in
\cW^1(\bar\nu_0, \nu_1)$, $\tilde u_2\in
\cM^1(\bu,u_1)$ and $\d\ge0$, we have $ \d\hat
\nu_1+\tilde\nu_{2}\in \cW(\bar\nu_0, \nu_1)$
and $\d\hat u_1+ \tilde u_2\in\cM(\bu,u_1).
$

Fixing $\d \ge 0$ and  letting
$x_{2,\d}(\cd)$(\resp $\tilde x_2$) be the
solution of (\ref{second order vari equ})
corresponding to $\d\hat \nu_1+\tilde \nu_2$
(\resp $\tilde \nu_2$) and $\d\hat u_1+ \tilde
u_2$ (\resp $\tilde u_2$), we have $
x_{2,\d}(\cd)=\d\hat x_{1}(\cd)+\tilde x_2(\cd).
$ It follows from Lemma \ref{estimate two of
varie qu} that\vspace{-0.2cm}
$$
|\tilde x_2|^{2}_{L^\infty_\dbF(0,T;L^2(\O;H))}
\leq C\big(|\tilde\nu_{2}|_H^2+|\hat
u_1|^{4}_{L^4_\dbF(0,T;H)}+ |\tilde
u_2|^{2}_{L^2_\dbF(0,T;H)}\big).
$$
Since $\hat x_{1}(\cd)\in
\cG^{(1)}\cap\cQ^{(1)}\cap\cE^{(1)}$, and
$\cI^0(\bar x)$ and $\dbI^0(\bar x,\hat x_1)$
are compact sets, for all sufficiently large
$\d$,\vspace{-0.2cm}
$$
\begin{array}{ll}\ds
\mE\inner{h_{x}(\bx(t))}{x_{2,\d}(t)}_H
+\frac{1}{2}\mE \inner{h_{xx}(\bx(t)) x_{1}(t)}{x_{1}(t)}_H+e(t)\\
\ns\ds= \d\mE\inner{h_{x}(\bx(t))}{\hat
x_1(t)}_H +\mE\inner{h_{x}(\bx(t))}{\tilde
x_2(t)}_H +\frac{1}{2}\mE \inner{h_{xx}(\bx(t))
x_{1}(t)}{ x_{1}(t)}_H+e(t)
\\
\ns\ds< 0, \q \forall \, t\in \dbI^0(\bar x,\hat
x_1),\vspace{-0.2cm}
\end{array}
$$
and  for every $j\in \dbI(\bar x,\hat x_1)$, and
all $\d$ sufficiently  large\vspace{-0.2cm}
$$
\begin{array}{ll}\ds
\mE\inner{g^{j}_{x}(\bx(T))}{x_{2,\d}(T)}_H +
\frac{1}{2}\mE
\inner{g^{j}_{xx}(\bx(T))x_{1}(T)}{x_{1}(T)}_H
\\
\ns\ds=\d\mE\inner{g^{j}_{x}(\bx(T))}{\hat
x_1(T)}_H +\mE\inner{g^{j}_{x}(\bx(T))}{\tilde
x_1(T)}_H +\frac{1}{2}\mE
\inner{g^{j}_{xx}(\bx(T))x_{1}(T)}{x_{1}(T)}_H<
0.
\end{array}\vspace{-0.2cm}
$$
Therefore, when $\d$ is large enough, $
x_{2,\d}(\cd)\in \cG^{(2)}(x_1,u_1)\cap
\cQ^{(2)}(x_1)\cap \cE^{(2)}(x_1)$. This yields
that $\cG^{(2)}(x_1,u_1)\cap \cQ^{(2)}(x_1)\cap
\cE^{(2)}(x_1)\neq \emptyset$
\end{proof}

\begin{proof}[Proof of Theorem \ref{TH second order integral
condition}]

If  $\dbI^0(\bar x,x_1) = \emptyset$, then
$\cQ^{(2)}(x_1)=L_{\dbF}^{2}(\Omega; C([0,T];
H))$. Hence,\vspace{-0.2cm}
$$
\cG^{(2)}(x_1,u_1)\cap \cQ^{(2)}(x_1)\cap
\cE^{(2)}(x_1)=\cG^{(2)}(x_1,u_1) \cap
\cE^{(2)}(x_1).\vspace{-0.2cm}
$$
In such case, without loss of generality, we can
ignore the constraint (\ref{constraints1}) and
put $\psi =0$. Thus, we  only need to consider
the case $\dbI^0(\bar x,x_1) \neq \emptyset$.

The proof is divided into five steps. In the
first four steps,  we deal with the special case
when $x_{2}(\cd)\in \cG^{(2)}(x_1,u_1)\cap
\cQ^{(2)}(x_1)\cap \cE^{(2)}(x_1)$. Then, in the
last step, we handle the general case.

\vspace{0.1cm}

{\bf Step 1:} Since $x_{2}(\cd)\in
\cG^{(2)}(x_1,u_1)\cap \cQ^{(2)}(x_1)\cap
\cE^{(2)}(x_1)$, $x_2(\cd)$ is a solution of the
equation (\ref{second order vari equ})
corresponding to some $(\nu_{2},u_2)\in
\cW(\bx_{0},\nu_0)\times \cM(\bu,u_1)$ such
that\vspace{-0.2cm}
$$
\mE\inner{g^0_{x}(\bx(t))}{x_2(t)}_H+\frac{1}{2}\mE
\inner{g^0_{xx}(\bx(t))x_{1}(t)}{x_{1}(t)}_H+e(t)<0,\q
\forall\; t\in \dbI^0(\bar x,x_1)\vspace{-0.2cm}
$$
and\vspace{-0.2cm}
$$
\mE\inner{g^{j}_{x}(\bx(T))}{x_2(T)}_H\!+\!\frac{1}{2}\mE
\inner{g^{j}_{xx}(\bx(T))x_{1}(T)}{x_{1}(T)}_H\!<\!0,\;\q
\forall\; j\in \dbI(\bar x,x_1).
$$

Let $\mu^\eps \in H$  and  $\eta^\eps(\cd)\in
L_{\dbF}^{4}(0,T; H_1)$  be such
that\vspace{-0.2cm}
$$
\begin{array}{ll}\ds
|\mu^\eps|=o(\eps^2),\qq\qq\q\;\;\nu^{\eps}_{0}\=\bx_{0}+\eps\nu_1
+\eps^2\nu_{2}+\mu^\eps
\in \cK_a,\\
\ns\ds |\eta^\eps|_{L_{\dbF}^{4}(0,T;
H_1)}=o(\eps^2),\qq u^{\eps}(\cd)\=\bu(\cd)+\eps
u_1(\cd)+\eps^{2}u_2(\cd)+\eta^\eps(\cd)
\in\mathcal{V}.
\end{array}\vspace{-0.2cm}
$$
Denote by $x^{\eps}(\cd)$ the solution of
\eqref{controlsys} corresponding to
$\nu^{\eps}_{0}$ and $u^{\eps}(\cd)$. By ({\bf
AS1})--({\bf AS7}) and Lemma \ref{estimate two
of varie qu}, for any $t\in [0,T]$, we
have\vspace{-0.2cm}
\begin{equation}\label{th5.1 expansion g xeps}
\begin{array}{ll}\ds
\mE g^0(x^{\eps}(t))\3n&=\ds\mE g^0(\bx(t))+\eps \mE\inner{g^0_{x}(\bx(t))}{x_{1}(t)}_H+ \eps^2\mE\inner{g^0_{x}(\bx(t))}{x_{2}(t)}_H \\
\ns&\ds\q
+\frac{\eps^2}{2}\mE\inner{g^0_{xx}(\bx(t))
x_{1}(t)}{x_{1}(t)}_H +o(\eps^2).
\end{array}\vspace{-0.2cm}
\end{equation}

{\bf Step 2:} Fix an arbitrary $\hat
t\in\dbI(\bar x,x_1)$. In this step, we prove
that  there exist $\delta(\hat t)>0$ and
$\alpha(\hat t)>0$ such that\vspace{-0.2cm}
\begin{equation}\label{xeps admi for I1}
\mE g^0(x^{\eps}(s))\le 0,\q\forall\, s\in (\hat
t-\delta(\hat t), \hat t+\delta(\hat t))\cap
[0,T],\;\;\forall\,\e \in [0,\alpha(\hat
t)].\vspace{-0.2cm}
\end{equation}

If \eqref{xeps admi for I1} is false, then for
any $\ell\in \dbN$, we can find
$\eps_{\ell}\in[0, 1/\ell]$ and $s_{\ell}\in
(\hat t- 1/\ell,\hat t+1/\ell)\cap [0,T]$ such
that\vspace{-0.2cm}
\begin{equation}\label{contodicition g xeps}
\mE
g^0(x^{\eps_{\ell}}(s_{\ell}))>0.\vspace{-0.2cm}
\end{equation}

We consider two different cases.

{\bf Case 1.1}. There exists a subsequence
$\{s_{\ell_k}\}_{k=1}^{\infty}$ of
$\{s_\ell\}_{\ell=1}^{\infty}$
satisfying\vspace{-0.2cm}
\begin{equation}\label{11.20-eq1}
\mE g^0(\bx(s_{\ell_k}))<0 \;\mbox{ and }\;
\mE\inner{g^0_{x}(\bx(s_{\ell_k}))}{x_{1}(s_{\ell_k})}_H>0,\;\forall\,
k=1,2,\cdots.\vspace{-0.2cm}
\end{equation}
By \eqref{th5.1 expansion g
xeps},\vspace{-0.2cm}
\begin{equation*}\label{th5.1 eq1}
\begin{array}{ll}\ds
\mE g^0(x^{\eps_{\ell_k}}(s_{\ell_k}))\\
\ns\ds=
\eps_{\ell_k}^2\!\Big(\mE\inner{g^0_{x}(\bx(s_{\ell_k}))}{
x_{2}(s_{\ell_k})}_H\!
+\!\frac{1}{2}\mE\inner{g^0_{xx}(\bx(s_{\ell_k}))
x_{1}(s_{\ell_k})}{x_{1}(s_{\ell_k})}_H\!-\!\dfrac{\big|\mE\inner{g^0_{x}(\bx(s_{\ell_k}))}{x_{1}(s_{\ell_k})}_H\big|^2}{4
\mE~ g^0(\bx(s_{\ell_k})) }
\\
\ns \ds  \qq\q
+\dfrac{o(\eps_{\ell_k}^2)}{\eps_{\ell_k}^2}\Big)+\mE
g^0(\bx(s_{\ell_k}))\Big(1+\dfrac{\eps_{\ell_k}\mE\inner{g_{x}(\bx(s_{\ell_k}))}{
x_{1}(s_{\ell_k})}_H}{2\mE
g^0(\bx(s_{\ell_k}))}\Big)^2.
\end{array}\vspace{-0.2cm}
\end{equation*}
Since $\hat t\in \dbI^0(\bar x,x_1)$ and
$x_2(\cd)\in \cQ^{(2)}(x_1)$, there exists
$\rho_0>0$ such that\vspace{-0.2cm}
$$
\mE\inner{g^0_{x}(\bx(\hat t))}{x_{2}(\hat t)}_H
+\frac{1}{2}\mE\inner{g^0_{xx}(\bx(\hat t))
x_{1}(\hat t)}{x_{1}(\hat t)}_H+e(\hat
t)<-\rho_0.\vspace{-0.2cm}
$$
Therefore, when $k$ is large
enough,\vspace{-0.2cm}
$$
\begin{array}{ll}\ds
\mE\inner{g^0_{x}(\bx(s_{\ell_k}))}{x_{2}(s_{\ell_k})}_H
\!+\!\frac{1}{2}\mE\inner{g^0_{xx}(\bx(s_{\ell_k}))
x_{1}(s_{\ell_k})}{ x_{1}(s_{\ell_k})}_H
\!+\!\dfrac{\big|\mE\inner{g^0_{x}(\bx(s_{\ell_k}))}{
x_{1}(s_{\ell_k})}_H\big|^2}{4\big|\mE
g^0(\bx(s_{\ell_k}))\big|}<\!-\frac{\rho_0}{2},
\end{array}\vspace{-0.2cm}
$$
which, together with (\ref{11.20-eq1}), implies
that $\mE g^0(x^{\eps_{\ell_k}}(s_{\ell_k}))\le
0$, provided that $k$ is large enough. This
contradicts (\ref{contodicition g xeps}).

\vspace{0.1cm}

{\bf Case 1.2}: There is no subsequence of
$\{s_\ell\}_{\ell=1}^\infty$ such that
(\ref{11.20-eq1}) holds.

Under this circumstance,\vspace{-0.2cm}
$$
\mE g^0(\bx(s_\ell))=0 \mbox{ or }
\mE\inner{g^0_{x}(\bx(s_\ell))}{x_{1}(s_\ell)}_H\leq0
\mbox{ for all sufficiently large
$\ell$.}\vspace{-0.2cm}
$$
If $s_\ell\notin \cI^0(\bx)$, we have $\mE
g^0(\bx(s_\ell))<0$. Thus,
$\mE\inner{g^0_{x}(\bx(s_\ell))}{x_{1}(s_\ell)}_H\le
0$. On the other hand, if $s_{\ell}\in
\cI^0(\bx)$, then $\mE g^0(\bx(s_\ell))=0$.
Since $x_{1}(\cd)\in  {\rm cl} \cQ^{(1)}$,
$\mE\inner{g^0_{x}(\bx(s_\ell))}{x_{1}(s_\ell)}_H\le
0$. In both cases,\vspace{-0.2cm}
\begin{equation}\label{th5.1 eq2}
\mE
g^0(\bx(s_\ell))+\eps_\ell\mE\inner{g^0_{x}(\bx(s_\ell))}{x_{1}(s_\ell)}_H\le0.\vspace{-0.2cm}
\end{equation}
Noting that $e(t)\ge0$ for all $t\in [0,T]$ and
$\dbI^0(\bar x,x_1)$ is compact, there exists
$\rho_2>0$ such that\vspace{-0.2cm}
$$
\mE\inner{g^0_{x}(\bx(t))}{x_{2}(t)}_H
+\frac{1}{2}\mE\inner{g^0_{xx}(\bx(t))x_{1}(t)}{
x_{1}(t)}_H<-\rho_2,\q \forall \, t\in
\dbI^0(\bar x,x_1).
$$
Since $s_\ell\to \hat t$ and $\hat t\in
\dbI(\bar x,x_1)$, when $\ell$ is large
enough,\vspace{-0.2cm}
$$
\mE\inner{g^0_{x}(\bx(s_\ell))}{x_{2}(s_\ell)}_H
+\frac{1}{2}\mE\inner{g^0_{xx}(\bx(s_\ell))
x_{1}(s_\ell)}{x_{1}(s_\ell)}_H<-\dfrac{\rho_2}{2}.
$$
Then, by (\ref{th5.1 expansion g xeps}) and
(\ref{th5.1 eq2}), for any sufficiently large
$\ell$,\vspace{-0.2cm}
$$
\begin{array}{ll}\ds
\mE g^0(x^{\eps_\ell}(s_\ell)) \3n&\ds\leq
\eps_\ell^2\mE\inner{g^0_{x}(\bx(s_\ell))}{
x_{2}(s_\ell)}_H
+\frac{\eps_\ell^2}{2}\mE\inner{g^0_{xx}(\bx(s_\ell))x_{1}(s_\ell)}{x_{1}(s_\ell)}_H +o(\eps_\ell^2)\\
\ns&\ds\leq\eps_\ell^2\Big(-\frac{\rho_2}{2}+\frac{o(\eps_\ell^2)}{\eps_\ell^2}
\Big)\le 0,\vspace{-0.2cm}
\end{array}
$$
which also contradicts (\ref{contodicition g
xeps}). This proves (\ref{xeps admi for I1}).

\vspace{0.1cm}

{\bf Step 3:} In this step, we prove that
 $(x^{\eps}(\cd), u^{\eps}(\cd))\in \cP_{ad}$,
provided that $\eps$ is sufficiently small.

By the compactness of $\dbI^0(\bar x,x_1)$, we
can find  $\{t_{\ell}\}_{\ell}^N\subset
\dbI^0(\bar x,x_1)$ ($N\in\dbN$) such
that\vspace{-0.2cm}
$$
\dbI^0(\bar x,x_1)\subset \bigcup_{\ell=1}^{N}
\big(t_{\ell}-\delta(t_\ell),
t_\ell+\delta(t_\ell)\big).\vspace{-0.2cm}
$$
Let $\eps_{1}\=\min\{\alpha(t_{\ell}),\;
\ell=1,2,\ldots,N\}$. Then we have
that\vspace{-0.2cm}
\begin{equation}\label{th5.1 eq3}
\mE g^0(x^{\eps}(s))\le 0,\q\forall\, s\in
\bigcup_{\ell=1}^{N}\big(t_{\ell}-\delta(t_\ell),
t_\ell+\delta(t_\ell)\big)\cap
[0,T],\;\forall\,\e \in
[0,\eps_1].\vspace{-0.2cm}
\end{equation}
Let $\cI_{0}^{c}\= \cI^0(\bar x)\setminus
\bigcup_{\ell=1}^{N} (t_{\ell}-\delta(t_\ell),
t_\ell+\delta(t_\ell))$. Since $\cI_{0}^{c}$ is
compact, we can find $\tilde{\delta} >0$ and
$\rho_{3}>0$ (independent of $t$) such
that\vspace{-0.2cm}
$$
\mE\inner{g^0_{x}(\bx(s))}{x_{1}(s)}_H
<-\rho_{3},\q \forall\, s\in (t-\tilde{\delta},
t+\tilde{\delta} )\cap[0,T],\q t\in
\cI_{0}^{c}.\vspace{-0.2cm}
$$
This, together with (\ref{th5.1 expansion g
xeps}), implies that there exists $\eps_2>0$
such that\vspace{-0.1cm}
\begin{equation}\label{th5.1 eq4}
\mE g^0(x^{\eps}(s)) \leq  0, \q \forall\,s\in
(t-\tilde{\delta} , t+\tilde{\delta}
)\cap[0,T],\;\,\forall\, t\in \cI_{0}^{c},\;\,
\forall\, \eps\in [0,\eps_2].\vspace{-0.1cm}
\end{equation}
Clearly,\vspace{-0.2cm}
$$
\cI^0(\bar
x)\subset\[\bigcup_{\ell=1}^{N}(t_{\ell}-\delta(t_\ell),
t_\ell+\delta(t_\ell)) \]\bigcup\[\bigcup_{t\in
\cI_{0}^{c}} (t-\tilde{\delta} ,
t+\tilde{\delta} ) \].\vspace{-0.2cm}
$$
Let $\delta_0>0$ be small enough  such
that\vspace{-0.2cm}
$$
\cI^0(\bar x)\subset\bigcup_{t\in \cI^0(\bar x)}
(t-\delta_{0}  , t+\delta_{0})
\subset\[\bigcup_{\ell=1}^{N}
(t_{\ell}-\delta(t_\ell), t_\ell+\delta(t_\ell))
\]\bigcup\[\bigcup_{t\in \cI_{0}^{c}}
(t-\tilde{\delta} , t+\tilde{\delta} )
\].\vspace{-0.2cm}
$$
Put $\eps_0=\min\{\eps_1,\eps_2\}$. It follows
from \eqref{th5.1 eq4} that \vspace{-0.2cm}
\begin{equation}\label{xeps admi for I0}
\mE g^0(x^{\eps}(s))\le 0,\q\forall\, s\in
(t-\delta_0, t+\delta_0)\cap
[0,T],\;\forall\;t\in \cI^0(\bar x),\; \forall\,
\eps\in[0,\eps_0].\vspace{-0.2cm}
\end{equation}

Set\vspace{-0.2cm}
$$
\cI^{cc}\= [0,T]\setminus\Big[\bigcup_{t\in
\cI^0(\bar x)} (t-\delta_{0}  , t+\delta_{0})
\Big].
$$
From the compactness of $\cI^{cc}$ and the
continuity if $\mE g^0(\bx(\cd))$ with respect
to $t$, we know that there exists $\rho_{4}>0$
such that\vspace{-0.2cm}
$$
\mE g^0(\bx(t))<-\rho_{4}, \q \forall\; t\in
\cI^{cc}.\vspace{-0.2cm}
$$
This, together with (\ref{th5.1 expansion g
xeps}), implies that for all sufficiently small
$\eps>0$,\vspace{-0.2cm}
\begin{equation}\label{th5.1 eq5}
\mE g^0(x^{\eps}(t)) \leq  0,\q \forall\; t\in
\cI^{cc}.\vspace{-0.2cm}
\end{equation}
Combining (\ref{xeps admi for I0}) and
(\ref{th5.1 eq5}), we conclude that
$x^{\eps}(\cd)$ satisfies the constraint
(\ref{constraints1}), provided that $\eps$ is
small enough.

\vspace{0.1cm}

By a similar argument, we can show that when
$\eps$ is small enough, $x^{\eps}(\cd)$
satisfies the constraint (\ref{constraints}).
This proves that $(x^{\eps}(\cd),
u^{\eps}(\cd))\in \cP_{ad}$, provided that
$\eps$ is sufficiently small.

\vspace{0.15cm}

{\bf Step 4:} By the optimality of
$(\bx(\cd),\bu(\cd))$ and the equality
$\mE\inner{h_{x}(\bx(T))}{ x_{1}(T)}_H=0$, we
have\vspace{-0.1cm}
\begin{equation}\label{2nd exp of phi}
\begin{array}{ll}\ds
0\3n&\le\ds\lim_{\eps\to 0^+}\frac{\mE h(x^{\eps}(T))-\mE h(\bx(T))}{\eps^2} \\
\ns&\ds=  \mE\inner{h_{x}(\bx(T))}{x_{2}(T)}_H +\frac{1}{2}\mE\inner{h_{xx}(\bx(T))x_{1}(T)}{x_{1}(T)}_H +\lim_{\eps\to 0^+}\frac{o(\eps^2)}{\eps^2} \\
\ns&\ds = \mE\inner{h_{x}(\bx(T))}{x_{2}(T)}_H
+\frac{1}{2}\mE\inner{h_{xx}(\bx(T))x_{1}(T)}{
x_{1}(T)}_H.
\end{array}\vspace{-0.2cm}
\end{equation}
From the definition of the transposition
solution of the equation \eqref{first ajoint
equ}, we get that\vspace{-0.2cm}
\begin{eqnarray}\label{dual P1y2}
&&\3n\3n\!\!\!\!
\mE \inner{y(T)}{x_{2}(T)}_H\nonumber\\
&&\3n\3n\!\!\!\!=\!
\inner{y(0)}{\nu_{2}}_H\!+\!\mE\!\int_{0}^{T}
\!\!\!\Big(\!\inner{y(t)}{a_{2}[t]u_2(t)}_H\!
+\!\!\frac{1}{2}\inner{y(t)}{a_{11}[t](x_{1}\!(t),x_{1}\!(t))}_H\!
+\!\inner{y(t)}{a_{12}[t](x_{1}\!(t),u_1\!(t))}_H \nonumber\\
&&\q\3n\3n\!\!\!\!
+\frac{1}{2}\inner{y(t)}{a_{22}[t](u_1(t),u_1(t))}_H+
\inner{Y(t)}{b_{2}[t]
u_2(t)}_{\cL_2}+\frac{1}{2} \inner{Y(t)}{
b_{11}[t](x_{1}(t),x_{1}(t))}_{\cL_2}
 \\
&&\q\3n\3n\!\!\!\! + \inner{Y(t)}{
b_{12}[t](x_{1}(t),u_1(t))}_{\cL_2} +\frac{1}{2}
\inner{Y(t)}{b_{22}[t](u_1(t),u_1(t))}_{\cL_2}\Big)dt+\mE\int_0^T\inner{x_2(t)}{d\psi(t)}_H.\nonumber
\vspace{-0.2cm}
\end{eqnarray}
This, together with the choice of $y(T)$,
implies that\vspace{-0.2cm}
\begin{eqnarray}\label{dual phix(T) y2}
&&\3n\3n\3n\!\!\ds
\mE\inner{h_{x}(\bx(T))}{x_{2}(T)}_H\nonumber
\\
&&\3n\3n\3n\!\!\ds=
-\inner{y(0)}{\nu_{2}}_H-\sum_{j\in\cI(\bar
x)}\l_j \inner{g^j_x(\bx(T))}{x_2(T)}_H
-\mE\int_{0}^{T}\inner{x_{2}(t)}{d\psi(t)}_H \nonumber\\
&&\3n\3n\3n\!\!\ds\q-\mE\int_{0}^{T}
\!\!\Big(\!\inner{y(t)}{a_{2}[t]u_2(t)}_H
\!+\!\frac{1}{2}\inner{y(t)}{
a_{11}[t](x_{1}(t),x_{1}(t))}_H\!+\!\inner{y(t)}{a_{12}[t](x_{1}(t),u_1(t))}_H
\\
&&\3n\3n\3n\!\!\ds
\qq\qq\;+\frac{1}{2}\inner{y(t)}{a_{22}[t](u_1(t),u_1(t))}_H+
\inner{Y(t)}{b_{2}[t] u_2(t)}_{\cL_2}
+\frac{1}{2} \inner{Y(t)}{
b_{11}[t](x_{1}(t),x_{1}(t))}_{\cL_2} \nonumber\\
&&\3n\3n\3n\!\!\ds \qq\qq\;+
\inner{Y(t)}{b_{12}[t](x_{1}(t),u_1(t))}_{\cL_2}
+\frac{1}{2} \inner{Y(t)}{
b_{22}[t](u_1(t),u_1(t))}_{\cL_2} \Big)dt \nonumber\\
&&\3n\3n\3n\!\!\ds= -\inner{y(0)}{\nu_{2}}_H
-\mE\int_{0}^{T}\inner{x_{2}(t)}{d\psi(t)}_H
-\mE\int_{0}^{T}\inner{\dbH_{u}[t]}{u_2(t)}_{H_1}dt \nonumber\\
&&\3n\3n\3n\!\!\ds\q-\frac{1}{2}\mE\!\int_{0}^{T}
\!\!\big(\!\inner{\dbH_{xx}[t]x_{1}(t)}{x_{1}(t)}_H
\!+\!2\!\inner{\dbH_{xu}[t]x_{1}(t)}{u_1(t)}_{H_1}\!\!+\!\inner{\dbH_{uu}[t]u_1(t)}{u_1(t)}_{H_1}
\!\big)dt.\nonumber
\end{eqnarray}
By the definition of the relaxed transposition
solution of \eqref{op-bsystem3}, we have that
\begin{equation}\label{dual P2 y1y1}
\begin{array}{ll}\ds
 \mE\inner{P(T)x_{1}(T)}{x_{1}(T)}_H\\
\ns\ds=  \inner{P(0) \nu_1}{\nu_1}_H
+\mE\int_{0}^{T}\big(2\inner{P(t)x_{1}(t)}{a_{2}[t]u_1(t)}_H+2
\inner{P(t)b_{1}[t]x_{1}(t)}
{b_{2}[t]u_1(t)}_{\cL_2}
 \\
\ns\ds\q + \inner{P(t)b_{2}[t]u_1(t)}
{b_{2}[t]u_1(t)}_{\cL_2} +\big\langle\big(
\widehat Q^{(0)} +
Q^{(0)}\big)\big(0,a_{2}[t]u_1(t),
b_{2}[t]u_1(t)\big),b_{2}[t]u_1(t)
\big\rangle_{\cL_2}
\\
\ns\ds\q
-\inner{\dbH_{xx}[t]x_{1}(t)}{x_{1}(t)}_H
\big)dt.
\end{array}\vspace{-0.2cm}
\end{equation}
This, together with (\ref{dual phix(T) y2}) and
(\ref{2nd exp of phi}), implies  (\ref{second
order integral condition}).

\vspace{0.15cm}

{\bf Step 5:} In this step, we handle the case
when $ x_{2}(\cd) \in \cG^{(2)}(x_1,u_1)\cap
{\rm cl} \cQ^{(2)}(x_1) \cap {\rm cl}
\cE^{(2)}(x_1)$.

Let $ \hat x_{2}(\cd)\in \cG^{(2)}(x_1,u_1)\cap
{\rm cl} \cQ^{(2)}(x_1) \cap {\rm cl}
\cE^{(2)}(x_1)$ with the corresponding
$\hat\nu_{2}\in \cW(\bar\nu_{0}, \nu_1)$ and
$\hat u_2(\cdot)\in \cM(\bu, u_1)$. For
$\theta\in(0,1)$, put\vspace{-0.2cm}
$$
x^{\theta}_{2}=(1-\theta)x_{2}+ \theta \hat
x_{2}.\vspace{-0.1cm}
$$
Noting that $\cW(\bar\nu_{0}, \nu_1)$ and
$\cM(\bu, u_1)$ are convex, $x^{\theta}_{2} $ is
the solution of the equation \eqref{second order
vari equ} with the initial datum\vspace{-0.2cm}
$$
\nu^{\theta}_{2}\= (1-\theta)\nu_{2}+\theta
\hat\nu_{2} \in \cW(\bar\nu_{0},
\nu_1)\vspace{-0.2cm}
$$
and the control\vspace{-0.2cm}
$$
u_2^{\theta}(\cdot)\=(1-\theta)
u_2(\cdot)+\theta \hat u_2(\cdot) \in \cM(\bu,
u_1).\vspace{-0.2cm}
$$
Then, it is easy to show that\vspace{-0.2cm}
$$
\lim_{\theta\to
0}x_2^{\theta}= x_{2} \mbox{ in }
L^{2}_{\dbF}(\Omega;C([0,T];H)).
$$
Furthermore, since $\hat x_{2}(\cd)\in
\cQ^{(2)}(x_1)\cap\cE^{(2)}(x_1)$, we have
$x^{\theta}_{2}(\cd)\in
\cQ^{(2)}(x_1)\cap\cE^{(2)}(x_1)$ for $\th\neq
0$. From \textbf{Step 1}, we deduce
that\vspace{-0.2cm}
$$
\begin{array}{ll}\ds
\langle y(0),\nu_2^{\theta} \rangle_H+
\frac{1}{2}\langle P(0) \nu_1, \nu_1 \rangle_H
+\sum_{j\in \cI(\bar x)}\mE
\big\langle\lambda_{j}g^{j}_{x}(\bx(T)),x_{2}^{\theta}(T)\big\rangle_H
\\
\ns\ds  +\mE \int_0^T \(\big\langle \dbH_{u}[t],
u_2^{\theta}(t)\big\rangle_{H_{1}} +
\frac{1}{2}\big\langle \dbH_{uu}[t]u_1(t),
u_1(t)\big\rangle_{H_{1}}+
\frac{1}{2}\big\langle
b_{2}[t]^{*}P(t)b_{2}[t]u_1(t),
u_1(t)\big\rangle_{H_{1}}
\\
\ns\ds \qq\qq + \big\langle\big(\dbH_{xu}[t] +
a_{2}[t]^*
P(t) + b_{2}[t]^*P(t) b_{1}[t]\big)x_{1}(t), u_1(t)\big\rangle_{H_{1}} \\
\ns\ds \qq\qq+ \frac{1}{2}\big\langle\big(
\widehat Q^{(0)}\! +\!
Q^{(0)}\big)\big(0,a_{2}[t]u_1(t),
b_{2}[t]u_1(t)\big),b_{2}[t]u_1(t)
\big\rangle_{\cL_2}\)dt\! + \mE\!\int_0^T\! \lan
x^{\theta}_2(t),d\psi(t)\ran_H \leq 0.
\end{array}
$$
Letting $\theta\to 0$ in the above inequality,
we obtain (\ref{second order integral
condition}). This completes the proof of Theorem
\ref{TH second order integral condition}.
\end{proof}

\begin{remark}
The second order necessary condition is only
valid for $\cY(\bar x,\bar u)$ (recall
\eqref{8.18-eq6} for the definition) being
nonempty. If $\cE^{(1)}_T \neq\emptyset$,
$U=H_1$, \eqref{first vari equ} is exactly
controllable and there are no state constraints,
then $\cY(\bar x,\bar u)\neq\emptyset$. However,
to enjoy the exact controllability property, one
needs some restrictive conditions
(e.g.\cite{Luqi4,Luqi6,LYZ}).
\end{remark}

Next, we give another second order necessary
condition.
\begin{theorem}\label{TH second order conditionbis}
Suppose that {\bf(AS1)}--{\bf(AS9)} hold and
$(\bx(\cd),\bu(\cd),\bar\nu_{0})$ be an optimal
triple of {\bf Problem (OP)}. Let
$\Phi(t,\omega)=\cC_{U}(\bar u(t,\omega))$.
Assume that $\mE |g^0_{x}(\bx(t))|_H\neq 0$ for
all $t\in \cI^0(\bar x)$. Let
$(x_{1},u_1,\nu_1)\in \Upsilon(\bx,\bu)$ and
suppose that $e(\cdot)$ (defined by
(\ref{e(t)})) is  bounded on $\dbI^0(\bar
x,x_1)$. Let $\cW(\bar\nu_{0},\nu_1)$ $\subset
T^{b(2)}_{\cK_a}(\bx_{0},\nu_1)$ and $\cM(\bu,
u_1) \subset T^{b(2)}_{\mathcal{V}}(\bu,u_1)$ be
convex. Then there exist $\lambda_0 \in\{0,1\}$,
$\lambda_{j}\ge 0$ for all $j\in \cI(\bar x)$
and $\psi \in \big(\cQ^{(1)}\big)^{-}$ such that
the solution $(y,Y)$ of (\ref{first ajoint equ})
with $\ds y_T=-\l_0h_x(\bar x(T)) - \sum_{j\in
\cI(\bar x)}\l_jg_x^j(\bar x(T))$ and ${\cal
I}(\bar x)$ replaced by $ {\dbI}(\bar x,x_1)$
satisfies the first order condition
(\ref{th2-eq1}), and for any $x_{2}(\cd)\in
\cG^{(2)}(x_1,u_1)$ with the corresponding
$\nu_{2}\in \cW(\bar\nu_{0}, \nu_1)$ and
$u_2(\cdot)\in \cM(\bu, u_1)$, the second order
necessary condition \eqref{second order integral
condition} holds true, where
$(P(\cd),Q^{(\cd)},\wh Q^{(\cd)})$ is the
relaxed transposition solution of
(\ref{op-bsystem3}) with $\ds P(T)=-\lambda_0
h_{xx}(\bx(T))-\sum_{j=1}^n \l_j
g^j_{xx}(\bx(T))$.
\end{theorem}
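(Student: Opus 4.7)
The plan follows the three-case separation strategy of Theorem~\ref{th1} lifted to the second-order level, combined with the second-order variational calculus used in the proof of Theorem~\ref{TH second order integral condition}.

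First I would establish the key disjointness
\[
\cG^{(2)}(x_1,u_1)\cap \cQ^{(2)}(x_1)\cap \cE^{(2)}(x_1)\cap \cL^{(2)}(x_1)=\emptyset
\]
by reusing Steps~1--4 of the proof of Theorem~\ref{TH second order integral condition}. Namely, if $x_2$ lay in all four sets with corresponding $(\nu_2,u_2)\in \cW(\bar\nu_0,\nu_1)\times \cM(\bar u,u_1)$, the second-order expansions of $\mE g^0(x^\eps(\cdot))$, $\mE g^j(x^\eps(T))$ and $\mE h(x^\eps(T))$, together with the admissibility-preserving handling of $e(t)$ on $\t^g(\bar x)$, would produce a feasible perturbed triple $(x^\eps(\cdot), u^\eps(\cdot), \nu_0^\eps)$ with $\mE h(x^\eps(T))<\mE h(\bar x(T))$, contradicting optimality; here one crucially uses $(x_1,u_1,\nu_1)\in \cZ(\bar x,\bar u)$ (i.e.\ $\mE\langle h_x(\bar x(T)),x_1(T)\rangle_H=0$) so that the $\eps$-term cancels.

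Next I would split into three cases mirroring the proof of Theorem~\ref{th1}. In Case~1, $\cQ^{(2)}(x_1)\cap \cG^{(2)}(x_1,u_1)=\emptyset$: Hahn--Banach together with Lemma~\ref{lm12} furnishes a nonzero $\psi\in L^{2}_{\dbF}(\Omega;BV_0([0,T];H))$ with $\psi\in (\cQ^{(2)}(x_1))^{-}$ and $-\psi\in (\cG^{(2)}(x_1,u_1))^{-}$; one sets $\lambda_0=0$, $\lambda_j=0$, so $y(T)=0$ and $P(T)=0$. In Case~2, $\cQ^{(2)}\cap\cG^{(2)}\neq\emptyset$ but $\cQ^{(2)}\cap\cG^{(2)}\cap\cE^{(2)}=\emptyset$: separate $\Gamma(\cQ^{(2)}\cap\cG^{(2)})$ from the terminal-value analogue of $\cE^{(2)}$ in $L^{2}_{\cF_T}(\Omega;H)$, use Lemma~\ref{lm6} to split the normal into $\psi$-contributions (from $\cQ^{(2)}$) and the remainder (yielding $\lambda_j\geq 0$), still with $\lambda_0=0$; when $\cE^{(2)}(x_1)=\emptyset$ a direct argument analogous to \eqref{11.20-eq3} produces nontrivial $\lambda_j$'s. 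In Case~3, $\cQ^{(2)}\cap\cG^{(2)}\cap\cE^{(2)}\neq\emptyset$, which by Step~1 is disjoint from $\cL^{(2)}$; a separation strictly analogous to Case~3 of Theorem~\ref{th1} gives $\lambda_0=1$, $\lambda_j\geq 0$, and $\psi$. In every case, Remark~\ref{11.20-rmk1} (the inclusions $\cQ^{(1)}+\cQ^{(2)}\subset\cQ^{(2)}$ and $\cG^{(1)}+\cG^{(2)}\subset\cG^{(2)}$) is dualized to promote $\psi$ from $(\cQ^{(2)}(x_1))^{-}$ to $(\cQ^{(1)})^{-}$, so that the first-order support needed for \eqref{th2-eq1} is automatic.

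With the multipliers fixed, I would read off $y(T)$ as stated and invoke Lemma~\ref{lm13} to obtain the transposition solution $(y,Y)$ of \eqref{first ajoint equ}, and similarly Lemma~\ref{OP-th2} to obtain $(P,Q^{(\cdot)},\widehat Q^{(\cdot)})$ for \eqref{op-bsystem3} with the specified $P(T)$. For any $x_2\in \cG^{(2)}(x_1,u_1)$, plugging the second-order variational equation \eqref{second order vari equ} into the transposition identity \eqref{dual P1y2}, and using the relaxed-transposition identity \eqref{dual P2 y1y1} applied on the pair $(x_1,x_1)$ exactly as in Step~4 of the proof of Theorem~\ref{TH second order integral condition}, converts the separation inequality into precisely \eqref{second order integral condition}. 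The first-order condition \eqref{th2-eq1} is obtained by first deriving the integral form \eqref{first order integraltype condition} via the same case analysis (at the first-order level the same multipliers are admissible because $\psi\in(\cQ^{(1)})^{-}$ and $y(T)$ has the stated form) and then invoking Lemma~\ref{lm8} for the pointwise conclusion.

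The main obstacle is verifying that the $\psi$ produced by the second-order separation genuinely lies in $(\cQ^{(1)})^{-}$ rather than only $(\cQ^{(2)}(x_1))^{-}$: the two cones are defined over different index sets ($\cI^0(\bar x)$ versus $\dbI^0(\bar x,x_1)$) and $\cQ^{(2)}$ carries the additive offset $\tfrac{1}{2}\mE\langle g^0_{xx}x_1,x_1\rangle_H + e(t)$, with $e(\cdot)$ only upper-semicontinuous. The inclusion $\cQ^{(1)}+\cQ^{(2)}\subset\cQ^{(2)}$ from Remark~\ref{11.20-rmk1} is precisely what makes the promotion possible by duality, but the argument is delicate because $\psi$ is a vector measure in time. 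A secondary obstacle is ensuring that the multiplier triple $(\lambda_0,\{\lambda_j\},\psi)$ is nontrivial simultaneously across all three cases and compatible with both \eqref{th2-eq1} and \eqref{second order integral condition}; this requires careful use of Lemmas~\ref{lm6} and~\ref{lm9} for each of the three cases above.
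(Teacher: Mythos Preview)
Your overall architecture—prove the four-set disjointness, separate, and use Remark~\ref{11.20-rmk1} to promote $\psi$ into $(\cQ^{(1)})^-$—is the same as the paper's, and you have correctly identified the promotion step as the crux. However, the paper does \emph{not} lift the three-case decomposition of Theorem~\ref{th1}. Instead it splits first on whether $\dbI^0(\bar x,x_1)=\emptyset$ (so that $\cQ^{(2)}(x_1)$ is the whole space and one may take $\psi=0$), and inside that subcase on whether $\dbI(\bar x,x_1)=\emptyset$; the terminal constraints are then handled via the polyhedral Lemma~\ref{lm5}, not the cone Lemma~\ref{lm6}. When $\dbI^0(\bar x,x_1)\neq\emptyset$, the paper applies Lemma~\ref{lm9} \emph{once} to the whole family $\cG^{(2)}(x_1,u_1),\ \cQ^{(2)}(x_1),\ \{\cE^{(2,j)}(x_1)\}_{j\in\mathbf{I}}$ (with $h$ relabeled $g^{n+1}$), obtaining all multipliers simultaneously. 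The second-order Hessian contributions enter because each $\cE^{(2,j)}$ is an open half-space, so one can compute $\sup_{z\in\cE^{(2,j)}}(-x_j^*)(z)=-\tfrac{\lambda_j}{2}\mE\langle g^j_{xx}(\bx(T))x_1(T),x_1(T)\rangle_H$ explicitly and feed it into the separation inequality~\eqref{new4}; this is how $P(T)$ acquires the $g^j_{xx}$ terms.

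Your three-case scheme has a concrete gap: the second-order sets $\cQ^{(2)}(x_1)$, $\cE^{(2)}(x_1)$, $\cL^{(2)}(x_1)$, $\cG^{(2)}(x_1,u_1)$ are \emph{not} cones—each carries an affine offset (from $g^0_{xx}$, $e(\cdot)$, $g^j_{xx}$, $h_{xx}$, and the $x_1$-dependent inhomogeneities in~\eqref{second order vari equ})—so expressions like ``$\psi\in(\cQ^{(2)}(x_1))^-$'' or an appeal to Lemma~\ref{lm6} are ill-posed here. Hahn--Banach on two such sets yields only $\sup_{\cQ^{(2)}}\langle\cdot,d\psi\rangle\le\inf_{\cG^{(2)}}\langle\cdot,d\psi\rangle$, with neither side equal to zero; the nonzero separating value is precisely what must become the $P(T)$-contribution in~\eqref{second order integral condition}. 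Your Case~1 sets $\lambda_0=\lambda_j=0$ and $P(T)=0$, which discards this value and cannot reproduce the inequality. The paper circumvents this by using Lemma~\ref{lm9} for general convex sets together with the explicit half-space structure of the $\cE^{(2,j)}$ to track the offsets, and by relying on $\cQ^{(1)}+\cQ^{(2)}\subset\cQ^{(2)}$ and $\cG^{(1)}+\cG^{(2)}\subset\cG^{(2)}$ (Remark~\ref{11.20-rmk1}) only at the very end to conclude $\psi\in(\cQ^{(1)})^-$ and $-\kappa^*\in(\cG^{(1)})^-$, which simultaneously delivers the first-order condition~\eqref{th2-eq1} via Lemma~\ref{lm8}.
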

\begin{proof}

If either $\cW(\bar\nu_{0},\nu_1)$ or  $\cM(\bu,
u_1) $ is empty, then by  Theorem \ref{th2}, we
get the desired result. Therefore, in the rest
of the proof, we assume that these two sets are
nonempty. Put\vspace{-0.2cm}
$$
\wt{\cW}(\bx_{0},\nu_1)\= \cC_{ \cK_a }(\bx_{0})
+ \cW(\bx_{0},\nu_1),\q \wt{\cM}(\bu, u_1)\=
\cT_\Phi(\bu)+\cM(\bu, u_1),\vspace{-0.2cm}
$$
where\vspace{-0.2cm}
$$
\cT_\Phi(\bu)\= \{u \in
L^{4}_{\dbF}(0,T;H_1)\;|\; u(t,\omega) \in
\cC_U(\bu(t,\omega)) \;\; \mbox{\rm a.e. in }
[0,T] \times \Omega\}.\vspace{-0.2cm}
$$
By Lemma \ref{lm7}, $\cT_\Phi(\bu) \subset
\cC_{\mathcal{V}} (\bar u)$. Thus, by Lemma 2.4
from [17], $\wt{\cM}(\bu, u_1) \subset
T^{b(2)}_{\mathcal{V}}(\bu,u_1) $.

We divide the rest of the proof into two steps.
In \textbf{Step 1}, we handle the case when
$\dbI^0(\bar x,x_1) = \emptyset$. In
\textbf{Step 2}, we deal with the case when
$\dbI^0(\bar x,x_1)\neq\emptyset$.

\medskip

{\bf Step 1}. If  $\dbI^0(\bar x,x_1) =
\emptyset$, then
$\cQ^{(2)}(x_1)=L_{\dbF}^{2}(\Omega; C([0,T];
H))$ and \vspace{-0.2cm}
$$\cG^{(2)}(x_1,u_1)\cap
\cQ^{(2)}(x_1)\cap\cE^{(2)}(x_1)=\cG^{(2)}(x_1,u_1)
\cap\cE^{(2)}(x_1).\vspace{-0.2cm}
$$

Fix $(x_{1}(\cd),u_1(\cd),\nu_1)\in \cZ(\bar
x,\bar u)$ (recall \eqref{8.18-eq7} for the
definition of $\cZ(\bar x,\bar u)$). Consider
the following two different cases:

\vspace{0.2cm}

{\bf Case 1.1}: $\dbI(\bar x,x_{1})=\emptyset$.

In this context,\vspace{-0.2cm}
$$
\mE\langle h_x(\bar x(T)),x_{1}(T) \rangle_H=0,
\qq \mE\langle g_x^j(\bar x(T)),x_{1}(T)
\rangle_H<0,\q \forall\, j\in \cI(\bar
x).\vspace{-0.2cm}
$$
Then for any $\nu_2\in \wt\cW(\bar
\nu_0,\nu_1)$, $u_2\in\wt\cM(\bar u,u_1)$ and
$\e>0$, there exist $\nu^\e\in H$ and $v^\e \in
L^4_\dbF(0,T;H_1)$ such that\vspace{-0.2cm}
$$
|\nu^\e|_H=o(\e^2),\qq \nu_0^\e\=\bar \nu_0 +
\e\nu_1+\e^2\nu_2+\nu^\e\in \cK_a\vspace{-0.2cm}
$$
and\vspace{-0.2cm}
$$
|v^\e|_{L^4_\dbF(0,T;H_1)}=o(\e^2),\qq
u^\e\=\bar u + \e u_1+\e^2 u_2 + v^\e\in
\cV.\vspace{-0.2cm}
$$
Let $x^\e(\cd)$ be the solution of the control
system \eqref{controlsys} with the initial datum
$\nu_0^\e$ and  the control $u^\e(\cd)$.
Put\vspace{-0.2cm}
$$
h_{11}^{\e}(T)\=\int_{0}^{1}
(1-\theta)h_{xx}(\bx(T) + \theta\delta
x^{\e}(T))d\theta.\vspace{-0.2cm}
$$
By Lemma \ref{estimate two of varie qu}, there
is $\rho<0$ such that for each $j\in \cI(\bar
x)$ and all sufficiently small
$\e>0$,\vspace{-0.2cm}
\begin{equation*}\label{8.18-eq10}
\begin{array}{ll}\ds
\mE g^j(x^\e(T))\3n&\ds = \mE g^j(\bar x(T)) +
\e\mE\langle g^j_x(\bar x(T)), x_{1}(T)
\rangle_H + \e^2\mE\langle g^j_x(\bar x(T)),
x_{2}(T)
\rangle_H\\
\ns&\ds \q +\frac{\e^2}{2}\mE\langle
g^j_{xx}(\bar x(T))x_{1}(T), x_{1}(T) \rangle_H
+
o(\e^2)\\
\ns&\ds = \e \Big( \mE\langle g^j_x(\bar x(T)),
x_{1}(T) \rangle_H + \e \mE\langle g^j_x(\bar
x(T)), x_{2}(T) \rangle_H \\
\ns&\ds\q + \frac{\e}{2}\mE\langle g^j_{xx}(\bar
x(T))x_{1}(T), x_{1}(T) \rangle_H +
o(\e)\Big)<\e\rho< 0\vspace{-0.2cm}
\end{array}
\end{equation*}
and, for each $j\notin \cI(\bar x)$,  $\mE
g^j(x^\e(T))  = \mE g^j(\bar x(T)) + O(\e) \leq
\rho+ O(\e)$. Consequently,
$(x^\e(\cd),u^\e(\cd))\in\cP_{ad}$.

Direct computations yield
$$
\begin{array}{ll}\ds
\frac{\cJ(u^{\e})-\cJ(\bu)}{\e^2}\\
\ns \ds=  \frac{1}{\e^2}\mE
\Big(\inner{h_{x}(\bx(T))}{\delta
x^{\e}(T)}_H+\frac{1}{2}\inner{h_{11}^{\e}(\bx(T))\delta
x^{\e}(T)}{\delta x^{\e}(T)}_H
\Big)\\
\ns \ds =  \mE\Big(\frac{1}{\e}
\inner{h_{x}(\bar{x}(T))}{x_{1}(T)}_H +
\inner{h_{x}(\bx(T))}{x_{2}(T)}_H
+\frac{1}{2}\inner{h_{xx}(\bx(T))x_{1}(T)}{x_{1}(T)}_H\Big)
+ \rho_{2}^{\e},
\end{array}
$$
where\vspace{-0.2cm}
$$
\begin{array}{ll}\ds
\rho_{2}^{\eps} \3n&\ds=  \mE
\Big(\frac{1}{2}\Big\langle
h_{11}^{\eps}(\bx(T))\frac{\delta
x^{\eps}(T)}{\eps},\frac{\delta
x^{\eps}(T)}{\eps}\Big\rangle_{H}-\frac{1}{2}\inner{h_{xx}(\bx(T))x_{1}(T)}{x_{1}(T)}_{H}\Big).
\end{array}
$$
Similar to the proof of Lemma \ref{estimate two
of varie qu}, we can show that $\ds\lim_{\e\to
0^+}\rho_{2}^{\e}=0$. Therefore,\vspace{-0.2cm}
\begin{equation}\label{taylorexpconvex}
\begin{array}{ll}\ds
0 \leq\lim_{\eps\to 0^+}
\frac{\cJ(u^{\eps}(\cdot))-\cJ(\bu(\cdot))}{\eps^2}
= \mE\Big(\inner{h_{x}(\bx(T))}{x_{2}(T)}_H
+\frac{1}{2}\inner{h_{xx}(\bx(T))x_{1}(T)}{x_{1}(T)}_H\Big). \\
\end{array}
\end{equation}
It follows from the definition of the
transposition solution of \eqref{first ajoint
equ}  that
\begin{equation}\label{hxty2}
\begin{array}{ll}\ds
\mE\inner{y(T)}{x_{2}(T)}_H\\
\ns\ds =
\mE\inner{y(0)}{\nu_{2}}_H+\mE\int_{0}^{T}
\Big(\inner{y(t)}{a_{2}[t]u_2(t)}_H +
\frac{1}{2}\inner{y(t)}{a_{11}[t]\big(x_{1}(t),x_{1}(t)\big)}_H
\\
\ns\ds\q
+
\inner{y(t)}{a_{12}[t]\big(x_{1}(t),u_1(t)\big)}_{H_1}+
\frac{1}{2}\inner{y(t)}{a_{22}[t]\big(u_1(t),u_1(t)\big)}_{H_1}
\\
\ns\ds\q+ \inner{Y(t)}{b_{2}[t]u_2(t)}_{\cL_2}
+\frac{1}{2}\inner{Y(t)}{b_{11}[t]\big(x_{1}(t),x_{1}(t)\big)}_{\cL_2}
\\
\ns\ds\q +
\inner{Y(t)}{b_{12}[t]\big(x_{1}(t),u_1(t)\big)}_{\cL_2}+
\frac{1}{2}\inner{Y(t)}{b_{22}[t]\big(u_1(t),u_1(t)\big)}_{\cL_2}
\Big)dt.
\end{array}
\end{equation}
By the definition of the relaxed transposition
solution of \eqref{op-bsystem3}, we have
\begin{equation}\label{hxxty12}
\begin{array}{ll}\ds
\mE\inner{P(T)x_{1}(T)}{x_{1}(T)}_{H}\\
\ns\ds=
\mE\inner{P(0)\nu_1}{\nu_1}_{H}+\mE\int_{0}^{T}\Big(2\inner{P(t)x_{1}(t)}{a_{2}[t]u_1(t)}_{H}
+2\inner{P(t)b_{1}[t]x_{1}(t)}{b_{2}[t]u_1(t)}_{H}
\\
\ns\ds\q
+\inner{P(t)b_{2}[t]u_1(t)}{b_{2}[t]u_1(t)}_{H}
-\inner{\dbH_{xx}(t)x_{1}(t)}{x_{1}(t)}_{H} \Big)dt\\
\ns\ds\q +\mE \int_0^T \big\langle \widehat
Q^{(0)}(0,a_2u_1, b_2u_1)(t) + Q^{(0)}(0,a_2u_1,
b_2u_1)(t), b_2(t)u_1(t) \big\rangle_{\cL_2}dt.
\end{array}
\end{equation}
Let $\l_0=1$, $\l_j=0$  for all $j\in \cI(\bar
x)$ and $\psi=0$. It follows from
(\ref{taylorexpconvex})--(\ref{hxxty12})  that
\begin{eqnarray*} \ds
&& \3n\3n\3n 0 \geq\mE\inner{y(0)}{\nu_{2}}_{H}
+\frac{1}{2}\mE\inner{P(0)\nu_1}{\nu_1}_{H}+\mE\int_{0}^{T}\Big[
\inner{y(t)}{a_{2}[t]u_2(t)}_{H}
+\inner{Y(t)}{b_{2}[t]u_2(t)}_{\cL_2}
\\
&&\3n\3n\3n\qq
+\frac{1}{2}\Big(\inner{y(t)}{a_{22}[t]\big(u_1(t),u_1(t)\big)}_{H}\!+\!\inner{Y(t)}{b_{22}[t]\big(u_1(t),u_1(t)\big)}_{\cL_2}
\!\!+\! \inner{P(t)
b_{2}[t]u_1(t)}{b_{2}[t]u_1(t)}_{H}\!\Big)
\\
&&\3n\3n\3n\qquad +\!
\inner{y(t)}{a_{12}[t]\big(x_{1}(t),u_1(t)\big)}_{H} +\!\inner{Y(t)}{b_{12}[t]\big(x_{1}(t),u_1(t)\big)}_{\cL_2}\\
&&\3n\3n\3n\qquad
+\!\inner{a_{2}[t]^{*}P(t)x_{1}(t)}{u_1(t)}_{H_1}
\!\!+\!\inner{b_{2}[t]^{*}P(t)b_{1}[t]x_{1}(t)}{u_1(t)}_{H_1}
\!\Big]dt\\
&&\3n\3n\3n \q+\frac{1}{2}\mE \int_0^T
\big\langle \widehat Q^{(0)}(0,a_{2}u_1,
b_{2}u_1)(t) + Q^{(0)}(0,a_{2}v, b_{2}u_1)(t),
b_{2}[t]u_1(t) \big\rangle_{\cL_2}dt
\\
&&\3n\3n\3n=
\mE\inner{y(0)}{\nu_{2}}_{H}+\frac{1}{2}\mE\inner{P(0)\nu_1}{\nu_1}_{H}
\\
&&\3n\3n\3n\q  +\mE\!\int_0^T\!\! \( \big\langle
\dbH_{u}[t], u_2(t)\big\rangle_{H_{1}}\!\! +\!
\frac{1}{2}\big\langle \dbH_{uu}(t)u_1(t),
u_1(t) \big\rangle_{H_{1}}\!\! +\!
\frac{1}{2}\big\langle
b_{2}[t]^{*}P(t)b_{2}[t]u_1(t),
u_1(t)\big\rangle_{H_{1}}\!
\)dt\\
&&\3n\3n\3n \q + \mE\!\int_0^T
\big\langle\big(\dbH_{xu}(t) + a_{2}[t]^*
P(t) + b_{2}[t]^*P(t) b_{1}[t]\big)y(t), u_1(t)\big\rangle_{H_{1}} dt  \\
&&\3n\3n\3n\q +\frac{1}{2}\mE \int_0^T
\big\langle \widehat Q^{(0)}(0,a_{2}[t]u_1,
b_{2}[t]u_1)(t) + Q^{(0)}(0,a_{2}[t]u_1,
b_{2}[t]u_1)(t), b_{2}[t]u_1(t)
\big\rangle_{\cL_2}dt.
\end{eqnarray*}

\vspace{0.2cm}

{\bf Case 1.2}: $\dbI(\bar
x,x_{1})\neq\emptyset$.

First, we claim that\vspace{-0.2cm}
\begin{equation}\label{8.3-eq10}
\cE^2(x_{1})\cap
\cL^{(2)}(x_{1})\cap\cG^2(x_{1},u_1)=\emptyset.
\end{equation}
Indeed, if \eqref{8.3-eq10} was false, then
there would exist $\nu_2\in \wt\cW(\bar
\nu_0,\nu_1)$ and $u_2(\cd)\in \wt\cM(\bar
u,u_1)$ such that for some $\rho<0$, the
corresponding solution $x_{2}(\cd)$ of
\eqref{second order vari equ}
satisfies\vspace{-0.2cm}
$$
\mE\big\langle g_x^j(\bar x(T)),x_{2}(T)
\big\rangle_H + \frac{1}{2}\mE\big\langle
g_{xx}^j(\bar x(T))x_{1}(T),x_{1}(T)
\big\rangle_H<2\rho,\qq \forall\, j\in \dbI(\bar
x,x_{1})\vspace{-0.2cm}
$$
and\vspace{-0.2cm}
$$
\begin{array}{ll}\ds
\mE\big\langle h_x(\bar x(T)),x_{2}(T)
\big\rangle_H + \frac{1}{2}\mE\big\langle
h_{xx}(\bar x(T))x_{1}(T),x_{1}(T)
\big\rangle_H<2\rho.
\end{array}
$$

Let $\nu^\e\in H$ and $v^\e\in
L^4_\dbF(0,T;H_1)$ be such that\vspace{-0.2cm}
$$
|\nu^\e|_H=o(\e^2),\qq \nu_0^\e\=\bar \nu_0 +
\e\nu_1+\e^2\nu_2+\nu^\e\in \cK_a\vspace{-0.2cm}
$$
and\vspace{-0.2cm}
$$
|v^\e|_{L^4_\dbF(0,T;H_1)}=o(\e^2),\qq
u^\e(\cd)\=\bar u(\cd) + \e u_1(\cd)+\e^2
u_2(\cd)+ v^\e(\cd)\in \cV.\vspace{-0.2cm}
$$
Let $x^\e(\cd)$ be the solution of the control
system \eqref{controlsys} with the initial datum
$\nu_0^\e$ and the control $u^\e(\cd)$. Similar
to {\bf Case 1.1}, one can prove that for every
$j\notin \dbI(\bar x,x_{1})$ and  for all $\e>0$
small enough, $\mE g^j(x^\e(T))\leq 0$.
Meanwhile, by Lemma \ref{estimate two of varie
qu}, for any $j\in \dbI(\bar x,x_{1})$, and for
all sufficiently small $\e>0$,\vspace{-0.2cm}
\begin{equation*}\label{8.18-eq11.1}
\begin{array}{ll}\ds
\mE g^j(x^\e(T))\3n&\ds = \mE g^j(\bar x(T)) +
\e\mE\big\langle g^j_x(\bar x(T)), x_{1}(T)
\big\rangle_H + \e^2\mE\big\langle g^j_x(\bar
x(T)), x_{2}(T)
\big\rangle_H\\
\ns&\ds \q +\frac{\e^2}{2}\mE\big\langle
g^j_{xx}(\bar x(T))x_{1}(T), x_{1}(T)
\big\rangle_H +
o(\e^2)\\
\ns&\ds = \e^2 \Big(\mE\big\langle g^j_x(\bar
x(T)), x_{2}(T) \big\rangle_H +
\frac{1}{2}\mE\big\langle g^j_{xx}(\bar
x(T))x_{1}(T), x_{1}(T) \big\rangle_H+ \frac{o(\e^2)}{\e^{2}}\Big)\\
\ns&\ds <\e^2\rho< 0.
\end{array}\vspace{-0.2cm}
\end{equation*}
This proves that $(x^\e(\cd),u^\e(\cd))\in
\cP_{ad}$.

On the other hand, for all sufficiently small
$\e>0$,\vspace{-0.1cm}
\begin{equation*}\label{8.18-eq12}
\begin{array}{ll}\ds
\mE h(x^\e(T))\3n&\ds = \mE h(\bar x(T)) +
\e\mE\big\langle h_x(\bar x(T)), x_{1}(T)
\big\rangle_H + \e^2\mE\big\langle h_x(\bar
x(T)), x_{2}(T)
\big\rangle_H\\
\ns&\ds \q +\frac{\e^2}{2}\mE\big\langle
h_{xx}(\bar x(T))x_{1}(T), x_{1}(T)
\big\rangle_H +
o(\e^2)\\
\ns&\ds = \mE h(\bar x(T)) \!+\!\e^2 \!\Big(
\mE\big\langle h_x(\bar x(T)), x_{2}(T)
\big\rangle_H \!  +\! \frac{1}{2}\mE\big\langle
h_{xx}(\bar x(T))x_{1}(T), x_{1}(T)
\big\rangle_H \!+\!
\frac{o(\e^2)}{\e^{2}}\Big)\\
\ns&\ds<\mE h(\bar x(T)) +\e^2\rho<\mE h(\bar
x(T)).
\end{array}\vspace{-0.2cm}
\end{equation*}
This contradicts the optimality of $(\bar
x(\cd),\bar u(\cd),\bar\nu_0)$. Hence,
\eqref{8.3-eq10} holds.

\vspace{0.2cm}

Next, we consider two subcases (recall
\eqref{8.18-eq8}, \eqref{E2} and \eqref{L2} for
the definitions of $\cG^{(2)}(x_1,u_1)$,
$\cE^{(2)}(x_1)$ and $\cL^{(2)}(x_1)$).

\vspace{0.2cm}

{\bf Case 1.2.1}. $\cL^{(2)}(x_1)\cap
\cE^{(2)}(x_1)\neq\emptyset$.

\vspace{0.2cm}

Under these circumstances,
$\G\big(\cL^{(2)}(x_1)\big)\cap
\G\big(\cE^{(2)}(x_1)\big)\neq\emptyset$. Since
$\G\big(\cL^{(2)}(x_1)\big)\cap
\G\big(\cE^{(2)}(x_1)\big)\cap
\G\big(\cG^{(2)}(x_1,u_1)\big)=\emptyset$, by
the Hahn-Banach separation  theorem, we can find
a nonzero $\xi\in L^2_{\cF_T}(\Om;H)$ such
that\vspace{-0.2cm}
$$
\sup_{\a\in \G(\cL^{(2)}(x_1))\cap
\G(\cE^{(2)}(x_1))}\mE\langle
\xi,\a\rangle_H\leq \inf_{\b\in
\G(\cG^{(2)}(x_1,u_1))}\mE\langle
\xi,\b\rangle_H.\vspace{-0.2cm}
$$
By Lemma \ref{lm5}, there exists\vspace{-0.2cm}
$$
\begin{array}{ll}\ds
\a_0\in {\rm cl}
\big(\G\big(\cL^{(2)}(x_1)\big)\bigcap
\G\big(\cE^{(2)}(x_1)\big)\big) = {\rm cl}
\G\big(\cL^{(2)}(x_1)\big)
 \bigcap \big(\bigcap_{j\in \dbI(\bar x,x_{1})}
{\rm cl} \G\big(\cE^{2,j}(x_{1}(T))\big) \big)
\end{array}\vspace{-0.2cm}
$$
such that\vspace{-0.2cm}
$$
\mE \langle\xi,\a_0\rangle_H=\sup_{\a\in
\G(\cL^{(2)}(x_1))\cap \G(\cE^{(2)}(x_1))}\mE
\langle\xi,\a \rangle_H.
$$
Put\vspace{-0.2cm}
$$\dbI_0(\bar x,x_{1}) \=\Big\{j\in \dbI(\bar
x,x_{1})\Big|\,  \mE\big\langle g_x^j(\bar
x(T)), \a_0\big\rangle_H +
\frac{1}{2}\mE\big\langle g_{xx}^j(\bar
x(T))x_{1}(T),
x_{1}(T)\big\rangle_H=0\Big\}.\vspace{-0.1cm}
$$
By Lemma \ref{lm5}, for every $j\in \dbI_0(\bar
x,x_{1})$, there exists $\l_j\geq 0$ such
that\vspace{-0.2cm}
\begin{equation}\label{7.2-eq1}
\xi=\l_0 h_x(\bar x(T)) + \sum_{j\in \dbI_0(\bar
x,x_{1})}\l_j g_x^j(\bar x(T)),\vspace{-0.2cm}
\end{equation}
where $\l_0=0$ if $\ds\mE\langle h_x(\bar x(T)),
\a_0\rangle_H + \frac{1}{2}\mE\langle
h_{xx}(\bar x(T))x_{1}(T), x_{1}(T)\rangle_H<0$.
Then \eqref{7.2-eq1} yields\vspace{-0.2cm}
$$
\begin{array}{ll}\ds
\mE\langle\xi, \a_0\rangle_H \!=\!
-\frac{1}{2}\(\l_0\mE\big\langle h_{xx}(\bar
x(T))x_{1}(T), x_{1}(T)\big\rangle_H+\!\!
\sum_{j\in \dbI_0(\bar
x,x_{1})}\!\l_j\mE\big\langle g_{xx}^j(\bar
x(T))x_{1}(T), x_{1}(T)\big\rangle_H\).
\end{array}\vspace{-0.2cm}
$$
Setting\vspace{-0.2cm}
$$
y(T)=-\l_0 h_x(\bar x(T)) -\sum_{j\in
\dbI_0(\bar x,x_{1})}\l_j g_x^j(\bar
x(T))\vspace{-0.2cm}
$$
and\vspace{-0.2cm}
$$
P(T)=-\l_0 h_{xx}(\bar x(T)) -\sum_{j\in
\dbI_0(\bar x,x_{1})}\l_j g_{xx}^j(\bar
x(T)),\vspace{-0.2cm}
$$
we find that for any $x_{2}(T)\in
\cG^{(2)}_T(x_1,u_1)$,\vspace{-0.2cm}
$$
\begin{array}{ll}\ds
\frac{1}{2}\mE\langle P(T)x_{1}(T),x_{1}(T)
\rangle_H\\
\ns\ds = -\frac{1}{2}\(\l_0\mE\langle
h_{xx}(\bar x(T))x_{1}(T), x_{1}(T)\rangle_H+
\sum_{j\in \dbI_0(\bar x,x_{1})}\l_j\mE\langle
g_{xx}^j(\bar x(T))x_{1}(T), x_{1}(T)\rangle_H\)\\
\ns\ds = \mE\langle\xi, \a_0\rangle_H\leq
\mE\langle y(T),x_{2}(T) \rangle_H.
\end{array}
$$
This, together with \eqref{hxty2} and
\eqref{hxxty12}, implies \eqref{second order
integral condition}.

\vspace{0.2cm}

{\bf Case 1.2.2}.  $\cL^{(2)}_T(x_1)\cap
\cE^{(2)}_T(x_1)=\emptyset$.

\vspace{0.2cm}

For simplicity of notations, we put
$g^{n+1}(\cd)\!=\!h(\cd)$,
$\mathbf{I}\!=\!\{n+1\}\cup \dbI(\bar x,x_{1})$,
$\cE^{(2,n+1)}(x_{1})\!=\!\cL^{(2)}(x_1)$ and
$\cE^{(2,n+1)}_T(x_{1})=\cL^{(2)}_T(x_1)$.

If there exists  $j\in \mathbf{I}$ such that
$\cE^{(2,j)}_T(x_{1})=\emptyset$, then
$g_x^j(\bar x(T))=0$, $\dbP$-a.s.
and\vspace{-0.1cm}
\begin{equation}\label{8.18-eq13}
\mE \big\langle g_{xx}^j(\bar
x(T))x_{1}(T),x_{1}(T)\big\rangle_H\geq
0.\vspace{-0.1cm}
\end{equation}
Let $\l_j=1$ and $\l_k=0$ for $k\in
\mathbf{I}\setminus\{j\}$. Then
$\ds\l_jg_x^j(\bar x(T)) + \sum_{k\in
\mathbf{I}\setminus\{j\}}\l_kg_x^k(\bar
x(T))=0$. Let $y(T)=0$ and $P(T)=-g_{xx}^j(\bar
x(T))$. It is easy to see that $(y(\cd),Y(\cd))=
(0,0)$, $\dbH(\cd)= 0$, $\dbH_{xx}[\cd]= 0$ and
by \eqref{8.18-eq13}, $\mE\langle
P(T)x_{1}(T),x_{1}(T)\rangle_H\leq 0$. Then, by
the definition of the relaxed transposition
solution of \eqref{op-bsystem3}, \eqref{second
order integral condition} holds and it is
reduced to\vspace{-0.2cm}
\begin{equation*}\label{8.18-eq4}
\begin{array}{ll}\ds
\mE\langle P(0)\nu_1,\nu_1 \rangle_H \!+\!\mE\!
\int_0^T\!\!
\[\big\langle b_{2}[t]^{*}P(t)b_{2}[t]u_1(t),
u_1(t)\big\rangle_{H_{1}}\!\! +\!
2\big\langle\!\big(a_{2}[t]^*
P(t)\! +\! b_{2}[t]^*P(t) b_{1}[t]\big)x_{1}(t),u_1(t)\big\rangle_{H_{1}} \\
\ns\ds\hspace{3.76cm} + \big\langle\big(
\widehat Q^{(0)} +
Q^{(0)}\big)\big(0,a_{2}[t]u_1(t),
b_{2}[t]u_1(t)\big),b_{2}[t]u_1(t)
\big\rangle_{\cL_2}\]dt \leq 0.
\end{array}\vspace{-0.1cm}
\end{equation*}

If $\cE^{(2,j)}_T(x_{1})\neq\emptyset$ for each
$j\in \mathbf{I}$, then one can find $j_0\in
\mathbf{I}$ and a subset $\mathbf{I}^0\subset
\mathbf{I}$ with $j_0\notin \mathbf{I}^0$ such
that\vspace{-0.1cm}
$$
\bigcap_{j\in
\mathbf{I}^0}\cE^{(2,j)}_T(x_{1})\neq\emptyset,
\qq \(\bigcap_{j\in
\mathbf{I}^0}\cE^{(2,j)}_T(x_{1})\)\bigcap
\cE^{(2,j_0)}_T(x_{1})=\emptyset.\vspace{-0.1cm}
$$
By the Hahn-Banach separation theorem, there
exists a nonzero $\xi\in L^2_{\cF_T}(\Om;H)$
such that\vspace{-0.1cm}
$$
\sup_{\a\in
\cE^{(2,j_0)}_T(x_{1})}\mE\langle\xi,\a\rangle_H\leq
\inf_{\b\in \cap_{j\in
\mathbf{I}^0}\cE^{(2,j)}_T(x_{1})}\mE\langle\xi,\b\rangle_H.\vspace{-0.2cm}
$$
By Lemma \ref{lm5}, we can find $\a_0\in {\rm
cl} \cE^{(2,j_0)}_T(x_{1})$ and $\b_0\in
\bigcap_{j\in \mathbf{I}^0} {\rm cl}
\cE^{(2,j)}_T(x_{1})$ such that
\begin{equation}\label{8.18-eq14}
\mE\langle \xi,\a_0\rangle_H = \sup_{\a\in
\cE^{(2,j_0)}_T(x_{1})}\mE\langle\xi,\a\rangle_H\leq
\inf_{\b\in \cap_{j\in
\mathbf{I}^0}\cE^{(2,j)}_T(x_{1})}\mE\langle\xi,\b\rangle_H
= \mE\langle\xi,\b_0\rangle_H.\vspace{-0.2cm}
\end{equation}
It follows from Lemma \ref{lm5} that there
exists $\l_{j_0}>0$ such that
$\xi=\l_{j_0}g_x^{j_0}(\bar x(T))$ and
\vspace{-0.1cm}
\begin{equation}\label{8.18-eq15}
0=\mE\big\langle g_x^{j_0}(\bar
x(T)),\a_0\big\rangle_H +
\frac{1}{2}\mE\big\langle g_{xx}^{j_0}(\bar
x(T))x_{1}(T),x_{1}(T)\big\rangle_H.\vspace{-0.1cm}
\end{equation}
Denote by $\mathbf{I}^1$ the set of all indices
$j\in \mathbf{I}^0$ satisfying\vspace{-0.2cm}
\begin{equation}\label{8.18-eq16}
0=\mE\big\langle g_x^{j}(\bar
x(T)),\a_0\big\rangle_H +
\frac{1}{2}\mE\big\langle g_{xx}^{j}(\bar
x(T))x_{1}(T),x_{1}(T)\big\rangle_H.\vspace{-0.2cm}
\end{equation}
Then, by Lemma \ref{lm5} once more, for each
$j\in \mathbf{I}^1$, there exists $\l_j\geq 0$
such that\vspace{-0.1cm}
\begin{equation}\label{8.18-eq17}
-\xi=-\l_{j_0}g_x^{j_0}(\bar x(T))=\sum_{j\in
\mathbf{I}^1}\l_jg_x^{j}(\bar
x(T)).\vspace{-0.2cm}
\end{equation}
Combing \eqref{8.18-eq14}--\eqref{8.18-eq17}, we
obtain that\vspace{-0.2cm}
$$
0\leq \l_{j_0}\mE\big\langle g_{xx}^{j_0}(\bar
x(T))x_{1}(T),x_{1}(T)\big\rangle_H + \sum_{j\in
\mathbf{I}^1}\mE\big\langle g_{xx}^{j}(\bar
x(T))x_{1}(T),x_{1}(T)\big\rangle_H.\vspace{-0.2cm}
$$
Let $ y(T)=0$ and
$P(T)=-\l_{j_0}g_{xx}^{j_0}(\bar
x(T))-\sum_{j\in \mathbf{I}^1} g_{xx}^{j}(\bar
x(T))$. Then\vspace{-0.2cm}
$$
(y(\cd),Y(\cd))=(0,0), \q \dbH(\cd)=0, \q
\dbH_{xx}[\cd]=0, \q\mE\langle P(\bar
x(T))x_{1}(T),x_{1}(T)\rangle_H\leq
0.\vspace{-0.2cm}
$$
Applying the same argument as before, we obtain
\eqref{second order integral condition} with
$\psi= 0$.

\vspace{0.2cm}

{\bf Step 2}. In this step, we deal with the
case that $\dbI^0(\bar x,x_1)\neq\emptyset$.

From $\mE|g_{x}(\bx(t))|_H\neq 0$ for any $t\in
\cI^0(\bar x)$ and $e(\cdot)$ (recall
\eqref{e(t)} for the definition of $e(\cdot)$)
is bounded on $\dbI^0(\bar x,x_1)$, we get that
$-g_{x}(\bx(\cdot))\in \cQ^{(1)}$ and $-\d
g_{x}(\bx(\cdot))\in \cQ^{(2)}(x_1)$ when $\d$
($>0$) is large enough. Thus, $\cQ^{(1)} \neq
\emptyset$ and $\cQ^{(2)}(x_1) \neq \emptyset$.

Let  $x_2(\cd) \in \cG^{(2)}(x_1,u_1)$ and
$(y(\cd),Y(\cd))$ be the transposition solution
to (\ref{first ajoint equ}). We deduce from
(\ref{dual P1y2}) that\vspace{-0.2cm}
$$
\begin{array}{ll}\ds
\mE \inner{y(T)}{x_2(T)}_H \\
\ns\ds=  \mE \inner{y(0)}{\nu_{2}}_H
+\mE\int_{0}^{T}\inner{x_{2}(t)}{d\psi(t)}_H
+\mE\int_{0}^{T}\inner{\dbH_{u}[t]}{u_2(t)}_{H_1}dt \\
\ns\ds\q+\frac{1}{2}\mE\int_{0}^{T}
\big(\!\inner{\dbH_{xx}[t]x_{1}(t)}{x_{1}(t)}_H
+2\inner{\dbH_{xu}[t]x_{1}(t)}{u_1(t)}_{H_1}+\inner{\dbH_{uu}[t]u_1(t)}{u_1(t)}_{H_1}
\!\big)dt.
\end{array}\vspace{-0.2cm}
$$

If $\cE^{(2,j_1)}(x_{1}) =\emptyset$ for some
$j_1\in \mathbf{I}$,   then
$g^{j_1}_{x}(\bx(T))=0$, $\dbP$-a.s. and $ \mE
\langle
g^{j_1}_{xx}(\bx(T))x_{1}(T),x_{1}(T)\rangle_H
\geq 0$.
Therefore, by setting $ \psi(\cd) = 0$,
$\lambda_{j_1}=1$ and $\lambda_j =0$   for all
$j_1 \neq j \in \mathbf{I}$,
we get $(y(\cd),Y(\cd))=(0,0)$,
$P(T)=-g_{xx}^{j_1}(\bar x(T)) $, $
\mE\inner{P(T)x_{1}(T)}{x_{1}(T)}_H \leq 0 $
and\vspace{-0.2cm}
$$
\mE\int_0^T\inner{\dbH_{xx}[t]x_{1}(t)}{x_{1}(t)}_H
dt=0.\vspace{-0.2cm}
$$
These facts, together with (\ref{dual P2 y1y1}),
imply \eqref{second order integral condition}.

Next, assume that $\cE^{(2,j)}(x_{1}) \neq
\emptyset$ for every $j \in \mathbf{I}$. We
claim that\vspace{-0.1cm}
\begin{equation}\label{11.20-eq5}
\cG^{(2)}(x_1,u_1) \bigcap
\mathcal{Q}^{(2)}(x_1) \bigcap \(\bigcap_{j \in
\mathbf{I}} \cE^{(2,j)}(x_{1})\)=
\emptyset.\vspace{-0.2cm}
\end{equation}
Indeed, if
$$
\cG^{(2)}(x_1,u_1) \bigcap
\mathcal{Q}^{(2)}(x_1) \bigcap \(\bigcap_{j \in
\dbI(\bar x,x_1)} \cE^{(2,j)}(x_{1})\)=
\emptyset,\vspace{-0.2cm}
$$
then \eqref{11.20-eq5} holds. Otherwise, for any
$$
x_2\in \cG^{(2)}(x_1,u_1) \bigcap
\mathcal{Q}^{(2)}(x_1) \bigcap \(\bigcap_{j \in
\dbI(\bar x,x_1)} \cE^{(2,j)}(x_{1})\),
$$
from \eqref{L2} and \eqref{2nd exp of phi}, we
see that $x_2\notin
\cL^{(2)}(x_1)=\cE^{(2,n+1)}(x_{1})$. This also
yields \eqref{11.20-eq5}.

It follows from Lemma \ref{lm9} that there exist
$x^*, x_j^* \in L_{\dbF}^{2}(\Omega; C([0,T];
H))^*$ for all $j \in \mathbf{I}$,  which do not
vanish simultaneously, such that for $\ds \k^* =
-\(x^* + \sum_{j\in \mathbf{I}} x_j^*\)$,
\vspace{-0.2cm}
\begin{equation}\label{new}
\inf_{z\in \cG^{(2)} (x_1,u_1)} \k^*(z) +
\inf_{z\in\mathcal{Q}^{(2)}(x_1)} x^*(z) +
\sum_{j\in \mathbf{I}} \inf_{z\in
\cE^{(2,j)}(x_{1})} {x_j^*}(z) \geq
0.\vspace{-0.2cm}
\end{equation}
If $g^{j}_{x}(\bx(T))=0$  for some $j \in
\mathbf{I}$, then
$\cE^{(2,j)}(x_{1})=L_{\dbF}^{2}(\Omega;
C([0,T]; H))$. This, together with (\ref{new}),
yields $x_j^*=0$.

For each $j\in \mathbf{I}$ with
$g^{j}_{x}(\bx(T)) \neq 0$, put\vspace{-0.2cm}
\begin{equation}\label{Ed2n}
R_j \= \big\{z_T\in  L^{2}_{\cF_{T}}(\O;
H)\,\big|\, \mE\inner{g^{j}_{x}(\bx(T))}{z_T}_H
\leq 0\; \big\}.\vspace{-0.2cm}
\end{equation}
Then $R_j$ is a closed convex cone and $(R_j)^-
= \dbR_+ g^{j}_{x}(\bx(T))$.

Let $\Gamma$ be given by (\ref{gamma C to L2}).
It is easy to show that\vspace{-0.2cm}
$$
\Gamma^{-1}(R_j) + \cE^{(2,j)}(x_{1}) \subset
\cE^{(2,j)}(x_{1}) \; \mbox{ for every }\;j\in
\mathbf{I}\vspace{-0.2cm}
$$
and that $\Gamma^{-1}(R_j)$ is a cone.  Hence,
by (\ref{new}), $-x_j^* \in
\left(\Gamma^{-1}(R_j)\right)^- $. Noting that
$\Gamma$ is surjective, by the well known result
of  convex analysis,
$\left(\Gamma^{-1}(R_j)\right)^- =
\Gamma^*(R_j^-)$ (see for instance
\cite[Corollary 22, p. 144]{Aubin84} applied  to
the closed convex cone $R_j$ and the set-valued
map $\Gamma^{-1}$  whose graph is a closed
subspace of $ L^{2}_{\cF_{T}}(\Omega;H) \times
L_{\dbF}^{2}(\Omega; C([0,T]; H))$).
Therefore,\vspace{-0.1cm}
$$
-x_j^* = \Gamma^* (\lambda_j g^{j}_{x}(\bx(T)) )
\mbox{ for some }\lambda_j \geq
0.\vspace{-0.1cm}
$$
If $x_j^*=0$, then we put $\lambda_j=0$.  By
normalizing, we may assume that $\lambda_0
\in\{0,1\}$.

Since the map $\Gamma$ is surjective, we have
that\vspace{-0.1cm}
$$
\sup_{z \in \cE^{(2,j)}(x_{1})}
\left(-x_j^*\right)(z) = \sup_{z \in
\cE^{(2,j)}(x_{1})} \mE\big\langle \lambda_j
g^{j}_{x}(\bx(T)) , \Gamma (z) \big\rangle_H =
\sup_{z_T \in \cE^{(2,j)}_T(x_{1})}
\mE\big\langle \lambda_j g^{j}_{x}(\bx(T)), z(T)
\big\rangle_H.\vspace{-0.1cm}
$$
By the definition of $\cE^{(2,j)}(x_{1})$, for
any $j \in \mathbf{I}$ with $g^{j}_{x}(\bx(T))
\neq 0$,\vspace{-0.2cm}
$$
\sup_{z_T \in \cE_T^{(2,j)}(x_{1})} \mE\langle
\lambda_j g^{j}_{x}(\bx(T)) , z_T \rangle_H =
-\frac {\lambda_j}{2}\mE
\inner{g^{j}_{xx}(\bx(T))x_{1}(T)}{x_{1}(T)}_H.\vspace{-0.2cm}
$$

From (\ref{new}) (by setting $d\psi =-x^*$), we
deduce that\vspace{-0.2cm}
\begin{equation}\label{new4}
\3n \sup_{x_2 \in \cG^{(2)}(x_1,v)}\!
(-\k^*)(x_2) +\! \sup_{\alpha \in
\mathcal{Q}^{(2)}(x_1) } \mE\!\int_0^T \langle
\alpha (t), d\psi(t) \rangle_H\! -\! \frac12
\mE \sum_{j \in \mathbf{I}} \inner{\lambda_j
g^{j}_{xx}(\bx(T))x_{1}(T)}{x_{1}(T)}_H \leq 0.
\end{equation}
Recalling Remark \ref{11.20-rmk1} for the
inclusions $ \mathcal{Q}^{(1)} +
\mathcal{Q}^{(2)}(x_1) \subset
\mathcal{Q}^{(2)}(x_1) $ and $ \cG^{(1)} +
\cG^{(2)}(x_1,u_1) \subset \cG^{(2)}(x_1,u_1)$,
we get from \eqref{new4} that $ d\psi \in
(\mathcal{Q}^{(1)})^-$ and $-\k^* \in
(\cG^{(1)})^-$.

Put $\ds y(T)= -\sum_{j\in
\mathbf{I}}\lambda_{j} g_{x}^{j}(\bx(T))$ and
let $(y(\cd),Y(\cd))$ be the solution  to
\eqref{first ajoint equ} with ${\cal I}(\bar x)$
replaced by $ {\dbI}(\bar x,x_1)$. Let
$P(T)=-\lambda_0h_{xx}(\bx(T))$ and
$(P(\cd),Q^{(\cd)},\widehat Q^{(\cd)})$ be the
relaxed solution of (\ref{op-bsystem3}). By
(\ref{new4}), for every $x_2(\cd) \in
\cG^{(2)}(\bar x,x_1)$,\vspace{-0.2cm}
\begin{equation*}\label{new1}
\begin{array}{ll}\ds
-\mE\int_{0}^{T}\inner{x_2 (t)}{d \psi(t)}_H -
\sum_{j \in \mathbf{I}}\mE\langle \lambda_j
g^{j}_{x}(\bx(T)), x_{2}(T)  \rangle_H
\\
\ns\ds  +\frac12 \mE\langle P(T) x_1(T), x_1(T)
\rangle_H  + \sup_{\alpha \in
\mathcal{Q}^{(2)}(x_1) } \int_0^T \langle \alpha
(t), d\psi(t) \rangle_H  \leq 0. \vspace{-0.2cm}
\end{array}
\end{equation*}
From the above inequality, using  (\ref{dual
P1y2})  and (\ref{dual P2 y1y1}), we complete
the proof.
\end{proof}

\appendix


\section{Proof of Lemma \ref{estimate one of varie qu}}


We first recall the following result. Its proof
can be found in  \cite[Chapter 7]{Prato}.
\begin{lemma}\label{estimatelinearsde}
Assume that \textbf{(AS1)} holds. Then,  for any
$\nu_{0}\in H$, $p\geq 1$ and $u(\cd)\in
L_{\dbF}^{p}(\Omega; L^{2}(0, T; $ $H_1))$, the
equation (\ref{controlsys}) admits a unique
solution $x(\cd) \in C_{\dbF}([0,
T];L^{p}(\Omega; H))$, and for any $t\in
[0,T]$,\vspace{-0.2cm}
\begin{equation}\label{estimateofx}
\begin{array}{ll}\ds
\sup_{s\in[0,t]}\mE\Big(|x(s)|_H^{p}\Big) \le
C\mE \Big[|\nu_{0}|_H^{p}
+\Big(\int_{0}^{t}|a(s,0,u(s))|_Hds\Big)^{p}
+\Big(\int_{0}^{t}|b(s,0,u(s))|_{\cL_2}^{2}ds\Big)^{\frac{p}{2}}\Big].
\end{array}\vspace{-0.2cm}
\end{equation}
Moreover, if  $\tilde{x}$ is the  solution of
(\ref{controlsys}) corresponding to $(\tilde
\nu_{0}, \tilde{u})\in H\times
L_{\dbF}^{p}(\Omega; L^{2}(0, T; H_1))$, then,
for any $t\in [0,T]$,\vspace{-0.32cm}
\begin{equation}\label{estimateof delta x}
\sup_{s\in[0,t]}\mE\Big(|x(s)-\tilde{x}(s)|_H^{p}\Big)
\le C\mE~\Big[|\nu_{0}-\tilde
\nu_{0}|_H^{p}+\Big(\int_{0}^{t}|u(s)
-\tilde{u}(s)|_{H_1}^{2}ds
\Big)^{\frac{p}{2}}\Big].\vspace{-0.2cm}
\end{equation}
\end{lemma}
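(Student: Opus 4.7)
The plan is to prove existence and uniqueness of the mild solution by a Banach fixed-point argument applied to the map
$$\cM(x)(s) \= S(s)\nu_0 + \int_0^s S(s-r) a(r,x(r),u(r))\,dr + \int_0^s S(s-r) b(r,x(r),u(r))\,dW(r)$$
on the Banach space $C_\dbF([0,\tau];L^p(\Omega;H))$ for $\tau\in(0,T]$ small enough. First I would verify, using \textbf{(AS1)} (linear growth $|a(\cdot,0,u)|_H+|b(\cdot,0,u)|_{\cL_2}\le C$ plus Lipschitz continuity in $x$), that $\cM$ preserves the space and admits the a priori bound \eqref{estimateofx}; then I would show that $\cM$ is a strict contraction, which simultaneously yields uniqueness and the difference estimate \eqref{estimateof delta x}. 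Extension from $[0,\tau]$ to $[0,T]$ is routine since $\tau$ depends only on the Lipschitz constants, not on the initial data.

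All three bounds (existence estimate, contraction constant, and the $x-\wt x$ bound) reduce to a maximal estimate for the stochastic convolution
$$\Psi(s) \= \int_0^s S(s-r) g(r)\,dW(r),\qquad g\in L^p_\dbF(\Omega;L^2(0,T;\cL_2)).$$
The deterministic piece is handled by contractivity $|S(t)|_{\cL(H)}\le 1$ together with H\"older, giving
$$\mE\sup_{s\in[0,t]}\Big|\int_0^s S(s-r) f(r)\,dr\Big|_H^p \le t^{p-1}\mE\int_0^t |f(r)|_H^p\,dr.$$
For the stochastic piece, the target inequality is
$$\mE\sup_{s\in[0,t]}|\Psi(s)|_H^p \le C_p\,\mE\Big(\int_0^t |g(r)|_{\cL_2}^2\,dr\Big)^{p/2}.$$
Once the two maximal estimates are in hand, the Lipschitz hypothesis and Gr\"onwall applied to $\varphi(t)\=\sup_{s\in[0,t]}\mE|x(s)|_H^p$ deliver \eqref{estimateofx}; running the same argument on $x-\wt x$ yields \eqref{estimateof delta x}. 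For $p\in[1,2)$ one reduces to $p=2$ by Jensen's inequality on $(\Om,\cF,\dbP)$ before taking the supremum in $s$.

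The main technical obstacle is that $\Psi$ is \emph{not} a martingale, so the Burkholder--Davis--Gundy inequality does not apply directly. The standard remedy, going back to Da Prato--Kwapie\'n--Zabczyk, is the factorization formula: for $\alpha\in(1/p,1/2)$,
$$\Psi(s) = \frac{\sin(\pi\alpha)}{\pi}\int_0^s (s-r)^{\alpha-1} S(s-r)\, Y(r)\,dr,\qquad Y(r)\=\int_0^r (r-\sigma)^{-\alpha} S(r-\sigma) g(\sigma)\,dW(\sigma).$$
Since $Y$ is a genuine martingale, the classical BDG inequality in Hilbert spaces yields $\mE|Y(r)|_H^p \le C_p\,\mE\big(\int_0^r (r-\sigma)^{-2\alpha}|g(\sigma)|_{\cL_2}^2\,d\sigma\big)^{p/2}$; combining with a Young-type convolution bound on the deterministic kernel $(s-r)^{\alpha-1}$ (which is integrable precisely because $\alpha>1/p$) gives the required supremum bound. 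With this ingredient the rest of the argument is routine and matches the treatment in \cite[Chapter 7]{Prato}.
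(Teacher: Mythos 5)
Your overall strategy (contraction mapping in $C_\dbF([0,\tau];L^p(\Omega;H))$, moment estimates for the stochastic convolution, Gr\"onwall) is exactly the argument of the reference the paper itself points to for this lemma, namely \cite[Chapter 7]{Prato}; the paper gives no independent proof. For $p\ge 2$ your sketch is essentially correct. Two remarks, though. First, the lemma only asserts $\sup_{s\in[0,t]}\mE|x(s)|_H^p$, not $\mE\sup_{s\in[0,t]}|x(s)|_H^p$, so the factorization machinery is not needed here: for each fixed $s$ the process $r\mapsto\int_0^r S(s-\sigma)g(\sigma)\,dW(\sigma)$, $r\le s$, is a genuine (local) martingale and the Burkholder--Davis--Gundy inequality applies directly at $r=s$. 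Factorization is what one needs for the pathwise statement $x\in L^2_\dbF(\Om;C([0,T];H))$ of Lemma \ref{well lemma s1}, and note that the interval $(1/p,1/2)$ you prescribe for $\alpha$ is nonempty only when $p>2$. Second, your displayed deterministic bound $\mE\sup_s\big|\int_0^sS(s-r)f(r)\,dr\big|_H^p\le t^{p-1}\mE\int_0^t|f(r)|_H^p\,dr$ must be applied only to the Lipschitz increment $a(r,x(r),u(r))-a(r,0,u(r))$; applied to $a(r,0,u(r))$ it yields $t^{p-1}\mE\int_0^t|a(r,0,u(r))|_H^p\,dr$, which does not imply the term $\mE\big(\int_0^t|a(s,0,u(s))|_H\,ds\big)^p$ appearing in \eqref{estimateofx}, since H\"older goes the wrong way there.

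The genuine gap is the case $p\in[1,2)$. Jensen gives $\mE|x(s)|_H^p\le\big(\mE|x(s)|_H^2\big)^{p/2}$, and feeding in the $p=2$ estimate produces a right-hand side of the form $\big(\mE[\cdots]\big)^{p/2}$; since $z\mapsto z^{p/2}$ is concave for $p<2$, one has $(\mE Y)^{p/2}\ge\mE\,Y^{p/2}$, so this bound is strictly weaker than \eqref{estimateofx} and does not imply it. Worse, for \eqref{estimateof delta x} the $p=2$ right-hand side $\mE\int_0^t|u(s)-\tilde u(s)|_{H_1}^2\,ds$ need not even be finite when $u,\tilde u$ are only assumed to lie in $L^p_\dbF(\Omega;L^2(0,T;H_1))$ with $p<2$. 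The correct route for $p\in[1,2)$ is to apply BDG, which holds for all exponents $p>0$ for continuous local martingales, directly at level $p$; but this produces the term $\mE\big(\int_0^s|x(r)|_H^2\,dr\big)^{p/2}$, which for $p<2$ is no longer dominated by $s^{p/2-1}\int_0^s\mE|x(r)|_H^p\,dr$, so the Gr\"onwall loop does not close as you set it up and an additional localization (stopping-time) argument is required. Since the paper only ever invokes the lemma with $p\ge2$ (indeed $p=4$ in Lemma \ref{estimate two of varie qu}), this does not propagate into the rest of the paper, but as a proof of the statement for all $p\ge1$ your argument is incomplete at this point.
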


\begin{proof}[Proof of Lemma \ref{estimate one of varie
qu}] From  (\ref{first vari equ}) and  Lemma
\ref{estimatelinearsde} we deduce
that\vspace{-0.2cm}
$$
\begin{array}{ll}\ds
\mE\Big(|x_{1}(t)|_H^{p}\Big)\3n&\ds  \le \!C
\mE\Big[\big|\nu_1\big|_H^p\!+\!
\Big(\int_{0}^{t}|a_{2}[s]u_1(s)|_Hds\Big)^{p}
+ \(\int_{0}^{t}| b_{2}[s]u_1(s)|_{\cL_2}^{2}ds\)^{\frac{p}{2}} \Big]\\
\ns&\ds\le C\mE\Big[|\nu_1|^{p} +
\(\int_{0}^{t}|u_1(t)|_{H_1}^2dt\)^{\frac{p}{2}}\Big].
\end{array}\vspace{-0.2cm}
$$
This  implies that\vspace{-0.2cm}
\begin{equation*}
\sup_{t\in[0,T]}\mE\Big(|x_{1}(t)|_H^{p}\Big)
\leq C\mE\Big[|\nu_1|^{p}+
\(\int_{0}^{T}|u_1(t)|_{H_1}^2dt
\)^{\frac{p}{2}}\Big],\vspace{-0.2cm}
\end{equation*}
which yields \eqref{8.18-eq41}.

\medskip

Since\vspace{-0.32cm}
$$
\lim_{\e\to 0^+}\nu_1^{\e}=\nu_1 \mbox{ in }H,\q
\lim_{\e\to 0^+}u_1^{\e}(\cdot)=u_1(\cdot)
\mbox{ in } L^p_{\dbF}(\Om;L^2(0,T;H_1)),
$$
it follows from (\ref{estimateof delta x})
that\vspace{-0.2cm}
\begin{equation*}
\sup_{t\in[0,T]}\mE\Big(|\delta
x^{\e}(t)|_H^{p}\Big) \le
C\mE\Big(\e^{p}|\nu^{\e}_{0}|_{H}^{p}+
\(\int_{0}^{T}|\e
u_1^{\e}(t)|_{H_1}^{2}ds\)^{\frac{p}{2}}\Big)
=O(\e^p).
\end{equation*}
This implies \eqref{8.18-eq42}.

\medskip

Let\vspace{-0.2cm}
$$
\begin{cases}\ds
\tilde{a}_{1}^{\e}(t)\=\int_{0}^{1}a_{x}(t,\bx(t)
+\theta\delta x^{\e}(t),\bu(t)+\theta\e
u_1^{\e}(t))d\theta,\\
\ns\ds
\tilde{a}_{2}^{\e}(t)\=\int_{0}^{1}a_{u}(t,\bx(t)
+\theta\delta x^{\e}(t),\bu(t)+\theta\e
u_1^{\e}(t))d\theta,\\
\ns\ds
\tilde{b}_{1}^{\e}(t)\=\int_{0}^{1}b_{x}(t,\bx(t)
+\theta\delta x^{\e}(t),\bu(t)+\theta\e
u_1^{\e}(t))d\theta,\\
\ns\ds
\tilde{b}_{2}^{\e}(t)\=\int_{0}^{1}b_{u}(t,\bx(t)
+\theta\delta x^{\e}(t),\bu(t)+\theta\e
u_1^{\e}(t))d\theta.
\end{cases}\vspace{-0.2cm}
$$
Then, $\delta x^{\e} (\cdot)$ is the solution of
the following SEE:\vspace{-0.1cm}
\begin{equation*}
\left\{\2n
\begin{array}{ll}\ds
d\delta x^{\e}(t)\!=\! \big(A \delta
x^{\e}(t)\!+\!\tilde{a}_{1}^{\e}(t)\delta
x^{\e}(t) \!+\!\e\tilde{a}_{2}^{\e}(t)
u_1^{\e}(t)\big)dt
+\big(\tilde{b}_{1}^{\e}(t)\delta x^{\e}(t)
\!+\!\e\tilde{b}_{2}^{\e}(t)
u_1^{\e}(t)\big)dW(t) \q \mbox{ in }(0,T],\\
\ns\ds\delta x^{\e}(0)=\e \nu_1^{\e},
\end{array}\vspace{-0.2cm}
\right.
\end{equation*}
and $r_{1}^{\e}(\cdot) $ solves\vspace{-0.2cm}
\begin{equation}\label{11.20-eq11}
\left\{
\begin{array}{ll}
\ds dr_{1}^{\e}(t)=
\Big[Ar_{1}^{\e}(t)+\tilde{a}_{1}^{\e}(t)r_{1}^{\e}(t)
+\big(\tilde{a}_{1}^{\e}(t)-a_{1}[t]\big)x_{1}(t)
+\tilde{a}_{2}^{\e}(t)\big(u_1^{\e}(t)-u_1 (t)\big)\\
\ns\ds\qquad\qquad
+\big(\tilde{a}_{2}^{\e}(t)-a_{2}[t]\big)u_1(t)\Big]dt
+\Big[\tilde{b}_{1}^{\e}(t)r_{1}^{\e}(t)
+\big(\tilde{b}_{1}^{\e}(t)-b_{1}[t]\big)x_{1}(t)\\
\ns\ds\qquad\qquad
+\tilde{b}_{2}^{\e}(t)\big(u_1^{\e}(t)-u_1
(t)\big)
+\big(\tilde{b}_{2}^{\e}(t)-b_{2}[t]\big)u_1(t)\Big]dW(t) \qq\qq  \mbox{ in }(0,T],\\
\ns\ds r_{1}^{\e}(0)=\nu^{\e}_{1}-\nu_1.
\end{array}
\right.
\end{equation}

For any sequence $\{\e_{j}\}_{j=1}^\infty$ of
positive numbers satisfying $\ds\lim_{j\to
\infty}\e_{j}=0$, we can find a subsequence
$\{j_k\}_{k=1}^\infty\subset\mn$ such
that\vspace{-0.2cm}
$$
\begin{cases}\ds
\lim_{k\to \infty}\sup_{t\in[0,T]}|\delta
x^{\e_{j_k}}(t)|_H \to
0, &\dbP\mbox{-a.s.,  } \\
\ns\ds \lim_{k\to \infty}\e_{j_k}
|u_1^{\e_{j_k}}(t)|_{H_1}=0, &\dbP\mbox{-a.s.
for a.e. } t \in [0,T].
\end{cases}\vspace{-0.2cm}
$$
Hence,\vspace{-0.2cm}
$$
\big|(\tilde{a}_{1}^{\e_j}(\cdot)-a_{1}(\cdot))x_{1}(\cdot)
\big|_H \to 0\; \mbox{ in  measure, as
}j\to\infty.\vspace{-0.2cm}
$$

From {\bf (AS2)}, we see that\vspace{-0.2cm}
$$
\lim_{k\to
\infty}\big|(\tilde{a}_{1}^{\e_{j_k}}(t)-a_{1}[t])x_{1}(t)
\big|_H = 0, \qq  \dbP\mbox{-a.s. for a.e. }t
\in [0,T].\vspace{-0.2cm}
$$
Then, it follows from  Lebesgue's dominated
convergence theorem that\vspace{-0.2cm}
\begin{equation}\label{11.20-eq12}
\lim_{j\to\infty}\mE
\int_{0}^{T}\big|\big(\tilde{a}_{1}^{\e_{j}}(t)
-a_{1}[t]\big)x_{1}(t)\big|_H^pdt= 0.
\end{equation}
A similar argument implies that\vspace{-0.2cm}
\begin{equation}\label{11.20-eq13}
\begin{array}{ll}\ds
\lim_{j\to\infty}
\mE\Big[\(\int_{0}^{T}\big|\big(\tilde{a}_{2}^{\e_{j}}(t)
-a_{2}[t]\big)u_1(t)\big|_{H}^2dt\)^{\frac{p}{2}}
+
\(\int_{0}^{T}\big|\big(\tilde{b}_{1}^{\e_{j}}(t)
-b_{1}[t]\big)x_{1}(t)\big|_{\cL_2}^2dt\)^{\frac{p}{2}}  \\
\ns\ds \qq\q+
\(\int_{0}^{T}\big|\big(\tilde{b}_{2}^{\e_{j}}(t)
-b_{2}[t]\big)u_1(t)\big|_{\cL_2}^2dt\)^{\frac{p}{2}}\Big]
= 0.
\end{array}\vspace{-0.2cm}
\end{equation}
On the other hand,\vspace{-0.2cm}
$$
\begin{array}{ll}\ds
\lim_{j\to\infty}
\mE\Big[\(\int_{0}^{T}|\tilde{a}_{2}^{\e_j}(t)
\big(u_1^{\e_j}(t)-u_1(t)\big)|_{H_1}^2dt\)^{\frac{p}{2}}
+ \(\int_{0}^{T}|\tilde{b}_{2}^{\e_j}(t)
\big(u_1^{\e_j}(t)-u_1(t)\big)|_{\cL_2}^2dt\)^{\frac{p}{2}} \]\\
\ns\ds\leq C\lim_{j\to\infty}\mE\(\int_{0}^{T}
|u_1^{\e_j}(t)-u_1(t)|^2_{H_1}dt\)^{\frac{p}{2}}
\to 0.
\end{array}\vspace{-0.2cm}
$$
Therefore, by Lemma \ref{estimatelinearsde} and
\eqref{11.20-eq11}--\eqref{11.20-eq13}, we
obtain that
$$
\begin{array}{ll}\ds
\lim_{j\to\infty}\sup_{t\in[0,T]}\mE\Big(|r_{1}^{\e_{j}}(t)|_H^{p}\Big)\\
\ns\ds\le\!
C\!\!\lim_{j\to\infty}\!\mE\Big[|\nu^{\e_{j_k}}_{1}\!\!-\!\nu_1|_H^{p}\!
+\!\Big(\!\int_{0}^{T}\!\!\big|\big(\tilde{a}_{1}^{\e_{j}}\!(t)\!-\!a_{1}[t]\big)x_{1}\!(t)
\!+\!\tilde{a}_{2}^{\e_{j}}\!(t)\big(u_1^{\e_{j}}\!(t)\!-\!u_1(t)\big)
\!+\!
\big(\tilde{a}_{2}^{\e_{j}}\!(t)\!-\!a_{2}[t]\big)u_1(t)\big|_Hdt\Big)^{p}\\
\ns\ds\qq\qq
+\Big(\int_{0}^{T}\!\big|\big(\tilde{b}_{1}^{\e_{j}}(t)
-b_{1}[t]\big)x_{1}(t) +
\tilde{b}_{2}^{\e_{j}}(t)\big(u_1^{\e_{j_k}}(t)\!-\!u_1(t)\big)
+\big(\tilde{b}_{2}^{\e_{j}}(t)-b_{2}[t]\big)u_1(t)
\big|_{\cL_2}^2dt\Big)^{\frac{p}{2}} \Big]= 0.
\end{array}
$$
Since the sequence $\{\e_{j}\}_{j=1}^\infty$ is
arbitrary, the proof is complete.
\end{proof}

\section{Proof of Lemma \ref{estimate two of varie qu}}

\begin{proof}
By Lemma \ref{estimate one of varie qu} (applied
with $p=4$), we obtain\vspace{-0.2cm}
\begin{equation}\label{8.18-eq30}
\sup_{t\in[0,T]}\mE\Big(|x_{1}(t)|_H^{4}\Big)
\le C\mE\Big[ |\nu_1|_H^{4}
+\Big(\int_{0}^{T}|u_1(t)|_{H_1}^2dt\Big)^{2}\Big].
\vspace{-0.2cm}
\end{equation}
By  (\ref{second order vari equ}),
\eqref{8.18-eq30} and H\"older's inequality, we
have that\vspace{-0.2cm}
$$
\begin{array}{ll}
\ds\sup_{t\in[0,T]}\mE\Big(|x_{2}(t)|_H^{2}\Big)\\
\ns\ds\le \! C\mE \Big[|\nu_{2}|_H^{2}\!+\!
\Big(\!\int_{0}^{T}\!\!\big|2a_{2}[t]u_2(t)
\!+\!a_{11}[t]\big(x_{1}\!(t),x_{1}\!(t)\big)\!
+\!2 a_{12}[t]\big(x_{1}\!(t),u_1\!(t)\big)\!+\!
a_{22}[t]\big(u_1\!(t),u_1\!(t)\big)\big|_Hdt\Big)^{2}
\\
\ns\ds\qquad\;\; + \int_{0}^{T}|2b_{2}[t]u_2(t)
+ b_{11}[t]\big(x_{1}(t),x_{1}(t)\big)  +2
b_{12}[t]\big(x_{1}(t),u_1(t)\big)
+ b_{22}[t]\big(u_1(t),u_1(t)\big)|_{\cL_2}^{2}dt \Big]\\
\ns\ds\le C \mE\Big(|\nu_{2}|_H^{2}\!+\!
\int_{0}^{T}\!|u_2(t)|_{H_1}^2dt
\!+\! \int_{0}^{T}\!|u_1(t)|_{H_1}^4dt \Big)
\!+\!\sup_{t\in [0,T]}\(\mE|x_{1}(t)|_H^{4} +
\mE|x_{1}(t)|_H^{2}
\mE\int_{0}^{T}|u_1(t)|_{H_1}^2dt\)
\\
\ns\ds\leq
C\mE\Big(|\nu_{2}|_H^{2}+|\nu_1|_H^{4} +
\int_{0}^{T}|u_2(t)|_{H_1}^2dt +
\int_{0}^{T}|u_1(t)|_{H_1}^4dt \Big).
\end{array}\vspace{-0.2cm}
$$

Let\vspace{-0.2cm}
$$
\begin{cases}\ds
\tilde{a}_{11}^{\e}(t)\=\int_{0}^{1}(1-\theta)a_{xx}(t,\bx(t)
+\theta\delta x^{\e}(t),\bu(t)+\theta\delta
u^{\e}(t))d\theta,\\
\ns\ds\tilde{a}_{12}^{\e}(t)\=\int_{0}^{1}(1-\theta)a_{xu}(t,\bx(t)
+\theta\delta x^{\e}(t),\bu(t)+\theta\delta
u^{\e}(t))d\theta,\\
\ns\ds\tilde{a}_{22}^{\e}(t)\=\int_{0}^{1}(1-\theta)a_{uu}(t,\bx(t)
+\theta\delta x^{\e}(t),\bu(t)+\theta\delta
u^{\e}(t))d\theta,\\
\ns\ds\tilde{b}_{11}^{\e}(t)\=\int_{0}^{1}(1-\theta)b_{xx}(t,\bx(t)
+\theta\delta x^{\e}(t),\bu(t)+\theta\delta
u^{\e}(t))d\theta,\\
\ns\ds\tilde{b}_{12}^{\e}(t)\=\int_{0}^{1}(1-\theta)b_{xu}(t,\bx(t)
+\theta\delta x^{\e}(t),\bu(t)+\theta\delta
u^{\e}(t))d\theta,\\
\ns\ds\tilde{b}_{22}^{\e}(t)\=\int_{0}^{1}(1-\theta)b_{uu}(t,\bx(t)
+\theta\delta x^{\e}(t),\bu(t)+\theta\delta
u^{\e}(t))d\theta.
\end{cases}
$$
Then, $\delta x^{\e}$ solves\vspace{-0.2cm}
\begin{equation}\label{8.18-eq31}
\left\{
\begin{array}{l}
d\delta x^{\e}(t)= \Big[A\delta x^{\e}(t)
+a_1[t]\delta x^{\e}(t) +a_{2}[t]\delta
u^{\e}(t)
+ \tilde{a}_{11}^{\e}(t)\big(\delta x^{\e}(t),\delta x^{\e}(t)\big)\\[+0.4em]
\qq \qq \q \ +2\delta
\tilde{a}_{12}^{\e}(t)\big(x^{\e}(t),\delta
u^{\e}(t)\big)
+\tilde{a}_{22}^{\e}(t)\big(\delta
u^{\e}(t),\delta u^{\e}(t)\big) \Big]dt\\
\qq \qq \q  +\Big[b_{1}[t]\delta x^{\e}(t)
+b_{2}[t]\delta u^{\e}(t)
+\tilde{b}_{11}^{\e}(t)\big( \delta
x^{\e}(t),\delta
x^{\e}(t)\big)\\[+0.4em]
\qq \qq \q +2\tilde{b}_{12}^{\e}(t)\big(\delta
x^{\e}(t), \delta u^{\e}(t)\big)
+\tilde{b}_{22}^{\e}(t)\big( \delta
u^{\e}(t),\delta
u^{\e}(t)\big)\Big]dW(t) \qq \mbox{ in }(0,T],\\[+0.4em]
\delta x^{\e}(0)=\e \nu_1+\e^2\nu_{2}^{\e}.
\end{array}\vspace{-0.2cm}
\right.
\end{equation}
Consequently,   $r_{2}^{\e}$
solves\vspace{-0.2cm}
\begin{eqnarray}\label{8.18-eq32}
\3n\left\{\3n
\begin{array}{ll}\ds
dr_{2}^{\e}(t)\!= \!\Big\{Ar_{2}^{\e}(t) +
a_{1}[t]r_{2}^{\e}(t) + a_{2}[t]\big(u_2^{\e}(t)
- u_2(t)\big)
 + \[\tilde{a}_{11}^{\e}(t)\Big(\frac{\delta
x^{\e}\!(t)}{\e}, \frac{\delta
x^{\e}\!(t)}{\e}\Big) \\
\ns\ds\qquad\qquad- \frac{1}{2}
a_{11}[t]\big(x_{1}\!(t),x_{1}\!(t)\big)\Big]
+\Big[2\tilde{a}_{12}^{\e}(t)\big(\frac{\delta
x^{\e}(t)}{\e},\frac{\delta
u^{\e}(t)}{\e}\big)-a_{12}[t]\big(x_{1}(t),u_1(t)\big)\Big]\\
\ns\ds\qquad\qquad +\Big[
\tilde{a}_{22}^{\e}(t)\Big(\frac{\delta
u^{\e}(t)}{\e},\frac{\delta
u^{\e}(t)}{\e}\Big)-\frac{1}{2}
a_{22}[t]\big(u_1(t),u_1(t)\big)\Big]\Big\}dt\\
\ns\ds\qquad\qquad +\Big\{b_{1}[t]r_{2}^{\e}(t)
+b_{2}[t]\big(h_{\e}(t)\!-\!h(t)\big) +\big[
\tilde{b}_{11}^{\e}(t)\big(\frac{\delta
x^{\e}(t)}{\e},\frac{\delta x^{\e}(t)}{\e}\big)
-\frac{1}{2}
b_{11}[t]\big(x_{1}(t),x_{1}(t)\big)\big]\\
\ns\ds\qquad\qquad +\big[2
\tilde{b}_{12}^{\e}(t)\big(\frac{\delta
x^{\e}(t)}{\e},\frac{\delta u^{\e}(t)}{\e}\big)-
b_{12}[t]\big(x_{1}(t),u_1(t)\big)\big]\\
\ns\ds\qquad\qquad +\big[
\tilde{b}_{22}^{\e}(t)\big(\frac{\delta
u^{\e}(t)}{\e},\frac{\delta
u^{\e}(t)}{\e}\big)-\frac{1}{2}
b_{22}[t]\big(u_1(t),u_1(t)\big)\big]\Big\}dW(t)
\qq \mbox{ in }(0,T], \\
\ns\ds r_{2}^{\e}(0)=\nu_{2}^{\e}-\nu_{2}.
\end{array}\vspace{-0.2cm}
\right.
\end{eqnarray}
Since $u_2^{\e}(\cdot)$ converges to
$u_2(\cdot)$ in $L^{4}_{\dbF}(0,T;H_1)$, we
have\vspace{-0.2cm}
\begin{equation}\label{8.18-eq33}
\lim_{\e\to
0^+}\mE\Big(\int_{0}^{T}\Big|a_{2}[t]\big(u_2^{\e}(t)-u_2(t)\big)
\Big|_Hdt\Big)^{2} + \lim_{\e\to
0^+}\mE\Big(\int_{0}^{T}\Big|b_{2}[t]\big(u_2^{\e}(t)-u_2(t)\big)
\Big|_{\cL_2}^2dt\Big)=0.\vspace{-0.2cm}
\end{equation}
By H\"older's inequality,\vspace{-0.2cm}
\begin{equation}\label{8.18-eq34}
\begin{array}{ll}\ds
\mE\Big(\int_{0}^{T}\Big|
\tilde{a}_{11}^{\e}(t)\(\frac{\delta
x^{\e}(t)}{\e},\frac{\delta x^{\e}(t)}{\e}\Big)
-\frac{1}{2}
a_{11}[t]\big(x_{1}(t),x_{1}(t)\big)\Big|_Hdt\Big)^{2}
\\
\ns\ds\le  C \mE \Big(\int_{0}^{T}\Big|
\tilde{a}_{11}^{\e}(t)\(\frac{\delta
x^{\e}(t)}{\e},\frac{\delta x^{\e}(t)}{\e}\)
-\frac{1}{2}
a_{11}[t]\big(x_{1}(t),x_{1}(t)\big)\Big|_H^2dt\Big)
\\
\ns\ds\le C\mE\Big[\int_{0}^{T}\Big|
\big(\tilde{a}_{11}^{\e}(t)
-\frac{1}{2}a_{11}[t]\big)\(\frac{\delta x^{\e}(t)}{\e},\frac{\delta x^{\e}(t)}{\e}\)\Big|_H^2dt\Big]  \\
\ns\ds\q +C\mE\Big[\sup_{t \in
[0,T]}\Big|\frac{\delta x^{\e}(t)}{\e}
-x_{1}(t)\Big|_H^2 \Big( \sup_{t \in
[0,T]}\Big|\frac{\delta x^{\e}(t)}{\e}\Big|_H^2
+
\sup_{t \in [0,T]}|x_{1}(t)|_H^2\Big)\Big]  \\
\ns\ds\leq C \Big[\mE \Big(\sup_{t \in [0,T]}
\Big|\frac{\delta x^{\e}(t)}{\e}\Big|_H^{4}
\Big) \Big]^{1/2} \Big[\mE
\Big(\int_{0}^{T}\Big|\tilde{a}_{11}^{\e}(t)
-\frac{1}{2}a_{11}[t]\Big|_{\cL(H\times H;H)}^4 \Big|\frac{\delta x^{\e}(t)}{\e}\Big|_H^{4}dt\Big) \Big]^{1/2} \\
\ns\ds\q+ C\Big[\mE\Big(\sup_{t \in
[0,T]}\Big|\frac{\delta x^{\e}(t)}{\e}
-x_{1}(t)\Big|_H^{4}\Big)\Big]^\frac{1}{2} \Big[
\mE\Big(\sup_{t \in [0,T]}\Big|\frac{\delta
x^{\e}(t)}{\e}\Big|_H^{4} + \sup_{t \in
[0,T]}|x_{1}(t)|_H^{4}\Big)\Big]^\frac{1}{2}.
\end{array}\vspace{-0.2cm}
\end{equation}
Since\vspace{-0.2cm}
$$
\lim_{\e\to 0^+}\nu_{2}^{\e}=\nu_{2} \mbox{ in
}H,\qq \lim_{\e\to
0^+}u_2^{\e}(\cd)=u_2(\cd)\mbox{ in
}L^{4}_{\dbF}(0,T;H_1),\vspace{-0.2cm}
$$
by Lemma \ref{estimatelinearsde},\vspace{-0.2cm}
$$
\sup_{t\in[0,T]}\mE\Big(|\delta
x^{\e}(t)|_H^{4}\Big) \le C\mE\Big[|\e
\nu_1+\e^2\nu_{2}^{\e}|_H^{4}+
\Big(\int_{0}^{T}|\e u_1(t)+\e^2
u_2^{\e}(t)|_H^{2}dt\Big)^{2}\Big]
=O(\e^{4}).\vspace{-0.2cm}
$$
%
As the proof of (\ref{r1 to 0}) in Lemma
\ref{estimate one of varie qu}, we obtain
that\vspace{-0.2cm}
$$
\lim_{\e\to
0^+}\sup_{t\in[0,T]}\mE\Big|\frac{\delta
x^{\e}(t)}{\e}-x_{1}(t)\Big|_H^{4}=
0.\vspace{-0.2cm}
$$
For any sequence $\{\e_{j}\}_{j=1}^\infty$ of
positive numbers converging to $0$ as $j\to
\infty$, one can show that\vspace{-0.1cm}
\begin{equation}\label{12.20-eq14}
a_{xx}(\cdot,\bx(\cdot) +\theta\delta
x^{\e_j}(\cdot),\bu(\cdot)+\theta\delta
u^{\e_j}(\cdot)) -a_{11}[\cdot] \to 0,\;\;\;
\mbox{ in measure, as
}j\to\infty.\vspace{-0.2cm}
\end{equation}
Since\vspace{-0.32cm}
$$
\tilde{a}_{11}^{\e_j}(t) -\frac{1}{2}a_{11}[t] =
\int_{0}^{1}(1-\theta)\big(a_{xx}(t,\bx(t)
+\theta\delta x^{\e_j}(t),\bu(t)+\theta\delta
u^{\e_j}(t))
-a_{11}[t]\big)d\theta,\vspace{-0.2cm}
$$
it follows from (\ref{8.18-eq34}),
\eqref{12.20-eq14} and the Lebesgue dominated
convergence theorem that\vspace{-0.2cm}
\begin{equation}\label{8.18-eq35}
\lim_{j\to \infty}\mE\Big(\int_{0}^{T}\Big|
\tilde{a}_{11}^{\e_{j}}(t)\Big(\frac{\delta
x^{\e_{j}}(t)}{\e_{j}},\frac{\delta
x^{\e_{j}}(t)}{\e_{j}}\Big) -\frac{1}{2}
a_{11}[t]\big(x_{1}(t),x_{1}(t)\big)\Big|_Hdt\Big)^{2}
= 0.\vspace{-0.2cm}
\end{equation}
Since,\vspace{-0.2cm}
$$
\begin{array}{ll}\ds
\mE\Big(\int_{0}^{T}\Big|2
\tilde{a}_{12}^{\e_{j}}(t)\Big(\frac{\delta
x^{\e_{j}}(t)}{\e_{j}},\frac{\delta
u^{\e_{j}}(t)}{\e_{j}}\Big)
- a_{12}[t]\big(x_{1}(t),u_1(t)\big)\Big|_Hdt\Big)^{2} \\
\ns\ds\leq  C\mE\Big(\int_{0}^{T}\Big|2
\tilde{a}_{12}^{\e_{j}}(t)\(\frac{\delta
x^{\e_{j}}(t)}{\e_{j}},\frac{\delta
u^{\e_{j}}(t)}{\e_{j}}\)
- a_{12}[t]\big(x_{1}(t),u_1(t)\big)\Big|_H^2dt\Big) \\
\ns\ds\le C \sup_{t \in
[0,T]}\Big(\mE\Big|\frac{\delta
x^{\e_{j}}(t)}{\e_{j}} \Big|_H^{4}
\Big)^{\frac12}  \Big( \mE\int_{0}^{T}
\Big|\tilde{a}_{12}^{\e_{j}}(t)
-\frac{1}{2}a_{12}[t]\Big|_{\cL(H \times H_1;H_1)}^4\Big|\frac{\delta u^{\e_{j}}(t)}{\e_{j}}\Big|_{H_1}^4dt\Big)^{\frac12} \\
\ns\ds\q + C \sup_{t \in [0,T]} \Big(
\mE\Big|\frac{\delta x^{\e_{j}}(t)}{\e_{j}}
-x_{1}(t)\Big|_H^{4}\Big)^{\frac12}
\Big(\mE\int_{0}^{T}\Big|\frac{\delta u^{\e_{j}}(t)}{\e_{j}}\Big|_{H_1}^4 dt \Big)^{\frac12} \\
\ns\ds\q +   C  \sup_{t \in
[0,T]}\Big(\mE|x_{1}(t)|_H^{4} \Big)^{\frac12}
\Big( \mE\int_{0}^{T}\Big|\frac{\delta
u^{\e_{j}}(t)}{\e_{j}} -u_1(t)\Big|_{H_1}^4 dt
\Big)^{\frac12}.
\end{array}\vspace{-0.2cm}
$$
Similar to the proof of \eqref{8.18-eq35}, we
have that\vspace{-0.2cm}
\begin{equation}\label{8.18-eq36}
\lim_{j\to \infty}\mE\Big(\int_{0}^{T}\Big|2
\tilde{a}_{12}^{\e_{j}}(t)\(\frac{\delta
x^{\e_{j}}(t)}{\e_{j}},\frac{\delta
u^{\e_{j}}(t)}{\e_{j}}\) -
a_{12}[t]\(\frac{\delta
x^{\e_{j}}(t)}{\e_{j}},u_1(t)\)\Big|_Hdt\Big)^{2}
= 0.\vspace{-0.2cm}
\end{equation}
Similarly,\vspace{-0.2cm}
\begin{eqnarray}\label{8.18-eq37}
& &\q\lim_{j\to\infty}\mE\Big(\int_{0}^{T}\Big|
\tilde{a}_{22}^{\e_{j}}(t)\(\frac{\delta
u^{\e_{j}}(t)}{\e_{j}},\frac{\delta
u^{\e_{j}}(t)}{\e_{j}}\)
-\frac{1}{2} a_{22}[t]\big(u_1(t),u_1(t)\big)\Big|_Hdt\Big)^{2}\nonumber\\
&&\le
C\lim_{j\to\infty}\mE\Big(\int_{0}^{T}\Big|\(\tilde{a}_{22}^{\e_{j}}(t)
-\frac{1}{2}a_{22}[t]\)\(\frac{\delta
u^{\e_{j}}(t)}{\e_{j}},\frac{\delta u^{\e_{j}}(t)}{\e_{j}}\)\Big|_H^2dt\Big) \nonumber\\
&
&\q+C\lim_{j\to\infty}\mE\Big[\int_{0}^{T}\Big|\frac{\delta
u^{\e_{j}}(t)}{\e_{j}}
-u_1(t)\Big|^{2}_{H_1}\Big(\Big|\frac{\delta
u^{\e_{j}}(t)}{\e_{j}}\Big|_{H_1}
^2+|u_1(t)|_{H_1}^2\Big)dt\Big] \\
&&\le C\lim_{j\to\infty}\mE
\int_{0}^{T}\Big|\frac{\delta
u^{\e_{j}}(t)}{\e_{j}}\Big|_{H_1}^{4}
\Big|\tilde{a}_{22}^{\e_{j}}(t)
-\frac{1}{2}a_{22}[t]\Big|_{\cL(H_1\times H_1;H_1)}^2dt \nonumber\\
& &\q+C\lim_{j\to\infty}\mE
\int_{0}^{T}\big|\e_{j}
u_2^{\e_{j}}(t)\big|_{H_1}^{2}\cdot\Big(\big|
u_1(t)+ \e_{j} u_2^{\e_{j}}(t)\big|_{H_1}^{2}+|
u_1(t)|_{H_1}^2\Big)dt
=0.\nonumber\vspace{-0.2cm}
\end{eqnarray}
Similar to the above argument, we
obtain\vspace{-0.2cm}
\begin{equation}\label{8.18-eq38}
\lim_{j\to\infty}\mE \int_{0}^{T}\Big|
\tilde{b}_{11}^{\e_{j}}(t)\(\frac{\delta
x^{\e_{j}}(t)}{\e_{j}},\frac{\delta
x^{\e_{j}}(t)}{\e_{j}}\)
-\frac{1}{2}b_{11}[t]\big(x_{1}(t),x_{1}(t)\big)\Big|_{\cL_2}^2dt
=0,
\end{equation}
\begin{equation}\label{8.18-eq39}
\lim_{j\to\infty}\mE \int_{0}^{T}\Big|2
\tilde{b}_{12}^{\e_{j}}(t)\(\frac{\delta
x^{\e_{j}}(t)}{\e_{j}},\frac{\delta
u^{\e_{j}}(t)}{\e_{j}}\) -
b_{12}[t]\big(x_{1}(t),u_1(t)\big)\Big|_{\cL_2}^2dt
=0,\vspace{-0.2cm}
\end{equation}
and\vspace{-0.12cm}
\begin{equation}\label{8.18-eq40}
\lim_{j\to\infty}\mE \int_{0}^{T}\Big|
\tilde{b}_{22}^{\e_{j}}(t)\(\frac{\delta
u^{\e_{j}}(t)}{\e_{j}},\frac{\delta
u^{\e_{j}}(t)}{\e_{j}}\)
-\frac{1}{2}b_{22}[t]\big(u_1(t),u_1(t)\big)\Big|_{\cL_2}^2dt
= 0.\vspace{-0.12cm}
\end{equation}
By Lemma \ref{estimatelinearsde},  and using
(\ref{8.18-eq32}), (\ref{8.18-eq33}) and
(\ref{8.18-eq35})--(\ref{8.18-eq40}),\vspace{-0.1cm}
\begin{eqnarray*}
\lim_{j\to\infty}\sup_{t\in
[0,T]}\mE|r_{2}^{\e_{j}}(t)|_H^{2}=0.
\end{eqnarray*}
The desired result follows from the fact that
the sequence $\{\varepsilon_j\}_{j=1}^\infty$ is
arbitrary.
\end{proof}

\section{Proof of Lemma \ref{lm8}}

\begin{proof}[Proof of Lemma \ref{lm8}] We borrow some idea from
\cite{WZ}. The proof is divided into three
steps.

\medskip

{\bf Step 1}. For any $\tilde v(\cd)\in
\cC_{\wt\cU_{ad}^{\nu_0}}(\bu(\cdot))$, we know
that $\tilde v(\cd)\in \cL^{2}_{\dbF}(0,T;H_1)$.
By Lemma \ref{lm3}, there exists a
$\cG$-measurable function $v(\cd)$ on
$[0,T]\times\Omega$ such that $\tilde
v(s,\o)=v(s,\o)$, $ \wt\mu$-a.e. Therefore,
\vspace{-0.2cm}
\begin{equation}\label{v*-v}
\int_{[0,T]\times\Omega}\!|\tilde
v(s,\o)-v(s,\o)|_{H_1}^2d\wt\mu(s,\o)=0
\vspace{-0.2cm}
\end{equation}
and\vspace{-0.12cm}
\begin{equation*}\label{v*-v.1}
|v(\cd)|_{L^2_{\dbF}(0,T;H_1)}^2=\int_{[0,T]\times\Omega}|v(s,\o)|_{H_1}^2d\wt\mu(s,\o)=\int_{[0,T]\times\Omega}|\tilde
v(s,\o)|_{H_1}^2d\wt\mu(s,\o)<\infty.\vspace{-0.12cm}
\end{equation*}

Since $\tilde v(\cd)\in
\cC_{\wt\cU_{ad}^{\nu_0}}(\bu(\cdot))$,  we
have\vspace{-0.21cm}
$$
\lim_{\hat v\to \bar u, \e \to
0^+}\frac{1}{\e}\inf_{\tilde u\in
\wt\cU_{ad}^{\nu_0}}\(\mE\int_0^T|\hat
v(t)+\e\tilde v(t)-\tilde
u(t)|_{H_1}^2dt\)^{\frac{1}{2}}=0.\vspace{-0.1cm}
$$
This, together with \eqref{v*-v}, implies
that\vspace{-0.1cm}
\begin{equation}\label{11.20-eq7}
\begin{array}{ll}\ds
\lim_{\hat v\to \bar u, \e\to
0^+}\frac{1}{\e}\inf_{\tilde u\in
\wt\cU_{ad}^{\nu_0}}\(\mE\int_0^T|\hat v(t)+\e
v(t)-\tilde u(t)|_{H_1}^2dt\)^{\frac{1}{2}}\\
\ns\ds\leq \lim_{\hat v\to \bar u, \e \to
0^+}\frac{1}{\e}\inf_{\tilde u\in
\wt\cU_{ad}^{\nu_0}}\(\mE\int_0^T|\hat
v(t)+\e\tilde v(t)-\tilde
u(t)|_{H_1}^2dt\)^{\frac{1}{2}}=0.\vspace{-0.1cm}
\end{array}
\end{equation}
For any $\tilde u\in \wt\cU_{ad}^{\nu_0}\subset
\cL^{2}_{\dbF}(0,T;H_1)$, by Lemma \ref{lm3},
there exists a $\cG$-measurable function
$u(\cd)$ on $[0,T]\times\Omega$ such that
$\tilde u(s,\o)=u(s,\o)$, $\wt\mu$-a.e.
Hence,\vspace{-0.1cm}
\begin{equation}\label{11.20-eq6}
\int_{[0,T]\times\Omega}\!|\tilde
u(s,\o)-u(s,\o)|_{H_1}^2d\wt\mu(s,\o)=0.
\vspace{-0.2cm}
\end{equation}
Consequently, $u\in \cU_{ad}^{\nu_0}$. This,
together with \eqref{11.20-eq7} and
\eqref{11.20-eq6}, implies that\vspace{-0.1cm}
\begin{equation}\label{11.20-eq8}
\begin{array}{ll}\ds
\lim_{\hat v\to \bar u, \e \to
0^+}\frac{1}{\e}\inf_{u\in
\cU_{ad}^{\nu_0}}\(\mE\int_0^T|\hat v(t)+\e
v(t)-
u(t)|_{H_1}^2dt\)^{\frac{1}{2}}\\
\ns\ds\leq\lim_{\hat v\to \bar u, \e \to
0^+}\frac{1}{\e}\inf_{\tilde u\in
\wt\cU_{ad}^{\nu_0}}\(\mE\int_0^T|\hat v(t)+\e
v(t)- \tilde
u(t)|_{H_1}^2dt\)^{\frac{1}{2}}=0.\vspace{-0.1cm}
\end{array}
\end{equation}
Therefore,  $v(\cd)\in
\cC_{\cU_{ad}^{\nu_0}}(\bar u(\cd))$ and
\vspace{-0.2cm}
\begin{equation}\label{integral-v*}
\begin{array}{ll}\ds
\int_{[0,T]\times\Omega}\!\lan F(t,\o),\tilde v(t,\o)\ran_{H_1} d\wt\mu(t,\o)\\
\ns\ds=  \int_{[0,T]\times\Omega}\!\lan
F(t,\o),v(t,\o)\ran_{H_1}
d\wt\mu(t,\o)\!=\!\dbE\int_0^T\!\lan
F(t),v(t)\ran_{H_1} dt\leq 0.
\end{array}\vspace{-0.2cm}
\end{equation}

\vspace{0.2cm}

{\bf Step 2}. In this step, we prove that the
set\vspace{-0.2cm}
\begin{equation}\label{cA-bar-u}
\cA_{\bar
u}\=\big\{(t,\omega)\in[0,T]\times\Omega\ \big|\
\lan F(t),v\ran_{H_1} \leq 0, \ \ \forall\ v\in
\cC_{U}(\bu(t))\big\}\in\wt\cG.\vspace{-0.2cm}
\end{equation}
We achieve this goal by showing that
\begin{equation}\label{cA-bar-u.1}
\cA_{\bar u}^c
=\big\{(t,\omega)\in[0,T]\times\Omega\ \big|\
\exists\ v\in \cC_{U}(\bu(t)),\;\lan
F(t),v\ran_{H_1} > 0
\big\}\in\wt\cG.\vspace{-0.2cm}
\end{equation}
For $k\in\dbN$, let\vspace{-0.32cm}
$$
\cB_{\bar u,k}
\=\Big\{(t,\omega)\in[0,T]\times\Omega\ \big|\
\exists\ v\in \cC_{U}(\bu(t)),\;\lan
F(t),v\ran_{H_1} \geq
\frac{1}{k}\Big\}.\vspace{-0.2cm}
$$
Clearly,\vspace{-0.2cm}
\begin{equation}\label{11.20-eq9}
\cA_{\bar u}^c =\bigcup_{k=1}^\infty\cB_{\bar
u,k}.
\end{equation}

By Corollary  \ref{Lem-adjacent} the set-valued
map $\cC_{U}(\bar
u(\cd)):[0,T]\times\Omega\rightsquigarrow H_1$
is $\wt\cG$-measurable. It follows from Lemma
\ref{lm2} that\vspace{-0.2cm}
$$
\big\{(t,\o,v)\in[0,T]\times\Omega\times H_1
\big|\ v\in \cC_U(\bar
u(t,\o))\big\}\in\wt\cG\otimes\cB(H_1).\vspace{-0.2cm}
$$
By the assumption on $F(\cd)$, we have
that\vspace{-0.2cm}
\begin{equation}\label{Graph-1}
\Big\{(t,\o,v)\in[0,T]\times\Omega\times H_1
\big|\ \inner{F(t)}{v}_{H_1}\geq \frac{1}{k},\;
v\in \cC_U(\bar u(t,\o))
\Big\}\in\wt\cG\otimes\cB(H_1).\vspace{-0.2cm}
\end{equation}
Define a set-valued map
$\L_k(\cd,\cd):[0,T]\times\Omega\rightsquigarrow
H_1$ as\vspace{-0.2cm}
$$
\L_k(t,\o)\=\Big\{v\in H_1\big|\
\inner{F(t)}{v}_{H_1}\geq \frac{1}{k},\;\, v\in
\cC_U(\bar u(t,\o)) \Big\},\qq
(t,\o)\in[0,T]\times\Omega.\vspace{-0.2cm}
$$
It follows from Lemma \ref{lm2} and
(\ref{Graph-1}) that  $\L_k$ is
$\wt\cG$-measurable. Then  $\cB_{\bar u,k}=
\L_k^{-1}(H_1)\in\wt\cG$. This, together with
\eqref{11.20-eq9}, implies \eqref{cA-bar-u.1}.
Consequently, we have \eqref{cA-bar-u}.

\vspace{0.2cm}

{\bf Step 3}. \rm In this step we  prove that
$\wt\mu(\cA_{\bar u})=T$.

For $k,\ m=1,2,\cdots$,  let\vspace{-0.2cm}
$$
B(0,m)\=\{v\in H_1||v|_{H_1}\leq
m\}\vspace{-0.2cm}
$$
and\vspace{-0.2cm}
$$
\cB_{\bar u,k,m}\=\Big\{(t,\omega)\in
[0,T]\times \Omega \,\Big|\, \exists\ v\in
\cC_{U}(\bu(t))\cap  B(0,m),\ s.t.\ \
\inner{F(t)}{v }_{H_1} \geq  \frac{1}{k}
\Big\}.\vspace{-0.2cm}
$$
It is clear that\vspace{-0.2cm}
$$
\cA^c_{\bar
u}=\bigcup_{k\geq1}\bigcup_{m\geq1}\cB_{\bar
u,k,m}.\vspace{-0.2cm}
$$
Similar to the proof of $\cB_{\bar u,k}\in
\wt\cG$, one can show that $\cB_{\bar
u,k,m}\in\wt\cG$.

Now we only need to prove that $\wt\mu(\cB_{\bar
u,k,m})=0$  for every $k,\ m\geq 1$. Let us do
this by a contradiction argument.

Suppose that there exist $k$ and $m$ such that
$\wt\mu(\cB_{\bar u,k,m})>0$. Define the
set-valued map $\Upsilon^{k,m}:\cB_{\bar
u,k,m}\rightsquigarrow H_1$ by\vspace{-0.482cm}
$$\Upsilon^{k,m}(t,\omega)\=\Big\{v\in \cC_{U}(\bu(t))\cap
B(0,m)\ \Big|\ \inner{F( t)}{v }_{H_1} \geq
\frac{1}{k} \Big\}.\vspace{-0.2cm}$$
Obviously, $\Upsilon^{k,m}(t,\o)$ is
closed-valued. Similar to \eqref{Graph-1},
\vspace{-0.28cm}
\begin{equation}
\begin{array}{ll}
\ns\ds \Big\{(t,\o,v)\in[0,T]\times\Omega\times
H_1 \Big|\ v\in \cC_U(\bar u(t,\o))\cap B(0,m),\
\inner{F(t)}{v}_{H_1}\geq \frac 1 k
\Big\}\in\wt\cG\otimes\cB(H_1).
\end{array}
\end{equation}
This, together with Lemma \ref{lm2}, implies
that $\Upsilon^{k,m}$ is $\wt\cG$-measurable.
Then by Lemma \ref{Pro-mea-sel} there exists a
$\wt\cG$-measurable selection
$v^{k,m}(\cdot)$ on $\cB_{\bar u,k,m}$,
i.e.,\vspace{-0.2cm}
$$
v^{k,m}(t,\omega)\in
\Upsilon^{k,m}(t,\omega)\subset\big[
\cC_{U}(\bu(t))\cap  B(0,m)\big], \q \forall\,
(t,\omega)\in \cB_{\bar u,k,m}.\vspace{-0.2cm}
$$
By Lemma \ref{lm7},\vspace{-0.32cm}
\begin{equation*}\label{cT-bar-u*.1}
\big\{v(\cdot)\in \cL^{2}_{\dbF}(0,T;H_1)\
\big|\ v(t)\in \cC_{U}(u(t)),\
\wt\mu\hb{-a.e.}\big\}  \subset
\cC_{\wt\cU_{ad}^{\nu_0}}(u(\cdot)).\vspace{-0.2cm}
\end{equation*}
Let $\wt v^{k,m}(\cd)\= v^{k,m}(\cd
)\chi_{\cB_{\bar u,k,m}}(\cd )$. Then
\vspace{-0.2cm}
\begin{equation}\label{negative-1}
\wt\mu\Big\{( t,\omega)\in[0,T]\,\Big|\ \lan
F(t),\wt v^{k,m}(t)\ran_{H_1} \geq
\frac{1}{k}\Big\}\geq \wt\mu(\cB_{\bar
u,k,m})>0.\vspace{-0.2cm}
\end{equation}
Therefore,\vspace{-0.2cm}
\begin{equation}\label{add eq4.27}
\int_{[0,T]}\int_\O\lan F(t,\o),\wt
v^{k,m}(t,\o)\ran_{H_1} d\wt\mu(t,\o)\geq
\frac{1}{k}\wt\mu(\cB_{\bar
u,k,m})>0.\vspace{-0.2cm}
\end{equation}
On the other hand, by Corollary
\ref{Lem-adjacent}, one has
$v^{k,m}(\cdot)\in\cT_{\bar u}\subset
\cC_{\wt\cU_{ad}^{\nu_0}}(\bar u(\cd))$. It
follows from (\ref{integral-v*})
that\vspace{-0.2cm}
$$
\int_{[0,T]}\int_\O\lan F(t,\o),\wt
v^{k,m}(t,\o)\ran_{H_1}
d\wt\mu(t,\o)\le0,\vspace{-0.2cm}
$$
which contradicts to (\ref{add eq4.27}).
Therefore, $\wt\mu(\cB_{\bar u,k,m})=0$.
Consequently, $\wt\mu(\cA_{\bar u}^c)=0$. Since
$\cA^c_{\bar u}\in\wt\cG$,  there exists a
$\cG$-measurable set $\cE_{\bar u}$ satisfying
$\cA^c_{\bar u}\subset \cE_{\bar u}$ and
$\wt\mu(\cA^c_{\bar u})=\mu(\cE_{\bar u})=0$.
Thus, $ \cE_{\bar u}^c \subset\cA_{\bar u}$ and
$[\mathbf{m}\times\dbP](\cE^c_{\bar u})=T$. This
completes the proof.
\end{proof}

\section{Proof of Lemma \ref{lm12}}

The case that $H$ is finite dimensional was
studied in \cite{FZZ2}. The proof for the
general case is similar. We give it here for the
sake of completeness.

\begin{proof}[Proof of Lemma \ref{lm12}]
Obviously, $L^{2}_{\dbF}(\Omega; C([0,T]; H))$
is a linear subspace of $L^{2}_{\dbF}(\Omega;
D([0,T]; H))$. For a given $\Lambda \in
L^{2}_{\dbF}(\Omega; C([0,T]; H))^*$, by the
Hahn-Banach theorem, there is an extension
$\widetilde\Lambda\in L^{2}_{\dbF}(\Omega;
D([0,T]; H))^*$ such that\vspace{-0.2cm}
\begin{equation}\label{(4)}
|\widetilde\Lambda|_{L^{2}_{\dbF}(\Omega;
D([0,T]; H))^*} =|\Lambda|_{L^{2}_{\dbF}(\Omega;
C([0,T]; H))^*}\vspace{-0.2cm}
\end{equation}
and\vspace{-0.2cm}
\begin{equation}\label{(3)}
\widetilde\Lambda(x(\cdot))
=\Lambda(x(\cdot)),\qquad \forall\; x(\cdot)\in
L_{\dbF}^{2}(\Omega; C([0,T];
H)).\vspace{-0.2cm}
\end{equation}

\vspace{0.2cm}

Recall that $\{e_k\}_{k=1}^\infty$ is an
orthonormal basis of $H$ and $\G_k$ the
projective operator from $H$ to
$H_k\=\span\{e_k\}$. Let $\L_k=\L \G_k$ and
$\wt\L_k=\wt\L\G_k$. Clearly,\vspace{-0.2cm}
$$
\begin{array}{ll}\ds
\L_k \in L^{2}_{\dbF}(\Omega; C([0,T];
H_k))^*=L^{2}_{\dbF}(\Omega; C([0,T];
\dbR))^*,\\
\ns\ds \wt\L_k \in L^{2}_{\dbF}(\Omega; D([0,T];
H_k))^*=L^{2}_{\dbF}(\Omega; D([0,T]; \dbR))^*,
\end{array}\vspace{-0.2cm}
$$
and\vspace{-0.32cm}
\begin{equation}\label{6.18-eq2}
\wt\Lambda(x(\cdot))
=\lim_{m\to\infty}\sum_{k=1}^m\wt\Lambda_k(x(\cdot)),\q
\forall\, x(\cdot)\in L_{\dbF}^{2}(\Omega;
D([0,T]; H)).\vspace{-0.2cm}
\end{equation}

For each $k\in\dbN$, from the proof of
\cite[Theorem 65, p. 254]{Meyer82}, we deduce
that, there exist two $\dbR$-valued processes
$\psi^+_k(\cdot)$ and $\psi^-_k(\cdot)$ of
bounded variations such that $\psi^+_k(\cdot)$
is optional and purely discontinuous,
$\psi^-_k(\cdot)$  is predictable with
$\psi_k^-(0)=0$,\vspace{-0.1cm}
\begin{equation}\label{6.18-eq1}
|\widetilde\Lambda_k|^2_{L^{2}_{\dbF}(\Omega;
D([0,T];
\dbR))^*}=\mE\Big|\int_{(0,T]}d|\psi_k^-(t)|
+\int_{[0,T)}d|\psi_k^+(t)|\Big|^2\vspace{-0.2cm}
\end{equation}
and, for any $x(\cdot)\in L_{\dbF}^{2}(\Omega;
D([0,T]; H))$,\vspace{-0.1cm}
\begin{equation}\label{6.18-eq3}
\widetilde\Lambda_k(x(\cdot)) =\mE\(\int_{(0,T]}
\G_k x_-(t) d\psi_k^-(t) +\int_{[0,T)} \G_k x(t)
d\psi_k^+(t)\),\vspace{-0.2cm}
\end{equation}
where $x_-(\cd)$ is the predictable modification
of $x(\cd)$, which equals $x(\cd)$ when
$x(\cd)\in\! L_{\dbF}^{2}(\Omega; C([0,T]; H))$.

Define two $H$-valued processes $\psi^+(\cdot)$
and $\psi^-(\cdot)$ as follows:\vspace{-0.1cm}
$$
\psi^+(\cdot)=\sum_{k=1}^\infty
\psi^+_k(\cdot)e_k,\qq
\psi^-(\cdot)=\sum_{k=1}^\infty
\psi^-_k(\cdot)e_k.\vspace{-0.2cm}
$$
Then\vspace{-0.32cm}
$$
\int_{(0,T]} \langle x_-(t), d\psi^-(t)\rangle_H
= \sum_{k=1}^\infty \int_{(0,T]}\G_k x_-(t)
d\psi_k^-(t)\vspace{-0.2cm}
$$
and\vspace{-0.12cm}
$$
\int_{[0,T)} \langle x(t), d\psi^+(t)\rangle_H =
\sum_{k=1}^\infty \int_{[0,T)}\G_k x(t)
d\psi_k^+(t).\vspace{-0.2cm}
$$
It follows from \eqref{6.18-eq2} and
\eqref{6.18-eq3} that\vspace{-0.1cm}
\begin{equation}\label{6.18-eq4}
\wt\Lambda(x(\cdot)) = \int_{(0,T]} \langle
x_-(t), d\psi^-(t)\rangle_H+\int_{[0,T)} \langle
x(t), d\psi^+(t)\rangle_H,\q \forall\,
x(\cdot)\in L_{\dbF}^{2}(\Omega; D([0,T]; H)).
\end{equation}
This, together with the arbitrariness of
$x(\cdot)\in L_{\dbF}^{2}(\Omega; D([0,T]; H))$,
implies that $\psi^+(\cdot)$ and $\psi^-(\cdot)$
are  functions of bounded variation
and\vspace{-0.1cm}
\begin{equation}\label{6.18-eq5}
|\widetilde\Lambda |^2_{L^{2}_{\dbF}(\Omega;
D([0,T];
H))^*}=\mE\Big|\int_{(0,T]}d|\psi^-(t)|_H
+\int_{[0,T)}d|\psi^+(t)|_H\Big|^2.
\end{equation}

Put $\psi^* \=  \psi^-  +\psi^+$. By
\eqref{(3)}, we have\vspace{-0.1cm}
$$
\Lambda(x(\cdot))=\mE~\int_0^T\inner{x(t)}{d\psi^*(t)}_H,\qquad
\forall\; x(\cdot)\in L_{\dbF}^{2}(\Omega;
C([0,T]; H)).\vspace{-0.2cm}
$$
Letting  $\psi =  \psi^*  -\psi^*(0)$, we obtain
(\ref{(1)}). (\ref{(2)}) follows from
(\ref{(1)}).
\end{proof}

\section{Proof of Lemma
\ref{lm13}}\label{sec-ap-E}

Before proving Lemma \ref{lm13}, we first recall
the following Riesz-type Representation Theorem
(See \cite[Corollary 2.3 and Remark 2.4]{LYZ}).

\begin{lemma}\label{lemma1}
Fix $t_1$ and $t_2$ satisfying $0\leq t_2 < t_1
\leq T$.  Assume that $\cY$ is a reflexive
Banach space. Then, for any $r,s\in [1,\infty)$,
it holds that\vspace{-0.2cm}
$$
\left(L^r_\dbF(t_2,t_1;L^s(\O;\cY))\right)^*=L^{r'}_\dbF(t_2,t_1;L^{s'}(\O;\cY^*)),\vspace{-0.2cm}
$$
where
$$
s'=\left\{
\begin{array}{ll}\ds
s/(s-1), &\mbox{ if } s\not=1,\\
\ns\ds \infty &\mbox{ if } s=1;
\end{array}
\right. \qq r'=\left\{
\begin{array}{ll}\ds
r/(r-1), &\mbox{ if } r\not=1,\\
\ns\ds \infty &\mbox{ if } r=1.
\end{array}
\right.
$$
\end{lemma}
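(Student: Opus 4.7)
The plan is to adapt the construction of the transposition solution developed in \cite[Chapter 3]{LZ1} (which covers the case $\psi \equiv 0$ with a one-dimensional Brownian motion) to incorporate the extra inhomogeneous term $d\psi(t)$. By the linearity of \eqref{first ajoint equ} in the pair $(y_T, \psi)$, I would decompose $(y,Y) = (y^{(1)},Y^{(1)}) + (y^{(2)},Y^{(2)})$, where $(y^{(1)},Y^{(1)})$ is the transposition solution corresponding to $(y_T, 0)$ and $(y^{(2)},Y^{(2)})$ corresponds to $(0, \psi)$. The first piece is handled by \cite{LZ1} (the passage from one-dimensional to cylindrical Brownian motion is routine since $Y$ already takes values in $\cL_2$ and the defining identity \eqref{eq def solzz} is written intrinsically), so the whole additional work concentrates on the inhomogeneous piece.

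To construct $(y^{(2)},Y^{(2)})$, I would use a Riesz-type duality argument combined with a contraction on a small time interval. For each $t \in [0,T]$ and test data $(\eta, f_1, f_2) \in L^2_{\cF_t}(\Omega;H) \times L^1_\dbF(t,T;L^2(\Omega;H)) \times L^2_\dbF(t,T;\cL_2)$, let $\phi = \phi(\cdot\,;t,\eta,f_1,f_2)$ denote the unique mild solution of \eqref{fsystem2}. Given a candidate $(y,Y) \in D_\dbF([t,T];L^2(\Omega;H)) \times L^2_\dbF(t,T;\cL_2)$, introduce the linear functional
\begin{equation*}
\Lambda_{y,Y}^t(\eta,f_1,f_2) \= \dbE\int_t^T \langle \phi(s), d\psi(s)\rangle_H - \dbE\int_t^T \langle \phi(s), a_1[s]^* y(s) + b_1[s]^* Y(s)\rangle_H\, ds.
\end{equation*}
Using the standard a~priori estimate $\dbE \sup_{s \in [t,T]} |\phi(s)|_H^2 \le C\big(|\eta|^2_{L^2_{\cF_t}(\Omega;H)} + |f_1|^2_{L^1_\dbF(t,T;L^2(\Omega;H))} + |f_2|^2_{L^2_\dbF(t,T;\cL_2)}\big)$ for \eqref{fsystem2}, together with $\|a_1[\cdot]\|_\infty + \|b_1[\cdot]\|_\infty \le C$ from {\bf(AS2)}, one bounds the second term directly. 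For the $\psi$-term, the key estimate is
\begin{equation*}
\Big|\dbE\int_t^T \langle \phi(s), d\psi(s)\rangle_H\Big| \le \big(\dbE \sup_{s\in[t,T]} |\phi(s)|_H^2\big)^{1/2} |\psi|_{L^2_\dbF(\Omega;BV_0([0,T];H))},
\end{equation*}
which makes $\Lambda_{y,Y}^t$ a bounded linear functional on the test space. Applying Riesz representation (in the spirit of Lemma \ref{lemma1}) separately with the three groups of test data — varying only $\eta$, only $f_1$, and only $f_2$ — yields elements $\tilde y(t) \in L^2_{\cF_t}(\Omega;H)$, $\tilde y(\cdot) \in L^\infty_\dbF(t,T;L^2(\Omega;H))$ and $\tilde Y(\cdot) \in L^2_\dbF(t,T;\cL_2)$. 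A consistency check (analogous to the one in \cite[Chapter 3]{LZ1}) shows that the process value $\tilde y(t)$ obtained from the $\eta$-duality coincides with the time-$t$ value of the process obtained from the $f_1$-duality. Defining $\Phi(y,Y)\=(\tilde y,\tilde Y)$, a short-time contraction follows from the linear-in-$(y,Y)$ dependence and the smallness of $|T-t|$ absorbing the $\cL^\infty$-norms of $a_1, b_1$; the fixed point is the desired $(y^{(2)}, Y^{(2)})$ on a small interval, and a standard patching argument extends it to $[0,T]$.

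The main obstacle lies in the c\`adl\`ag regularity of $t \mapsto y^{(2)}(t)$ as a member of $D_\dbF([0,T];L^2(\Omega;H))$: the usual arguments for $\psi \equiv 0$ produce a continuous modification, but here the jumps of $\psi$ must be transmitted correctly to $y$. I would handle this by showing, from the representation through $\Lambda^t_{y,Y}$, that the jump process of $y^{(2)}$ matches $-\Delta\psi$ (pointwise in $\omega$), using that for small $h > 0$, replacing $\phi(\cdot\,;t-h,\eta,0,0)$ by $\phi(\cdot\,;t,S(h)\eta,0,0)$ introduces an error that vanishes as $h \to 0^+$ while the $\psi$-integral differs precisely by $\langle \eta, \Delta\psi(t-)\rangle_H$. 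Uniqueness of the transposition solution is immediate from the defining identity \eqref{eq def solzz}: given two solutions with the same data, their difference satisfies the identity with all data zero, and testing against $(\eta,0,0)$, $(0,f_1,0)$, $(0,0,f_2)$ successively forces $y(t) \equiv 0$ in $L^2_{\cF_t}(\Omega;H)$ for every $t$ and $Y \equiv 0$ in $L^2_\dbF(0,T;\cL_2)$.
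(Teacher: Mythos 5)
Your proposal does not address the statement at hand. Lemma \ref{lemma1} is a purely functional-analytic duality theorem identifying $\left(L^r_\dbF(t_2,t_1;L^s(\O;\cY))\right)^*$ with $L^{r'}_\dbF(t_2,t_1;L^{s'}(\O;\cY^*))$; what you have written is a construction of the transposition solution of the first order adjoint equation \eqref{first ajoint equ} with the inhomogeneous term $d\psi$, i.e.\ an argument aimed at Lemma \ref{lm13}. Indeed your own text invokes ``Riesz representation (in the spirit of Lemma \ref{lemma1})'' as a tool, so read as a proof of Lemma \ref{lemma1} it is circular: the duality statement is precisely the ingredient that must be established first, and it is exactly what the paper uses (in Step 1 of the proof of Lemma \ref{lm13}) to extract the triple $(y^t,Y^t,\xi^t)$ from the bounded linear functional $\mathfrak{F}$.

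For the record, the paper does not prove Lemma \ref{lemma1} either; it is quoted from \cite[Corollary 2.3 and Remark 2.4]{LYZ}. A self-contained proof would have to combine two things: (i) the classical duality $\left(L^r(t_2,t_1;L^s(\O;\cY))\right)^* = L^{r'}(t_2,t_1;L^{s'}(\O;\cY^*))$ for Lebesgue--Bochner spaces, which holds because reflexivity of $\cY$ gives $\cY^*$ the Radon--Nikodym property; and (ii) an argument that the adaptedness constraint is preserved under duality, i.e.\ that every bounded linear functional on the closed subspace of $\dbF$-adapted processes is represented by an $\dbF$-adapted element of the dual space --- typically by extending the functional via Hahn--Banach, representing the extension by some (not necessarily adapted) $\varphi$, and then replacing $\varphi(t)$ by $\dbE(\varphi(t)\,|\,\cF_t)$ after checking that this projection does not change the action on adapted test processes. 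None of these steps appears in your proposal.
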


Next, we recall the following result.
\begin{lemma}\cite[Lemma 2.5]{LZ1}\label{lemma2.1}
Assume that   $f_1\in L^2_{\dbF}(0,T;H)$ and
$f_2\in L^2_{\dbF}(0,T;H)$. Then there exists a
decreasing sequence $\{\e_n\}_{n=1}^\infty$ of
positive numbers such that
$\ds\lim_{n\to\infty}\e_n=0$, and\vspace{-0.2cm}
\begin{equation}\label{2.21}
\lim_{n\to\infty}\frac{1}{\e_n}\int_t^{t+\e_n}\mathbb{E}
\langle f_1(t),f_2(\tau)\rangle_{H}
d\tau=\mathbb{E} \langle
f_1(t),f_2(t)\rangle_{H},\qq\ae\, t\in
 [0,T].
 \end{equation}
\end{lemma}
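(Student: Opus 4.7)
The plan is to recognize the inner integral as an $L^2$-averaging operation applied to $f_2$, and then pass from $L^1(0,T)$-convergence in the variable $t$ to pointwise a.e.\ convergence along a subsequence.

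First, I would extend $f_2$ by zero outside $[0,T]$ and define, for each $\e>0$, the averaged process
$$F_\e(t) \= \frac{1}{\e}\int_t^{t+\e} f_2(\tau)\,d\tau, \qquad t \in [0,T],$$
viewed as an element of $L^2_\dbF(0,T;H)$. With this notation, a change in the order of $\mE$ and the Lebesgue integral yields
$$\frac{1}{\e}\int_t^{t+\e}\mE\langle f_1(t), f_2(\tau)\rangle_H\,d\tau = \mE\langle f_1(t), F_\e(t)\rangle_H,$$
while the target right-hand side of \eqref{2.21} is $\mE\langle f_1(t), f_2(t)\rangle_H$.

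Second, I would establish that $F_\e \to f_2$ in $L^2_\dbF(0,T;H)$ as $\e\to 0^+$. By Jensen's inequality and Fubini,
$$\int_0^T \mE|F_\e(t)-f_2(t)|_H^2\,dt \le \frac{1}{\e}\int_0^\e\!\!\int_0^T \mE|f_2(t+s)-f_2(t)|_H^2\,dt\,ds,$$
and for each fixed $s\in[0,\e]$ the inner integral tends to zero as $s\to 0^+$ by the $L^2$-continuity of translation in the Bochner space $L^2(\dbR;L^2(\Om;H))$ (verified first for simple functions and then extended by density); this uniform control in $s$ suffices. Combined with Cauchy-Schwarz,
$$\int_0^T\!\big|\mE\langle f_1(t),F_\e(t)-f_2(t)\rangle_H\big|\,dt \le \Big(\int_0^T \mE|f_1(t)|_H^2\,dt\Big)^{\!1/2}\!\Big(\int_0^T \mE|F_\e(t)-f_2(t)|_H^2\,dt\Big)^{\!1/2},$$
so the scalar function $t\mapsto \mE\langle f_1(t),F_\e(t)\rangle_H$ converges to $t\mapsto \mE\langle f_1(t),f_2(t)\rangle_H$ in $L^1(0,T)$ as $\e\to 0^+$.

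Finally, from this $L^1(0,T)$-convergence I would extract, by a standard subsequence argument, a null sequence along which convergence holds for a.e.\ $t\in[0,T]$; discarding terms if necessary, I may arrange it to be strictly decreasing, giving the desired $\{\e_n\}_{n=1}^\infty$ and \eqref{2.21}. The main subtlety—minor but essential—is precisely this last extraction: the full limit $\e\to 0^+$ need not exist pointwise a.e.\ (the "good" null set may depend on $\e$), which is why the statement only asserts existence of a sequence $\e_n\downarrow 0$ rather than a full limit.
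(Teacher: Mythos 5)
Your argument is correct. Note that the paper does not prove this lemma at all --- it is imported verbatim from \cite[Lemma 2.5]{LZ1} --- so there is no in-paper proof to compare against; your route (rewrite the average as $\mE\langle f_1(t),F_\e(t)\rangle_H$ with $F_\e$ the Steklov average of $f_2$, show $F_\e\to f_2$ in $L^2((0,T)\times\Om;H)$ by continuity of translations, pass to $L^1(0,T)$-convergence of the scalar functions via Cauchy--Schwarz, and extract an a.e.-convergent subsequence) is the standard argument and is essentially what the cited reference does. One cosmetic slip: $F_\e(t)$ averages $f_2$ over $[t,t+\e]$ and hence is in general not $\cF_t$-measurable, so it lives in $L^2((0,T)\times\Om;H)$ rather than $L^2_{\dbF}(0,T;H)$; adaptedness plays no role in your estimates, so nothing breaks. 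You are also right that the subsequence extraction is the genuinely necessary step, since the Lebesgue differentiation theorem cannot be invoked directly: the integrand $\tau\mapsto\mE\langle f_1(t),f_2(\tau)\rangle_H$ varies with the base point $t$, so its exceptional null set would depend on $t$.
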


\begin{proof}[Proof of Lemma \ref{lm13}]  It
suffices to consider a particular case for
\eqref{first ajoint equ}:
\begin{eqnarray}\label{bsystem1-1}
\left\{
\begin{array}{lll}
\ds dy (t)= -  A^* y(t) dt + f(t)dt + d\psi(t) + Y (t)dW (t)&\mbox{ in }[0,T),\\
\ns\ds y(T) = y_T,
\end{array}
\right.
\end{eqnarray}
where $y_T \in L^p_{\cF_T}(\O;H)$ and
$f(\cdot)\in L^1_{\dbF}(0,T; L^2(\O;H))$. The
general case follows from the well-posedness of
(\ref{bsystem1-1}) and the standard fixed point
technique.

We divide the proof into several steps. Since
the proof is very similar to that of
\cite[Theorem 3.1]{LZ1}, we give below only a
sketch.

\medskip

{\bf Step 1.} For any $t\in [\tau,T]$, let us
define a linear functional $\mathfrak{F}$
(depending on $t$) on the Banach space
$L^1_{\dbF}(t,T;L^2(\O;H))\times
L^2_{\dbF}(t,T;\cL_2)\times L^2_{\cF_t}(\O;H)$
as follows:\vspace{-0.2cm}
\begin{equation}\label{vw-th1-eq1.1}
\begin{array}{ll}
\ds\mathfrak{F}\big(f_1(\cdot),
f_2(\cdot),\eta\big) = \mathbb{E}\big\langle
\phi(T),y_T\big\rangle_H - \mathbb{E}\int_t^T
\big\langle \phi(s),f(s)\big\rangle_H ds -
\mE\int_t^T \langle \phi(s),d\psi(s) \rangle_H,
\\
\ns\ds \qq\qq\qq\forall\; \big(f_1(\cdot),
f_2(\cdot),\eta\big)\in
L^1_{\dbF}(t,T;L^2(\O;H))\times
L^2_{\dbF}(t,T;\cL_2)\times L^2_{\cF_t}(\O;H),
\end{array}\vspace{-0.2cm}
\end{equation}
where $\phi(\cdot)\in
L^{2}_{\dbF}(\O;C([t,T];H))$ is the mild
solution of the equation \eqref{fsystem2}. It is
an easy matter to show that $\mathfrak{F}$ is a
bounded linear functional. By Lemma
\ref{lemma1}, there exists a
triple\vspace{-0.12cm}
$$
\big(y^t(\cdot), Y^t(\cdot), \xi^t\big)\in
L^\infty_{\dbF}(t,T;L^2(\O;H))\times
L^2_{\dbF}(t,T;\cL_2)\times L^2_{\cF_t}(\O;H)
$$
such that\vspace{-0.2cm}
\begin{equation}\label{vw-th1-eq2}
\begin{array}{ll}\ds
\mathbb{E}\big\langle \phi(T),y_T\big\rangle_H -
\mathbb{E}\int_t^T \big\langle \phi(s),f(s)\big\rangle_H \,ds - \mE\int_t^T \langle \phi(s),d\psi(s) \rangle_H\\
\ns\ds =  \mathbb{E}\int_t^T \big\langle
f_1(s),y^t(s)\big\rangle_H \,ds + \mathbb{E}
\int_t^T\big\langle
f_2(s),Y^t(s)\big\rangle_{\cL_2} \,ds
+\mathbb{E} \big\langle\eta,\xi^t\big\rangle_H.
\end{array}\vspace{-0.2cm}
\end{equation}
It is clear that $\xi^T=y_T$.
Furthermore,\vspace{-0.1cm}
\begin{equation}\label{vw-th1-eq3}
\begin{array}{ll}\ds
|(y^t(\cdot), Y^t(\cdot),\xi^t)|_{
L^\infty_{\dbF}(t,T;L^2(\O; H)) \times L^2_{\dbF}(t,T;H)\times L^2_{\cF_t}(\O;H)} \\
\ns\ds \leq C\big(|f(\cdot)|_{
L^1_{\dbF}(t,T;L^2(\O;H))}+|y_T|_{
L^2_{\cF_T}(\O;H)}+
|\psi|_{L^2_\dbF(\O;BV(0,T;H))}\big),
\qq\q\forall\;t\in [\tau,T].
\end{array}\vspace{-0.2cm}
\end{equation}
\medskip

{\bf Step 2.} Note that $(y^t(\cdot),
Y^t(\cdot))$ obtained in Step 1 may depend on
$t$. Now we  show the time consistency of
$(y^t(\cdot), Y^t(\cdot))$, that is, for any
$t_1$ and $t_2$ satisfying $0\leq t_2 \leq t_1
\leq T$, it holds that
\begin{equation}\label{vw-th1-eq4}
\big(y^{t_2} (s,\o),Y^{t_2} (s,\o)\big)=\big(
y^{t_1}(s,\o), Y^{t_1}(s,\o)\big),\qq \ae (s,\o)
\in [t_1,T]\times\O,
\end{equation}
for a suitable choice of the $\eta$, $f_1$ and
$f_2$ in \eqref{fsystem2}. In fact, for any
fixed $\varrho_1 (\cdot)\in
L^1_{\dbF}(t_1,T;L^2(\O;H))$ and $\varrho_2
(\cdot)\in L^2_{\dbF}(t_1,T;\cL_2)$, we choose
first $t=t_1$, $\eta = 0$, $f_1(\cdot)=\varrho_1
(\cdot)$ and $f_2(\cdot) = \varrho_2(\cdot)$ in
\eqref{fsystem2}. From \eqref{vw-th1-eq2}, we
get that\vspace{-0.2cm}
\begin{equation}\label{vw-th1-eq5}
\begin{array}{ll}\ds \mathbb{E}\big\langle
\phi^{t_1}(T),y_T\big\rangle_H -
\mathbb{E}\int_{t_1}^T \big\langle
\phi^{t_1}(s),f(s)\big\rangle_H ds-
\mathbb{E}\int_{t_1}^T \big\langle
\phi^{t_1}(s),d\psi(s)\big\rangle_H  \\
\ns\ds = \mathbb{E}\int_{t_1}^T
\big\langle\varrho_1 (s),y^{t_1}(s)\big\rangle_H
ds+\mathbb{E}\int_{t_1}^T \big\langle\varrho_2
(s),Y^{t_1}(s)\big\rangle_{\cL_2} ds.
\end{array}
\end{equation}
Next, we choose $t=t_2$, $\eta = 0$, $f_1(\cd) =
\chi_{[t_1,T]}(\cd) \varrho_1 (\cd)$ and
$f_2(\cd) = \chi_{[t_1,T]}(\cd) \varrho_2(\cd)$
in \eqref{fsystem2}. It follows from
\eqref{vw-th1-eq2} that\vspace{-0.2cm}
\begin{equation}\label{vw-th1-eq6}
\begin{array}{ll}\ds \mathbb{E}\big\langle
\phi^{t_1}(T),y_T\big\rangle_H -
\mathbb{E}\int_{t_1}^T \big\langle
\phi^{t_1}(s),f(s)\big\rangle_H ds-
\mathbb{E}\int_{t_1}^T \big\langle
\phi^{t_1}(s),d\psi(s)\big\rangle_H\\
\ns\ds = \mathbb{E}\int_{t_1}^T
\big\langle\varrho_1 (s),y^{t_2}(s)\big\rangle_H
ds+\mathbb{E}\int_{t_1}^T \big\langle\varrho_2
(s),Y^{t_2}(s)\big\rangle_{\cL_2} ds.
\end{array}\vspace{-0.2cm}
\end{equation}
Combining  \eqref{vw-th1-eq5} and
\eqref{vw-th1-eq6}, we get\vspace{-0.1cm}
$$
\begin{array}{ll}\ds
\mathbb{E}\int_{t_1}^T \big\langle\varrho_1
(s),y^{t_1}(s)-y^{t_2}(s)\big\rangle_H ds
+\mathbb{E}\int_{t_1}^T \big\langle\varrho_2
(s),Y^{t_1}(s)-Y^{t_2}(s)\big\rangle_{\cL_2}\,
ds=0,\\
\ns\ds\qq \qq\qq\qq\qq\forall\;
\varrho_1(\cdot)\in
L^1_{\dbF}(t_1,T;L^2(\O;H)),\q \varrho_2
(\cdot)\in L^2_{\dbF}(t_1,T;\cL_2).
\end{array}\vspace{-0.2cm}
$$
This yields the desired equality
\eqref{vw-th1-eq4}.

Put\vspace{-0.2cm}
\begin{equation}\label{vw-th1-eq7}
y(t,\o)=y^\tau(t,\o),\qq Y (t,\o)=
Y^\tau(t,\o),\qq \forall\;(t,\o) \in
[\tau,T]\times\O.\vspace{-0.1cm}
\end{equation}
From  \eqref{vw-th1-eq4}, we see
that\vspace{-0.1cm}
\begin{equation}\label{vw-th1-eq8}
 \big(y^t (s,\o),Y^t
(s,\o)\big)=\big( y(s,\o),Y (s,\o)\big), \qq
\ae(s,\o) \in [t,T]\times\O.\vspace{-0.2cm}
\end{equation}
Combining \eqref{vw-th1-eq2} and
\eqref{vw-th1-eq8}, we deduce
that\vspace{-0.2cm}
\begin{equation}\label{vw-th1-eq9}
\begin{array}{ll}
\ds \mathbb{E}\big\langle
\phi(T),y_T\big\rangle_H - \mathbb{E}\int_t^T
\big\langle \phi(s),f(s)\big\rangle_H ds-
\mathbb{E}\int_t^T \big\langle
\phi(s),d\psi(s)\big\rangle_H \\
\ns\ds = \mathbb{E} \big\langle\eta,\xi^t
\big\rangle_H+\mathbb{E}\int_t^T \big\langle
f_1(s),y(s)\big\rangle_H ds +\mathbb{E} \int_t^T
\big\langle f_2(s),Y(s)\big\rangle_{\cL_2}
ds,\\\ns\ds \qq\q\forall\; \big(f_1(\cdot),
f_2(\cdot),\eta\big)\in
L^1_{\dbF}(t,T;L^2(\O;H))\times
L^2_{\dbF}(t,T;\cL_2)\times L^2_{\cF_t}(\O;H).
\end{array}\vspace{-0.2cm}
\end{equation}

\medskip

{\bf Step 3.} We show in this step that $\xi^t$
has a c\`adl\`ag modification.  The detail is
lengthy and very similar to Step 3 in the proof
of \cite[Theorem 3.1]{LZ1}, and hence we omit it
here.

First of all, we claim that, for each $t\in
[0,T]$,\vspace{-0.1cm}
\begin{equation}\label{6e1}
\dbE\Big(S^*(T-t) y_T - \int_t^T S^*(s-t)
f(s)ds- \int_t^T S^*(s-t) d\psi(s)
\;\Big|\;\cF_t\Big) = \xi^t,\ \
\dbP\mbox{-}\as\vspace{-0.2cm}
\end{equation}
To prove this, we note that for any $\eta\in
L^2_{\cF_t}(\O;H)$, $f_1=0$ and $f_2=0$, the
corresponding solution of \eqref{fsystem2} is
given by $\phi(s)=S(s-t)\eta$ for $s\in [t,T]$.
Hence, by \eqref{vw-th1-eq9}, we obtain
that\vspace{-0.1cm}
\begin{equation}\label{M2}
\begin{array}{ll}\ds
\dbE\big\langle S(T\!-\!t)\eta,y_T\big\rangle_H
\!-\! \dbE \langle \eta,\xi^t\rangle_H \!=\!
\dbE\!\int_t^T\! \big\langle
S(s\!-\!t)\eta,f(s)\big\rangle_H ds \!+\!
\dbE\!\int_t^T\! \big\langle
S(s-t)\eta,d\psi(s)\big\rangle_H.
\end{array}\vspace{-0.2cm}
\end{equation}
Noting that\vspace{-0.2cm}
$$
\dbE\big\langle
S(T-t)\eta,y_T\big\rangle_H=\dbE\big\langle
\eta,S^*(T-t)y_T\big\rangle_H=\dbE
\big\langle\eta,\dbE(S^*(T-t)y_T\;|\;\cF_t)\big\rangle_H,
$$
$$
\begin{array}{ll}\ds
\dbE\int_t^T \!\big\langle
S(s\!-\!t)\eta,f(s)\big\rangle_H ds\! =\!\dbE
\Big\langle \eta,\int_t^TS^*(s\!-\!t)
f(s)ds\Big\rangle_H \! =\!\dbE \Big\langle
\eta,\dbE\Big(\int_t^T\! S^*(s\!-\!t)
f(s)ds\;\Big|\;\cF_t\Big) \Big\rangle_H,
\end{array}
$$
and
$$
\begin{array}{ll}\ds
\dbE\int_t^T \big\langle
S(s-t)\eta,d\psi(s)\big\rangle_H
 =\dbE\Big\langle \eta,\int_t^TS^*(s-t)
d\psi(s)\Big\rangle_H
 =\dbE \Big\langle \eta,\dbE\Big(\int_t^T
S^*(s-t) d\psi(s)\;\Big|\;\cF_t\Big)
\Big\rangle_H,
\end{array}
$$
by (\ref{M2}), we conclude that for every
$\eta\in L^2_{\cF_t}(\O;H)$,\vspace{-0.1cm}
\begin{equation}\label{M1}
\dbE
\Big\langle\eta,\dbE\Big(S^*(T-t)y_T-\int_t^T
S^*(s-t) f(s)ds-\int_t^T S^*(s-t) d\psi(s)
\;\Big|\;\cF_t\Big)-\xi^t
\Big\rangle_H=0.\vspace{-0.1cm}
\end{equation}
Clearly, (\ref{6e1}) follows from (\ref{M1})
immediately.

In the rest of this step, we show that the
process\vspace{-0.2cm}
$$
\Big\{\dbE\Big(S^*(T-t) y_T - \int_t^T S^*(s-t)
f(s)ds
\;\Big|\;\cF_t\Big)\Big\}_{t\in[0,T]}\vspace{-0.2cm}
$$
has a c\`adl\`ag modification.

Recall that for any $\l\in\rho(A)$, the bounded
operator $A_\l$ (resp. $A^*_\l$) generates a
$C_0$-group $\{S_\l(t)\}_{t\in\dbR}$ (resp.
$\{S^*_\l(t)\}_{t\in\dbR}$) on $H$.

For each  $t\in [0,T]$, put\vspace{-0.2cm}
\begin{equation}\label{6e1.1z}
\xi_{\l}^t\=\dbE\Big(S^*_\l(T-t) y_T - \int_t^T
S^*_\l(s-t) f(s)ds- \int_t^T S^*_\l(s-t)
d\psi(s) \;\Big|\;\cF_t\Big)\vspace{-0.2cm}
\end{equation}
and\vspace{-0.2cm}
\begin{equation}\label{X}
\Phi_\l(t)\=S^*_\l(t)\xi_{\l}^t - \int_0^t
S^*_\l(s) f(s)ds-\int_t^T S^*_\l(s-t)
d\psi(s).\vspace{-0.2cm}
\end{equation}
We claim that $\{\Phi_\l(t)\}$ is an $H$-valued
$\dbF$-martingale. In fact, for any $\tau_1,
\tau_2 \in [0,T]$ with $\tau_1 \leq \tau_2$, it
follows from \eqref{6e1.1z} and \eqref{X}
that\vspace{-0.2cm}
$$
\!\!\!\!\begin{array}{ll}
\ns\ds\q\dbE(\Phi_\l(\tau_2)\;|\;\cF_{\tau_1})\\
\ns\ds =
\dbE\Big(S^*_\l(\tau_2)\xi_{\l}^{\tau_2} -
\int_0^{\tau_2}S^*_\l(s) f(s)ds-
\int_0^{\tau_2}S^*_\l(s)
d\psi(s) \;\Big|\;\cF_{\tau_1}\Big) \\
\ns\ds = \dbE \[\dbE\Big(S^*_\l(T)y_T\! -\!
\int_{\tau_2}^T\! S^*_\l(s)\!f(s)ds-
\int_{\tau_2}^T\! S^*_\l(s)d\psi(s) \Big|
\cF_{\tau_2}\Big)\! - \!\!\int_0^{\tau_2}\!\!
S^*_\l(s)f(s)ds\!-\!\! \int_0^{\tau_2}\!\!
S^*_\l(s)d\psi(s) \Big| \cF_{\tau_1}\]\\
\ns\ds  = \dbE\Big(S^*_\l(T)y_T-\int_0^{T}
S^*_\l(s)f(s)ds-\int_0^{T}
S^*_\l(s)d\psi(s) \;\Big|\;\cF_{\tau_1}\Big)\\
\ns\ds =
S^*_\l(\tau_1)\dbE\Big(S^*_\l(T-\tau_1)y_T-\int_{\tau_1}^{T}
S^*_\l(s-\tau_1)f(s)ds-\int_{\tau_1}^{T}
S^*_\l(s-\tau_1)d\psi(s) \;\Big|\;\cF_{\tau_1}\Big)\\
\ns\ds\q -\int_0^{\tau_1}S^*_\l(s)f(s)ds-\int_0^{\tau_1}S^*_\l(s)d\psi(s) \\
\ns\ds = S^*_\l(\tau_1)\xi_{\l}^{\tau_1}- \int_0^{\tau_1}S^*_\l(s)f(s)ds- \int_0^{\tau_1}S^*_\l(s)d\psi(s) \\
\ns\ds = X_\l(\tau_1), \q \dbP\mbox{-a.s.},
\end{array}
$$
as desired.

Now, since $\{X_\l(t)\}_{0\leq t\leq T}$ is an
$H$-valued $\dbF$-martingale, it enjoys a
c\`adl\`ag modification, and hence so does the
following process
$$\{\xi_{\l}^t\}_{0\leq t \leq
T}=\Big\{S^*_\l(-t)\Big[X_\l(t)+ \int_0^t
S^*_\l(s) f(s)ds+ \int_0^t S^*_\l(s)
d\psi(s)ds\Big]\Big\}_{0\leq t \leq T}.
$$
Here we have used the fact that
$\{S_\l^*(t)\}_{t\in\dbR}$ is a $C_0$-group on
$H$. We still use $\{\xi_{\l}^t\}_{0\leq t \leq
T}$ to stand for its c\`adl\`ag modification.

From \eqref{6e1} and \eqref{6e1.1z}, it follows
that
\begin{eqnarray}\label{th1eq3.1}
&&\3n\3n\3n\3n\ds \q
\lim_{\l\to\infty} |\xi^\cd - \xi_{\l}^{\cd}|_{L^\infty_{\dbF}(0,T;L^2(\O;H))} \nonumber\\
&&\3n\3n\3n\3n\ds =  \lim_{\l\to\infty} \Big|
\dbE\Big(S^*(T-\cd)) y_T - \int_\cd^T
S^*(s-\cd)) f(s)ds-
\int_\cd^T S^*(s-\cd))d\psi(s) \;\Big|\;\cF_\cd\Big) \nonumber\\
&&\3n\3n\3n\3n\ds \q - \dbE\Big(S^*_\l(T-\cd))
y_T- \int_\cd^T S^*_\l(s-\cd) f(s)ds -
\int_\cd^T S^*_\l(s-\cd)d\psi(s) \;\Big|\;\cF_\cd\Big) \Big|_{L^\infty_{\dbF}(0,T;L^2(\O;H))}\\
&&\3n\3n\3n\3n\ds \leq  \lim_{\l\to\infty}   \Big| S^*(T-\cd) y_T - S^*_\l(T-\cd) y_T \Big|_{L^\infty_{\dbF}(0,T;L^2(\O;H))} \nonumber\\
&&\3n\3n\3n\3n\ds \q +  \lim_{\l\to\infty} \Big|
\int_\cd^T S^*(s-\cd) f(s)ds - \int_\cd^T
S^*_\l(s-\cd)
f(s)ds \Big|_{L^\infty_{\dbF}(0,T;L^2(\O;H))}\nonumber\\
&&\3n\3n\3n\3n\ds \q +  \lim_{\l\to\infty} \Big|
\int_\cd^T S^*(s-\cd)d\psi(s) - \int_\cd^T
S^*_\l(s-\cd) d\psi(s)
\Big|_{L^\infty_{\dbF}(0,T;L^2(\O;H))}.\nonumber
\end{eqnarray}

Let us prove the right hand side of
\eqref{th1eq3.1} equals zero. First, we
prove\vspace{-0.1cm}
\begin{equation}\label{z-x1}
\lim_{\l\to\infty}   \Big| S^*(T-\cd) y_T -
S^*_\l(T-\cd) y_T
\Big|_{L^\infty_{\dbF}(0,T;L^2(\O;H))} =
0.\vspace{-0.1cm}
\end{equation}
By the property of Yosida approximations, we
deduce that for any $\a\in H$, it holds
that\vspace{-0.1cm}
$$
\lim_{\l\to\infty}|S^*(T-\cd)\a -
S^*_\l(T-\cd)\a|_{L^\infty(0,T;H)}=0\vspace{-0.2cm}
$$
and that\vspace{-0.2cm}
$$
\big| S^*(T-\cd) y_T - S^*_\l(T-\cd) y_T
\big|_{H}\leq C|y_T|_H.\vspace{-0.1cm}
$$
Thus, by Lebesgue's dominated convergence, we
obtain \eqref{z-x1}.

Similarly, we can prove that\vspace{-0.1cm}
\begin{equation}\label{z-x2}
\lim_{\l\to\infty}\Big| \int_\cd^T S^*(s-\cd)
f(s)ds - \int_\cd^T S^*_\l(s-\cd) f(s)ds
\Big|_{L^\infty_{\dbF}(0,T;L^2(\O;H))}=0\vspace{-0.2cm}
\end{equation}
and\vspace{-0.1cm}
\begin{equation}\label{z-x2.1}
\lim_{\l\to\infty}\Big| \int_\cd^T S^*(s-\cd)
d\psi(s)  - \int_\cd^T S^*_\l(s-\cd)d\psi(s)
\Big|_{L^\infty_{\dbF}(0,T;L^2(\O;H))}=0.
\vspace{-0.1cm}
\end{equation}

By \eqref{th1eq3.1}, \eqref{z-x1}, \eqref{z-x2}
and \eqref{z-x2.1}, we obtain that
$\ds\lim_{m\to\infty}\lim_{\l\to\infty} |\xi^\cd
-
\xi_{\l,m}^{\cd}|_{L^\infty_{\dbF}(0,T;L^2(\O;H))}=0$.
Recalling that $\xi_{\l}^{\cd}\in
D_{\dbF}([0,T];L^2(\O;H))$, we deduce that
$\xi^\cd$ enjoys a c\'{a}dl\'{a}g modification.

\vspace{0.15cm}

{\bf Step 4.} In this step, we show that, for
a.e. $t\in [0,T]$,\vspace{-0.1cm}
\begin{equation}\label{6e3}
\xi^t= y(t),\ \ \dbP\mbox{-}\as\vspace{-0.1cm}
\end{equation}
Choosing $t=t_2$, $f_1(\cd) = 0$, $f_2(\cd) = 0$
and $\eta = (t_1-t_2)\gamma$ in
\eqref{fsystem2}, utilizing \eqref{vw-th1-eq9},
we obtain that
\begin{equation}\label{eq6xz}
\begin{array}{ll}\ds
\mathbb{E}\big\langle S(T-t_2)(t_1-t_2)\gamma,
y_T \big\rangle_H - \mathbb{E}\big\langle
(t_1-t_2)\gamma, \xi^{t_2} \big\rangle_H \\
\ns\ds = \mathbb{E}\int_{t_2}^T \big\langle
S(\tau-t_2) (t_1-t_2)\gamma,
f(\tau)\big\rangle_H
d\tau+\mathbb{E}\int_{t_2}^T \big\langle
S(\tau-t_2) (t_1-t_2)\gamma,
d\psi(\tau)\big\rangle_H.
\end{array}\vspace{-0.2cm}
\end{equation}
Choosing $t=t_2$, $f_1(\tau,\o) =
\chi_{[t_2,t_1]}(\tau)\gamma(\o)$, $f_2(\cd) =
0$ and $\eta = 0$ in (\ref{fsystem2}), utilizing
\eqref{vw-th1-eq9} again, we find
that\vspace{-0.2cm}
\begin{equation}\label{eq7zx}
\begin{array}{ll}\ds
\q\mathbb{E}\Big\langle \int_{t_2}^T S(T-s) \chi_{[t_2,t_1]}(s)\gamma ds, y_T \Big\rangle_H \\
\ns \ds= \mathbb{E}\int_{t_2}^{t_1}\Big\langle
\int_{t_2}^\tau S(\tau-s)\gamma ds,
f(\tau)\Big\rangle_H d\tau +
\mathbb{E}\int_{t_1}^T \Big\langle S(\tau-t_1)
\int_{t_2}^{t_1}S(t_1-s)\gamma ds,
f(\tau)\Big\rangle_H d\tau \\
\ns\ds\q + \mathbb{E}\int_{t_2}^{t_1}\Big\langle
\int_{t_2}^\tau S(\tau-s)\gamma ds,
d\psi(\tau)\Big\rangle_H +
\mathbb{E}\int_{t_1}^T \Big\langle S(\tau-t_1)
\int_{t_2}^{t_1}S(t_1-s)\gamma ds,
d\psi(\tau)\Big\rangle_H \\
\ns\ds \q + \mathbb{E}\int_{t_2}^{t_1}\langle
\gamma,y(\tau)\rangle_H d\tau.
\end{array}\vspace{-0.2cm}
\end{equation}
It follows from (\ref{eq6xz}) and (\ref{eq7zx})
that\vspace{-0.1cm}
\begin{eqnarray}\label{eq7.1}
&&\ds
\q\mathbb{E}\langle \gamma, \xi^{t_2}\rangle_H \nonumber\\
&&\ds =\!
\frac{1}{t_1\!-\!t_2}\!\int_{t_2}^{t_1}\!\!\mathbb{E}\lan\!
\gamma,y(\tau) \rangle_H
d\tau \! +\! \mathbb{E}\big\langle S(T\!-\!t_2) \gamma, y_T \big\rangle_H \!-\! \frac{1}{t_1\!-\!t_2}\mathbb{E}\Big\langle \int_{t_2}^T\!\! S(T\!-\!\tau) \chi_{[t_2,t_1]}(\tau)\gamma d\tau, y_T \Big\rangle_H \nonumber\\
&&\ds \q  - \mathbb{E}\int_{t_2}^T \langle
S(\tau-t_2) \gamma, f(\tau)\rangle_H d\tau +
\frac{1}{t_1-t_2}\mathbb{E}\int_{t_2}^{t_1}\Big\langle
\int_{t_2}^\tau S(\tau-s)\gamma,
f(\tau)\Big\rangle_H d\tau\\
&&\ds \q +
\frac{1}{t_1-t_2}\mathbb{E}\int_{t_1}^T
\Big\langle S(\tau-t_1)
\int_{t_2}^{t_1}S(t_1-s)\gamma ds,
f(\tau)\Big\rangle_H d\tau-
\mathbb{E}\int_{t_2}^T \langle S(\tau-t_2)
\gamma, d\psi(\tau)\rangle_H \nonumber\\
&&\ds\q +
\frac{1}{t_1\!-\!t_2}\mathbb{E}\!\int_{t_2}^{t_1}\!\!\Big\langle\!
\int_{t_2}^\tau\! S(\tau\!-\!s)\gamma,
d\psi(\tau)\Big\rangle_H\! +\!
\frac{1}{t_1\!-\!t_2}\mathbb{E}\!\int_{t_1}^T\!\!
\Big\langle\! S(\tau\!-\!t_1)\!
\int_{t_2}^{t_1}\!\!S(t_1\!-\!s)\gamma ds,
d\psi(\tau)\Big\rangle_H.\nonumber
\end{eqnarray}
Now we analyze the terms in the right hand side
of \eqref{eq7.1}. First, it is easy to show
that\vspace{-0.2cm}
\begin{equation}\label{eq7.11}
\begin{array}{ll}\ds
\lim_{t_1\to t_2+0}
\frac{1}{t_1-t_2}\mathbb{E}\int_{t_2}^{t_1}\Big\langle
\int_{t_2}^\tau S(s-t_2)\gamma,
f(\tau)\Big\rangle_H d\tau \\
\ns\ds + \lim_{t_1\to t_2+0}\frac{1}{t_1 -
t_2}\mathbb{E} \int_{t_2}^{t_1} \Big\langle
\int_{t_2}^\tau S(\tau - s)\gamma,
d\psi(\tau)\Big\rangle_H =0,\qq\forall\;\gamma
\in L^2_{\cF_{t_2}}(\O;H).
\end{array}\vspace{-0.2cm}
\end{equation}
 Further,\vspace{-0.2cm}
\begin{equation}\label{eq7.13}
 \begin{array}{ll}\ds
\q\lim_{t_1\to t_2+0}   \frac{1}{t_1-t_2}\mathbb{E}\Big\langle \int_{t_2}^T S(T-\tau) \chi_{[t_2,t_1]}(\tau)\gamma d\tau, y_T \Big\rangle_H \\
\ns\ds = \lim_{t_1\to t_2+0}  \frac{1}{t_1-t_2}
\mathbb{E} \Big\langle \int_{t_2}^{t_1}
S(T-\tau) \gamma d\tau, y_T \Big\rangle_H =
\mathbb{E}\big\langle S(T-t_2) \gamma, y_T
\big\rangle_H.
\end{array}\vspace{-0.1cm}
\end{equation}
Utilizing the semigroup property of
$\{S(t)\}_{t\geq 0}$, we have\vspace{-0.2cm}
\begin{equation}\label{eq7.14}
\begin{array}{ll}
\ds\lim_{t_1\to t_2+0}
\frac{1}{t_1-t_2}\mathbb{E}\[\int_{t_1}^T
\Big\langle S(\tau-t_1)
\int_{t_2}^{t_1}S(t_1-s)\gamma ds,
f(\tau)\Big\rangle_H d\tau\\
\ns\ds\qq\qq\qq\q + \int_{t_1}^T \Big\langle
S(\tau-t_1) \int_{t_2}^{t_1}S(t_1-s)\gamma ds,
d\psi(\tau)\Big\rangle_H  \] \\
\ns\ds = \mathbb{E}\int_{t_2}^T \big\langle
S(\tau-t_2) \gamma, f(\tau)\big\rangle_H d\tau +
\mathbb{E}\int_{t_2}^T \big\langle S(\tau-t_2)
\gamma, d\psi(\tau)\big\rangle_H.\vspace{-0.2cm}
\end{array}
\end{equation}
From \eqref{eq7.1}, \eqref{eq7.11},
\eqref{eq7.13} and \eqref{eq7.14}, we arrive
at\vspace{-0.2cm}
\begin{equation}\label{eq8xz}
\lim_{t_1\to t_2+0}
\frac{1}{t_1-t_2}\int_{t_2}^{t_1}\mathbb{E}\langle
\gamma,y(\tau) \rangle_H
d\tau=\mathbb{E}\big\langle \gamma,
\xi^{t_2}\big\rangle_H,\q \forall\;\gamma \in
L^2_{\cF_{t_2}}(\O;H), \ t_2\in
[0,T).\vspace{-0.2cm}
\end{equation}
Now, by \eqref{eq8xz}, we conclude that, for
$\ae t_2\in (0,T)$\vspace{-0.2cm}
\begin{equation}\label{ezq10}
\lim_{t_1\to t_2+0}
\frac{1}{t_1-t_2}\int_{t_2}^{t_1}\mathbb{E}\big\langle
\xi^{t_2}-y(t_2),y(\tau) \big\rangle_H
d\tau=\mathbb{E}\big\langle \xi^{t_2}-y(t_2),
\xi^{t_2}\big\rangle_H.
\end{equation}
By Lemma \ref{lemma2.1}, we can find a monotonic
sequence $\{\e_n\}_{n=1}^\infty$ of positive
numbers with $\ds\lim_{n\to\infty}\e_n=0$, such
that\vspace{-0.2cm}
\begin{equation}\label{eq12}
\lim_{n\to\infty}
\frac{1}{\e_n}\int_{t_2}^{t_2+\e_n}\!\mathbb{E}\langle
\xi^{t_2}-y(t_2),y(\tau) \rangle_H
d\tau=\mathbb{E}\langle
\xi^{t_2}\!-\!y(t_2),y(t_2)\rangle_H,\q\ \ae
t_2\in [0,T).\vspace{-0.2cm}
\end{equation}
By (\ref{ezq10})--(\ref{eq12}), we arrive
at\vspace{-0.1cm}
\begin{equation}\label{eq14}
\mathbb{E}\langle \xi^{t_2}-y(t_2),
\xi^{t_2}\rangle_H=\mathbb{E}\langle
\xi^{t_2}-y(t_2),y(t_2)\rangle_H,\q\ \ae t_2\in
[0,T].\vspace{-0.1cm}
\end{equation}
By (\ref{eq14}), we find that $\mathbb{E}\left|
\xi^{t_2}-y(t_2)\right|_H^2=0$ for $ t_2\in
[0,T]$ a.e., which implies (\ref{6e3}).

This completes the proof of Lemma \ref{lm13}.

\end{proof}

\end{document}